\newtheorem{thm}{Theorem}[section]
\newtheorem{lem}{Lemma}[section]
\newtheorem{prop}[lem]{Proposition}
\newtheorem{cor}[lem]{Corollary}
\newtheorem{q}[lem]{Question}
\newtheorem{defn}[lem]{Definition}
\newtheorem{rem}[lem]{Remark}
\numberwithin{equation}{section}
\newtheorem{assum}[lem]{Assumption}
\newcommand{\bR}{ \mathbb{R}} %%%% real
\newcommand{\bC}{ \mathbb{C}} %%%% natural
\newcommand \tr{\;\text{tr}}
\newcommand \Hess{\;\text{Hess}}
\newcommand \eps{\varepsilon}
\newcommand \call{\mathcal L}
\newlength{\originalbase}
\title{viscosity solution to complex Hessian equations on compact Hermitian manifolds}
\author{Jingrui Cheng, Yulun Xu}
\date{\today}
\begin{document}
\maketitle
\begin{abstract}
We prove the existence of viscosity solutions to complex Hessian equations on a compact Hermitian manifold that satisfy a determinant domination condition.  This viscosity solution is shown to be unique when the right hand is strictly monotone increasing in terms of the solution.  When the right hand side does not depend on the solution,  we reduces it to the strict monotonicity of the solvability constant.
\end{abstract}
\section{introduction}
The goal of this note is to study the existence and uniqueness of viscosity solutions to complex Hessian equations on a closed Hermitian manifold.  There has been numerous works on the existence of viscosity solutions to complex Hessian equations,  as well as pluripotential solutions,  either on domains or on manifolds.  We refer the readers to \cite{DDT},  \cite{EGZ},  \cite{EGZ2}, \cite{HL2}, \cite{KNFSWW}, \cite{YW} and references therein for the quickly expanding literatures on this topic.  The techniques developed by the pioneering work of Guo-Phong-Tong \cite{GPT} allows us to develop stability estimates that make it possible to prove existence of weak solutions for more general complex Hessian equations.  On the other hand,  the regularization technique developed by our previous work \cite{CX} allows us to get a quite general uniqueness result.

Let $(M,\omega_0)$ be a closed Hermitian manifold.  In local coordinates,  we can write $\omega_0=\sqrt{-1}g_{i\bar{j}}dz_i\wedge d\bar{z}_j$.  
Let $\chi$ be a real $(1,1)$ form on $M$ and in local coordinates we can write it as: $\chi=\sqrt{-1}\chi_{i\bar{j}}dz_i\wedge d\bar{z}_j$.  For any $C^2$ function $\varphi:M\rightarrow \bR$,  we obtain a new real $(1,1)$ form: $\chi+dd^c\varphi$. We can define an operator $A: TX \rightarrow TX$ by $A^i_j=g^{i\bar{k}}\big(\chi_{j\bar{k}}+\varphi_{j\bar{k}}\big)$ in local coordinates. Let $\lambda[\chi+dd^c\varphi]$ be the (unordered) eigenvalues of $A$.  Equivalently,  $\lambda[\chi+dd^c\varphi]$ is the set of roots for:
\begin{equation*}
\det\big(\lambda g_{i\bar{j}}-(\chi_{i\bar{j}}+\varphi_{z_i\bar{z}_j})\big)=0.
\end{equation*}

Then we consider equations for $\varphi$ that may be written in the form:
\begin{equation}\label{1.1N}
F(\chi+dd^c\varphi)=h,\,\,\,h=e^{G(x)}\text{ or $e^{G(x,\varphi)}$}.
\end{equation}
In the above,  
\begin{equation}\label{1.2N}
F(\chi+dd^c\varphi)=f(\lambda[\chi+dd^c\varphi]),
\end{equation}
where $f(\lambda_1,\cdots,\lambda_n)$ is a smooth symmetric function.
$G(x)$ or $G(x,\varphi)$ are some right hand side one prescribes. The reason we write the right hand side in the form $e^G$ is to emphasize that it is strictly positive. Such equations have been
studied extensively in the literature, going back to the work of Caffarelli-Nirenberg-Spruck \cite{CNS} on the Dirichlet problem in the real case, when $\omega_0$ is the Euclidean metric
and $M$ is a domain in $\bR^n$.

We assume that the function $f$ in (\ref{1.2N}) is defined in a closed convex symmetric cone $\Gamma\subset \bR^n$,  with $\Gamma\subset \{\lambda\in \bR^n:\sum_{i=1}^n\lambda_i>0\}$ and containing the first octant $\Gamma_n$.  Therefore,  we need to assume that 
\begin{equation}\label{1.3N}
\lambda[\chi+dd^c\varphi]\in \Gamma.
\end{equation}

In addition,  we are going to assume that:
\begin{assum}\label{a1.1N}
\begin{enumerate}
\item $\frac{\partial f}{\partial \lambda_i}> 0,\,\,1\le i\le n$,  $f$ is concave and $>0$ on $\Gamma$ and $f=0$ on $\partial \Gamma$.
\item $\lambda[\chi](x)\in Int(\Gamma)$ for any $x\in M$.
\item (determinant domination condition) $f$ is a positive homogeneous function with degree 1,  and there exist constants $c_0>0$,  such that $f(\lambda)\ge c_0(\Pi_{i=1}^n\lambda_i)^{\frac{1}{n}}$ for $\lambda\in \Gamma_n=\{\lambda\in \bR^n:\lambda_i>0,\,\,1\le i\le n\}$.
\end{enumerate}
\end{assum}
In the above,  the assumption that $\frac{\partial f}{\partial\lambda_i}\ge 0$ implies the ellipticity of the equation (\ref{1.1N}),  (\ref{1.3N}).  The determinant domination condition is motivated by the pioneering work of Guo-Phong-Tong \cite{GPT},  which developed a unified PDE approach to $L^{\infty}$ estimate which satisfies:
\begin{equation}\label{1.4N}
\sum_i\lambda_i\frac{\partial f}{\partial \lambda_i}\le C_0f,\,\,\Pi_{i=1}^n\frac{\partial f}{\partial \lambda_i}\ge c_0,\text{ for some $c_0,\,C_0>0$}.
\end{equation}
\cite{GPT} also observed that determinant domination condition implies (\ref{1.4N}).   The assumption that $f$ is positive homogeneity one implies $\sum_i\lambda_i\frac{\partial f}{\partial \lambda_i}=f$.  The proof of the other property is contained in Lemma \ref{gpt assumption},  originally due to Guo-Phong-Tong \cite{GPT},  which we reproduce for the convenience of the readers.

There are many examples which satisfy the Assumption \ref{a1.1N} above.  The most well-known example is probably $f(\lambda)=\sigma_k^{\frac{1}{k}}(\lambda),\,1\le k\le n$,  defined on $\Gamma_k:=\{\lambda:\sigma_i(\lambda)\ge 0,\,1\le i\le k\}$,  where $\sigma_k(\lambda)$ is the $k$-th symmetric polynomial of $\lambda$.  These $\sigma_k$ equations have been extensively studied.  See \cite{DK}, \cite{DK2}, \cite{KN},  \cite{KN2} and references therein.  
However,  there are other examples satisfying Assumption \ref{a1.1N} which are less studied,  and we just name a few here:
\begin{enumerate}
\item ($\sigma_k$-equation for $(n-1)-$plurisubharmonic function) $f(\lambda)=\sigma_k^{\frac{1}{k}}(\tilde{\lambda})$,  where $\tilde{\lambda}_i=\frac{1}{n-1}\sum_{j\neq i}\lambda_j$.
\item ($p$-fold sum operator) $f(\lambda)=\big(\Pi_{|J|=p}\lambda_J\big)^{\frac{1}{N}}$,  where $\lambda_J=\lambda_{j_1}+\lambda_{j_2}+\cdots+\lambda_{j_p}$ and $N={n\choose p}$.
\end{enumerate}The first example with $k=n$ was studied by Tosatti-Weinkove \cite{TW} and the second example was considered by Harvey-Lawson \cite{HL2} related to $p$-geometry/$p$-potential theory.  Moreover,  we have the following general result due to Leonid Gurvits \cite{Gur} (see also \cite{HL} for a proof) that produces a large number of examples of $f$ satisfying the determinant domination condition.
\begin{prop}
Let $p(x)$ be a homogeneous polynomial of degree $N$ on $\bR^n$.  Denote $e=(1,\cdots,1)\in \bR^n$.  Assume that:
\begin{enumerate}
\item All coefficients of $p$ are $\ge 0$,
\item $p(e)>0$,
\item $\frac{\partial p}{\partial x_1}(e)=\frac{\partial p}{\partial x_2}(e)\cdots=\frac{\partial p}{\partial x_n}(e)=k$ for some $k>0$.
\end{enumerate}
Then $p(x)^{\frac{1}{N}}\ge c(x_1\cdots x_n)^{\frac{1}{n}}$ for some $c>0$ on $\{x_1>0,\cdots,x_n>0\}$.
\end{prop}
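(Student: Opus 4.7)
The plan is to use a convexity argument after a logarithmic change of variables. Set $y_i = e^{x_i}$ so that the positive orthant $\{x_1>0,\dots,x_n>0\}$ is identified with $\mathbb{R}^n$, and consider the function
\begin{equation*}
L(x) := \log p(e^{x_1},\dots,e^{x_n}).
\end{equation*}
Because all coefficients of $p$ are non-negative, writing $p(y)=\sum_\alpha c_\alpha y^\alpha$ with $c_\alpha\ge 0$ gives $p(e^x)=\sum_\alpha c_\alpha e^{\alpha\cdot x}$, a non-negative combination of exponentials. The standard log-sum-exp estimate then shows that $L$ is convex on $\mathbb{R}^n$. This is the single analytic ingredient I will use, and the non-negativity hypothesis (1) enters precisely here.

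Next I would compute $\nabla L(0)$. At $x=0$ (i.e.\ $y=e$) one has $\partial_i L(0)=\partial_i p(e)/p(e)=k/p(e)$ for every $i$, by hypothesis (3). To identify the common value, I apply Euler's identity to the homogeneous polynomial $p$ of degree $N$:
\begin{equation*}
\sum_{i=1}^n e_i\,\partial_i p(e) = Np(e), \quad\text{hence}\quad nk = Np(e),
\end{equation*}
so that $\nabla L(0) = \tfrac{N}{n}(1,\dots,1)$. Note this is also visible directly from the homogeneity relation $L(x+t(1,\dots,1))=L(x)+tN$ differentiated at $t=0$.

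Now invoke convexity of $L$: the tangent inequality at the origin reads
\begin{equation*}
L(x) \ge L(0) + \nabla L(0)\cdot x = \log p(e) + \frac{N}{n}\sum_{i=1}^n x_i,
\end{equation*}
for all $x\in\mathbb{R}^n$. Exponentiating and restoring $y_i=e^{x_i}$ gives
\begin{equation*}
p(y) \ge p(e)\,(y_1 y_2 \cdots y_n)^{N/n} \FORA y\in\mathbb{R}^n_{>0},
\end{equation*}
and taking $N$-th roots yields the claim with $c=p(e)^{1/N}>0$ thanks to hypothesis (2).

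There is no real obstacle in this plan; the only place any care is required is establishing convexity of $L$, which uses non-negativity of the coefficients essentially (without it, $p(e^x)$ need not be a sum of exponentials and log-convexity can fail — as it must, since the conclusion itself can fail, e.g.\ for $p(x,y)=x^2$, which also violates hypothesis (3)). Everything else is a transparent application of Euler's identity and the first-order characterization of convexity.
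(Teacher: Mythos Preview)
Your proof is correct and quite clean. However, note that the paper does not actually give its own proof of this proposition: it is stated without proof and attributed to Gurvits \cite{Gur}, with a pointer to Harvey--Lawson \cite{HL} for a proof. So there is no ``paper's proof'' to compare against.

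That said, your argument is essentially the standard one. The key observation --- that for a polynomial with non-negative coefficients, $x\mapsto \log p(e^{x_1},\dots,e^{x_n})$ is convex (log-sum-exp) --- together with Euler's identity to pin down the gradient at the origin, is exactly the route taken in the references. One small notational hiccup: you write ``Set $y_i=e^{x_i}$'' at the very start, which momentarily reverses the roles of old and new variables; it would read more smoothly to introduce $L(x)=\log p(e^{x_1},\dots,e^{x_n})$ first and only afterwards name the substitution $y_i=e^{x_i}$ when you translate back. Also, your closing example $p(x,y)=x^2$ does have non-negative coefficients, so it does not illustrate a failure of hypothesis (1) as your parenthetical suggests; it only violates (3), which is enough for your point but worth stating precisely.
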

However,  the Inverse $\sigma_k$-equation does not satisfy the above determinant domination condition.

For a general Hessian equation without the determinant domination condition,  the apriori estimates will usually require the existence of a subsolution.  Székelyhidi \cite{G2} derived apriori estimates up to $C^{2,\alpha}$ assuming the existence of a $\mathcal{C}$-subsolution.  In order to prove existence of solutions,  the recent work of Guo-Song \cite{GS} shows that one needs a more delicate notion of subsolution.  It is no trivial issue to determine whether such subsolutions exist.
Assuming determinant domination condition alleviates this issue since we will always have 0 as a subsolution.  

Since there has been many works on the solvability in the smooth category,  it is a natural question to find weak solutions.  Most of the previous works have centered around $\sigma_k$-equation using pluripotential theory.  The work by Lu \cite{L} studied the existence and uniqueness of viscosity solutions to $\sigma_k$-equations on bounded domains in $\bC^n$ as well as homogeneous Hermitian manifolds.    We generalize this result and prove:
\begin{thm}\label{t1.1}
Assume that Assumption \ref{a1.1N} holds:
\begin{enumerate}
\item Let $G\in C(M)$,  then there exists a constant $c\in \bR$,  and $\varphi\in C(M)$ that solves the following equation in the viscosity sense:
\begin{equation*}
F(\chi+dd^c\varphi)=e^{G+c},\,\,\,\lambda[\chi+dd^c\varphi]\in \Gamma.
\end{equation*}
\item Let $G(x,u)\in C(M\times \bR)$.  Assume that $G$ is monotone increasing in $u$,  and $f(\lambda[\chi])(x)<e^{G(x,C_0)}$ for some $C_0\in \bR$ and any $x\in M$.  Then there exists $\varphi\in C(M)$ that solves the following equation in the viscosity sense:
\begin{equation*}
F(\chi+dd^c\varphi)=e^{G(x,\varphi)},\,\,\,\lambda[\chi+dd^c\varphi]\in \Gamma.
\end{equation*} 
\end{enumerate}
\end{thm}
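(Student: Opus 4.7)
The plan is to prove both parts by approximation: regularize the right-hand side data, invoke the existing smooth existence theory for Hessian equations on compact Hermitian manifolds, then use the $L^\infty$ and stability estimates coming from the determinant domination condition (Assumption \ref{a1.1N}(3)) to pass to a uniform limit that is a viscosity solution. Concretely, for Part (1), I would approximate $G$ uniformly by $G_\epsilon\in C^\infty(M)$ and appeal to the smooth existence theorem (continuity method combined with the a priori estimates of \cite{GPT} together with Assumption \ref{a1.1N}) to produce $(c_\epsilon,\varphi_\epsilon)$ with $\sup_M\varphi_\epsilon = 0$, $\lambda[\chi+dd^c\varphi_\epsilon]\in\Gamma$, and $F(\chi+dd^c\varphi_\epsilon)=e^{G_\epsilon+c_\epsilon}$. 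The uniform bound $|c_\epsilon|\le C$ follows from the maximum principle at the extrema of $\varphi_\epsilon$: at a maximum point $F(\chi+dd^c\varphi_\epsilon)\le F(\chi)$ by monotonicity of $f$ (and symmetrically at a minimum), and $F(\chi)\ge\delta>0$ by Assumption \ref{a1.1N}(2). The uniform $L^\infty$ bound $\|\varphi_\epsilon\|_\infty\le C$ is the Guo-Phong-Tong estimate, valid by Lemma \ref{gpt assumption}. A uniform modulus of continuity for $\varphi_\epsilon$ is then obtained from a quantitative stability estimate of the same flavour. Arzelà-Ascoli plus the standard stability of viscosity solutions under uniform convergence of the data then yields the desired $(c,\varphi)$.

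For Part (2), the idea is to reduce to the Part (1) setting by a fixed-point argument that exploits the monotonicity of $G$ in $u$ together with the sub-solution condition $f(\lambda[\chi])<e^{G(x,C_0)}$. First I would perturb: for $\eta>0$ set $G_\eta(x,u)=G(x,u)+\eta(u-C_0)$, which is strictly monotone in $u$ and still satisfies $f(\lambda[\chi])<e^{G_\eta(x,C_0)}$. For this strictly monotone perturbation, I define a map $T:K\to C(M)$ on a suitable closed convex set $K\subset C(M)$ by sending $v$ to $\varphi_v$, where $(\varphi_v,c_v)$ comes from Part (1) applied to the continuous datum $G_\eta(x,v(x))$; thanks to the strict monotonicity $\partial_u G_\eta\ge\eta$, one can show that $c_v$ depends monotonically on a shift of $v$, so that a Schauder fixed point on a set determined by the barriers around $C_0$ produces a fixed point $\varphi_\eta$ with $c_{\varphi_\eta}=0$. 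Uniform-in-$\eta$ bounds (sup, inf, and modulus of continuity), coming again from the Guo-Phong-Tong toolkit, then let me take $\eta\to 0$ and recover a solution of the unperturbed equation in the viscosity sense.

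The main obstacle is the uniform modulus of continuity for the approximating families, which is the crucial input needed to upgrade uniform $L^\infty$ control to actual uniform convergence of $\varphi_\epsilon$ or $\varphi_\eta$. In the K\"ahler case one can appeal to pluripotential theory, but in the general Hermitian setting considered here the argument must be PDE-based, adapting the stability estimates of \cite{GPT} together with the regularization technique of \cite{CX}. A secondary delicate point is setting up the fixed-point map in Part (2) so that the strict monotonicity of $G_\eta$ is genuinely used to eliminate the free constant $c_v$; without a careful choice of normalization the problem does not reduce to a solvable instance of Part (1), and the limit $\eta\to 0$ cannot be performed.
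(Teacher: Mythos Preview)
Your outline for Part (1) is correct through the uniform bounds on $c_\epsilon$ and $\|\varphi_\epsilon\|_{L^\infty}$, and matches the paper. The divergence is at the compactness step. You aim for a uniform modulus of continuity and then Arzel\`a--Ascoli, and you correctly flag this as the main obstacle. The paper sidesteps this entirely: since $\Gamma\subset\Gamma_1$, each $\varphi_j$ satisfies $\Delta_{\omega_0}\varphi_j\ge -n$, which together with the $L^\infty$ bound gives a uniform $W^{1,2}$ bound and hence (by Rellich) a subsequence converging in $L^1$. The paper then proves a stability estimate of the form
\[
\|(v-\varphi)_+\|_{L^\infty}\le C\|(v-\varphi)_+\|_{L^1}^a
\]
(Proposition \ref{p3.1}), valid whenever $v$ is $\Gamma$-subharmonic and $\varphi$ is a smooth solution, and applies it with $v=\varphi_{j_l}$, $\varphi=\varphi_{j_k}$ (and vice versa) to upgrade $L^1$ convergence to $L^\infty$ convergence directly. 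No equicontinuity is ever established. This is the key technical input, and it is proved by a Guo--Phong--Tong style auxiliary Monge--Amp\`ere comparison, not by the regularization technique of \cite{CX} (the latter is used only in the uniqueness section).

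Your Part (2) strategy is substantially more complicated than the paper's and, as written, has gaps. The paper does \emph{not} reduce Part (2) to Part (1) via a fixed-point argument. Instead it approximates $G(x,u)$ directly by smooth $G_j(x,u)$ with $\partial_u G_j>0$ that still satisfy $F(\chi)<e^{G_j(x,C_0)}$, and invokes the smooth existence theory (Theorem \ref{t2.1}, part (1)) to solve $F(\chi+dd^c\varphi_j)=e^{G_j(x,\varphi_j)}$ with no free constant. The assumption $F(\chi)<e^{G(x,C_0)}$ gives $\sup_M\varphi_j\le C_0$ at the maximum point, then the $L^\infty$ bound and the same $L^1\to L^\infty$ upgrade finish as in Part (1). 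Your Schauder map $T$ would require continuity and compactness of $v\mapsto\varphi_v$, which runs into the same modulus-of-continuity obstacle you already identified; and the mechanism by which strict monotonicity of $G_\eta$ forces $c_{\varphi_\eta}=0$ at a fixed point is not explained. The paper's direct approximation avoids all of this.
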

\begin{rem}
The present result only applies to the case with strictly positive right hand side,  and we hope to deal with the degenerate case in subsequent works.
\end{rem}
When the right hand side is increasing with respect to $\varphi$, the uniqueness of the solution to K\"ahler-Einstein equation whose right hand side is in $L^p$ is proved in \cite{LPT}. For the uniqueness of solution to general Hessian equations,  we prove:
\begin{thm}
Let $G\in C(M\times \bR)$.  Assume that $G(x,u_1)<G(x,u_2)$ for any $x\in M$ and $u_1<u_2$.  Then there exists at most one viscosity solution to:
\begin{equation*}
F(\chi+dd^c\varphi)=e^{G(x,\varphi)},\,\,\lambda[\chi+dd^c\varphi]\in \Gamma.
\end{equation*}
\end{thm}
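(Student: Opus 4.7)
The plan is to prove a comparison principle: if $u$ is a viscosity subsolution and $v$ a viscosity supersolution of $F(\chi+dd^c\varphi)=e^{G(x,\varphi)}$ with $\lambda[\chi+dd^c\varphi]\in\Gamma$, then $u\le v$ on $M$; uniqueness follows by swapping the roles of two candidate solutions. Suppose for contradiction that $m\defd\max_M(u-v)>0$. Since $u$ and $v$ are merely continuous, a pointwise comparison of Hessians is unavailable, and the standard remedy is Crandall--Ishii's doubling-of-variables trick, adapted to the compact Hermitian manifold $(M,\omega_0)$.

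Choose a smooth nonnegative function $\rho(x,y)^2$ on $M\times M$ that vanishes to exactly second order on the diagonal---for example, a smoothing of the squared Riemannian distance, cut off away from the diagonal. For $\eps>0$ set
\[
M_\eps \defd \max_{(x,y)\in M\times M}\Big(u(x)-v(y)-\tfrac{1}{2\eps}\rho(x,y)^2\Big),
\]
attained at some $(x_\eps,y_\eps)\in M\times M$. Standard arguments give $\rho(x_\eps,y_\eps)\to 0$, $M_\eps\to m$, and, along a subsequence, $x_\eps,y_\eps\to x_0$ with $u(x_0)-v(x_0)=m$. Working in local holomorphic coordinates around $x_0$, Crandall--Ishii's theorem produces complex Hermitian matrices $A_\eps$ in the second-order superjet of $u$ at $x_\eps$ and $B_\eps$ in the second-order subjet of $v$ at $y_\eps$, with $A_\eps\le B_\eps+o(1)$ as $\eps\to 0$. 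By the subsolution property, $\lambda[\chi(x_\eps)+A_\eps]\in\Gamma$ and $F(\chi(x_\eps)+A_\eps)\ge e^{G(x_\eps,u(x_\eps))}$. By the supersolution property, either $\lambda[\chi(y_\eps)+B_\eps]\notin\Gamma$---which is ruled out for small $\eps$ because $\chi(y_\eps)+B_\eps$ dominates $\chi(x_\eps)+A_\eps$ up to $o(1)$ and $\Gamma$ is closed and contains $\Gamma_n$---or $F(\chi(y_\eps)+B_\eps)\le e^{G(y_\eps,v(y_\eps))}$. Ellipticity of $f$ then gives $F(\chi(x_\eps)+A_\eps)\le F(\chi(y_\eps)+B_\eps)+o(1)$, so
\[
e^{G(x_\eps,u(x_\eps))}\;\le\;e^{G(y_\eps,v(y_\eps))}+o(1).
\]
Passing to the limit yields $G(x_0,u(x_0))\le G(x_0,v(x_0))$, contradicting strict monotonicity of $G$ in its second slot since $u(x_0)>v(x_0)$.

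The main obstacle is the clean implementation of Ishii's lemma in the Hermitian manifold setting. The penalty $\rho^2$ is not globally a squared Euclidean distance, so the matrix inequality $A_\eps\le B_\eps$ holds only up to a curvature correction $o(1)$ that must be tracked carefully; moreover, the eigenvalues $\lambda[\chi+dd^c\varphi]$ depend on the base-point metric $g_{i\bar j}$, so the transition from $F(\chi(x_\eps)+A_\eps)$ to $F(\chi(y_\eps)+B_\eps)$ requires controlling $\chi(y_\eps)-\chi(x_\eps)$ and $g(y_\eps)-g(x_\eps)$ simultaneously with the matrix comparison. A cleaner alternative, along the lines of the authors' earlier work \cite{CX}, is to replace $u,v$ by sup- and inf-convolutions $u^\eps,v_\eps$ that are semiconvex/semiconcave, hence twice differentiable almost everywhere (Alexandrov); one then argues pointwise at the maximum of $u^\eps-v_\eps$, provided one verifies that the approximations remain approximate sub- and supersolutions on the compact manifold.
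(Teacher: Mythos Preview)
Your primary outline—doubling of variables plus Crandall--Ishii's theorem—is the standard Euclidean route, but it has a genuine gap on a Hermitian manifold that you only partially acknowledge. The step ``$\lambda[\chi(y_\eps)+B_\eps]\notin\Gamma$ is ruled out because $\chi(y_\eps)+B_\eps$ dominates $\chi(x_\eps)+A_\eps$ up to $o(1)$'' does not go through as stated: the subsolution condition only places $\lambda[\chi(x_\eps)+A_\eps]$ in the \emph{closed} cone $\Gamma$, and subtracting an $o(1)$ error can push you out. One needs some mechanism to force the relevant matrix strictly into $\mathrm{Int}(\Gamma)$ with room to spare, and your sketch provides none. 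Likewise, the matrix inequality from Ishii's lemma on a manifold involves the full Hessian of the penalty $\rho^2$, which has curvature terms of size $O(1/\eps)$ times distance-squared; turning this into a clean $A_\eps\le B_\eps+o(1)$ for the \emph{complex} Hessian, with the base metrics $g(x_\eps)$ and $g(y_\eps)$ also differing, requires real work that is not indicated.

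The paper takes precisely the ``cleaner alternative'' you mention in your last sentence: sup/inf convolutions via the exponential map, following \cite{CX}. But the implementation contains ideas your sketch does not anticipate. First, the paper does not compare $\varphi_1^\eps$ with $(\varphi_2)_\eps$ directly, but rather $\frac{\varphi_1^\eps}{1+a_1}$ with $\frac{(\varphi_2)_\eps}{1-a_2}$; the extra parameters $a_1,a_2>0$, together with the decomposition $\chi=\tilde\chi+c_*\omega_0$ with $\lambda(\tilde\chi)\in\Gamma$, are exactly what force the supersolution side strictly into $\Gamma$ and make the concavity comparison (Proposition~\ref{p4.12}) go through—this is the missing mechanism for your cone-condition step. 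Second, the convolutions are only punctually second order differentiable almost everywhere, so there is no guarantee of differentiability at the maximum point $x_*$; the paper handles this with a Jensen-type perturbation (Lemma~\ref{l4.13}, using \cite{CIL}, Lemma~A.3) to produce nearby points $x_k\to x_*$ where all functions are twice differentiable and a perturbed functional still has a local maximum. Your alternative sketch says ``one then argues pointwise at the maximum of $u^\eps-v_\eps$,'' which elides both of these issues.
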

When the right hand side does not depend on $\varphi$,  the uniqueness is more subtle. For the complex Monge-Amp\'ere equation, the uniqueness is proved in \cite{KN} if the right hand side of the equation is in $L^p$ and has a positive lower bound. For the general Hessian equation,  as the first step,  we prove:
\begin{prop}
Assume that Assumption \ref{a1.1N} holds.
Let $G\in C(M)$,  then there exists a unique $c\in \bR$ such that the following equation is solvable in the viscosity sense:
\begin{equation*}
F(\chi+dd^c\varphi)=e^{G+c},\,\,\lambda[\chi+dd^c\varphi]\in \Gamma.
\end{equation*}
\end{prop}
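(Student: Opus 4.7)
Existence is already provided by Theorem~\ref{t1.1}(1), so only the uniqueness of $c$ requires proof. I would argue by contradiction: assume two values $c_1<c_2$ both admit viscosity solutions $\varphi_1,\varphi_2$, and derive a contradiction from ellipticity. The guiding heuristic is the classical maximum principle. If $\varphi_1,\varphi_2$ were $C^2$, then at any maximum point $\bar{x}$ of $\varphi_2-\varphi_1$ one would have $dd^c\varphi_2(\bar{x})\leq dd^c\varphi_1(\bar{x})$, and hence by ellipticity of $F$,
\begin{equation*}
e^{G(\bar{x})+c_2}=F(\chi+dd^c\varphi_2)(\bar{x})\leq F(\chi+dd^c\varphi_1)(\bar{x})=e^{G(\bar{x})+c_1},
\end{equation*}
contradicting $c_1<c_2$. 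The work is to upgrade this to merely continuous $\varphi_i$.

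The standard upgrade is the Crandall--Ishii--Lions doubling-of-variables argument. Choose a smooth nonnegative function $\rho$ on $M\times M$ vanishing to second order exactly on the diagonal (for instance a smoothly extended squared Riemannian distance, localized near the maximum of $\varphi_2-\varphi_1$), and for $\varepsilon>0$ maximize
\begin{equation*}
\Phi_\varepsilon(x,y)=\varphi_2(x)-\varphi_1(y)-\tfrac{1}{\varepsilon}\rho(x,y)
\end{equation*}
over the compact manifold $M\times M$, obtaining a maximizer $(x_\varepsilon,y_\varepsilon)$. Standard penalization estimates give $(x_\varepsilon,y_\varepsilon)\to(\bar{x},\bar{x})$ with $\bar{x}$ a maximum of $\varphi_2-\varphi_1$, and $\varepsilon^{-1}\rho(x_\varepsilon,y_\varepsilon)\to 0$. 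The Crandall--Ishii--Lions jet lemma, in its Hermitian-manifold form (which is already needed for the comparison principle underlying Theorem~\ref{t1.1}), produces Hermitian $(1,1)$-tensors $X_\varepsilon,Y_\varepsilon$ with $(p_\varepsilon,X_\varepsilon)\in\overline{J}^{2,+}\varphi_2(x_\varepsilon)$, $(q_\varepsilon,Y_\varepsilon)\in\overline{J}^{2,-}\varphi_1(y_\varepsilon)$, and, after parallel transport to a common frame at $\bar{x}$, the matrix inequality $X_\varepsilon\leq Y_\varepsilon+o(1)$ as $\varepsilon\to 0$.

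The viscosity subsolution property of $\varphi_2$ at $x_\varepsilon$ and the supersolution property of $\varphi_1$ at $y_\varepsilon$ then yield
\begin{equation*}
F\bigl(\chi(x_\varepsilon)+X_\varepsilon\bigr)\geq e^{G(x_\varepsilon)+c_2},\qquad F\bigl(\chi(y_\varepsilon)+Y_\varepsilon\bigr)\leq e^{G(y_\varepsilon)+c_1}.
\end{equation*}
Combining these with ellipticity and continuity of $\chi$ and $G$, passage to the limit $\varepsilon\to 0$ gives $e^{G(\bar{x})+c_2}\leq e^{G(\bar{x})+c_1}$, i.e.\ $c_2\leq c_1$, which is the desired contradiction. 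The main technical obstacle is the Hermitian-manifold version of the jet lemma: matching the second-order jets produced by the distance-squared penalization with the complex $(1,1)$-Hessian appearing in $F$, and uniformly controlling the variation of $g_{i\bar{j}}$ and $\chi_{i\bar{j}}$ between $x_\varepsilon$ and $y_\varepsilon$. This machinery is presumably the same one used to prove the existence half of Theorem~\ref{t1.1}; the only new ingredient here is the observation that the strict gap $c_1<c_2$ creates a strict separation between the two right-hand sides, which is precisely what lets the comparison close without any monotonicity in the unknown.
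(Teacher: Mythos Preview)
Your proposal follows the classical Crandall--Ishii--Lions doubling-of-variables route, which is a different strategy from the one the paper adopts. More importantly, the paper does \emph{not} build a comparison principle for the existence part of Theorem~\ref{t1.1}: existence is obtained by smoothing $G$, solving the smooth problem via Sz\'ekelyhidi's estimates, and then using the stability estimate of Proposition~\ref{p3.1} to upgrade $L^1$ convergence of the approximating solutions to uniform convergence. So your assumption that ``this machinery is presumably the same one used to prove the existence half'' is incorrect---no viscosity comparison and no jet lemma appears anywhere in the existence argument, and the Hermitian-manifold jet lemma you invoke is not available to you for free.

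For the uniqueness of $c$ the paper instead argues as in Proposition~\ref{p4.15}: it regularises via the sup/inf convolutions $\varphi^{\eps},\varphi_{\eps}$ of Section~5.1, built with the exponential map, which are semi-convex/concave and hence a.e.\ punctually second-order differentiable. By Proposition~\ref{p4.3} these are approximate viscosity sub/supersolutions with errors $\rho_i(\eps,a_i)\to 0$. One then looks at the maximum of $\frac{(\varphi_1)^{\eps}}{1+a_1}-\frac{(\varphi_2)_{\eps}}{1-a_2}$, perturbs by a small linear-quadratic term (Lemma~\ref{l4.13}, which uses a measure-theoretic lemma of Crandall--Ishii--Lions but \emph{not} the jet lemma) to land on a point of twice-differentiability, and reads off the contradiction pointwise. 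The curvature error that you correctly flag as ``the main technical obstacle'' is handled here by the explicit estimate on $D^2_z(|\xi(z)|_z^2)$ in Lemma~\ref{l4.6} together with the holomorphic correction map $\phi$ of~(\ref{def phi}). Your doubling argument is plausible in outline, and the strict gap $c_1<c_2$ does give exactly the separation needed to close a comparison; but the matching of complex $(1,1)$-Hessians at two nearby points of a general Hermitian manifold is precisely the work you have deferred, and it is not done anywhere else in the paper.
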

We are not able to prove the uniqueness of the solution $\varphi$ and the main obstacle seems to be a lack of understanding of the constant $c$ that makes the equation solvable in the viscosity sense.  Indeed,  for $G\in C(M)$,  we may denote $c(G)$ to be the above said (unique) constant.  It is not very hard to see that $G_1\ge G_2$ implies $c(G_1)\le c(G_2)$.  The question that is of interest to us is whether this monotonicity is strict.  More precisely:
\begin{q}\label{q1.3}
Assume that $G_1\ge G_2$,  $G_1\neq G_2$.  Do we have $c(G_1)<c(G_2)$?
\end{q}
We show that an affirmative answer to Question \ref{q1.3} will lead to the uniqueness of viscosity solutions.  More precisely:
\begin{prop}
Let $G\in C(M)$.  Assume that for any $G'\le G,\,G'\neq G$ one has $c(G')>c(G)$,  then there is at most one solution to the following equation in the viscosity sense:
\begin{equation*}
F(\chi+dd^c\varphi)=e^{G+c},\,\,\lambda[\chi+dd^c\varphi]\in \Gamma,\,\,\sup_M\varphi=0.
\end{equation*}
\end{prop}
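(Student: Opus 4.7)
My plan is to derive a contradiction by exhibiting a continuous function $G'\in C(M)$ with $G'\le G$, $G'\not\equiv G$, together with a viscosity solution of $F(\chi+dd^c\cdot)=e^{G'+c(G)}$; uniqueness of the solvability constant (the preceding Proposition) then forces $c(G')=c(G)$, in direct contradiction with the strict monotonicity hypothesis $c(G')>c(G)$. So suppose $\varphi_1,\varphi_2\in C(M)$ are two distinct viscosity solutions of $F(\chi+dd^c\varphi)=e^{G+c}$ with $\sup_M\varphi_i=0$, where $c:=c(G)$.

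The first step is to produce a viscosity supersolution of $F(\chi+dd^c\cdot)=e^{G+c}$ that fails to be a viscosity solution. Swapping $\varphi_1$ and $\varphi_2$ if necessary, I may assume $\sup_M(\varphi_2-\varphi_1)>0$; pick $\epsilon>0$ with $\epsilon<\sup_M(\varphi_2-\varphi_1)$ and set
\begin{equation*}
\psi:=\min(\varphi_1,\varphi_2-\epsilon).
\end{equation*}
Since the pointwise minimum of two viscosity supersolutions is a viscosity supersolution, $\psi$ is a viscosity supersolution of $F(\chi+dd^c\cdot)=e^{G+c}$. At a maximum point $x_1$ of $\varphi_1$ one has $\varphi_1(x_1)=0>-\epsilon\ge\varphi_2(x_1)-\epsilon$, so $\psi(x_1)=\varphi_2(x_1)-\epsilon<\varphi_1(x_1)$; meanwhile at any point $y_0$ with $\varphi_2(y_0)-\varphi_1(y_0)>\epsilon$ one has $\psi(y_0)=\varphi_1(y_0)<\varphi_2(y_0)-\epsilon$. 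Hence $\varphi_1-(\varphi_2-\epsilon)$ changes sign on $M$, so there is a crossing point $x_0$ where $\varphi_1(x_0)=\varphi_2(x_0)-\epsilon$ and $\psi$ exhibits a downward V-kink. A $C^2$ test function $g\ge\psi$ with $g(x_0)=\psi(x_0)$ is only required to dominate the smaller of the two branches $\varphi_1$ and $\varphi_2-\epsilon$ locally, not both simultaneously, so its Hessian can be chosen with arbitrarily large negative trace, pushing $\lambda[\chi+dd^cg](x_0)$ out of $\bar\Gamma$. This shows $\psi$ is not a viscosity subsolution at $x_0$, and in particular not a viscosity solution.

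The second step is to regularize $\psi$ to a $C^2$ function while preserving the pointwise supersolution inequality. Applying the regularization scheme of \cite{CX} to the compact Hermitian setting, I obtain for each small $\delta>0$ a $C^2$ function $\psi_\delta:M\to\bR$ with $\lambda[\chi+dd^c\psi_\delta]\in\Gamma$, $\psi_\delta\to\psi$ uniformly as $\delta\to 0$, and
\begin{equation*}
F(\chi+dd^c\psi_\delta)(x)\le e^{G(x)+c}\quad\text{for every }x\in M,
\end{equation*}
where the failure of $\psi$ to be a subsolution at $x_0$ propagates to a strict inequality $F(\chi+dd^c\psi_\delta)<e^{G+c}$ on a fixed open neighborhood $U$ of $x_0$. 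Define
\begin{equation*}
G'(x):=\log F(\chi+dd^c\psi_\delta)(x)-c\in C(M).
\end{equation*}
Then $G'\le G$ on $M$ and $G'<G$ on $U$, so $G'\not\equiv G$. By construction $\psi_\delta$ is a classical (hence viscosity) solution of $F(\chi+dd^c\psi_\delta)=e^{G'+c}$, so uniqueness of the solvability constant gives $c(G')=c=c(G)$, contradicting the strict monotonicity hypothesis.

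The main technical obstacle is the second step: on a compact Hermitian manifold, naive mollification in local charts only yields the supersolution inequality up to a $o_\delta(1)$ error coming from the variation of $\chi$, $g_{i\bar j}$, and $G$, and if one absorbs such an error by a small shift one obtains only $c(G')=c+o_\delta(1)>c$, which is consistent with the hypothesis rather than contradicting it. Preserving the pointwise inequality $F\le e^{G+c}$ \emph{exactly} (while simultaneously maintaining $\lambda\in\Gamma$ and the strict inequality near $x_0$) is delicate and requires the manifold-adapted regularization from \cite{CX}, which exploits the concavity of $F$ on Hermitian matrices in a crucial way.
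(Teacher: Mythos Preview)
Your second step has a genuine gap that cannot be repaired within this framework. The function $\psi=\min(\varphi_1,\varphi_2-\epsilon)$ is \emph{not} $\Gamma$-subharmonic: at a crossing point $x_0$ the downward kink contributes a negative Dirac mass to $dd^c\psi$ in the direction transverse to the crossing set (already for $\Gamma=\Gamma_1$, think of $\min(0,\mathrm{Re}\,z_1)$, whose complex Hessian has the singular part $-\tfrac14\delta_{\{\mathrm{Re}\,z_1=0\}}\,idz_1\wedge d\bar z_1$). Now suppose you had $C^2$ functions $\psi_\delta$ with $\lambda[\chi+dd^c\psi_\delta]\in\Gamma$ and $\psi_\delta\to\psi$ uniformly. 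By stability of viscosity subsolutions under uniform limits (together with $\Gamma+\Gamma_n\subset\Gamma$), $\psi$ would then be $\Gamma$-subharmonic in the viscosity sense, contradicting what you yourself established in Step~1. So no such regularization exists. In particular, the scheme of \cite{CX} does not help: it is a sup-convolution that preserves (approximate) $\Gamma$-subharmonicity only when applied to a function that is already $\Gamma$-subharmonic; applied to the semiconcave function $\psi$ it will produce second derivatives of order $-1/\delta$ near the crossing set, forcing $\lambda[\chi+dd^c\psi_\delta]\notin\Gamma$ there. Without $\lambda\in\Gamma$ the quantity $G':=\log F(\chi+dd^c\psi_\delta)-c$ is not defined (or equals $-\infty$) on an open set, so it is not in $C(M)$ and the uniqueness of $c(\cdot)$ cannot be invoked.

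The paper avoids this obstruction by never taking a minimum. It first \emph{chooses} $\tilde G\le G$, $\tilde G\neq G$, with $\tilde G=G$ on a neighborhood $E_{\delta/2}$ of the contact set $E=\{\varphi_1-\varphi_2=\max\}$, and uses the existence theorem to produce a viscosity solution $\eta$ of $F(\chi+dd^c\eta)=e^{\tilde G+\tilde c}$, where $\tilde c=c(\tilde G)>c$ by hypothesis. It then compares $\varphi_2$ with the \emph{convex combination} $(1-r)\varphi_1+r\eta$, which \emph{is} $\Gamma$-subharmonic (convexity of $\Gamma$). For small $r$ the maximum of $(1-r)\varphi_1+r\eta-\varphi_2$ lies in $E_{\delta/2}$, where $\tilde G=G$; the concavity of $f$ at that point yields $e^{G+c}\ge(1-r)e^{G+c}+re^{G+\tilde c}$, i.e.\ $c\ge\tilde c$, a contradiction. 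The rigorous version passes through the sup/inf convolutions of $\varphi_1,\eta,\varphi_2$ and the Jensen--Ishii perturbation to locate points of punctual second-order differentiability, exactly as in the strictly monotone case.
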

As to Question \ref{q1.3},  we observe that the answer is yes if $G_1$ or $G_2$ is smooth:
\begin{prop}
If $G_1$ or $G_2$ is smooth,  then the answer to Question \ref{q1.3} is affirmative.
\end{prop}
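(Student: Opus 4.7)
The plan is to reduce the claim to the case where both $G_1$ and $G_2$ are smooth, and then settle that special case by linearizing along the segment joining the two smooth solutions and invoking the strong maximum principle on the closed manifold.

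For the reduction, assume first that $G_1$ is smooth (the case when $G_2$ is smooth is symmetric). Since $G_1-G_2\geq 0$ is continuous and not identically zero, pick an open ball $B\subset M$ and $\delta>0$ with $G_1-G_2\geq\delta$ on $B$, together with a smooth bump $\alpha\geq 0$ supported in $B$, $0\leq\alpha\leq\delta$, $\alpha\not\equiv 0$. Set $\tilde G:=G_1-\alpha$; then $\tilde G$ is smooth, $G_2\leq\tilde G\leq G_1$, and $\tilde G\not\equiv G_1$. Granting the strict monotonicity for smooth pairs applied to $G_1\ge\tilde G$, we obtain $c(G_1)<c(\tilde G)$, and combined with the already-known weak monotonicity $c(\tilde G)\leq c(G_2)$ this gives $c(G_1)<c(G_2)$. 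If instead $G_2$ is smooth, the analogous choice $\tilde G:=G_2+\alpha$ works in the same way.

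It therefore suffices to show strict monotonicity when both $G\geq G'$, $G\not\equiv G'$ are smooth. Under Assumption~\ref{a1.1N}, the apriori estimates of Guo--Phong--Tong (including the $L^\infty$ bound that exploits determinant domination) together with standard $C^{2,\alpha}$ bootstrapping produce smooth classical solutions $\varphi,\varphi'$ of
\begin{equation*}
F(\chi+dd^c\varphi)=e^{G+c(G)},\qquad F(\chi+dd^c\varphi')=e^{G'+c(G')},
\end{equation*}
and the uniqueness of the solvability constant in Theorem~\ref{t1.1} identifies these smooth constants with $c(G)$ and $c(G')$. Assume toward contradiction that $c(G)=c(G')=:c$. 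Along $A_s:=\chi+dd^c\bigl(s\varphi+(1-s)\varphi'\bigr)$, $s\in[0,1]$, the eigenvalues stay in the convex cone $\Gamma$ (a standard consequence of the concavity of $f$ on $\Gamma$), so $F^{j\bar k}(A_s)$ is smooth and positive-definite. The fundamental theorem of calculus gives
\begin{equation*}
L(\varphi-\varphi'):=\left(\int_0^1 F^{j\bar k}(A_s)\,ds\right)\partial_j\bar\partial_k(\varphi-\varphi')=e^{G'+c}\bigl(e^{G-G'}-1\bigr)\geq 0,
\end{equation*}
with strict inequality on the nonempty open set $\{G>G'\}$. Since $L$ is linear, uniformly elliptic with smooth coefficients and no zeroth-order term, and since $M$ is closed, $\varphi-\varphi'$ attains a maximum, and the Hopf strong maximum principle forces $\varphi-\varphi'$ to be constant; but then $L(\varphi-\varphi')\equiv 0$, contradicting strict positivity on $\{G>G'\}$.

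The main obstacle I expect is the first step of the smooth case: upgrading the viscosity solutions produced by Theorem~\ref{t1.1} to smooth classical ones and matching their solvability constants with $c(G)$ and $c(G')$. This rests on the unified Guo--Phong--Tong apriori estimates together with Evans--Krylov and Schauder theory for fully nonlinear concave elliptic equations, followed by the uniqueness of $c$ from Theorem~\ref{t1.1} to identify the smooth constant with the viscosity one. A secondary, standard technical point is that the set of Hermitian matrices whose eigenvalues lie in $\Gamma$ is convex, which follows from the concavity of $f$ on $\Gamma$; once this is in hand, the strong-maximum-principle step is routine.
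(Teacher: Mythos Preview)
Your reduction step (inserting a smooth intermediate function between $G_1$ and $G_2$) is exactly what the paper does. Your treatment of the smooth case, however, takes a genuinely different route: you linearize along the full segment $A_s$ and apply the strong maximum principle on the closed manifold, whereas the paper linearizes only at one endpoint via the concavity inequality $F(\chi+dd^c\varphi_2)-F(\chi+dd^c\varphi_1)\ge \frac{\partial F}{\partial h_{i\bar j}}(\chi+dd^c\varphi_2)(\varphi_2-\varphi_1)_{i\bar j}$, rewrites this as $\tilde\Omega^{n-1}\wedge dd^c(\varphi_2-\varphi_1)\le (e^{G_2+c}-e^{G_1+c})\omega_0^n/n!$, and then integrates against the Gauduchon factor $e^{(n-1)v}$ of $\tilde\Omega$ (so that $dd^c(e^{(n-1)v}\tilde\Omega^{n-1})=0$) to force $0\le\int_M e^{(n-1)v+c}(e^{G_2}-e^{G_1})\omega_0^n\le 0$. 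Your argument is more elementary in that it avoids the Gauduchon machinery entirely; the paper's argument, on the other hand, ties into the Hermitian integration-by-parts framework used throughout Sections~3 and~4.

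Two minor corrections. First, your concern about upgrading viscosity solutions to smooth ones is unnecessary: the paper's Theorem~\ref{t2.1} already produces smooth classical solutions when the right-hand side is smooth, and the uniqueness of the constant (easy maximum principle in the smooth case) identifies it with $c(G)$. Second, the convexity of the matrix cone $\{A:\lambda(A)\in\Gamma\}$ comes from $\Gamma$ being a convex \emph{symmetric} cone (a classical fact, cf.\ G\aa rding or Caffarelli--Nirenberg--Spruck), not from the concavity of $f$. Also beware that in this paper $F^{i\bar j}$ denotes the \emph{inverse} of $\partial F/\partial h_{a\bar b}$ (see (\ref{2.1NN})), so your linearized coefficients should be written $\frac{\partial F}{\partial h_{j\bar k}}(A_s)$.
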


Next we explain our strategy of proof of the above results.  For the existence proof,  we first approximate the right hand side $G$ with a sequence of smooth right hand side $G_j$.  The apriori estimates developed in Székelyhidi \cite{G2} allows us to solve:
\begin{equation*}
F[\chi+dd^c \varphi_j]=e^{G_j(x)+c_j}\text{ or $e^{G_j(x,\varphi)}$}.
\end{equation*}
In order to get a viscosity solution,  all we need is to prove that $\varphi_j$ converges uniformly (at least up to a subsequence).  Using that $\varphi_j$ are all subharmonic and that $\varphi_j$ are uniformly bounded,  we see that $\varphi_j$ is precompact in $L^1$.  In order to improve the convergence to uniform convergence,  we need the following stability estimate (roughly stated): there exists $a>0$,  such that for any $v\in C^2(M)$,  $\lambda[\chi+dd^cv]\in \Gamma$,
\begin{equation}\label{1.5}
\sup_M(v-\varphi)\le C||(v-\varphi)_+||_{L^1}^a.
\end{equation}
The proof of (\ref{1.5}) is really a variant of the $L^{\infty}$ estimate by Guo-Phong-Tong \cite{GPT}.

For the uniqueness proof,  we need to consider the super/inf convolution,  adapted to manifolds.  We use the super/inf convolution considered in Cheng-Xu\cite{CX},  and show that it gives a semi-convex/concave approximation of the viscosity solutions.

Finally we explain the organization of this paper.  

In Section 2,  we explain some basic notations , definitions and some preliminary results that we need later on.

In Section 3,  we prove the existence of solution with smooth right hand side.

In Section 4,  we prove the stability result,  that allows us to improve the $L^1$ convergence to $L^{\infty}$ convergence,  thereby proving the existence of a viscosity solution.  

In Section 5,  we address the uniqueness issues.

\section{Notations and preliminaries}
In the following,  we denote $d^c=\frac{\sqrt{-1}}{2}(\bar{\partial}-\partial)$,  so that one has $\sqrt{-1}\partial\bar{\partial}=dd^c$.  The advantage of working with $d$ and $d^c$ is that they are real operators.

The central idea of viscosity solution is to use a $C^2$ test function to touch the solution from above and below,  which we define more precisely in the following:
\begin{defn}\label{touch}
Let $\varphi$ be a function defined on $M$ and $x_0\in M$.  Let $\psi$ be another function defined on an open subset of $M$ containing $x_0$.  
\begin{enumerate}
\item We say that $\psi$ touches $\varphi$ from above at $x_0$,  if there exists an open neighborhood $U$ of $x_0$ such that $\psi(x_0)=\varphi(x_0)$ and $\psi\ge\varphi$ on $U$,
\item We say that $\psi$ touches $\varphi$ from below at $x_0$,  if there exists an open neighborhood $U$ of $x_0$ such that $\psi(x_0)=\varphi(x_0)$ and $\psi\le\varphi$ on $U$.
\end{enumerate}
\end{defn}

The notion of viscosity solution we work with is consistent with Definition 1.5 in Crandall,  Ishii and Lions \cite{CMPL}:
\begin{defn}
Let $\varphi\in C(M)$ and $\Gamma$ be a closed convex symmetric cone in $\bR^n$ that contains the first octant.  Let $\tilde{\chi}$ be a real $(1,1)$ form on $M$.  We say that $\lambda[\tilde{\chi}+dd^c\varphi]\in \Gamma$ in the viscosity sense,  if for any $x_0\in M$,  and any $C^2$ function $P$ defined in a neighborhood of $x_0$ that touches $\varphi$ from above at $x_0$,  one has:
\begin{equation*}
\lambda[\tilde{\chi}+dd^cP](x_0)\in \Gamma.
\end{equation*}
We call such a function to be $\Gamma$-subharmonic with respect to $\tilde{\chi}$.
\end{defn}
Let $f(\lambda_1,\cdots,\lambda_n)$ and $\Gamma$ be as described in Section 1,  we will put $F(\chi+dd^c\varphi)=
f(\lambda[\chi+dd^c\varphi])$.  We may interchangably use both notations in the following.
\begin{defn}
\begin{enumerate}
    \item Let $\varphi$ be an upper semicontinuous function, We say that $\varphi$ is a viscosity subsolution to:
    \begin{equation*}
F(\chi+dd^c\varphi)=e^{G(x,\varphi)},\,\,\,\,\lambda[\chi+dd^c\varphi]\in \Gamma,
\end{equation*}
if for any $x_0\in M$ and any $C^2$ function $P$ defined in a neighborhood of $x_0$ that touches $\varphi$ from above at $x_0$,  one has:
\begin{equation*}
F(\chi+dd^cP)(x_0)\ge e^{G(x_0,P(x_0))},\,\,\, \lambda[\chi+ dd^c P](x_0)\in \Gamma.
\end{equation*}
\item Let $\varphi$ be a lower semicontinuous function, we say that $\varphi$ is a viscosity supersolution to 
\begin{equation*}
F(\chi +dd^c\varphi)=e^{G(x,\varphi)},\,\,\,\,\lambda[\chi+dd^c\varphi]\in \Gamma,
\end{equation*}
if for any $x_0\in M$ and any $C^2$ function $P$ defined in a neighborhood of $x_0$ that touches $\varphi$ from above at $x_0$,  one has either
\begin{equation*}
\lambda[\chi+ dd^c P](x_0)\in \Gamma\text{ and }F(\chi+dd^cP)(x_0)\le e^{G(x_0,P(x_0))} .
\end{equation*}
or
\begin{equation*}
    \lambda[\chi+ dd^c P](x_0)\notin \Gamma.
\end{equation*}
\item We say that a continuous function $\varphi$ is a viscosity solution to 
\begin{equation*}
F(\chi +dd^c\varphi)=e^{G(x,\varphi)},\,\,\,\,\lambda[\chi+dd^c\varphi]\in \Gamma,
\end{equation*}
if $\varphi$ is both a viscosity subsolution and a viscosity supersolution.
\end{enumerate}
\end{defn}

We will also need the following theorem of Gauduchon \cite{G}:
\begin{thm}
Let $\omega$ be a Hermitian metric on $M$.  Then there exists a unique function $v\in C^{\infty}(M)$ such that $\inf_Mv=0$,  $dd^c\big(e^{(n-1)v}\omega^{n-1}\big)=0$.
\end{thm}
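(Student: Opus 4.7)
The plan is to linearize the Gauduchon equation by setting $u:=e^{(n-1)v}$, which converts $dd^c(e^{(n-1)v}\omega^{n-1})=0$ into the linear problem $Lu=0$ with $u>0$, where $L:C^\infty(M)\to C^\infty(M)$ is the scalar operator defined by $(Lu)\,\omega^n = n\,dd^c(u\,\omega^{n-1})$. Applying the Leibniz rule gives
$$dd^c(u\,\omega^{n-1}) \;=\; dd^cu\wedge\omega^{n-1}-d^cu\wedge d\omega^{n-1}+du\wedge d^c\omega^{n-1}+u\,dd^c\omega^{n-1},$$
so the principal part of $L$ is $dd^cu\wedge\omega^{n-1}=\tfrac{1}{n}(g^{i\bar j}u_{i\bar j})\,\omega^n$; in particular $L$ is a second-order linear elliptic operator on the closed manifold $M$, with lower-order terms determined by derivatives of $\omega$.

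The decisive observation is the structure of the formal $L^2(\omega^n)$-adjoint $L^*$. Integrating by parts twice on the closed manifold $M$ (using Stokes for $d$, together with the fact that $\int_M d^c\eta=0$ for any real $(2n-1)$-form $\eta$ on a closed complex manifold, which follows from $d=\partial+\bar\partial$ and a bidegree argument), one obtains
$$\int_M w\,(Lu)\,\omega^n \;=\; n\int_M w\,dd^c(u\,\omega^{n-1}) \;=\; n\int_M u\,dd^cw\wedge\omega^{n-1},$$
so $(L^*w)\,\omega^n = n\,dd^cw\wedge\omega^{n-1}$. The crucial feature is that $L^*$ carries \emph{no} first-order or zero-order terms: in local coordinates $L^*w = c_n\,g^{i\bar j}w_{i\bar j}$ for a nonzero constant $c_n$. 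The strong maximum principle on $M$ therefore yields $\ker L^*=\bR\cdot 1$, and Fredholm theory for elliptic operators on closed manifolds (which are of index zero) forces $\dim\ker L=\dim\ker L^*=1$.

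To produce a strictly positive element of $\ker L$, I invoke the Krein--Rutman theorem applied to the non-self-adjoint elliptic operator $L$. For $\lambda>0$ large enough, $(\lambda - L)^{-1}$ is a compact positivity-preserving operator on $C(M)$ (by the maximum principle for the shifted operator $\lambda - L$ and elliptic regularity), so Krein--Rutman provides a simple principal real eigenvalue with a strictly positive smooth eigenfunction $u_1>0$, say $Lu_1=\lambda_1 u_1$ with $\lambda_1\in\bR$. Pairing against the constant function $1$,
$$\lambda_1\int_M u_1\,\omega^n \;=\; \int_M (Lu_1)\,\omega^n \;=\; \int_M u_1\,(L^*1)\,\omega^n \;=\; 0,$$
and since $u_1>0$ one concludes $\lambda_1=0$. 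Thus $u_1\in\ker L$ is strictly positive.

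Setting $v:=\tfrac{1}{n-1}\log u_1$ gives a smooth solution of the Gauduchon equation, and a unique additive constant normalizes $\inf_M v=0$. Uniqueness is then automatic: $\ker L$ is one-dimensional, so any other positive solution $\tilde u=e^{(n-1)\tilde v}$ must be a positive scalar multiple of $u_1$, and the normalization $\inf_M\tilde v=0$ pins down that scalar. The main nontrivial ingredient is the Krein--Rutman step for the non-self-adjoint operator $L$; the ellipticity, the adjoint identity, and the one-dimensionality of $\ker L^*$ are essentially formal consequences of the $dd^c$ calculus on a closed Hermitian manifold.
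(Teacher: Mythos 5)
The paper does not reprove this theorem; it cites Gauduchon \cite{G} (and, for the proof, refers later to Fu--Wang--Wu \cite{FWW}) and uses the statement as a black box. Your argument is a correct, self-contained proof and is essentially the standard linear-PDE route. All of the ingredients check out: the substitution $u=e^{(n-1)v}$ converts the problem to producing a positive element of $\ker L$ for the elliptic operator $(Lu)\,\omega^n=n\,dd^c(u\,\omega^{n-1})$; the two integrations by parts (Stokes together with $\int_M d^c\eta=0$ on a closed manifold) identify the formal $L^2(\omega^n)$-adjoint as $L^*w=g^{i\bar j}w_{i\bar j}$, with no first- or zeroth-order terms, so the strong maximum principle gives $\ker L^*=\bR\cdot 1$; and Krein--Rutman applied to the resolvent $(\lambda-L)^{-1}$ (compact and strongly positive on $C(M)$ once $\lambda$ dominates the generally sign-changing zeroth-order coefficient $n\,dd^c\omega^{n-1}/\omega^n$) produces a positive real eigenpair $(\lambda_1,u_1)$ of $L$, whose eigenvalue must vanish because $\int_M(Lu_1)\,\omega^n=n\int_M dd^c(u_1\omega^{n-1})=0$ by Stokes and $u_1>0$. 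One streamlining worth noting: Krein--Rutman already asserts that the principal eigenvalue of the resolvent is \emph{simple}, so once you know $\lambda_1=0$ you get $\dim\ker L=1$ directly, and the separate Fredholm index-zero step is redundant (though of course not wrong). The uniqueness paragraph then follows exactly as you wrote it.
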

In general,  let $\chi$ be a real $(1,1)$ form on $M$ with $\lambda[\chi]\in \Gamma$.  We choose coordinates on an open subset of $M$.  Define:$
F^{i\bar{j}}=\big(\frac{\partial F}{\partial h_{a\bar{b}}}(\chi)\big)^{-1}_{ji}.$  That is,  $F^{i\bar{j}}\frac{\partial F}{\partial h_{q\bar{j}}}(\chi)=\delta_{iq}$.  We define:
\begin{equation}\label{2.1NN}
\Omega=\sqrt{-1}F^{i\bar{j}}dz_i\wedge d\bar{z}_j.
\end{equation}
Then one can verify that the above defined $\Omega$ is actually independent of the choice of coordinates.  Moreover,  from ellipticity,  one sees that $\Omega>0$,  hence $\Omega$ defines a Hermitian metric.  Another thing we note that the function $\det g_{i\bar{j}}\det(\frac{\partial F}{\partial h_{i\bar{j}}}(\chi))$ is also independent of the choice of coordinates.  Moreover,  for any $u\in C^2(M)$,  the following formula holds:
\begin{equation}\label{2.2NNN}
\frac{\partial F}{\partial h_{i\bar{j}}}(\chi)u_{i\bar{j}}\frac{\omega_0^n}{n!}=\det g_{i\bar{j}}\det(\frac{\partial F}{\partial h_{i\bar{j}}}(\chi))\frac{\Omega^{n-1}}{(n-1)!}\wedge dd^cu.
\end{equation}

We want to mention the following Lemma which is the Lemma 4 in \cite{GPT}:
\begin{lem}\label{gpt assumption}
    Assume that $f: \mathbb{R}^n \rightarrow \mathbb{R}_+$ is a concave and homogeneous function of degree one, which satisfies $\frac{\partial f(\lambda)}{\partial \lambda_j}>0$ for any $\lambda$ in an admissible cone $\Gamma \subset \mathbb{R}^n$. Assume that there is a $\gamma>0$ such that 
    \begin{equation}\label{2.3NNew}
        f(\mu) \ge n \gamma^{\frac{1}{n}} (\Pi_j \mu_j)^{\frac{1}{n}}, \text{ for all } \mu \in \Gamma_n \triangleq \{\lambda \in \mathbb{R}^n: \lambda_1>0,...,\lambda_n>0\}.
    \end{equation}
    Then there exists a constant $\gamma>0$ such that $f$ satisfies the structural condition:
    \begin{equation}\label{e 2.3}
        \Pi_{i=1}^n\frac{\partial f}{\partial \lambda_i}(\lambda)\ge \gamma, \text{ for all }\lambda \in \Gamma.
    \end{equation}
\end{lem}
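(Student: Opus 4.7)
The plan is to dualize the determinantal lower bound on $f$ into a lower bound on $\Pi_j f_j$ by exploiting concavity together with degree-one homogeneity, and then specializing to a cleverly chosen argument $\mu$. Write $f_j := \partial f/\partial \lambda_j$. The two tools I need are Euler's identity $\sum_j f_j(\lambda)\lambda_j = f(\lambda)$, which is a direct consequence of homogeneity, and the supporting hyperplane bound $f(\mu) \le \nabla f(\lambda) \cdot \mu$ valid for any $\mu \in \Gamma$ and any $\lambda \in \Gamma$. The latter follows by combining the standard tangent-plane inequality $f(\mu) \le f(\lambda) + \nabla f(\lambda)\cdot(\mu - \lambda)$ for concave $f$ with Euler's identity, which cancels the $f(\lambda)$ term and leaves this clean form.

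Next, I would fix $\lambda \in \Gamma$ and feed a well-chosen $\mu \in \Gamma_n$ into both inequalities. The natural choice is $\mu_j := 1/f_j(\lambda)$, which lies in $\Gamma_n$ precisely because the hypothesis supplies $f_j(\lambda) > 0$ throughout $\Gamma$. This choice forces the supporting-hyperplane bound to telescope to $f(\mu) \le n$, while the hypothesis (\ref{2.3NNew}) applied at this $\mu$ becomes $f(\mu) \ge n \gamma^{1/n} / (\Pi_j f_j(\lambda))^{1/n}$. Chaining the two and raising to the $n$-th power produces exactly (\ref{e 2.3}) with the same constant $\gamma$.

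The only step that deserves a moment of care is the existence of $\nabla f(\lambda)$ at every $\lambda \in \Gamma$, since a concave function on a closed cone can fail to be differentiable at boundary points; here this is not an issue because the hypothesis already asserts that each partial derivative exists and is strictly positive everywhere on $\Gamma$. Beyond this bookkeeping, the argument is essentially an application of the supporting hyperplane inequality at an optimally chosen test point $\mu$, so I do not anticipate any real obstacle.
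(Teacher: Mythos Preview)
Your proposal is correct and follows essentially the same approach as the paper: both combine the concavity tangent-plane inequality with Euler's identity to obtain $f(\mu) \le \sum_j \mu_j\, f_j(\lambda)$, then play this against the hypothesis (\ref{2.3NNew}). The only cosmetic difference is that the paper takes the infimum of $\sum_j \mu_j f_j(\lambda)$ over $\{\mu \in \Gamma_n : \Pi_j \mu_j = 1\}$ and identifies it as $n(\Pi_j f_j(\lambda))^{1/n}$ via the equality case of AM--GM, whereas you plug in the explicit test point $\mu_j = 1/f_j(\lambda)$ directly; your choice is precisely (a rescaling of) the AM--GM minimizer, so the two arguments are the same in substance.
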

\begin{proof}
    By the concavity of $f$ on $\Gamma$, for any $\lambda, \mu \in \Gamma$ we have
    \begin{equation}\label{e 2.4}
        f(\mu)\le f(\lambda) +\sum_{j=1}^n (-\lambda_j + \mu_j) \frac{\partial f(\lambda)}{\partial \lambda_j} =\sum_{j=1}^n \mu_j \frac{\partial f(\lambda)}{\partial \lambda_j},
    \end{equation}
    where we have used the homogeneity of degree one assumption on $f$, which implies that $\sum_{j}\lambda_j \frac{\partial f(\lambda)}{\partial \lambda_j}=f(\lambda)$. Taking the infimum of the right hand side of $(\ref{e 2.4})$ over all $\mu \in \Gamma_n$ with $\Pi_{j=1}^n \mu_j=1$,  we see that:
\begin{equation*}
\inf_{\Pi_j\mu_j=1,\,\mu\in \Gamma_n}\big(\sum_j\mu_j\frac{\partial f(\lambda)}{\partial \lambda_j}\big)\ge \inf_{\Pi_j\mu_j=1,\,\mu\in \Gamma_n}f(\mu)\ge n\gamma^{\frac{1}{n}}.
\end{equation*}
The last inequality follows from (\ref{2.3NNew}).  On the other hand,  from the equality case of the arithmetic-geometric inequality,  we see that:
\begin{equation*}
\inf_{\Pi_j\mu_j=1,\,\mu\in\Gamma_n}\big(\sum_j\mu_j\frac{\partial f(\lambda)}{\partial \lambda_j}\big)=n\big(\Pi_j\frac{\partial f(\lambda)}{\partial \lambda_j}\big)^{\frac{1}{n}}.
\end{equation*}
Therefore we get:
\begin{equation*}
\Pi_j\frac{\partial f(\lambda)}{\partial \lambda_j}\ge \gamma.
\end{equation*}
\end{proof}
\section{existence of solution with smooth right hand side}
In this section,  our goal is to establish:
\begin{thm}\label{t2.1}
\begin{enumerate}
\item Assume that $G(x,u)$ is smooth,  and $G_{u}(x,u)>0$.  Then there exists a unique solution to
\begin{equation*}
F\big(\chi+dd^c\varphi\big)=e^{G(x,\varphi)}.
\end{equation*}
\item Assume that $G(x)$ is smooth,  then there is a unique $c\in \bR$ and $\varphi\in C^{\infty}(M)$ that solves:
\begin{equation*}
F(\chi+dd^c\varphi)=e^{G(x)+c},\,\,\sup_M\varphi=0.
\end{equation*}
\end{enumerate}
\end{thm}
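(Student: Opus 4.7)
The plan is to prove both parts by the standard continuity method, with an additional $\epsilon$-regularization trick for Part (2) where $G$ is independent of $\varphi$. For Part (1), I would consider the one-parameter family
\begin{equation*}
F(\chi+dd^c\varphi_t)=e^{H_t(x,\varphi_t)},\quad H_t(x,u)=tG(x,u)+(1-t)\bigl(\log F(\chi)(x)+u\bigr),\quad t\in[0,1].
\end{equation*}
At $t=0$, $\varphi_0\equiv 0$ is a solution (using $\lambda[\chi]\in \mathrm{Int}(\Gamma)$), and at $t=1$ we recover the target equation. Openness of the solvability set follows from the implicit function theorem applied to the linearization $L_tv=F^{i\bar j}v_{i\bar j}-e^{H_t}H_{t,u}v$; since $H_{t,u}=tG_u+(1-t)>0$, the zeroth-order coefficient is strictly negative, so $L_t$ has trivial kernel by the maximum principle, and Fredholm theory gives invertibility in H\"older spaces. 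Closedness requires apriori estimates uniform in $t$: a $C^0$ bound by the maximum principle (at $\sup\varphi_t$ one has $F(\chi+dd^c\varphi_t)\le F(\chi)$, forcing $H_t(\cdot,\varphi_t)\le \log F(\chi)$, which bounds $\varphi_t$ above by strict monotonicity of $H_t$ in $u$; similarly from below), $C^2$ bounds from Sz\'ekelyhidi \cite{G2}, $C^{2,\alpha}$ bounds from Evans--Krylov using concavity of $f$, and higher regularity via Schauder bootstrap. Uniqueness in Part (1) follows from concavity: at a maximum of $\varphi_1-\varphi_2$ we have $F^{i\bar j}(\chi+dd^c\varphi_2)(\varphi_1-\varphi_2)_{i\bar j}\ge e^{G(x,\varphi_1)}-e^{G(x,\varphi_2)}$, which together with $G_u>0$ rules out $\varphi_1>\varphi_2$ at that point.

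For Part (2), since $G$ does not depend on $\varphi$, I would apply Part (1) to the perturbed right-hand side $\tilde G_\epsilon(x,u)=G(x)+\epsilon u$ for each $\epsilon>0$, obtaining a unique smooth $\varphi_\epsilon$ with $F(\chi+dd^c\varphi_\epsilon)=e^{G(x)+\epsilon\varphi_\epsilon}$. Set $M_\epsilon=\sup_M\varphi_\epsilon$, $c_\epsilon=\epsilon M_\epsilon$, and $u_\epsilon=\varphi_\epsilon-M_\epsilon$, so that $u_\epsilon$ satisfies $F(\chi+dd^cu_\epsilon)=e^{G(x)+c_\epsilon+\epsilon u_\epsilon}$ with $\sup u_\epsilon=0$. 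The goal is to take $\epsilon\to 0$. Evaluating at a maximum of $u_\epsilon$ (value $0$) gives $c_\epsilon\le \log\max F(\chi)-\min G$. The crucial step is a uniform bound $\|u_\epsilon\|_\infty\le C$ independent of $\epsilon$; this is exactly the Guo--Phong--Tong $L^\infty$ estimate \cite{GPT}, which relies on the determinant domination condition in Assumption \ref{a1.1N}(3) (reproduced in Lemma \ref{gpt assumption}). With this uniform oscillation bound in hand, evaluating at a minimum of $u_\epsilon$ yields a lower bound on $c_\epsilon$. Sz\'ekelyhidi's $C^2$ estimate and Evans--Krylov then apply uniformly in $\epsilon$, and passing to a subsequence produces a smooth pair $(c,\varphi)$ solving the desired equation with $\sup_M\varphi=0$.

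Uniqueness of $c$ in Part (2) is direct: if $(c_i,\varphi_i)$ are two solutions with $c_1>c_2$, then at a maximum of $\varphi_1-\varphi_2$ one has $\chi+dd^c\varphi_1\le \chi+dd^c\varphi_2$ as Hermitian matrices, so monotonicity of $f$ gives $e^{G+c_1}=F(\chi+dd^c\varphi_1)\le F(\chi+dd^c\varphi_2)=e^{G+c_2}$ at that point, contradicting $c_1>c_2$. Given the same $c$, uniqueness of $\varphi$ (with $\sup\varphi=0$) follows from the strong maximum principle: concavity of $F$ yields $F^{i\bar j}(\chi+dd^c\varphi_2)(\varphi_1-\varphi_2)_{i\bar j}\ge 0$, an elliptic operator with no zeroth-order term; since $\varphi_1-\varphi_2$ attains its maximum on the compact connected manifold $M$, Hopf's strong maximum principle forces it to be constant, and the normalization then makes this constant zero.

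The main obstacle is the uniform $L^\infty$ bound on $u_\epsilon$ in Part (2), since all other ingredients (continuity method, Sz\'ekelyhidi's higher-order estimates, Evans--Krylov, standard maximum principle arguments) are by now routine. This bound is precisely where the determinant domination condition is essential, and one expects that the stability estimate to be developed later in the paper (for use in the continuous-right-hand-side case) will in fact strengthen and subsume this $L^\infty$ control.
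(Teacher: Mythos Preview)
Your argument is correct. For Part (1) you run essentially the same continuity path as the paper; your appeal to ``Fredholm index zero plus trivial kernel'' for the openness is a legitimate shortcut, whereas the paper verifies surjectivity of the linearized operator explicitly using a Gauduchon factor (this is where the Hermitian, non-K\"ahler setting shows up, since $v\mapsto F^{i\bar j}v_{i\bar j}$ is not in divergence form).

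For Part (2) your route genuinely differs. The paper runs a second continuity path
\[
F(\chi+dd^c\varphi_t)=e^{(1-t)B_0(x)+tG(x)+c_t},\qquad B_0=\log F(\chi),
\]
treating the pair $(c_t,\varphi_t)\in\bR\times C^{k,\alpha}_0$ as the unknown; openness then requires a separate invertibility argument for the linearization $(\lambda,u)\mapsto F^{-1}F^{i\bar j}u_{i\bar j}-\lambda$, again handled via the Gauduchon factor. Your $\epsilon$-regularization avoids this second openness step entirely by reducing to Part~(1) and passing to a limit, at the price of needing a uniform-in-$\epsilon$ $L^\infty$ bound on $u_\epsilon$ before extracting a subsequence. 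You cite Guo--Phong--Tong for this, but note that Sz\'ekelyhidi's theorem (the paper's Theorem~\ref{Szekelyhidi}) already gives the full $C^{2,\alpha}$ bound once one checks that $\underline u=0$ is a $\mathcal{C}$-subsolution (the paper's Lemma~\ref{l2.3}, which is where the determinant-domination condition enters); so the separate appeal to \cite{GPT} is redundant here. The paper's direct path is slightly cleaner in that $c$ appears naturally rather than as a limit $\epsilon\sup\varphi_\epsilon$; your approach has the advantage of recycling Part~(1) and sidestepping the more delicate linearized invertibility with the extra $\bR$-factor.
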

The above result probably exists somewhere in the literature,  but we were unable to locate the exact reference.  
The uniqueness part (of both $\varphi$ and $c$) is an easy consequence of maximum principle. 

For the existence part,  we are going to use a continuity path.  
The continuity path when $G_u>0$ will be:
\begin{equation}\label{2.1N}
F(\chi+dd^c\varphi)=e^{(1-t)(\varphi+B_0(x))+tG(x,\varphi)},\,\,t\in[0,1].
\end{equation} 
If we denote $G(t,x,u)=(1-t)(u+B_0(x))+tG(x,u)$,  then we see that $G_{u}(t,x,u)$ remains positive.  In the above,  $B_0(x)$ is chosen so as to make sure $\varphi=0$ solves the equation with $t=0$,  namely:
\begin{equation*}
F(\chi)=e^{B_0(x)}.
\end{equation*}
When the right hand side does not depend on $\varphi$,  we are going to use the following continuity path:
\begin{equation}\label{2.2N}
F(\chi+dd^c\varphi)=e^{(1-t)B_0(x)+tG(x)+c_t},\,\,t\in[0,1].
\end{equation}
Note that in the above, there is a unique constant $c_t$ for which the solution could exist,  for each $t\in[0,1]$. Clearly $c_0=0$ and $\varphi=0$ when $t=0$.
It only remains to establish the openness and closedness.
\subsection{Openness of the coutinuity path}
First we consider the openness of (\ref{2.1N}).  We are going to set up the nonlinear mapping as follows:
\begin{equation*}
\begin{split}
\mathcal{F}:&\bR\times C^{k,\alpha}(M)\rightarrow C^{k-2,\alpha}(M)\\
&(t,\varphi)\mapsto \log\big(F(\chi+dd^c\varphi)\big)-((1-t)(\varphi+B_0(x))+tG(x,\varphi)).
\end{split}
\end{equation*}
In the above,  $k$ is sufficiently large.  
By implicit function theorem,  all we need is to verify that $D_{\varphi}\mathcal{F}(t_0,\varphi_{t_0})$ is an invertible map from $C^{k,\alpha}(M)$ to $C^{k-2,\alpha}(M)$,  where $t_0\in [0,1]$ and $\varphi_{t_0}$ is the solution to (\ref{2.1N}) corresponding to $t_0$.  We can compute that:
\begin{equation*}
\begin{split}
D_{\varphi}\mathcal{F}(t_0,\varphi_{t_0}):&C^{k,\alpha}(M)\rightarrow C^{k-2,\alpha}(M),\\
&\psi\mapsto \frac{1}{F(\chi+dd^c\varphi_{t_0})}\frac{\partial F}{\partial h_{i\bar{j}}}(\chi+dd^c\varphi_{t_0})\partial_{i\bar{j}}\psi-((1-t)+tG_u)\psi.
\end{split}
\end{equation*}
Let us denote $L=D_{\varphi}\mathcal{F}(t_0,\varphi_0)$ and we have:
\begin{lem}
The operator $L$ is invertible from $C^{k,\alpha}(M)$ to $C^{k-2,\alpha}(M)$.
\end{lem}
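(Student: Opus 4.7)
The plan is straightforward: combine standard linear elliptic theory on the closed manifold $M$ with a maximum principle argument. I would first verify that $L$ is a uniformly elliptic linear second order operator with smooth coefficients. Indeed, since $\varphi_{t_0}$ is smooth (by bootstrapping, as it solves a nondegenerate elliptic equation with smooth right hand side), we have $\lambda[\chi+dd^c\varphi_{t_0}]\in\mathrm{Int}(\Gamma)$, so $\frac{\partial F}{\partial h_{i\bar{j}}}(\chi+dd^c\varphi_{t_0})$ is a smooth, positive-definite Hermitian matrix at every point of the compact manifold $M$, and so is the rescaled matrix $\frac{1}{F}\frac{\partial F}{\partial h_{i\bar{j}}}(\chi+dd^c\varphi_{t_0})$. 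The zeroth order coefficient of $L$ is $-((1-t_0)+t_0 G_u(x,\varphi_{t_0}(x)))$, which is strictly negative on $M$ since $G_u>0$ by hypothesis and $t_0\in[0,1]$.

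The next step is to invoke the standard linear theory on closed manifolds: as a second order elliptic operator with smooth coefficients, $L:C^{k,\alpha}(M)\to C^{k-2,\alpha}(M)$ is Fredholm of index zero (for instance, by deforming the principal symbol of $L$ through positive definite Hermitian matrices to that of the Laplace--Beltrami operator of $\omega_0$ and using the homotopy invariance of the Fredholm index). Therefore it suffices to show that $\ker L=\{0\}$. For this I would apply the classical maximum principle. Suppose $L\psi=0$ and let $x_0\in M$ be a point at which $\psi$ attains its maximum on $M$. Then the complex Hessian $(\psi_{i\bar{j}})(x_0)$ is negative semi-definite, so $\frac{\partial F}{\partial h_{i\bar{j}}}(\chi+dd^c\varphi_{t_0})\psi_{i\bar{j}}(x_0)\le 0$ by positivity of the coefficient matrix. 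Rearranging the identity $L\psi(x_0)=0$ gives
$$\bigl((1-t_0)+t_0 G_u(x_0,\varphi_{t_0}(x_0))\bigr)\,\psi(x_0)\le 0,$$
and since the factor in parentheses is strictly positive we conclude $\max_M\psi=\psi(x_0)\le 0$. Applying the same reasoning to $-\psi$ at its maximum gives $\min_M\psi\ge 0$, and hence $\psi\equiv 0$.

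Trivial kernel together with Fredholm index zero yields trivial cokernel, so $L$ is bijective between $C^{k,\alpha}(M)$ and $C^{k-2,\alpha}(M)$; elliptic regularity then gives continuity of the inverse. I do not anticipate any serious obstacle: the decisive ingredient is the strict positivity of the zeroth order coefficient $(1-t_0)+t_0 G_u$, which is precisely where the hypothesis $G_u>0$ of part (1) of Theorem \ref{t2.1} enters. This is also why the openness argument for the continuity path (\ref{2.2N}) will need a separate treatment: there the analogous zeroth order term is absent, and one is forced to vary the constant $c_t$ simultaneously with $\varphi_t$ to compensate for the nontrivial kernel of the linearized operator.
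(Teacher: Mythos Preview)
Your argument is correct: the operator $L$ is uniformly elliptic with smooth coefficients and strictly negative zeroth order term, so the maximum principle gives $\ker L=0$, and the general Fredholm theory for scalar elliptic operators on closed manifolds (index zero via homotopy of the principal symbol to the Laplacian) then yields invertibility.

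The paper proves the same lemma by a different, more hands-on route. Injectivity is also obtained from the maximum principle, but for surjectivity the paper avoids invoking Fredholm index theory. Instead it reduces to surjectivity of $L:H^2\to L^2$, multiplies $L$ by the Gauduchon factor $e^{(n-1)v_0}$ of the Hermitian metric $\tilde{\Omega}$ defined in (\ref{2.3NN}), and then proves directly that the modified operator $\tilde{L}=e^{(n-1)v_0}L$ has closed range (via the coercivity estimate $\|\psi\|_{H^2}\le C\|\tilde{L}\psi\|_{L^2}$, obtained by integrating $\psi\tilde{L}\psi$ and using the Gauduchon condition $dd^c(e^{(n-1)v_0}\tilde{\Omega}^{n-1})=0$ to kill the boundary terms) and that $\ker\tilde{L}^*=0$ (again by maximum principle after elliptic regularity). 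Your approach is shorter and perfectly valid, but relies on the black box of elliptic index theory; the paper's approach is self-contained and, more importantly, introduces the Gauduchon factor technique that is reused later (for instance in Lemma~\ref{l3.4}) to handle integration by parts in the non-K\"ahler Hermitian setting.
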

\begin{proof}
Injectivity is quite easy to see.  Indeed,  one can look at the point where $\psi$ achieves positive maximum or negative minimum.  One can take a coordinate chart near that point,  so that locally $Lu=0$ could be written as:
\begin{equation*}
a_{ij}(x)\partial_{ij}\psi+c\psi=0,\,\,c< 0.
\end{equation*}
We see by (strong) maximum principle that $u$ must be a constant and clearly this constant must be zero.

To prove surjectivity,  we only need to show that $L$ is surjective from $H^2$ to $L^2$.  With the right hand side in $C^{k-2,\alpha}(M)$,  one can use the standard elliptic regularity theory and work locally to improve the pre-image to be in $C^{k,\alpha}(M)$.
In order to show the surjectivity of $L$ from $H^2$ to $L^2$,  it will not work well to consider $L$,  but we will need to consider $\tilde{L}=e^{(n-1)v_0}L$,  where $v_0$ is the Gauduchon factor of the following Hermitian metric:
\begin{equation}\label{2.3NN}
\tilde{\Omega}=F^{-\frac{1}{n-1}}(\chi+dd^c\varphi_{t_0})(\det g\det(\frac{\partial F}{\partial h_{i\bar{j}}})(\chi+dd^c\varphi_{t_0}))^{\frac{1}{n-1}}\Omega.
\end{equation}
That is,  $dd^c\big(e^{(n-1)v_0}\tilde{\Omega}^{n-1}\big)=0$.  In the above,  $\Omega$ is defined in (\ref{2.1NN}),  evaluated at $\chi+dd^c\varphi_{t_0}$.  The existence of $v_0$ was first proved by Gauduchon \cite{G} (A modern proof was given by Fu-Wang-Wu in \cite{FWW}).
We will be able to show that $\tilde{L}$ is surjective (so $L$ is surjective) if we can show that:
\begin{enumerate}
\item $Im(\tilde{L})$ is a closed subspace of $L^2$.  For this we just need to show $||\psi||_{H^2}\le C||\tilde{L}\psi||_{L^2}$.
\item $Ker(\tilde{L}^*)=0$,  where $\tilde{L}^*:L^2\rightarrow H^{-2}$ is the adjoint map of $\tilde{L}$ and $H^{-2}$ is the dual space of $H^2$.
\end{enumerate}

To prove (1),  first we observe that $L$ can be re-written as:
\begin{equation*}
L\psi=\frac{1}{F}\frac{\det g\det(\frac{\partial F}{\partial h_{i\bar{j}}})\frac{\Omega^{n-1}}{(n-1)!}\wedge dd^c\psi}{\frac{1}{n!}\omega_0^n}-((1-t)+tG_u)\psi.
\end{equation*}
Therefore
\begin{equation}\label{2.3}
(\tilde{L}\psi)\frac{1}{n!}\omega_0^n=e^{(n-1)v_0}\tilde{\Omega}^{n-1}\wedge dd^c\psi-e^{(n-1)v_0}((1-t)+tG_u)\psi\frac{1}{n!}\omega_0^n.
\end{equation}
Now we multiply both sides by $\psi$ and integrate,  we see that:
\begin{equation*}
\begin{split}
&\int_M\psi\tilde{L}\psi\frac{1}{n!}\omega_0^n=\int_Me^{(n-1)v_0}\tilde{\Omega}^{n-1}\wedge (\psi dd^c\psi)-\int_Me^{(n-1)v_0}((1-t)+tG_{u})\psi^2\frac{\omega_0^n}{n!}\\
&=\int_Me^{(n-1)v_0}\tilde{\Omega}^{n-1}\wedge(dd^c(\frac{\psi^2}{2})-d\psi\wedge d^c\psi)-\int_Me^{(n-1)v_0}((1-t)+tG_{u})\psi^2\frac{\omega_0^n}{n!}\\
&=-\int_Me^{(n-1)v_0}\tilde{\Omega}^{n-1}\wedge d\psi\wedge d^c\psi-\int_Me^{(n-1)v_0}((1-t)+tG_{u})\psi^2\frac{\omega_0^n}{n!}.
\end{split}
\end{equation*}
At this point,  Using the Cauchy-Schwarz inequality,  we see that:
\begin{equation}\label{2.4NN}
||\psi||_{L^2}+||\nabla \psi||_{L^2}\le C||\tilde{L}\psi||_{L^2}.
\end{equation}
To get the estimate for the second derivative,  we go back to (\ref{2.3}) and re-write it as:
\begin{equation*}
\begin{split}
&d\big(e^{(n-1)v_0}\tilde{\Omega}^{n-1}\wedge d^c\psi\big)=(\tilde{L}\psi)\frac{1}{n!}\omega_0^n+d\big(e^{(n-1)v_0}\tilde{\Omega}^{n-1}\big)\wedge d^c\psi\\
&+e^{(n-1)v_0}((1-t)+tG_{u})\psi\frac{\omega_0^n}{n!}.
\end{split}
\end{equation*}
Note that the right hand side is in $L^2$ now.  Writing this equation in local coordinates would be of the form:
\begin{equation*}
\partial_i(a_{ij}\partial_ju)=k,\,\,k\in L^2.
\end{equation*}
The standard estimate (see,  for example,  Evans \cite{Evans},  Section 6.3,  Theorem 1) would give: $||u||_{H^2(B_{\frac{1}{2}})}\le C(||u||_{L^2(B_1)}+||k||_{L^2(B_1)})$.  This estimate combined with (\ref{2.4NN}) gives what we need.

Next we show that $\tilde{L}^*$ is injective.  we can compute that:
\begin{equation*}
(\tilde{L}^*v)=\frac{dd^c\big(ve^{(n-1)v_0}\tilde{\Omega}^{n-1}\big)}{\frac{1}{n!}\omega_0^n}-e^{(n-1)v_0}((1-t)+tG_{u})v,\,\,v\in L^2.
\end{equation*}
If $\tilde{L}^*v=0$,  then this equation written locally takes the form:
\begin{equation*}
\partial_{ij}(a_{ij}v)-hv=0,\,\,h\in C^{\infty}(B_1),a_{ij}\in C^{\infty},\,\,v\in L^2.
\end{equation*}
Using elliptic theory,  we can improve the regularity of $v$ to $C^{\infty}$,  but then $v$ satisfies:
\begin{equation*}
e^{(n-1)v_0}\tilde{\Omega}^{n-1}\wedge dd^cv+2dv\wedge d^c(e^{(n-1)v_0}\tilde{\Omega}^{n-1})-e^{(n-1)v_0}((1-t)+tG_{u})v\frac{\omega_0^n}{n!}=0.
\end{equation*}
In local coordinates,  the above equation reads:
\begin{equation*}
a_{ij}\partial_{ij}v+b_i\partial_iv+cv=0,\,\,c<0.
\end{equation*}
If one looks at this equation in a neighborhood of the point where $v$ achieves positive maximum or negative minimum,  we see from strong maximum principle that $v=0$.
\end{proof}
Now we consider the openness of (\ref{2.2N}).  We set up the nonlinear mapping in a similar way:
\begin{equation*}
\begin{split}
\mathcal{F}&:\bR\times \bR\times C_0^{k,\alpha}(M)\rightarrow C^{k-2,\alpha}(M)\\
&(t,c,\varphi)\mapsto \log\big(F(\chi+dd^c\varphi)\big)-(1-t)B_0(x)-tG(x)-c.
\end{split}
\end{equation*}
In the above:
\begin{equation*}
C_0^{k,\alpha}(M)=\{h\in C^{k,\alpha}(M):\int_Mh\omega_0^n=0\}.
\end{equation*}
Assume that (\ref{2.2}) is solvable with $t=t_0$,  that is,  there exist $(c_{t_0},\varphi_{t_0})$ such that $\mathcal{F}(t_0,c_{t_0},\varphi_{t_0})=0$.  By implicit function theorem,  we just need to verify the linearized map $D_{(c,\varphi)}\mathcal{F}$ defines a bijective map from $\bR\times C_0^{k,\alpha}(M)$ to $C^{k-2,\alpha}(M)$.  One can compute that: 
\begin{equation*}
\begin{split}
D_{(c,\varphi)}\mathcal{F}&:\bR\times C_0^{k,\alpha}(M)\rightarrow C^{k-2,\alpha}(M)\\
&(\lambda,u)\mapsto \frac{1}{F(\chi+dd^c\varphi_{t_0})}\frac{\partial F}{\partial h_{i\bar{j}}}(\chi+\sqrt{-1}\partial\bar{\partial}\varphi_{t_0})\partial_{i\bar{j}}u-\lambda.
\end{split}
\end{equation*}
Denote this linear operator to be $\call$.  We then have:
\begin{lem}
$\call$ is bijective from $\bR\times C_0^{k,\alpha}(M)$ to $C^{k-2,\alpha}(M)$.
\end{lem}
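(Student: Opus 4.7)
My plan is to follow the structure of the preceding lemma, noting the one essential difference: the operator $\call$ has no zeroth-order term in $u$, so the ``reduced'' operator on functions has a one-dimensional kernel (the constants) and a one-dimensional cokernel, each of which is absorbed by the extra $\bR$-factor for $\lambda$ and by the mean-zero normalization on $u$.

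For injectivity, suppose $\call(\lambda,u)=0$. Evaluating the PDE at a point where $u$ attains its maximum forces $\lambda\le 0$, and at a point where $u$ attains its minimum forces $\lambda\ge 0$; hence $\lambda=0$. Then $u$ satisfies a homogeneous linear elliptic equation with no zeroth-order term, and the strong maximum principle forces $u$ to be constant. The normalization $\int_M u\,\omega_0^n=0$ then gives $u\equiv 0$.

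For surjectivity, let $v_0$ be the Gauduchon factor for the Hermitian metric $\tilde{\Omega}$ introduced in (\ref{2.3NN}), so that $dd^c\bigl(e^{(n-1)v_0}\tilde{\Omega}^{n-1}\bigr)=0$. Multiplying $\call(\lambda,u)=g$ by $e^{(n-1)v_0}\tfrac{\omega_0^n}{n!}$ and using (\ref{2.2NNN}) yields
\begin{equation*}
e^{(n-1)v_0}\tilde{\Omega}^{n-1}\wedge dd^c u-\lambda\,e^{(n-1)v_0}\tfrac{\omega_0^n}{n!}=g\,e^{(n-1)v_0}\tfrac{\omega_0^n}{n!}.
\end{equation*}
Integrating over $M$ and applying Stokes' theorem together with the Gauduchon property kills the first term, forcing
\begin{equation*}
\lambda=-\frac{\int_M g\,e^{(n-1)v_0}\omega_0^n}{\int_M e^{(n-1)v_0}\omega_0^n},
\end{equation*}
which determines $\lambda$ uniquely from $g$. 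With this choice of $\lambda$ it remains to solve the pure second-order equation $\frac{1}{F}\frac{\partial F}{\partial h_{i\bar j}}\partial_{i\bar j}u=g+\lambda$ for $u\in C_0^{k,\alpha}(M)$.

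To do this, I would set $\tilde{\call}_0 u\defd e^{(n-1)v_0}\bigl(\frac{1}{F}\frac{\partial F}{\partial h_{i\bar j}}\partial_{i\bar j}u\bigr)$ and repeat the integration-by-parts argument from the previous lemma. Multiplying by $u$ and integrating gives $\int_M e^{(n-1)v_0}\tilde{\Omega}^{n-1}\wedge d u\wedge d^c u=-\int_M u\,\tilde{\call}_0 u\,\tfrac{\omega_0^n}{n!}$; combining Cauchy--Schwarz with the Poincar\'e inequality on mean-zero functions gives $\|u\|_{H^2}\le C\|\tilde{\call}_0u\|_{L^2}$, so the image of $\tilde{\call}_0$ on mean-zero functions is closed in $L^2$. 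The adjoint $\tilde{\call}_0^*$ has principal part $\frac{dd^c(v e^{(n-1)v_0}\tilde{\Omega}^{n-1})}{\omega_0^n/n!}$ with \emph{no} zeroth-order term; elliptic regularity upgrades any $L^2$ solution of $\tilde{\call}_0^* v=0$ to a smooth function satisfying a linear elliptic equation $a_{ij}\partial_{ij}v+b_i\partial_i v=0$, and the strong maximum principle on the compact $M$ shows $v$ is constant. Thus $\ker\tilde{\call}_0^*$ is exactly the constants, the compatibility condition for solving $\tilde{\call}_0 u=e^{(n-1)v_0}(g+\lambda)$ is precisely $\int_M(g+\lambda)e^{(n-1)v_0}\omega_0^n=0$, and that condition is exactly how $\lambda$ was chosen. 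Finally, standard local elliptic regularity bootstraps the $H^2$ solution to $C^{k,\alpha}$, and subtracting its mean value places it in $C_0^{k,\alpha}$.

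The main obstacle is this cokernel analysis: whereas the previous lemma had a strictly negative zeroth-order term that trivialized $\ker\tilde{L}^*$ via maximum principle, here that term is absent, so one must carefully verify that $\ker\tilde{\call}_0^*$ is \emph{exactly} one-dimensional (the constants) and that the single compatibility condition it imposes is the same scalar equation used to define $\lambda$. This matching of dimensions between the free parameter $\lambda\in\bR$, the normalization $\int u\,\omega_0^n=0$, and the Fredholm index zero of $\tilde{\call}_0$ is the heart of the argument.
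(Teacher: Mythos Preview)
Your proof is correct and rests on the same ingredients as the paper (Gauduchon factor for $\tilde{\Omega}$, integration by parts, Poincar\'e on mean-zero functions, closed range plus adjoint kernel), but the packaging differs in two places worth noting. For injectivity you pin down $\lambda=0$ by evaluating at the max and min of $u$, whereas the paper multiplies $\call(\lambda,u)=0$ by $e^{(n-1)v_0}$ and integrates, killing the $dd^c u$ term via the Gauduchon condition and reading off $\lambda=0$ directly; your way is slightly more elementary and avoids invoking the Gauduchon factor until surjectivity. For surjectivity you decouple: first solve for $\lambda$ from the compatibility condition, then reduce to a pure second-order problem $\tilde{\call}_0 u=e^{(n-1)v_0}(g+\lambda)$ whose Fredholm cokernel is the constants. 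The paper instead treats $\tilde{\call}:\bR\times H_0^2\to L^2$ as a single operator and computes its adjoint $\tilde{\call}^*:L^2\to\bR\times H^{-2}$, $g\mapsto\bigl(-\int_M g\,e^{(n-1)v_0}\tfrac{\omega_0^n}{n!},\,dd^c(ge^{(n-1)v_0}\tfrac{\tilde{\Omega}^{n-1}}{(n-1)!})\bigr)$, showing $\ker\tilde{\call}^*=0$ in one stroke; the two components of this adjoint are exactly your ``compatibility condition'' and your ``$\ker\tilde{\call}_0^*=\text{constants}$'' observations, so the content is identical. Your decoupled version makes the index-matching more explicit, while the paper's joint formulation is a bit more streamlined.
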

\begin{proof}
First,  similar as before,  $\call$ may be written as:
\begin{equation*}
\mathcal{L}(\lambda,u)=\frac{1}{F}\frac{\det g\det(\frac{\partial F}{\partial h_{i\bar{j}}})\frac{\Omega^{n-1}}{(n-1)!}\wedge dd^cu}{\frac{1}{n!}\omega_0^n}-\lambda=\frac{\frac{\tilde{\Omega}^{n-1}}{(n-1)!}\wedge dd^cu}{\frac{1}{n!}\omega_0^n}-\lambda.
\end{equation*}
Here $\tilde{\Omega}$ is defined by (\ref{2.3NN}).  Assume that $\mathcal{L}(\lambda,u)=0$,  we need to show that $\lambda=0,\,u=0$.
Let $v_0$ be the Gauduchon factor of $\tilde{\Omega}$,  so that $dd^c(e^{(n-1)v_0}\tilde{\Omega}^{n-1})=0$.  Multiplying by $e^{(n-1)v_0}$ and integrating gives:
\begin{equation*}
0=\int_Me^{(n-1)v_0}\frac{\tilde{\Omega}^{n-1}}{(n-1)!}\wedge dd^cu-\int_M\lambda e^{(n-1)v_0}\frac{\omega_0^n}{n!}=-\lambda\int_Me^{(n-1)v_0}\frac{\omega_0^n}{n!}.
\end{equation*}
This proves $\lambda=0$,  so that one has $\tilde{\Omega}^{n-1}\wedge dd^cu=0$.  Then one can use the strong maximum principle (similar to the argument of part (1)) to show that $u$ is a constant.  But we are requiring its integral to be equal to zero,  so $u=0$.

Next we show that $\mathcal{L}$ is surjective.  Just as before,  it will be sufficient to show that $\mathcal{L}$ is surjective from $\bR\times H_0^2$ to $L^2$.  Here $H_0^2$ denotes the element $u\in H^2$ such that $\int u\omega_0^n=0$.  Also it will be necessary to consider $\tilde{\call}(\lambda,u)=e^{(n-1)v_0}\tilde{\call}$.  We will be able to show $\tilde{\call}$ is surjective (hence $\call$),  if we can show:
\begin{enumerate}
\item $Im(\tilde{\call})$ is a closed subspace of $L^2$.  For this we just need to show that $||u||_{H^2}+|\lambda|\le C||\tilde{\call}(u,\lambda)||_{L^2}$.
\item $Ker(\tilde{\call}^*)=0$,  where $\tilde{\call}^*:L^2\rightarrow H^{-2}\times \bR$ is the adjoint map of $\call$.
\end{enumerate}
To see (1),  first note that one has:
\begin{equation}\label{2.6NN}
\tilde{\call}(\lambda,u)\frac{\omega_0^n}{n!}=e^{(n-1)v_0}\frac{\tilde{\Omega}^{n-1}}{(n-1)!}\wedge dd^cu-\lambda e^{(n-1)v_0}\frac{\omega_0^n}{n!}.
\end{equation}
Integrating both sides,  we get $|\lambda|\le C||\tilde{\call}(\lambda,u)||_{L^1}\le C'||\tilde{\call}(\lambda,u)||_{L^2}$.
Then we multiply (\ref{2.6NN}) by $u$,   and use that $udd^cu=dd^c(\frac{u^2}{2})-du\wedge d^cu$,  we integrate and find that:
\begin{equation*}
\int_Mu\tilde{\mathcal{L}}(\lambda,u)\frac{\omega_0^n}{n!}=-\int_Me^{(n-1)v_0}\tilde{\Omega}^{n-1}(n-1)!\wedge du\wedge d^cu-\int_M\lambda u e^{(n-1)v_0}\frac{\omega_0^n}{n!}.
\end{equation*}
This way we get:
\begin{equation*}
||\nabla u||_{L^2}^2\le \eps||u||_{L^2}^2+C_{\eps}(|\lambda|^2+||\tilde{\call}(\lambda,u)||_{L^2}^2),\,\,\,\forall \eps>0.
\end{equation*}
On the other hand,  since $\int_Mu\omega_0^n=0$,  we may use Poincaré inequality to see that $||\nabla u||_{L^2}\ge c_0||u||_{L^2}$.  This way we obtain that
\begin{equation*}
||u||_{L^2}+||\nabla u||_{L^2}\le C||\tilde{\call}(\lambda,u)||_{L^2}.
\end{equation*}
Then we get the second derivative estimates by writing (\ref{2.6NN}) under local coordinates,  and argue in the same way as part (1).

Next,  we can find the adjoint map is:
\begin{equation*}
\begin{split}
\tilde{\call}^*&:L^2\rightarrow \bR\times H^{-2}\\
&g\mapsto (-\int_Mge^{(n-1)v_0}\frac{\omega_0^n}{n!},dd^c\big(ge^{(n-1)v_0}\frac{\tilde{\Omega}^{n-1}}{(n-1)!}\big)).
\end{split}
\end{equation*}
If $dd^c\big(ge^{(n-1)v_0}\tilde{\Omega}^{n-1}\big)=0$ with $g\in L^2$,  then one can improve the regularity of $g$ and see that $g\in C^{\infty}$.  But then 
\begin{equation*}
0=\int_Mgdd^c(ge^{(n-1)v_0}\tilde{\Omega}^{n-1})=-\int_Me^{(n-1)v_0}dg\wedge d^c g\wedge \tilde{\Omega}^{n-1}.
\end{equation*}
This implies $g$ is a constant.  On the other hand,  from $\int_Mge^{(n-1)v_0}\omega_0^n=0$,  we see that $g=0$.  This proves $\tilde{\call}^*$ is injective.
\end{proof}

\subsection{Closedness of the continuity path}
The required apriori estimates we need can be found in Székelyhidi \cite{G2},  who proved that:  
\begin{thm}\label{Szekelyhidi}
Consider the Hessian equation $f\big(\lambda[\chi+\sqrt{-1}\partial\bar{\partial}u]\big)=h(x)$,  where $f$ and $h$ satisfy the following assumptions:
\begin{enumerate}
\item $\frac{\partial f}{\partial \lambda_i}>0$ and $f$ is convex.
\item $\sup_{\partial \Gamma}f<\inf_Mh$,
\item For any $\sigma<\sup_{\Gamma}f$ and $\lambda\in \Gamma$ we have $\lim_{t\rightarrow \infty}f(t\lambda)>\sigma$.
\end{enumerate}
Suppose $u$ is a (smooth) solution with $\sup_Mu=0$ and $\underline{u}$ is a $\mathcal{C}$-subsolution,  then we have an estimate $||u||_{2,\alpha}\le C$,  where $C$ depends on the given data $M,\,g,\,\chi,\,h$ and the subsolution $\underline{u}$.
\end{thm}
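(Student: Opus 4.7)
The plan is to follow Székelyhidi's strategy, establishing a priori bounds in the order $C^0$, $C^2$, $C^1$ and finally $C^{2,\alpha}$. The last step follows from Evans-Krylov once the lower-order estimates are in hand, so the real work is the first three.

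For the $C^0$ bound I would compare $u$ with the $\mathcal{C}$-subsolution $\underline{u}$ and apply the Alexandroff-Bakelman-Pucci maximum principle to $u-\underline{u}$ at a point where this difference attains its infimum. The structural inputs --- concavity of $f$ together with hypotheses (2) and (3) --- quantify how far the eigenvalues of $\chi+dd^c\underline{u}$ sit inside the admissible cone; combined with the normalization $\sup_M u=0$ this converts the ABP integral estimate into a uniform $L^\infty$ bound for $u$.

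The second-order estimate is the core technical step and I expect it to be the main obstacle. I would apply the maximum principle to a test quantity of the form
\begin{equation*}
H(x)=\log\lambda_{\max}(\chi+dd^c u)+\psi(|\nabla u|^2)+A(\underline{u}-u),
\end{equation*}
where $\lambda_{\max}$ is the largest eigenvalue of the complex Hessian, $\psi$ is a carefully chosen auxiliary function and $A$ is a large constant. Differentiating once and twice at the maximum of $H$, using concavity of $f$ together with the commutator corrections coming from the Hermitian (non-K\"ahler) structure of $\omega_0$, produces a differential inequality. The $\mathcal{C}$-subsolution enters through a linear algebra lemma: at the bad point either a definite fraction of the first derivatives $\partial f/\partial\lambda_i$ are uniformly bounded from below, or one of them dominates the rest, and the subsolution hypothesis rules out the unfavorable second case. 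Balancing the three terms in $H$ then yields $\lambda_{\max}\le C(1+|\nabla u|^2)$.

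The $C^1$ estimate follows by a blow-up argument using the $C^0$ and $C^2$ bounds: if $|\nabla u|$ blew up along a sequence of solutions, rescaling would produce in the limit a nonconstant bounded $\Gamma$-admissible function on $\bC^n$ satisfying an entire Hessian equation with strictly positive right-hand side, contradicting the Liouville-type rigidity available under assumption (3). With $C^0$, $C^1$, $C^2$ bounds in hand, the equation is uniformly elliptic with bounded coefficients; concavity of $f$ lets us invoke the Evans-Krylov theorem for interior $C^{2,\alpha}$, and standard Schauder iteration closes the estimate.
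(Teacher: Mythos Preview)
The paper does not actually prove this theorem: it is quoted verbatim from Sz\'ekelyhidi \cite{G2} and then applied as a black box to the continuity paths (\ref{2.1N}) and (\ref{2.2N}). So there is no ``paper's own proof'' to compare your proposal against. (Note also the evident typo in assumption (1): $f$ is assumed concave throughout the paper, not convex; you correctly used concavity.)

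That said, your outline is a faithful sketch of Sz\'ekelyhidi's actual argument in \cite{G2}: the $C^0$ bound via an ABP-type estimate exploiting the $\mathcal{C}$-subsolution, the $C^2$ estimate from a maximum principle applied to a test function of the form $\log\lambda_1 + \psi(|\nabla u|^2) + \varphi(\underline{u}-u)$ together with the key linear-algebra dichotomy lemma for $\mathcal{C}$-subsolutions, the $C^1$ bound by blow-up and a Liouville theorem, and finally Evans--Krylov. If your intent was to indicate that you understand where the cited estimate comes from, this is accurate; if you intended to supply a proof the paper omitted, be aware that the authors deliberately defer to \cite{G2} here and only verify (in Lemma~\ref{l2.3}) the one hypothesis that needs checking in their setting, namely that $\underline{u}=0$ is a $\mathcal{C}$-subsolution.
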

We are going to apply this result to (\ref{2.2N}).  Note that by evaluating at the minimum and maximum of $\varphi$,  we see that $c_t$ is actually uniformly bounded in $t$.  So we would be able to apply Theorem \ref{Szekelyhidi} if we can show the existence of a $\mathcal{C}$-subsolution.

Indeed,  a $\mathcal{C}$-subsolution simply means a (smooth) function $\underline{u}$,  such that for each $x\in M$,  the following set is bounded:
\begin{equation}\label{2.2}
\{\lambda'\in \Gamma:f(\lambda')=h(x),\,\,\text{ and }\lambda'-\lambda(\chi+\sqrt{-1}\partial\bar{\partial}\underline{u})\in \Gamma_n\}.
\end{equation}
 In our setting,  the situation is simple and we are going to see that $\underline{u}=0$ will be a $\mathcal{C}$-subsolution.  More precisely:
\begin{lem}\label{l2.3}
For any $C_0>0$ and any $x\in M$,  we define the following set: 
\begin{equation*}\{\lambda'\in \Gamma:f(\lambda')\le C_0,\text{ and }\lambda'-\lambda(\chi)(x)\in \Gamma_n\}.
\end{equation*}
This set is bounded and one can estimate the diameter of this set in terms of $f$,  $C_0$,  $\chi$ and the background metric $g$.
\end{lem}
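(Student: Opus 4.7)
The plan is to convert the upper bound $f(\lambda') \le C_0$ into an upper bound on each component of $\mu := \lambda' - \lambda(\chi)(x) \in \Gamma_n$ by shifting $\lambda'$ into the positive octant and combining superadditivity with the determinant domination condition. Since $\lambda(\chi)(x) \in Int(\Gamma)$, I first fix $\delta > 0$ (depending on $x$ and $\chi$) so that $\lambda(\chi)(x) - \delta e \in \Gamma$, where $e = (1,\ldots,1)$. Then I split
\[
\lambda' \;=\; (\mu + \delta e) \;+\; (\lambda(\chi)(x) - \delta e),
\]
where both summands lie in $\Gamma$ and every component of $\mu + \delta e$ is at least $\delta > 0$.

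The next step exploits superadditivity of $f$ on the convex cone $\Gamma$: because $f$ is concave and positively $1$-homogeneous, $\tfrac{1}{2}f(a+b) = f(\tfrac{a+b}{2}) \ge \tfrac{1}{2}(f(a)+f(b))$ for $a,b \in \Gamma$, so $f(a+b) \ge f(a)+f(b)$. Applied to the decomposition above, together with $f \ge 0$ on $\Gamma$, this gives $f(\lambda') \ge f(\mu + \delta e)$. Since $\mu + \delta e \in \Gamma_n$ with all entries at least $\delta$, the determinant domination condition from Assumption \ref{a1.1N}(3) yields
\[
f(\mu + \delta e) \;\ge\; c_0\,\Big(\prod_{i=1}^n (\mu_i + \delta)\Big)^{1/n} \;\ge\; c_0\,\delta^{(n-1)/n}\,(\mu_j + \delta)^{1/n}
\]
for each index $j$. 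Combining with $f(\lambda') \le C_0$ forces every $\mu_j$, and hence $|\lambda'|$, to be bounded by an explicit constant depending only on $C_0$, $c_0$, $\delta$, and $\lambda(\chi)(x)$.

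I do not expect a real obstacle here: the argument is direct bookkeeping using the three ingredients in Assumption \ref{a1.1N} (concavity, $1$-homogeneity, determinant domination). The only mild subtlety is that superadditivity is being applied to a decomposition whose second summand lies in $\Gamma$ but not in $\Gamma_n$, which is fine because the superadditivity inequality holds on all of $\Gamma$. If one later needs the diameter bound to be uniform in $x \in M$ — as is required to invoke Sz\'ekelyhidi's a priori estimates in Theorem \ref{Szekelyhidi} — the choice of $\delta$ can be made independent of $x$ by compactness of $M$ and the continuity of $\lambda(\chi)(\cdot)$.
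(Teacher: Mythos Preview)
Your proof is correct and takes a more elementary, fully quantitative route than the paper's. The paper instead invokes Trudinger's dichotomy (either $\lim_{\lambda_n\to+\infty} f(\lambda_1,\ldots,\lambda_n)=+\infty$ for all $(\lambda_1,\ldots,\lambda_{n-1})\in\Gamma_\infty$, or for none), verifies via determinant domination that the limit is infinite on the positive orthant and hence everywhere, and then uses monotonicity/concavity to deduce $f(\lambda[\chi]+\mu)\to+\infty$ uniformly as $|\mu|\to\infty$ in $\Gamma_n$. Your superadditivity-plus-shift trick bypasses Trudinger entirely and yields the explicit bound $\mu_j+\delta\le (C_0/c_0)^n\delta^{-(n-1)}$, with the uniformity in $x$ coming for free from a compactness choice of $\delta$. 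The paper's route has the advantage of applying to nonhomogeneous $f$ that still satisfy Sz\'ekelyhidi's structural hypotheses; under Assumption~\ref{a1.1N} as stated, though, your argument is the cleaner one.
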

\begin{proof}
Denote $\Gamma_{\infty}=\{(\lambda_1,\cdots,\lambda_{n-1}):\text{$(\lambda_1,\cdots,\lambda_n)\in \Gamma$ for some $\lambda_n$}\}.$ For any $\lambda'=(\lambda_1,\cdots,\lambda_{n-1})\in \Gamma_{\infty}$,  we consider the limit
\begin{equation*}
\lim_{\lambda_n\rightarrow +\infty}f(\lambda_1,\cdots,\lambda_n).
\end{equation*}
Trudinger \cite{T} proved that either this limit is infinite for all $\lambda'\in \Gamma_{\infty}$ or this limit is finite for all $\lambda'\in \Gamma_{\infty}$.  Moreover,  this convergence is uniform on any compact subset of $\Gamma_{\infty}$.  The proof essentially follows from the concavity of $f$.  In our setting,  we will have the above limit is infinite for all $\lambda'\in \Gamma_{\infty}$.  Indeed,  since $f$ satisfies the determinant domination condition,  we see that,  for $\lambda_1>0,\cdots,\lambda_{n-1}>0$,  we have: $f(\lambda_1,\cdots,\lambda_n)\ge c\big(\Pi_{i=1}^n\lambda_i\big)^{\frac{1}{n}}\rightarrow +\infty$ as $\lambda_n\rightarrow +\infty$.

From this,  we see that,  for each $1\le i\le n$,  we have:
\begin{equation*}
\lim_{t\rightarrow +\infty}f\big(\lambda[\chi]+t\mathbf{e}_i\big)=+\infty,
\end{equation*}
where $\mathbf{e}_i$ is the standard basis in $\bR^n$.  Moreover,  since $\lambda[\chi]$ is strictly contained in $Int(\Gamma)$ as $x$ varies over $M$,  the above convergence is uniform on $M$.  Using the concavity of $f$ again,  we see that:
\begin{equation*}
\lim_{\mu\in \Gamma_n,\,\mu\rightarrow \infty}f\big(\lambda[\chi]+\mu\big)=+\infty,
\end{equation*}
and this convergence is uniform on $M$.
\end{proof}
Therefore,  we may use Theorem \ref{Szekelyhidi} to conclude that $||\varphi||_{2,\alpha}$ is uniformly bounded in (\ref{2.2N}).  From the standard elliptic theory,  we see that $\varphi$ is uniformly bounded in any higher order norm.

Now let us consider the continuity path (\ref{2.1N}).  First we show that $\varphi$ is bounded from above,  uniform for $t\in [0,1]$.  Indeed,  we fix any $t\in[0,1]$,  and assume that $\varphi$ achieves maximum at $x_0$,  then at this point,  we have
\begin{equation}\label{2.8}
F(\chi)(x_0)\ge F(\chi+dd^c\varphi)(x_0)=e^{(1-t)(\varphi+B_0(x_0))+tG(x_0,\varphi)}.
\end{equation}
Our assumption on $G$ was that there exists $C_0>0$ large enough,  such that for all $\varphi\ge C_0$,  one has
\begin{equation*}
F(\chi)(x)<e^{G(x,\varphi)},\,\,\varphi\ge C_0.
\end{equation*}
Therefore,  there exists $C_0'$,  such that for any $t\in [0,1],\,\varphi\ge C_0'$,  one has:
\begin{equation}\label{2.9}
F(\chi)(x)<e^{(1-t)(\varphi+B_0(x))+tG(x,\varphi)}.
\end{equation}
(\ref{2.8}) and (\ref{2.9}) combined shows that $\varphi(x_0)\le C_0'$,  hence $\sup_M\varphi\le C_0',\,\,t\in[0,1]$.

Once we have a bound of $\sup_M\varphi$,  we see that the right hand side of (\ref{2.1N}) is uniformly bounded.  Using Lemma \ref{l2.3},  we see that the following set is uniformly bounded in $x\in M$ and $t\in [0,1]$:\begin{equation*}
\{\lambda'\in \Gamma:\,\,f(\lambda')=e^{(1-t)(\varphi+B_0(x))+tG(x,\varphi)}\text{ and $\lambda'-\lambda[\chi]\in \Gamma_n$}\}
\end{equation*}
 Therefore,  Theorem \ref{Szekelyhidi} carries over in this case (indeed,  the monotone increasing dependence on $\varphi$ actually helps with the estimates) and gives $||\varphi||_{2,\alpha}$ is uniformly bounded.  From standard elliptic estimates,  we see that $\varphi$ is uniformly bounded in any higher order norm.

\section{stability estimate and existence of viscosity solutions}
The key stability result we need is the following:
\begin{prop}\label{p3.1}
Let $v$ be a bounded smooth $\Gamma$-subharmonic function,  and let $\varphi$ be a smooth solution to (\ref{1.1N}),  (\ref{1.3N}) with $h=e^{G_0(x)}$ and $\sup_M\varphi=0$.  Then for any $p_0>1$,  and any $a<\frac{p_0-1}{np_0+p_0-1}$ we have
\begin{equation*}
||(v-\varphi)_+||_{L^{\infty}}\le C||(v-\varphi)_+||_{L^1}^{a}.
\end{equation*}
Here $C$ depends only on $||v||_{L^{\infty}}$,  $||e^{nG_0}||_{L^{p_0}}$,  the background metric and the choice of $a$.
\end{prop}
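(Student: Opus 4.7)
Following the paper's remark that this is a variant of the Guo-Phong-Tong $L^\infty$ estimate, I would adapt their auxiliary complex Monge-Ampère argument to this comparison setting. Write $w = v - \varphi$ and set $s = \sup_M w$; we may assume $s > 0$. For a threshold $s_0 \in (0, s)$ to be optimized later, consider the superlevel set $\Omega_{s_0} = \{w > s_0\}$, whose volume obeys $|\Omega_{s_0}| \le s_0^{-1} \|w_+\|_{L^1}$ by Chebyshev. The plan is then to solve an auxiliary complex Monge-Ampère equation
\begin{equation*}
(\omega_0 + dd^c u)^n = c \tau \omega_0^n, \qquad \sup_M u = 0, \qquad \omega_0 + dd^c u > 0,
\end{equation*}
where $\tau \ge 0$ is a mollification of $\mathbf{1}_{\Omega_{s_0}}$ and $c$ is the Tosatti-Weinkove normalizing constant for the Hermitian Monge-Ampère equation. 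Kolodziej-type bounds provide $\|u\|_{L^\infty} \le C \|\tau\|_{L^{p_0}}^{1/n}$, and this $L^\infty$ estimate is the only place the $L^{p_0}$ norm of $e^{nG_0}$ will be used.

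With $\Lambda = 2(\|u\|_{L^\infty} + 1)$ and a small parameter $\varepsilon > 0$ to be chosen, I would examine the test function
\begin{equation*}
\Phi = -\varepsilon(\Lambda - u)^{\frac{n}{n+1}} + w - s_0.
\end{equation*}
On $M \setminus \Omega_{s_0}$ we have $w - s_0 \le 0$ and $\Phi$ is strictly negative, so any point $x_0$ where $\Phi$ attains a positive maximum must lie in $\Omega_{s_0}$. At such $x_0$, $dd^c \Phi(x_0) \le 0$, which combined with the concavity inequality
\begin{equation*}
F^{i\bar j}(\chi + dd^c \varphi) w_{i\bar j} \ge F(\chi + dd^c v) - F(\chi + dd^c \varphi) \ge -e^{G_0(x_0)},
\end{equation*}
and the formula (\ref{2.2NNN}) expressing the linearized operator via the metric $\tilde{\Omega}$ from (\ref{2.3NN}), produces a pointwise matrix inequality of complex Monge-Ampère type. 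Diagonalizing and applying arithmetic-geometric mean together with the determinant domination bound $\prod_i \partial f/\partial \lambda_i \ge \gamma$ from Lemma \ref{gpt assumption} (which gives a lower bound on $\det(\frac{\partial F}{\partial h_{i\bar j}})$ at $\chi + dd^c\varphi$) converts this into a pointwise inequality roughly of the form
\begin{equation*}
\varepsilon^{n}(\Lambda - u(x_0))^{-n} \le C e^{nG_0(x_0)} c\, \tau(x_0) + \text{lower order}.
\end{equation*}

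Evaluating $\Phi$ at $x_0$ against its value at a maximizer of $w$ gives $s - s_0 \le C\varepsilon \Lambda^{n/(n+1)-1}(-u(x_0))$, and plugging in the pointwise bound (after integrating over $\Omega_{s_0}$ and using Hölder with the $L^{p_0}$ control on $e^{nG_0}$) produces
\begin{equation*}
s - s_0 \le C \varepsilon^{-\beta} s_0^{-\gamma} \|w_+\|_{L^1}^{\delta}
\end{equation*}
with explicit exponents $\beta, \gamma, \delta$ determined by $n$ and $p_0$. Optimizing $\varepsilon$ and $s_0$ by a Young-inequality calculation yields the advertised exponent $a$, which approaches $(p_0-1)/(np_0 + p_0 - 1)$ from below. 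The main obstacle I anticipate is the careful bookkeeping in the Hermitian (non-Kähler) setting: the linearized operator has to be rewritten via $\tilde{\Omega}$ and a Gauduchon factor in order to integrate by parts cleanly, and the auxiliary Monge-Ampère solution $u$ only exists with a normalizing constant (rather than solving a Dirichlet problem on $\Omega_{s_0}$ as in the GPT domain argument). The rest — smoothing $\mathbf{1}_{\Omega_{s_0}}$, handling the cutoff in $(\cdot)_+$, and tracking the exponents through the Young inequality — is routine.
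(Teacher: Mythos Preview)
Your proposal has the right overall architecture (auxiliary Monge--Amp\`ere plus a test function of the form $\eps(\cdots)^{n/(n+1)}$ and a maximum principle), but there is a genuine gap at the heart of the pointwise argument, and your choice of right-hand side for the auxiliary equation is not the one that makes the computation close.

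\medskip

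\textbf{The missing $(1-\delta)$-trick.} At the maximum of $\Phi$, applying the linearized operator $L=F^{i\bar j}(\chi+dd^c\vphi)\partial_{i\bar j}$ to the auxiliary piece $(\Lambda-u)^{n/(n+1)}$ produces a term $-\frac{n}{n+1}(\Lambda-u)^{-1/(n+1)}F^{i\bar j}g_{i\bar j}$ with the wrong sign, and $F^{i\bar j}g_{i\bar j}=\sum_i \partial f/\partial\lambda_i$ has no upper bound in terms of $f$ (take e.g.\ $f=(\lambda_1\lambda_2)^{1/2}$ with one eigenvalue small). Nothing in your test function absorbs this. The paper's device is to replace $v$ by $(1-\delta)v$: then expanding $\eps_0(1-\delta)Lv-\eps_0 L\vphi$ leaves an extra $\eps_0\delta F^{j\bar k}\chi_{j\bar k}$, and writing $\chi=c_*\omega_0+\hat\chi$ with $\lambda[\hat\chi]\in\Gamma$ turns this into $\eps_0\delta c_* F^{j\bar k}g_{j\bar k}$ plus a nonnegative term. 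Choosing $\Lambda$ so that $\eps_0\delta c_*\ge \frac{n}{n+1}\Lambda^{-1/(n+1)}$ then kills the bad term. Without this $\delta$, the maximum-principle step cannot close; this is why the paper carries $\delta$ through all of Lemmas~\ref{l3.5}--\ref{l3.12} and only removes it at the very end by optimizing $\delta$ against $\|(v-\vphi)_+\|_{L^1}$.

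\medskip

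\textbf{The right-hand side of the auxiliary equation.} You solve $(\omega_0+dd^cu)^n=c\tau\,\omega_0^n$ with $\tau\approx \mathbf{1}_{\Omega_{s_0}}$. But at the maximum point the AM--GM step then gives $F^{i\bar j}(g+u)_{i\bar j}\ge n(\gamma c\tau)^{1/n}$, a bare constant on $\Omega_{s_0}$, whereas the negative term coming from $Lw\ge -e^{G_0}$ is $-e^{G_0(x_0)}$, which can be arbitrarily large pointwise. The paper instead takes the auxiliary right-hand side to be a smoothed version of $((1-\delta)v-\vphi-s)_+\,e^{nG_0}$ (normalized by its $L^\kappa$ norm $A_{\delta,s,\kappa}$). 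Then AM--GM yields $F^{i\bar j}(g+\psi)_{i\bar j}\ge c\,A^{-1/n}((1-\delta)v-\vphi-s)_+^{1/n}e^{G_0}$, and the factor $e^{G_0}$ exactly matches the $-\eps_0 f=-\eps_0 e^{G_0}$ term, giving a scale-free comparison. This matching is the mechanism by which only the $L^{p_0}$ norm of $e^{nG_0}$, rather than its sup, enters the final constant; your statement that the $L^{p_0}$ norm is used only in a Ko\l odziej bound for $u$ is not how the argument actually runs.

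\medskip

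Finally, the paper does not stop at a single pointwise inequality and optimize: the output of Lemma~\ref{l3.5} is a Moser--Trudinger-type bound which, via the $\alpha$-invariant, becomes an exponential integrability estimate (Corollary~\ref{c3.6}); this is fed into a De~Giorgi iteration (Lemmas~\ref{l3.8}--\ref{l3.10}) on the level-set function $u(s)=\int_{\Omega_{\delta,s}}e^{nG_0}$ to reach $\sup_M((1-\delta)v-\vphi)\le s_0+C\operatorname{vol}(\Omega_{\delta,s_0})^\nu$. The exponent $a<\frac{p_0-1}{np_0+p_0-1}$ falls out only after the further optimization of $s_0$ and $\delta$ in Lemmas~\ref{l3.11}--\ref{l3.12}. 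Your sketch skips this iteration entirely.
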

\begin{rem}
The above estimate uses the $L^{p},\,p>n$ norm of the right hand side $e^{G_0}$.  This is more than enough for us at the moment,  but it may be needed in the future.
\end{rem}
Let us postpone the proof for the moment,  and we first use this proposition to prove existence.  

First we look at the case when $G$ depends only on $x$.
We can take a sequence of smooth $G_j(x)$ that converges to $G(x)$ uniformly.  By Theorem \ref{t2.1},  there exists $\varphi_j\in C^{\infty}(M)$ which is strictly $\Gamma$-subharmonic,  that solves:
\begin{equation}\label{3.1}
F(\chi+dd^c \varphi_j)=e^{c_j+G_j(x)},\,\,\sup_M\varphi_j=0.
\end{equation}
Our first goal will be to show that $\varphi_j$ is uniformly bounded.  For this we first need to estimate the constants $c_j$.  Evaluating at the maximum and minimum point of $\varphi_j(x)$ respectively,  we find that:
\begin{equation*}
|c_j|\le \max|G_j|+\max |\log F(\chi)|.
\end{equation*}
Since $G_j$ approximates $G$ uniformly,  we see that $\max|G_j|$,  hence $c_j$ is uniformly bounded.  Next,  the uniform $C^0$ bound of $\varphi_j$ follows from Székelyhidi's work \cite{G2}. 

Now we look at the situation when $G(x,u)$ is monotone increasing in $u$.  We can approximate $G(x,u)$ uniformly by a sequence of $G_j(x,u)$,  such that:
\begin{enumerate}
\item Each $G_j(x,u)$ is $C^{\infty}$ smooth in $x$ and $u$,
\item $\frac{\partial G_j}{\partial u}(x,u)>0$,
\item There exists $C_0>0$,  such that for all $j$,  $x\in M$ and $u>C_0$,  one has $F(\chi)(x)<e^{G_j(x,u)}$.
\end{enumerate}
 We can use Theorem \ref{t2.1} to conclude that one can solve:
\begin{equation}\label{3.2}
F(\chi+dd^c\varphi_j)=e^{G_j(x,\varphi_j)}.
\end{equation}
Evaluating at the maximum point of $\varphi_j$,  from point (3) above we see that 
\begin{equation*}
\sup_M\varphi_j\le C_0.
\end{equation*}
Once we have the uniform upper bound of $\varphi_j$,  we see that the right hand side $e^{G_j(x,\varphi_j)}$ is uniformly bounded from above,  from which we can deduce a uniform lower bound for $\varphi_j$ following Székelyhidi \cite{G2} or Guo-Phong-Tong \cite{GPT}.

 Next we show that there is uniform $L^2$ bound for $\nabla\varphi$:
\begin{lem}
There exists a constant $C$,  depending only on the $C^0$ bound of $G$,  and also the Hermitian metric $\omega_0$,  such that
\begin{equation*}
\int_M d\varphi_j\wedge d^c\varphi_j\wedge \omega_0^{n-1}\le C.
\end{equation*}
\end{lem}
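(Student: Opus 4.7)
The plan is to perform integration by parts on the closed manifold $M$, using the cone condition $\Gamma\subset\{\sum_i\lambda_i>0\}$ to control a Laplacian-type term, and absorbing the error coming from $d\omega_0^{n-1}\ne 0$ via Cauchy--Schwarz. Since $\varphi_j$ is uniformly bounded in $L^\infty$ by the preceding discussion, and since $E_j:=\int_M d\varphi_j\wedge d^c\varphi_j\wedge\omega_0^{n-1}$ is translation-invariant, I may assume $\sup_M\varphi_j=0$, so $-C\le\varphi_j\le 0$. The cone condition applied to $\lambda[\chi+dd^c\varphi_j]$ gives $\text{tr}_{\omega_0}(\chi+dd^c\varphi_j)\ge 0$, hence the pointwise bound $\Delta_{\omega_0}\varphi_j\ge -A$ with $A=\|\text{tr}_{\omega_0}\chi\|_{L^\infty}$.

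Applying Stokes' theorem to the exact form $d(\varphi_j\, d^c\varphi_j\wedge\omega_0^{n-1})$ yields
\begin{equation*}
E_j=-\int_M\varphi_j\,dd^c\varphi_j\wedge\omega_0^{n-1}+\int_M\varphi_j\,d^c\varphi_j\wedge d\omega_0^{n-1}.
\end{equation*}
For the first term on the right I use $n\,dd^c\varphi_j\wedge\omega_0^{n-1}=\Delta_{\omega_0}\varphi_j\,\omega_0^n$ and split $\Delta_{\omega_0}\varphi_j=(A+\Delta_{\omega_0}\varphi_j)-A$, with the first summand nonnegative. Combined with $-\varphi_j\ge 0$ and $|\varphi_j|\le C$, this gives
\begin{equation*}
-\int_M\varphi_j\,dd^c\varphi_j\wedge\omega_0^{n-1}\le\frac{C}{n}\int_M(A+\Delta_{\omega_0}\varphi_j)\,\omega_0^n;
\end{equation*}
a further integration by parts produces $\int_M\Delta_{\omega_0}\varphi_j\,\omega_0^n=n\int_M d^c\varphi_j\wedge d\omega_0^{n-1}$, which is bounded by $C'\|\nabla\varphi_j\|_{L^1}\le C''\sqrt{E_j}$ using Cauchy--Schwarz and that $\omega_0^{n-1}$ is a smooth form. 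The second integral in the identity for $E_j$ is controlled directly by $C\|\varphi_j\|_{L^\infty}\|\nabla\varphi_j\|_{L^1}\le C'''\sqrt{E_j}$ since $d\omega_0^{n-1}$ is smooth. Putting the two estimates together yields $E_j\le C_1+C_2\sqrt{E_j}$, from which $E_j\le C_3$ follows.

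The main obstacle is the non-K\"ahlerness of $\omega_0$: because $d\omega_0^{n-1}\ne 0$, naive integration by parts picks up extra first-order terms that would otherwise close instantly in the K\"ahler case. The uniform $L^\infty$ bound on $\varphi_j$ (from the $\mathcal{C}$-subsolution estimates of Sz\'ekelyhidi) together with the trace bound coming from $\Gamma\subset\{\sum_i\lambda_i>0\}$ are exactly what allow these terms to be absorbed through Cauchy--Schwarz.
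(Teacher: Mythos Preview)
Your proof is correct and follows the same overall strategy as the paper: use the cone inclusion $\Gamma\subset\Gamma_1$ to get a lower bound on $\Delta_{\omega_0}\varphi_j$, integrate by parts, and exploit the uniform $L^\infty$ bound on $\varphi_j$. The difference lies in how the torsion term coming from $d\omega_0^{n-1}\neq 0$ is handled. You estimate $\int_M \varphi_j\,d^c\varphi_j\wedge d\omega_0^{n-1}$ and $\int_M d^c\varphi_j\wedge d\omega_0^{n-1}$ by $C\sqrt{E_j}$ via Cauchy--Schwarz and then solve the quadratic inequality $E_j\le C_1+C_2\sqrt{E_j}$. The paper instead multiplies the differential inequality by $\varphi_j+C_1\ge 0$ and observes that $(\varphi_j+C_1)\,d^c\varphi_j=d^c\big(\tfrac{(\varphi_j+C_1)^2}{2}\big)$, so a second integration by parts converts the torsion term directly into $\int_M \tfrac{(\varphi_j+C_1)^2}{2}\,dd^c\omega_0^{n-1}$, which is bounded outright by the $L^\infty$ norm of $\varphi_j$ with no gradient appearing. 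The paper's route is slightly cleaner (no absorbing step), while yours is the standard robust maneuver and would generalize more readily to settings where the exact-derivative trick is unavailable.
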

\begin{proof}
We use the fact that each $\varphi_j$ is strictly $\Gamma$-subharmonic,  and that $\Gamma\subset \Gamma_1:=\{\lambda\in \bR^n:\sum_{i=1}^n\lambda_i\ge 0\}$.
This would imply that:
\begin{equation*}
\Delta_{\omega_0}\varphi_j\ge -n,\,\,\,\text{ where $\Delta_{\omega_0}\varphi_j=g^{p\bar{q}}\partial_{p\bar{q}}\varphi_j$}.
\end{equation*}
This is equivalent to:
\begin{equation*}
dd^c\varphi_j\wedge \omega_0^{n-1}\ge -\omega_0^n.
\end{equation*}
Let $C_1>0$ be large enough such that $\varphi_j+C_1\ge 0$.  Then you multiply this to both sides above:
\begin{equation*}
-\int_Md\varphi_j\wedge d^c\varphi_j\wedge \omega_0^{n-1}+\int_M(\varphi_j+C_1)d^c\varphi_j\wedge d\omega_0^{n-1}\ge -\int_M(\varphi_j+C_1)\omega_0^n.
\end{equation*}
For the middle term,  one has
\begin{equation*}
\begin{split}
&\int_M(\varphi_j+C_1)d^c\varphi_j\wedge d\omega_0^{n-1}=\int_Md^c\big(\frac{(\varphi_j+C_1)^2}{2}\big)\wedge d\omega_0^{n-1}=-\int_Md\big(\frac{(\varphi_j+C_1)^2}{2}\big)\wedge d^c\omega_0^{n-1}\\
&=\int_M\frac{(\varphi_j+C_1)^2}{2}dd^c\omega_0^{n-1}.
\end{split}
\end{equation*}
Therefore,  we see that the integral $\int_Md\varphi_j\wedge d^c\varphi_j\wedge \omega_0^{n-1}$ is uniformly bounded.
\end{proof}
Hence we are in a position to apply the Rellich compact embedding theorem to conclude that there is a subsequence $\varphi_{j_k}$ that converges in $L^1$.

Before we proceed with the proof,  first we explain how to use this to obtain existence:
\begin{cor}
Theorem \ref{t1.1} holds.
\end{cor}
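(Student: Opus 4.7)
The plan is to deduce Theorem \ref{t1.1} from Proposition \ref{p3.1} by upgrading the $L^1$ precompactness of the approximating sequence $\{\varphi_j\}$ (constructed just above the corollary) to uniform convergence, and then passing to the viscosity limit. The two parts of Theorem \ref{t1.1} are handled by the same mechanism.

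For part (1), where $G=G(x)$, we already have $\sup_M\varphi_j=0$, uniform bounds on $\|\varphi_j\|_{L^\infty}$ and $|c_j|$, and $L^1$-precompactness. Extract a subsequence (still written $j_k$) with $c_{j_k}\to c_*$ and $\varphi_{j_k}\to \varphi_*$ in $L^1$. The core step is to apply Proposition \ref{p3.1} to the pair $v=\varphi_{j_k}$, $\varphi=\varphi_{j_l}$: the hypotheses are met because each $\varphi_{j_l}$ solves (\ref{3.1}) with $\sup=0$ and right-hand side $G_0=G_{j_l}+c_{j_l}$, which is uniformly bounded in $L^\infty$ (hence in $L^{p_0}$ for any $p_0>n$). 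This yields $\|(\varphi_{j_k}-\varphi_{j_l})_+\|_{L^\infty}\le C\|(\varphi_{j_k}-\varphi_{j_l})_+\|_{L^1}^{a}$, and swapping $k\leftrightarrow l$ gives the analogous bound for the negative part. Thus $\{\varphi_{j_k}\}$ is Cauchy in $L^\infty$ and converges uniformly to a continuous $\varphi_*$.

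For part (2), where $G=G(x,u)$ is monotone increasing, the argument is the same after a shift. Put $M_l=\sup_M\varphi_{j_l}$; then $\tilde\varphi_{j_l}=\varphi_{j_l}-M_l$ has $\sup=0$ and solves $F(\chi+dd^c\tilde\varphi_{j_l})=e^{\tilde G_l(x)}$ with the fixed smooth datum $\tilde G_l(x)=G_{j_l}(x,\varphi_{j_l}(x))$, which is uniformly bounded thanks to the $L^\infty$ bounds on $\varphi_j$ and on $G_j$ over bounded $u$-intervals. Apply Proposition \ref{p3.1} with $v=\varphi_{j_k}-M_l$ and $\varphi=\tilde\varphi_{j_l}$; the shifts cancel inside $(\varphi_{j_k}-\varphi_{j_l})_+$, and the same Cauchy-in-$L^\infty$ argument gives uniform convergence to some $\varphi_*\in C(M)$.

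It remains to verify that the uniform limit $\varphi_*$ is a viscosity solution of the corresponding equation. This is the standard perturbation argument. To check the subsolution property, let $P\in C^2$ touch $\varphi_*$ from above at $x_0$; replace $P$ by $P_\varepsilon=P+\varepsilon d(\cdot,x_0)^2$ so the touching is strict. Uniform convergence then supplies points $x_k\to x_0$ at which $P_\varepsilon-\varphi_{j_k}$ attains a local minimum; shifting $P_\varepsilon$ by a constant $a_k\to 0$, one obtains a $C^2$ test function that touches the smooth $\varphi_{j_k}$ from above at $x_k$. Consequently $\chi+dd^cP_\varepsilon(x_k)\ge \chi+dd^c\varphi_{j_k}(x_k)$ as Hermitian matrices, so its eigenvalues lie in $\Gamma+\Gamma_n\subset\Gamma$, and by monotonicity of $F$ we get $F(\chi+dd^cP_\varepsilon)(x_k)\ge F(\chi+dd^c\varphi_{j_k})(x_k)$, which equals $e^{G_{j_k}(x_k)+c_{j_k}}$ or $e^{G_{j_k}(x_k,\varphi_{j_k}(x_k))}$ depending on the case. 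Send $k\to\infty$ using the continuity of $G$, $F$ and the closedness of $\Gamma$, then send $\varepsilon\to 0$. The supersolution check is symmetric. The anticipated main obstacle is the stability-based upgrade from $L^1$ to $L^\infty$ convergence, which is exactly where Proposition \ref{p3.1} does the heavy lifting; the viscosity passage itself is routine once uniform convergence is secured.
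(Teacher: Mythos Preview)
Your proof is correct and follows essentially the same approach as the paper: extract an $L^1$-convergent subsequence, upgrade to uniform convergence by applying Proposition \ref{p3.1} in both directions to pairs $(\varphi_{j_k},\varphi_{j_l})$ (after the $\sup$-normalizing shift in part (2)), and then pass to the limit in the viscosity sense. Your write-up is actually more detailed than the paper's, which simply asserts that the uniform limit ``solve[s] the limit equation in the viscosity sense'' without spelling out the strict-touching perturbation argument you supply.
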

\begin{proof}
First we prove part (1).  We have shown that one can find smooth solutions to 
\begin{equation*}
F(\chi+dd^c\varphi_j)=e^{G_j+c_j},\,\,\lambda[\chi+dd^c\varphi_j]\in \Gamma,\,\,\sup_M\varphi_j=0.
\end{equation*}

Now we can use Proposition \ref{p3.1},  and take $v=\varphi_{j_l}$,  $\varphi=\varphi_{j_k}$ to get:
\begin{equation*}
||(\varphi_{j_l}-\varphi_{j_k})_+||_{L^{\infty}}\le C||(\varphi_{j_l}-\varphi_{j_k})_+||_{L^1}^{a}.
\end{equation*}
Switching the choice of $v$ and $\varphi$ gives:
\begin{equation*}
||\varphi_{j_k}-\varphi_{j_l}||_{L^{\infty}}\le C||\varphi_{j_k}-\varphi_{j_l}||_{L^1}^{a}.
\end{equation*}
That is,  the subsequence $\varphi_{j_k}$ actually converges uniformly.  Therefore,  it is easy to see that their uniform limit will solve the limit equation in the viscosity sense.

Now we look at part (2).  We have shown that one can find smooth solutions to
\begin{equation*}
F(\chi+dd^c\varphi_j)=e^{G_j(x,\varphi_j)},\,\,\lambda[\chi+dd^c\varphi_j]\in \Gamma.
\end{equation*}
Moreover,  we have seen that $\varphi_j$ has uniform $C^0$ bound.  Denote $\tilde{c}_j=\sup_M\varphi_j$ and define $\tilde{\varphi}_j=\varphi_j-\tilde{c}_j$,  so that 
$\tilde{\varphi}_j$ solves:
\begin{equation*}
F(\chi+dd^c\tilde{\varphi}_j)=e^{G_j(x,\tilde{c}_j+\tilde{\varphi}_j)}.
\end{equation*}
We have seen that one can take a subsequence such that $\varphi_{j_k}$ converges in $L^1$ and $c_{j_k}$ converges.  So that $\tilde{\varphi}_{j_k}$ converges in $L^1$.  On the other hand,  using Proposition \ref{p3.1} will allow us to conclude that $\tilde{\varphi}_{j_k}$ converges uniformly,  which in turn implies that $\varphi_{j_k}$ converges uniformly and we can conclude that the limit function will be the viscosity solutions. 
\end{proof}

The rest of the section is devoted to the proof of Proposition \ref{p3.1}.  First we need a more refined estimate on the constant $c$ that allows one to solve $f(\lambda[\chi +dd^c\varphi])=e^{G+c}$. This kind of estimate first appeared in \cite{KNFSWW} for the complex Monge-Ampère equation. We have:
\begin{lem}\label{l3.4}
Let $G(x)$ be a smooth function on $M$.  Let $\varphi$ be the smooth solution to:
\begin{equation*}
f(\lambda[\chi+dd^c \varphi])=e^{G+c},
\end{equation*}
for some constant $c\in \bR$. Then
\begin{enumerate}
\item $c$ can be estimated from above in terms of the structural condition on $f$,  the background manifold and metric $(M,\omega_0)$,  and an upper bound on $||e^G||_{L^{2n}(\omega_0)}$.
\item $c$ can be estimated from below in terms of the function $f$,  the background metric $\omega_0$,  and a lower bound on $\int_Me^G\omega_0^n$.
\end{enumerate}
\end{lem}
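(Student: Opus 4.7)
The plan is to establish two integration-by-parts identities, each exploiting a different structural feature of $f$, then combine them with Hölder/Jensen inequalities to match the two claimed dependencies. Throughout, I would first normalize $\sup_M \varphi = 0$ since the equation is gauge-invariant under adding constants to $\varphi$, which only rescales the side conditions without altering $c$.

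For part (1), the approach is via the determinant domination condition. Since $f(\lambda)\ge c_0(\prod_i \lambda_i)^{1/n}$ on $\Gamma_n$, the equation translates to the pointwise matrix inequality $(\chi+dd^c\varphi)^n \le c_0^{-n} e^{n(G+c)}\omega_0^n$ as positive $(n,n)$-forms wherever the solution's eigenvalues lie in $\Gamma_n$. Integrating over $M$ and then applying Hölder to pass from $\|e^G\|_{L^n}$ to $\|e^G\|_{L^{2n}}$ yields
\begin{equation*}
\int_M (\chi+dd^c\varphi)^n \le c_0^{-n}\,e^{nc}\,\mathrm{Vol}(M)^{1/2}\,\|e^G\|_{L^{2n}}^{\,n}.
\end{equation*}
The crucial step is then to bound $\int_M(\chi+dd^c\varphi)^n$ uniformly from below by a positive constant depending only on $\chi$ and $\omega_0$. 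Since on a non-Kähler Hermitian manifold this quantity is not a cohomological invariant, I would expand $(\chi+dd^c\varphi)^n = \sum_{k=0}^{n}\binom{n}{k}\chi^{n-k}\wedge (dd^c\varphi)^k$ and use the Gauduchon factor $u_0$ of $\omega_0$ together with Stokes' theorem iteratively to reduce each mixed term to $\int_M \chi^n$ plus torsion-driven remainders controlled by structural data, in the spirit of \cite{KNFSWW}. Combining then gives the upper bound on $c$ paired with the $L^{2n}$ norm.

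For part (2), the approach is the concavity-linearization method at the fixed form $\chi$. Concavity and $1$-homogeneity of $f$ give the matrix inequality $F(\chi+dd^c\varphi)\le P^{i\bar j}(\chi+dd^c\varphi)_{i\bar j}$, where $P^{i\bar j} = \partial F/\partial h_{i\bar j}(\chi)$ depends only on $\chi$. Let $\Omega$ be the Hermitian metric from (\ref{2.1NN}) evaluated at $\chi$, set $\hat\Omega = (\det g\cdot \det P)^{1/(n-1)}\Omega$, and let $w_0$ be the Gauduchon factor with $dd^c(e^{(n-1)w_0}\hat\Omega^{n-1})=0$. Multiplying the inequality by $e^{(n-1)w_0}$ and using formula (\ref{2.2NNN}) at $\chi$ to rewrite the $\varphi_{i\bar j}$ term as $e^{(n-1)w_0}\hat\Omega^{n-1}\wedge dd^c\varphi/(n-1)!$, the IBP annihilates this piece. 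One arrives at
\begin{equation*}
e^c\int_M e^{G+(n-1)w_0}\,\omega_0^n\le \int_M e^{(n-1)w_0}P^{i\bar j}\chi_{i\bar j}\,\omega_0^n =: C_0(\chi,\omega_0,f).
\end{equation*}
Since $w_0$ depends only on $\chi$, the weight $e^{(n-1)w_0}$ is uniformly bounded between positive constants, so this converts into the desired bound on $c$ in terms of $\int_M e^G\omega_0^n$.

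The hardest step will be the uniform lower bound on $\int_M(\chi+dd^c\varphi)^n$ in part (1), which on a general Hermitian manifold requires a delicate Gauduchon-type argument to absorb the torsion terms that arise when $\omega_0$ is not closed; this is where the parallel with complex Monge-Ampère results on Hermitian manifolds such as \cite{KNFSWW} is essential.
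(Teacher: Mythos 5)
Your argument for the second estimate is essentially the paper's: you linearize via concavity and $1$-homogeneity of $f$ at the fixed form $\chi$, rewrite the second-order term as $e^{(n-1)w_0}\tilde\Omega^{n-1}\wedge dd^c\varphi$ using the identity~(\ref{2.2NNN}), and kill it with the Gauduchon factor of $\tilde\Omega$. That matches the paper's computation leading from~(\ref{3.3}) to the final bound, so that part is fine.

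Your argument for the first estimate, however, takes a genuinely different route from the paper's, and it has a real gap. The paper obtains the bound \emph{locally}: it works near the minimum point of $\varphi$ in a coordinate ball, applies the Alexandrov--Bakelman--Pucci maximum principle, and uses determinant domination only on the contact set $\mathcal C_+$, where positivity of the relevant Hessians is automatic. This sidesteps the global integral $\int_M(\chi+dd^c\varphi)^n$ entirely. Your proposal instead integrates the pointwise inequality $(\chi+dd^c\varphi)^n\le c_0^{-n}e^{n(G+c)}\omega_0^n$ over $M$, but there are two obstructions. First, as you note, the determinant domination $f(\lambda)\ge c_0(\prod_i\lambda_i)^{1/n}$ is asserted only on $\Gamma_n$; the viscosity solution's eigenvalues lie in $\Gamma$, which is strictly larger. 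On the complement set, $\prod_i\lambda_i$ need not be nonpositive (e.g.\ $\lambda=(-1,-1,3)\in\Gamma_1\setminus\Gamma_3$ has positive product), so the pointwise inequality can fail and the integral inequality does not follow. Second, and more seriously, the step you flag as hardest --- a uniform positive lower bound on $\int_M(\chi+dd^c\varphi)^n$ --- is not available in this generality. Expanding in powers of $dd^c\varphi$ and applying Stokes leaves terms of the form $\int_M\varphi\,dd^c(\chi^{n-k})\wedge(dd^c\varphi)^{k-1}$; these carry $(k-1)$-fold wedge powers of $dd^c\varphi$ for which no a~priori bound is known at this stage of the argument, and a Gauduchon factor of $\omega_0$ does not annihilate $dd^c(\chi^{n-k})$ when $\chi$ is a general (non-closed) real $(1,1)$-form. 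The setting in \cite{KNFSWW} that you invoke is the Hermitian Monge--Amp\`ere equation, where $\chi=\omega_0$ is itself the metric; the torsion bookkeeping there does not transfer to the present generality. The local ABP argument is precisely the device the paper uses to avoid both of these difficulties.

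A minor remark on bookkeeping: your sentence "Combining then gives the upper bound on $c$" has the direction of the inequality backwards. From $\alpha_0\le\int_M(\chi+dd^c\varphi)^n\le c_0^{-n}e^{nc}\,\mathrm{Vol}(M)^{1/2}\|e^G\|_{L^{2n}}^n$ one deduces a \emph{lower} bound on $e^{nc}$ (hence on $c$) in terms of an \emph{upper} bound on $\|e^G\|_{L^{2n}}$; similarly your part (2) produces an \emph{upper} bound on $c$. This is in fact consistent with what the paper's proof establishes (its own prose at the end of each half of the argument says "lower bound" and "upper bound" respectively), but it is the opposite of the way the two items are labelled in the lemma statement, so it is worth being explicit about which direction each argument yields.
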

\begin{proof}
First we prove the upper bound.  Let $x_0$ be the minimum point of $\varphi$ on $x_0$.  Then we can find $r_0>0$ small enough,  and a coordinate system on $B_{2r_0}(x_0)$,  such that $x_0$ corresponds to $z=0$,  and that
\begin{equation*}
g_{i\bar{j}}\ge c_0(|z|^2)_{ij}=c_0\delta_{ij}\text{ on $B_{2r_0}(x_0)$ and under this coordinate}.
\end{equation*}
We wish to apply the Alexandrov maximum principle to $\varphi+c_0|z|^2$ on $B_{r_0}(x_0)$ as follows:
\begin{equation*}
\inf_{B_{r_0}(x_0)}\big(c_0|z|^2+\varphi\big)\ge \inf_{\partial B_{r_0}(x_0)}\big(c_0|z|^2+\varphi\big)-C_nr_0\bigg(\int_{B_{r_0}(x_0)\cap \mathcal{C}_+}\det D^2\big(c_0|z|^2+\varphi\big)\bigg)^{\frac{1}{2n}}.
\end{equation*}
In the above,  $\mathcal{C}_+$ is the subset of $B_{2r_0}(x_0)$ such that $D^2(c_0|z|^2+\varphi)\ge 0$.

We can estimate further from the above.  On the set $\mathcal{C}_+$:
\begin{equation*}
\begin{split}
&\det D^2\big(c_0|z|^2+\varphi)\le 2^{2n}\big(\det(c_0|z|^2+\varphi)_{i\bar{j}}\big)^2\le 2^{2n}\big(\det(g_{i\bar{j}}+\varphi_{i\bar{j}})\big)^2\\
&\le C\big(f(\lambda[\chi+dd^c\varphi])\big)^{2n}=Ce^{2nG+2nc}.
\end{split}
\end{equation*}
In the second inequality above,  we used that $g_{i\bar{j}}\ge (c_0|z|^2)_{i\bar{j}}$ on $B_{2r_0}(x_0)$.  In the third inequality above,  we used the determinant domination condition (item (4) of Assumption \ref{a1.1N}).

On the other hand,  since $x_0$ is the global minimum point of $\varphi$,  we get
\begin{equation*}
\inf_{B_{r_0}(x_0)}(c_0|z|^2+\varphi)-\inf_{\partial B_{r_0}(x_0)}(c_0|z|^2+\varphi)\le -c_0r_0^2.
\end{equation*}
Therefore we get that:
\begin{equation*}
-c_0r_0^2\ge -C_nr_0\bigg(\int_{B_{r_0(x_0)}}Ce^{2nG+2nc}\bigg)^{\frac{1}{2n}}.
\end{equation*}
This gives a lower bound of $c$ with the said dependence.

Next we estimate the $c$ from above.  From concavity of $f$,  we see that:
\begin{equation}\label{3.3}
e^{G+c}=f(\lambda[\chi+dd^c\varphi])\le f(\lambda[\chi])+\frac{\partial F}{h_{i\bar{j}}}(\chi)\varphi_{i\bar{j}}.
\end{equation}
In the above,  one can use (\ref{2.2NNN}) to see that:
\begin{equation*}
\frac{\partial F}{\partial h_{i\bar{j}}}(\chi)\varphi_{i\bar{j}}\frac{\omega_0^n}{n!}=\det g_{i\bar{j}}\det(\frac{\partial F}{\partial h_{i\bar{j}}}(\chi))\Omega^{n-1}\wedge dd^c\varphi.
\end{equation*}
Denote $\tilde{\Omega}=\big(\det g_{i\bar{j}}\det(\frac{\partial F}{\partial h_{i\bar{j}}}(\chi))\big)^{\frac{1}{n-1}}\Omega$ and let $v_0$ be the Gauduchon factor of $\tilde{\Omega}$,  namely $dd^c(e^{(n-1)v_0}\tilde{\Omega}^{n-1})=0$.  Hence from (\ref{3.3}),  we see that:
\begin{equation*}
\begin{split}
\int_Me^{G+(n-1)v_0}e^c\frac{\omega_0^n}{n!}&\le \int_Me^{(n-1)v_0}f(\lambda[\chi])\frac{\omega_0^n}{n!}+\int_Me^{(n-1)v_0}\frac{\tilde{\Omega}^{n-1}}{(n-1)!}\wedge dd^c\varphi\\
&=\int_Me^{(n-1)v_0}f(\lambda[\chi])\frac{\omega_0^n}{n!}.
\end{split}
\end{equation*}
This gives an upper bound of $c$ with the said dependence.
\end{proof}

Now let $0<\delta<1$,  let $h_j:\bR\rightarrow \bR_{>0}$ be a sequence of smooth functions such that $h_j\ge \max(0,x)$,  $h_j\rightarrow \max(0,x)$ as $j\rightarrow \infty$.  Let $s>0$, $\kappa>1$ to be determined,  we put:
\begin{equation}\label{3.4NNNN}
A_{\delta,s,\kappa,j}=\bigg(\int_M h_j\big((1-\delta)v-\varphi-s\big)^{\kappa}e^{n\kappa G_0}\omega_0^n\bigg)^{\frac{1}{\kappa}}.
\end{equation}
Here $G_0$ is the right hand side appearing in Proposition \ref{p3.1}.  
Let $\psi_{\delta,s,j,\kappa}$ be the solution to the following problem:
\begin{equation}
\big(\omega_0+\sqrt{-1}\partial\bar{\partial}\psi_{\delta,s,j,\kappa}\big)^n=\frac{h_j\big((1-\delta)v-\varphi-s\big)}{A_{\delta,s,j,\kappa}}e^{nG_0+b_{\delta,s,j,\kappa}},\,\,\sup_M\psi_{\delta,s,j,\kappa}=0.
\end{equation}
Note that the function on the right hand side $\frac{h_j\big((1-\delta)v-\varphi-s\big)}{A_{\delta,s,j,\kappa}}e^{nG_0}$ is uniformly bounded in $L^{\kappa}$,  we may
apply Lemma 5.9 of \cite{KN2} to get:
\begin{equation*}
b_{\delta,s,j,\kappa}\ge -C,
\end{equation*}
We can also get the same result from Lemma \ref{l3.4} if $\kappa\ge 2$.
Here $C$ depends only on the background manifold and metric.  We have the following Moser-Trudinger type inequality:
\begin{lem}\label{l3.5}
There exists $c_1>0$ small enough,  $C_2>0$ large enough,  both dependent only on the structural constant of $f$,  the background metric $\omega_0$ and the choice of $\kappa>1$,  such that on the set $\{(1-\delta)v-\varphi-s>0\}$,  one has
\begin{equation}\label{3.6NN}
c_1A_{\delta,s,j,\kappa}^{-\frac{1}{n}}\big((1-\delta)v-\varphi-s\big)^{\frac{n+1}{n}}\le -\psi_{\delta,s,j,\kappa}+C_2A_{\delta,s,j,\kappa}\delta^{-(n+1)}.
\end{equation}
\end{lem}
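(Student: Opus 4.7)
The plan is to run the standard Guo--Phong--Tong maximum principle argument with a carefully chosen auxiliary test function. Writing $\psi := \psi_{\delta,s,j,\kappa}$ and $A := A_{\delta,s,j,\kappa}$ for brevity, I will consider
\begin{equation*}
\Phi := (1-\delta)v - \varphi - s - \varepsilon\bigl(-\psi + \Lambda\bigr)^{n/(n+1)},
\end{equation*}
where $\varepsilon, \Lambda > 0$ are parameters to be chosen. Since $\sup_M \psi = 0$, we have $-\psi + \Lambda \ge \Lambda > 0$, so $\Phi$ is smooth on $M$. It is enough to prove $\Phi \le 0$ everywhere on $M$; raising to the power $(n+1)/n$ on the set $\{(1-\delta)v - \varphi - s > 0\}$ then gives $[(1-\delta)v-\varphi-s]^{(n+1)/n} \le \varepsilon^{(n+1)/n}(-\psi + \Lambda)$, which is exactly the claimed inequality once $\varepsilon^{(n+1)/n} = c_1^{-1} A^{1/n}$ and $\Lambda = C_2 A \delta^{-(n+1)}$.

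Suppose the maximum of $\Phi$ is attained at $x_0$ with $\Phi(x_0) > 0$ (otherwise we are done). Then $h_j((1-\delta)v-\varphi-s) \ge (1-\delta)v-\varphi-s > \varepsilon(-\psi+\Lambda)^{n/(n+1)}$ at $x_0$. Apply the positive Hermitian linearization $G^{i\bar j} := \frac{\partial F}{\partial h_{i\bar j}}(\chi + dd^c\varphi)$ to the inequality $dd^c\Phi(x_0) \le 0$; the concavity of $t \mapsto t^{n/(n+1)}$ produces a nonnegative term $\tfrac{\varepsilon n}{(n+1)^2}(-\psi+\Lambda)^{-(n+2)/(n+1)} G^{i\bar j}\psi_i \psi_{\bar j}$ which can be dropped, leaving
\begin{equation*}
0 \ge (1-\delta)G^{i\bar j} v_{i\bar j} - G^{i\bar j}\varphi_{i\bar j} + \tfrac{\varepsilon n}{n+1}(-\psi+\Lambda)^{-1/(n+1)} G^{i\bar j}\psi_{i\bar j}.
\end{equation*}
I process each term as follows. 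Degree-one homogeneity gives $G^{i\bar j}\varphi_{i\bar j} = e^{G_0} - G^{i\bar j}\chi_{i\bar j}$; concavity of $F$ with $\lambda[\chi+dd^c v] \in \Gamma$ yields $G^{i\bar j}v_{i\bar j} \ge -G^{i\bar j}\chi_{i\bar j}$; concavity applied with the reference matrix $\chi - \tau\omega_0$, where $\tau > 0$ is uniform on $M$ with $\lambda[\chi - \tau\omega_0] \in \Gamma$ (by $\lambda[\chi] \in \mathrm{Int}(\Gamma)$ and compactness), gives $G^{i\bar j}\chi_{i\bar j} \ge \tau\,\mathrm{tr}_{\omega_0} G$ with $\mathrm{tr}_{\omega_0} G := G^{i\bar j}g_{i\bar j}$. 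For the $\psi$ term, working in orthonormal coordinates at $x_0$ that diagonalize $\chi + dd^c\varphi$ so that $\det G = \prod_i \partial f/\partial \lambda_i \ge \gamma$ by Lemma \ref{gpt assumption}, the arithmetic--geometric mean inequality together with the Monge--Amp\`ere equation defining $\psi$ and the uniform lower bound $b \ge -C$ (from Lemma \ref{l3.4}) produce
\begin{equation*}
G^{i\bar j}\psi_{i\bar j} \ge n\gamma^{1/n} h_j\bigl((1-\delta)v-\varphi-s\bigr)^{1/n} e^{G_0+b/n} A^{-1/n} - \mathrm{tr}_{\omega_0} G.
\end{equation*}

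Substituting everything and using the pointwise bound $h_j^{1/n}(-\psi+\Lambda)^{-1/(n+1)} > \varepsilon^{1/n}$ at $x_0$ (which follows from $\Phi(x_0) > 0$), the inequality reduces to
\begin{equation*}
0 \ge -e^{G_0} + \bigl[\delta\tau - \tfrac{\varepsilon n}{n+1}(-\psi+\Lambda)^{-1/(n+1)}\bigr]\mathrm{tr}_{\omega_0} G + \tfrac{n^2\gamma^{1/n}}{n+1}\varepsilon^{(n+1)/n}e^{G_0+b/n} A^{-1/n}.
\end{equation*}
I then pick $\varepsilon = c_1^{-n/(n+1)} A^{1/(n+1)}$ with $c_1$ small enough that the last term strictly exceeds $e^{G_0}$, and $\Lambda = C_2 A\delta^{-(n+1)}$ with $C_2$ large enough that $\Lambda \ge [\varepsilon n /(\delta\tau(n+1))]^{n+1}$, which together with $-\psi \ge 0$ forces the coefficient of $\mathrm{tr}_{\omega_0} G$ to be nonnegative. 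These two choices produce a strict contradiction, hence $\Phi \le 0$ on $M$ as required. The main obstacle is precisely this parameter calibration: $\varepsilon$ must scale exactly as $A^{1/(n+1)}$ to neutralize the $A^{-1/n}$ from the Monge--Amp\`ere right-hand side of $\psi$, while $\Lambda$ must grow like $A\delta^{-(n+1)}$ to absorb the $\delta^{-1}$ blow-up created by the $\chi$-compensation term, and the determinant domination condition (Lemma \ref{gpt assumption}) is the crucial structural input that makes the AM--GM lower bound for $G^{i\bar j}\psi_{i\bar j}$ nontrivial.
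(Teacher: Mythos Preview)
Your proof is correct and follows essentially the same approach as the paper: the same auxiliary function (up to the cosmetic change of placing the parameter $\varepsilon$ on the $(-\psi+\Lambda)^{n/(n+1)}$ term rather than $\eps_0$ on $(1-\delta)v-\varphi-s$), the same linearization $G^{i\bar j}=\partial F/\partial h_{i\bar j}(\chi+dd^c\varphi)$, the same decomposition $\chi=\tau\omega_0+(\chi-\tau\omega_0)$ with $\lambda[\chi-\tau\omega_0]\in\Gamma$ (the paper's $c_*$ is your $\tau$), the same AM--GM/determinant-domination input for the $\psi$ term, and the same parameter calibration $\varepsilon\sim A^{1/(n+1)}$, $\Lambda\sim A\delta^{-(n+1)}$. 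The only remark is that what you call ``concavity'' in the steps $G^{i\bar j}(\chi_{i\bar j}+v_{i\bar j})\ge 0$ and $G^{i\bar j}(\chi-\tau\omega_0)_{i\bar j}\ge 0$ is exactly the content of the paper's Lemma~\ref{l3.6N}, which the paper isolates and proves separately.
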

\begin{proof}
We define,  with $\eps_0>0,\,\Lambda>0$ to be determined:
\begin{equation*}
\Phi=\eps_0\big((1-\delta)v-\varphi-s\big)-(-\psi_{\delta,s,j,\kappa}+\Lambda)^{\frac{n}{n+1}}.
\end{equation*}
In the following,  we will simply denote $\psi_{\delta,s,j,\kappa}$ by $\psi$ for simplicity.
The linearized operator is given by $L=\frac{\partial F}{\partial h_{j\bar{k}}}(\chi+dd^c \varphi)\partial_{j\bar{k}}$ and we may compute:
\begin{equation*}
\partial_{j\bar{k}}\Phi=\eps_0\big((1-\delta)v_{j\bar{k}}-\varphi_{j\bar{k}}\big)+\frac{n}{n+1}(-\psi+\Lambda)^{-\frac{1}{n+1}}\psi_{j\bar{k}}+\frac{n}{(n+1)^2}(-\psi+\Lambda)^{-\frac{n+2}{n+1}}\psi_j\psi_{\bar{k}}.
\end{equation*}
This implies:
\begin{equation*}
\begin{split}
&dd^c\Phi=\eps_0\big((1-\delta)dd^cv-dd^c\varphi\big)+\frac{n}{n+1}(-\psi+\Lambda)^{-\frac{1}{n+1}}dd^c\psi+\frac{n}{(n+1)^2}(-\psi+\Lambda)^{-\frac{n+2}{n+1}}d\psi\wedge d^c\psi\\
&\ge \eps_0\big((1-\delta)dd^cv-dd^c\varphi\big)+\frac{n}{n+1}(-\psi+\Lambda)^{-\frac{1}{n+1}}dd^c\psi.
\end{split}
\end{equation*}
Therefore:
\begin{equation}\label{3.7N}
\begin{split}
&L\Phi\ge \eps_0(1-\delta)Lv-\eps_0L\varphi+\frac{n}{n+1}(-\psi+\Lambda)^{-\frac{1}{n+1}}L\psi.\\
&\ge\eps_0(1-\delta)\frac{\partial F}{\partial h_{j\bar{k}}}(\chi+dd^c \varphi)(\chi_{j\bar{k}}+v_{j\bar{k}})-\eps_0\frac{\partial F}{\partial h_{j\bar{k}}}(\chi+dd^c \varphi)(\chi_{j\bar{k}}+\varphi_{j\bar{k}})\\
&+\frac{n}{n+1}(-\psi+\Lambda)^{-\frac{1}{n+1}}\frac{\partial F}{\partial h_{j\bar{k}}}(\chi+dd^c \varphi)(g_{j\bar{k}}+\psi_{j\bar{k}})+\eps_0\delta\frac{\partial F}{\partial h_{j\bar{k}}}(\chi+dd^c \varphi)\chi_{j\bar{k}}\\
&-\frac{n}{n+1}(-\psi+\Lambda)^{-\frac{1}{n+1}}\frac{\partial F}{\partial h_{j\bar{k}}}(\chi+dd^c \varphi)g_{j\bar{k}}.
\end{split}
\end{equation}
Now we use that $\chi\in \Gamma_{\omega_0}$,  so that there exists $c_*>0$,  such that one can write $\chi=c_*\omega_0+\hat{\chi}$ with $\lambda(\hat{\chi})\in \Gamma$.  With this observation,  we see that:
\begin{equation}\label{3.8N}
\begin{split}
&\eps_0\delta\frac{\partial F}{\partial h_{j\bar{k}}}(\chi+dd^c \varphi)\chi_{j\bar{k}}-\frac{n}{n+1}(-\psi+\Lambda)^{-\frac{1}{n+1}}\frac{\partial F}{\partial h_{j\bar{k}}}(\chi+dd^c \varphi)g_{j\bar{k}}\\
&\ge \eps_0\delta\frac{\partial F}{\partial h_{j\bar{k}}}(\chi+dd^c \varphi)\hat{\chi}_{j\bar{k}}+\frac{\partial F}{\partial h_{j\bar{k}}}(\chi+dd^c \varphi)(\eps_0\delta c_*-\frac{n}{n+1}\Lambda^{-\frac{1}{n+1}}\big)g_{j\bar{k}}.
\end{split}
\end{equation}
Using the Lemma \ref{l3.6N} below,  we may conclude that:
\begin{equation}\label{3.9N}
\frac{\partial F}{\partial h_{j\bar{k}}}(\chi+dd^c \varphi)(\chi_{j\bar{k}}+v_{j\bar{k}})\ge 0,\,\,\frac{\partial F}{\partial h_{j\bar{k}}}(\chi+dd^c \varphi)\hat{\chi}_{j\bar{k}}\ge 0.
\end{equation}
Combining (\ref{3.7N})-(\ref{3.9N}),  we see that:
\begin{equation*}
\begin{split}
&L\Phi\ge -\eps_0\frac{\partial F}{\partial h_{j\bar{k}}}(\chi+dd^c \varphi)(\chi_{j\bar{k}}+\varphi_{j\bar{k}})+\frac{n}{n+1}(-\psi+\Lambda)^{-\frac{1}{n+1}}\frac{\partial F}{\partial h_{j\bar{k}}}(\chi+dd^c \varphi)(g_{j\bar{k}}+\psi_{j\bar{k}})\\
&+(\eps_0\delta c_*-\frac{n}{n+1}\Lambda^{-\frac{1}{n+1}})\frac{\partial F}{\partial h_{j\bar{k}}}(\chi+dd^c \varphi)g_{j\bar{k}}.
\end{split}
\end{equation*}
Next we use that $f$ has homogeneity one to see that:
\begin{equation*}
\frac{\partial F}{\partial h_{j\bar{k}}}(\chi+dd^c \varphi)(\chi+dd^c \varphi)_{j\bar{k}}=\sum_i\frac{\partial f}{\partial \lambda_i}\lambda_i=f.
\end{equation*}

Also we note that:
\begin{equation*}
\begin{split}
&\frac{\partial F}{\partial h_{j\bar{k}}}(\chi+dd^c \varphi)(g_{j\bar k}+\psi_{j\bar k})\ge n\big(\det \frac{\partial F}{\partial h_{i\bar j}}\cdot \det(g_{i\bar j}+ \psi_{i\bar j})\big)^{\frac{1}{n}}\\
&\ge c_1A_{\delta,s,j,\kappa}^{-\frac{1}{n}}\big((1-\delta)-\varphi-s\big)_+^{\frac{1}{n}}F(\chi+dd^c\varphi)e^{\frac{b_{\delta,s,j,\kappa}}{n}}.
\end{split}
\end{equation*}

The first inequality above follows from arithmetic-geometric inequality,  which we prove in more detail in Lemma \ref{l4.8New}.   
The second inequality above used that:
\begin{equation*}
\det(g_{i\bar{j}}+\psi_{i\bar{j}})=\frac{h_j((1-\delta)v-\varphi-s)}{A_{\delta,s,j,\kappa}}e^{nG_0+b_{\delta,s,j,\kappa}}\ge \frac{\big((1-\delta)v-\varphi-s\big)_+}{A_{\delta,s,j,\kappa}}e^{nG_0+b_{\delta,s,j,\kappa}}.
\end{equation*}

To proceed further,  assume that $\Phi$ achieves positive maximum at $p\in M$.  We obtain the following when evaluated at $p$:

Therefore,  when evaluated at $p$,  we get:
\begin{equation}\label{3.6}
\begin{split}
0&\ge L\Phi\ge -\eps_0f+c_2A_{\delta,s,j,\kappa}^{-\frac{1}{n}}\big((1-\delta)-\varphi-s\big)_+^{\frac{1}{n}}(-\psi+\Lambda)^{-\frac{1}{n+1}}f\\
&+(\eps_0\delta c_*-\frac{n}{n+1}\Lambda^{-\frac{1}{n+1}})\frac{\partial F}{\partial h_{j\bar{k}}}g_{j\bar{k}}.
\end{split}
\end{equation}

In the above,  $c_2$ depends only on the structural constants of $f$ and also the background metric.  Here we used that $b_{\delta,s,j,\kappa}$ has a universal lower bound (except depending on the choice of $\kappa>1$).
Now we can choose the parameter $\eps_0$ first so as to have:
\begin{equation}\label{4.11New}
-2\eps_0+c_2A_{\delta,s,j,\kappa}^{-\frac{1}{n}}\eps_0^{-\frac{1}{n}}=0.
\end{equation}
Now we fix this choice of $\eps_0$ and then choose $\Lambda$ so that:
\begin{equation*}
\frac{1}{2}\eps_0\delta c_*=\frac{n}{n+1}\Lambda^{-\frac{1}{n+1}}. 
\end{equation*}
With this choice,  we are going to get a contradiction from (\ref{3.6}).  Indeed,  since $\Phi(p)>0$,  we see that at $p$,  one has
\begin{equation*}
((1-\delta)v-\varphi-s)^{\frac{1}{n}}(-\psi+\Lambda)^{-\frac{1}{n+1}}>\eps_0^{-\frac{1}{n}}.
\end{equation*}
Then we see from (\ref{3.6}) that:
\begin{equation*}
\begin{split}
&L\Phi\ge -\eps_0f+c_2A_{\delta,s,j,\kappa}^{-\frac{1}{n}}\big((1-\delta)-\varphi-s\big)_+^{\frac{1}{n}}(-\psi+\Lambda)^{-\frac{1}{n+1}}f\ge -\eps_0f +c_2A_{\delta,s,j,k}^{-\frac{1}{n}}\eps_0^{-\frac{1}{n}}f\\
&=-\eps_0f+2\eps_0f>0.
\end{split}
\end{equation*}
The equality on the second line used the choice of $\eps_0$ specified in (\ref{4.11New}).  This contradicts with (\ref{3.6}) because we had $L\Phi\le 0$ in (\ref{3.6}).
\end{proof}
In the above proof,  we used the following lemma to get to (\ref{3.9N}).
\begin{lem}\label{l3.6N}
Let $A$,  $B$ be two Hermitian matrices such that $\lambda(g^{i\bar{k}}A_{j\bar{k}}),\,\lambda(g^{i\bar{k}}B_{j\bar{k}})\in \Gamma$.  Define $F(h)=f\big(\lambda(g^{i\bar{k}}h_{j\bar{k}})\big)$.  Then we have:
\begin{equation*}
\frac{\partial F}{\partial h_{j\bar{k}}}(A)B_{j\bar{k}}\ge 0.
\end{equation*}
\end{lem}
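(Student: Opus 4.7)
The plan is to combine the concavity of $f$ with its homogeneity of degree one. Since both matrices $A$ and $B$ are admissible (their associated eigenvalue vectors lie in $\Gamma$), $F$ is defined and nonnegative at both points, and concavity of $F$ as a function on the Hermitian matrices (inherited from concavity of $f$ on $\Gamma$ via the standard symmetric-function/matrix correspondence) gives the supporting hyperplane inequality
\begin{equation*}
F(B) \le F(A) + \frac{\partial F}{\partial h_{j\bar k}}(A)(B_{j\bar k} - A_{j\bar k}).
\end{equation*}

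Next I would invoke the positive homogeneity of degree one. Euler's identity applied to $f$ reads $\sum_i \lambda_i \frac{\partial f}{\partial \lambda_i}(\lambda) = f(\lambda)$; pulling this back through the correspondence $h \mapsto \lambda(g^{i\bar k}h_{j\bar k})$ yields
\begin{equation*}
\frac{\partial F}{\partial h_{j\bar k}}(A)\, A_{j\bar k} = F(A).
\end{equation*}
Substituting into the concavity inequality above, the $F(A)$ terms cancel and one obtains
\begin{equation*}
\frac{\partial F}{\partial h_{j\bar k}}(A)\, B_{j\bar k} \ge F(B).
\end{equation*}

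Finally, since $\lambda(g^{i\bar k}B_{j\bar k}) \in \Gamma$ and $f \ge 0$ on $\Gamma$ by Assumption \ref{a1.1N}(1), we have $F(B) \ge 0$, which closes the argument. There is no real obstacle here; the only point to be careful about is that the assumption $\lambda(g^{i\bar k}B_{j\bar k}) \in \Gamma$ is used exactly once, namely to guarantee $F(B) \ge 0$, and that $F$ being defined at $A$ relies on $\lambda(g^{i\bar k}A_{j\bar k}) \in \Gamma$ so that the linearization $\frac{\partial F}{\partial h_{j\bar k}}(A)$ makes sense. The argument is purely algebraic and does not use ellipticity ($\partial f/\partial \lambda_i > 0$) beyond what is already implicit in concavity and homogeneity of $f$.
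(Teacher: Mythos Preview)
Your argument is correct and is actually cleaner than the paper's own proof. The paper proceeds by first reducing to the case $g_{i\bar j}=\delta_{ij}$ with $A$ diagonal, so that $\frac{\partial F}{\partial h_{j\bar k}}(A)=\frac{\partial f}{\partial\lambda_j}\delta_{jk}$; it then invokes the Horn--Schur lemma to compare $\sum_j\frac{\partial f}{\partial\lambda_j}B_{j\bar j}$ with $\sum_j\frac{\partial f}{\partial\lambda_j}\mu_j$ (where $\mu$ are the eigenvalues of $B$), and finally argues that $\sum_j\frac{\partial f}{\partial\lambda_j}(\lambda(A))\mu_j=\frac{d}{dt}\big|_{t=0}f(\lambda(A)+t\mu)\ge 0$ because $t\mapsto f(\lambda(A)+t\mu)$ is concave and bounded below on $[0,\infty)$. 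Your route bypasses all of this: you use concavity of $F$ directly at the matrix level together with Euler's identity $\frac{\partial F}{\partial h_{j\bar k}}(A)A_{j\bar k}=F(A)$, arriving at $\frac{\partial F}{\partial h_{j\bar k}}(A)B_{j\bar k}\ge F(B)\ge 0$ in two lines. The trade-off is that you use the degree-one homogeneity explicitly (for Euler's identity), whereas the paper's argument only needs concavity and $f\ge 0$ on $\Gamma$; since homogeneity is part of Assumption~\ref{a1.1N} anyway, your use of it is entirely legitimate here, and your proof is the more economical one.
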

\begin{proof}
We first prove this statement,  assuming that $g_{i\bar{j}}=\delta_{ij}$ and that $A$ is diagonal.  With this assumption,  we then have:
\begin{equation*}
\frac{\partial F}{\partial h_{j\bar{k}}}(A)=\frac{\partial f}{\partial \lambda_j}(\lambda(A))\delta_{jk}.
\end{equation*}
Without loss of generality,  we may assume that $\lambda_1\le \lambda_2\le \cdots \lambda_n$,  so that $\frac{\partial f}{\partial \lambda_1}\ge \frac{\partial f}{\partial \lambda_2} \ge \cdots \ge \frac{\partial f}{\partial \lambda_n}$.  Therefore,  from above we see that
\begin{equation*}
\frac{\partial F}{\partial h_{j\bar{k}}}(A)B_{j\bar{k}}=\frac{\partial f}{\partial \lambda_j}(\lambda(A))B_{j\bar{j}}\ge \frac{\partial f}{\partial \lambda_j}(\lambda(A))\mu_j.
\end{equation*}
In the above,  $\mu_j$ are the eigenvalues of $B$,  also listed in the increasing order.  The inequality above used the Horn-Shur lemma,  which says that the vector $(B_{1\bar{1}},\cdots,B_{n\bar{n}})$ is contained in the convex hull of $(\mu_{\sigma(1)},\cdots,\mu_{\sigma(n)})$,  where $\sigma$ is a permutation of the indices.  Moreover,  since $\frac{\partial f}{\partial \lambda_j}$ is in the decreasing order,  $\frac{\partial f}{\partial \lambda_j}\mu_j$ will be minimized if $\mu_j$ is in the increasing order,  hence the inequality above.  Now it only remains to show $\frac{\partial f}{\partial \lambda_j}(\lambda(A))\mu_j \ge 0$.  Indeed,  we just need to note that $\frac{\partial f}{\partial \lambda_j}(\lambda(A))\mu_j=\frac{d}{dt}|_{t=0}f(\lambda(A)+t\mu)$.  Note that $t\mapsto f(\lambda(A)+t\mu)$ is concave,  and is bounded from below on $[0,\infty)$,  hence $\frac{d}{dt}|_{t=0}f(\lambda(A)+t\mu)\ge 0$.

Next we explain how to reduce the general case to the special case considered above.

First we observe that one can reduce to the case when $g_{i\bar{j}}=\delta_{ij}$.  Indeed,  we may assume that there is an invertible $n\times n$ matrix,  such that $g=P\overline{P^T}$ (that is,  $g_{i\bar{j}}=P_{ir}\overline{P_{jr}}$).
On the other hand,
\begin{equation*}
(g^{i\bar{k}}A_{j\bar{k}})^i_j=\big((g^{-1})_{ki}A_{j\bar{k}}\big)^i_j=(A\cdot g^{-1})_{ij}=(A\cdot \overline{(P^T)^{-1}}\cdot P^{-1})_{ij}
\end{equation*}
Therefore,  if we define $\tilde{A}=P^{-1}A\overline{(P^T)^{-1}}$,  then one has $\lambda((g^{i\bar{k}}A_{j\bar{k}})_j^i)=\lambda(\tilde{A})$.  Likewise,  we define $\tilde{B}=P^{-1}B\overline{(P^T)^{-1}}$,  then one gets $\lambda(\tilde{A}),\,\lambda(\tilde{B})\in \Gamma$.  Also we define $\tilde{F}(h)=f\big(\lambda(h)\big)$, where $h$ is a Hermitian matrix and $\lambda(h)$ means the usual eigenvalue of $h$,  then we have $F(h)=\tilde{F}\big(P^{-1}h\overline{(P^T)^{-1}}\big)$.  From this we may calculate:
\begin{equation*}
\begin{split}
&\frac{\partial F}{\partial h_{j\bar{k}}}(A)B_{j\bar{k}}=\frac{\partial \tilde{F}}{\partial h_{p\bar{q}}}(\tilde{A})\frac{\partial (P^{-1}h\overline{(P^T)^{-1}})_{p\bar{q}}}{\partial h_{j\bar{k}}}P_{ja}\tilde{B}_{a\bar{b}}\overline{(P^T)_{ak}}\\
&=\frac{\partial \tilde{F}}{\partial h_{p\bar{q}}}(\tilde{A})(P^{-1})_{pj}\overline{(P^{-1})_{qk}}P_{j\bar{a}}\tilde{B}_{a\bar{b}}\overline{(P^T)_{ak}}=\frac{\partial \tilde{F}}{\partial h_{a\bar{b}}}(\tilde{A})\tilde{B}_{a\bar{b}}.
\end{split}
\end{equation*}
Therefore,  we see that,  as long as we can verify the lemma with $\tilde{A},\,\tilde{B},\,\tilde{F}$ (which is equivalent to taking $g=I$),  the general statement would follow.

Next,  we explain why we can assume that $A$ is diagonal.   Indeed,  from the definition of $F$,  one has $F(h)=F(Uh\overline{U^T})$,  for any unitary matrix $U$.  Therefore one has
\begin{equation*}
\frac{\partial F}{\partial h_{i\bar{j}}}(h)=\frac{\partial F}{\partial h_{p\bar{q}}}(Uh\overline{U^T})U_{pi}\overline{U_{qj}}.
\end{equation*}
Now we choose $h=A$,  and choose $U$ to be the unitary matrix such that $UA\overline{U^T}$ is diagonal.  Then one gets:
\begin{equation*}
\frac{\partial F}{\partial h_{i\bar{j}}}(A)B_{i\bar{j}}=\frac{\partial F}{\partial h_{p\bar{q}}}(UA\overline{U^T})U_{pi}\overline{U_{qj}}B_{i\bar{j}}.
\end{equation*}
All we need to do is to consider $\tilde{B}=UB\overline{U^T}$ in place of $B$,  and they have the same eigenvalues.
\end{proof}
We also used the following lemma:
\begin{lem}\label{l4.8New}
Let $A,\,B$ be two positive definite Hermitian matrices.  Then we have:
\begin{equation*}
\frac{1}{n}tr\big(\bar{A}^TB\big)\ge \big(\det A\cdot \det B\big)^{\frac{1}{n}}.
\end{equation*}
\end{lem}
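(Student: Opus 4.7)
The plan is to reduce the claimed inequality to the standard arithmetic--geometric mean inequality for the eigenvalues of a positive definite Hermitian matrix. Since $A$ is Hermitian we have $\bar{A}^T = A$, so the quantity on the left hand side is simply $\frac{1}{n}\tr(AB)$, and the right hand side is invariant under the same identification.

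First I would use that $A$ is positive definite to form its unique positive definite Hermitian square root $A^{1/2}$. The cyclic property of the trace gives
\begin{equation*}
\tr(AB) = \tr(A^{1/2} B A^{1/2}),
\end{equation*}
and the matrix $M := A^{1/2} B A^{1/2}$ is again positive definite and Hermitian (it is manifestly Hermitian and is conjugate to $B$ by an invertible matrix, hence has the same signature). Therefore all eigenvalues $\mu_1,\dots,\mu_n$ of $M$ are strictly positive.

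Next I would apply the ordinary AM--GM inequality to these eigenvalues:
\begin{equation*}
\frac{1}{n}\tr(M) \;=\; \frac{1}{n}\sum_{i=1}^n \mu_i \;\ge\; \Bigl(\prod_{i=1}^n \mu_i\Bigr)^{1/n} \;=\; \bigl(\det M\bigr)^{1/n}.
\end{equation*}
Finally, by multiplicativity of the determinant, $\det M = \det(A^{1/2})\cdot \det B \cdot \det(A^{1/2}) = \det A \cdot \det B$. Combining these three identities with the displayed inequality gives $\tfrac{1}{n}\tr(AB) \ge (\det A \det B)^{1/n}$, which is the desired conclusion.

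There is no real obstacle here; the only subtle point to verify is that $M$ is genuinely Hermitian positive definite so that AM--GM applies to real positive eigenvalues. This is immediate once one notes $M^* = (A^{1/2})^* B^* (A^{1/2})^* = A^{1/2} B A^{1/2} = M$ and that for any nonzero vector $v$, $\langle Mv, v\rangle = \langle B A^{1/2} v, A^{1/2} v\rangle > 0$ since $A^{1/2} v \neq 0$ and $B$ is positive definite.
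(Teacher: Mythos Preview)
Your proof is correct and follows essentially the same approach as the paper: both symmetrize the product into a positive definite Hermitian matrix and then apply the AM--GM inequality to its eigenvalues. The only cosmetic difference is that the paper factors $B=P\bar{P}^T$ and works with $\bar{P}^T\bar{A}^T P$, whereas you factor $A$ via its Hermitian square root and work with $A^{1/2}BA^{1/2}$.
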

\begin{proof}
Since $B$ is positive definite and Hermitian,  we can find an invertible matrix $P$ such that $B=P\bar{P}^T$.  Then we have:
\begin{equation*}
\frac{1}{n}tr\big(\bar{A}^TB\big)=\frac{1}{n}tr\big(\bar{A}^TP\bar{P}^T\big)=\frac{1}{n}tr\big(\bar{P}^T\bar{A}^TP\big)\ge \big(\det(\bar{P}^T\bar{A}^TP)\big)^{\frac{1}{n}}=\big(\det A\cdot \det B\big)^{\frac{1}{n}}.
\end{equation*}
The inequality above follows from the Arithmetic-Geometric inequalities applied to the eigenvalues of $\bar{P}^T\bar{A}^TP$.
\end{proof}

As a consequence of Lemma \ref{l3.5},  we see that:
\begin{cor}\label{c3.6}
Denote 
\begin{equation}\label{3.8NN}
A_{\delta,s,\kappa}=\bigg(\int_M\big((1-\delta)v-\varphi-s\big)_+^{\kappa}e^{\kappa nG_0}\omega_0^n\bigg)^{\frac{1}{\kappa}}.
\end{equation}
There exists $\beta_0>0$,  $C>0$ which depends only on the background manifold and metric,  as well as the choice of $\kappa>1$,  such that 
\begin{equation*}
\int_M\exp\big(\beta_0A_{\delta,s,\kappa}^{-\frac{1}{n}}\big((1-\delta)v-\varphi-s\big)_+^{\frac{n+1}{n}}\big)\omega_0^n\le \exp\big(CA_{\delta,s,\kappa}\delta^{-(n+1)}\big).
\end{equation*}
\end{cor}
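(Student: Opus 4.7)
Setting $W:=\bigl((1-\delta)v-\varphi-s\bigr)_+$, Lemma \ref{l3.5} gives, on the set $\{(1-\delta)v-\varphi-s>0\}$,
\[
c_1A_{\delta,s,j,\kappa}^{-\frac{1}{n}}W^{\frac{n+1}{n}}\le -\psi_{\delta,s,j,\kappa}+C_2A_{\delta,s,j,\kappa}\delta^{-(n+1)},
\]
and on the complementary set both sides are handled trivially (the left vanishes, while the right is non-negative because $\sup_M\psi_{\delta,s,j,\kappa}=0$ and $A,\delta>0$), so the inequality holds throughout $M$. The plan is to exponentiate this pointwise inequality, integrate against $\omega_0^n$, and absorb $\int_M e^{-\alpha_0\psi_{\delta,s,j,\kappa}}\omega_0^n$ via the classical Moser--Trudinger/exponential integrability estimate for the auxiliary Monge--Amp\`ere potential $\psi_{\delta,s,j,\kappa}$, before finally sending $j\to\infty$.

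By the very choice of $A_{\delta,s,j,\kappa}$ in (\ref{3.4NNNN}), the density on the right-hand side of the Monge--Amp\`ere equation defining $\psi_{\delta,s,j,\kappa}$ has $L^{\kappa}(\omega_0^n)$-norm equal to $e^{b_{\delta,s,j,\kappa}}$, which is uniformly bounded by the argument immediately following (\ref{3.4NNNN}) (and Lemma 5.9 of \cite{KN2}). The classical Ko\l odziej/Dinew--Ko\l odziej exponential integrability theorem then yields constants $\alpha_0>0$ and $C_0>0$, depending only on $\kappa$ and the background geometry, such that
\[
\int_M e^{\alpha_0(-\psi_{\delta,s,j,\kappa})}\omega_0^n\le C_0,
\]
uniformly in $\delta,s,j$. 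Multiplying the pointwise inequality by $\alpha_0$, exponentiating, and integrating then give
\[
\int_M\exp\bigl(\alpha_0c_1A_{\delta,s,j,\kappa}^{-\frac{1}{n}}W^{\frac{n+1}{n}}\bigr)\omega_0^n\le C_0\exp\bigl(\alpha_0C_2A_{\delta,s,j,\kappa}\delta^{-(n+1)}\bigr).
\]
Setting $\beta_0:=\alpha_0c_1$ and choosing $C$ large enough to absorb the multiplicative $C_0$ (using $\delta^{-(n+1)}\ge 1$, and enlarging $C$ further if necessary to handle the regime where $A_{\delta,s,j,\kappa}$ is very small, in which case the left-hand side is already a priori controlled by $C_0\cdot\mathrm{vol}(M)$) yields the corollary for $\psi_{\delta,s,j,\kappa}$. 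Sending $j\to\infty$, dominated convergence gives $A_{\delta,s,j,\kappa}\to A_{\delta,s,\kappa}$, and Fatou applied to both sides yields the corollary as stated.

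The principal obstacle I anticipate is the uniformity, over all $\delta,s,j,v,\varphi$, of the exponential integrability constants $\alpha_0,C_0$ invoked in the Moser--Trudinger step: the density contains the potentially large factor $e^{nG_0}$ (with $e^{G_0}$ only in $L^{p_0}$), so one must verify that normalizing by $A_{\delta,s,j,\kappa}$ keeps the $L^{\kappa}$ norm of the right-hand side of the Monge--Amp\`ere equation equal to $e^{b_{\delta,s,j,\kappa}}$, which is precisely where the bound on $b_{\delta,s,j,\kappa}$ coming from Lemma 5.9 of \cite{KN2} is essential. Once that uniformity is secured, the rest of the argument is an algebraic manipulation of the pointwise inequality from Lemma \ref{l3.5}.
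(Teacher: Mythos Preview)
Your proof is essentially the same as the paper's: multiply (\ref{3.6NN}) by a small constant, exponentiate, integrate, use a uniform exponential bound for $-\psi_{\delta,s,j,\kappa}$, and then let $j\to\infty$.

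The one place where you overcomplicate matters is the exponential integrability step, and this is exactly the ``principal obstacle'' you worry about. In fact there is no obstacle at all. The function $\psi_{\delta,s,j,\kappa}$ is $\omega_0$-plurisubharmonic with $\sup_M\psi_{\delta,s,j,\kappa}=0$, and that alone is enough: the paper invokes the classical uniform $\alpha$-invariant bound (Tian \cite{Ti}, stated as the lemma immediately following the corollary), which gives $\alpha_0>0$ and $C_0>0$ depending \emph{only on $(M,\omega_0)$} such that $\int_M e^{-\alpha_0\psi}\omega_0^n\le C_0$ for every $\psi\in PSH(M,\omega_0)$ with $\sup_M\psi=0$. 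No reference to the Monge--Amp\`ere equation, to $e^{nG_0}$, to $\kappa$, or to the bound on $b_{\delta,s,j,\kappa}$ is needed here; the Ko\l odziej/Dinew--Ko\l odziej machinery you cite is much stronger than required and introduces dependencies you then have to chase. Once you use the $\alpha$-invariant bound instead, your uniformity concern evaporates and the argument is exactly the paper's.

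A minor remark: your observation about absorbing the multiplicative constant $C_0$ into the exponential when $A_{\delta,s,\kappa}\delta^{-(n+1)}$ may be small is a genuine wrinkle that the paper glosses over; in the subsequent applications (Lemma \ref{l3.8} onward) one always works under the hypothesis $A_{\delta,s,\kappa}\le\delta^{n+1}$, so the right-hand side is simply bounded by $e^C$ and the issue never materializes.
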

\begin{proof}
Using the following lemma,  we see that there exists $\alpha_0>0$,  $C>0$ such that for any $\psi\in PHS(M,\omega_0)$ with $\sup_M\psi=0$,  one has:
\begin{equation*}
\int_Me^{-\alpha_0\psi}\omega_0^n\le C.
\end{equation*}
Now one multiplies both sides of (\ref{3.6NN}) with $\alpha_0$,  raise to the exponential,  and integrate on $M$,  we get that 
\begin{equation*}
\int_M\exp\big(\beta_0A_{\delta,s,j,\kappa}^{-\frac{1}{n}}\big((1-\delta)v-\varphi-s\big)_+^{\frac{n+1}{n}}\big)\omega_0^n\le \exp\big(CA_{\delta,s,j,\kappa}\delta^{-(n+1)}\big).
\end{equation*}
Note that from the definition of $A_{\delta,s,j,\kappa}$,  we see that $A_{\delta,s,j,\kappa}\rightarrow A_{\delta,s,\kappa}$ in (\ref{3.4NNNN}) as $j\rightarrow \infty$.
\end{proof}
In the above,  we used the following lemma whose proof may be found in \cite{Ti}:
\begin{lem}
Let $(M,\omega_0)$ be a compact Hermitian manifold.  Then there exists $\alpha_0>0,\,C>0$,  such that for any $\psi\in PSH(M,\omega_0)$ with $\sup_M\psi=0$,  one has that
\begin{equation*}
\int_Me^{-\alpha_0\psi}\omega_0^n\le C.
\end{equation*}
\end{lem}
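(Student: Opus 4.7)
The plan is to reduce to the classical local Hörmander--Skoda integrability for plurisubharmonic functions on Euclidean balls and patch via a finite covering, after first establishing a uniform $L^1$ bound on the normalized family
\begin{equation*}
\mathcal{F}:=\{\psi\in PSH(M,\omega_0):\sup_M\psi=0\}.
\end{equation*}

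First I would set up local trivializations. Cover $M$ by finitely many coordinate balls $(U_i,z^{(i)})$ with refinement $V_i\subset\subset U_i$ still covering $M$. On each $U_i$, since $\omega_0$ is smooth, there is a constant $C_i>0$ with $\omega_0\le C_i\,dd^c(|z^{(i)}|^2)$, so for every $\psi\in\mathcal{F}$ the function $\tilde\psi_i:=\psi+C_i|z^{(i)}|^2$ is plurisubharmonic on $U_i$ in the classical sense, and $\sup_{U_i}\tilde\psi_i$ is bounded above by a fixed constant independent of $\psi$. Because $\psi$ is upper semicontinuous and $M$ is compact, the value $0=\sup_M\psi$ is attained at some point $x_\psi$, lying in some chart $U_{i_0}$. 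The sub-mean value inequality applied to $\tilde\psi_{i_0}$ at $x_\psi$ then yields a uniform lower bound $\int_{V_{i_0}}\psi\,\omega_0^n\ge -C$. Using overlap of the cover and the sub-mean value inequality on each $\tilde\psi_i$ to propagate this bound, one obtains $\|\psi\|_{L^1(M,\omega_0^n)}\le C$ uniformly in $\mathcal{F}$.

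Next I would invoke the standard local uniform integrability statement (Hörmander, Skoda; see also Zeriahi): for relatively compact $V\subset\subset U\subset\mathbb{C}^n$ and any $A>0$, there exist $\alpha_0,C>0$ such that every plurisubharmonic $u$ on $U$ with $\sup_U u\le A$ and $\int_U |u|\,dV\le A$ satisfies $\int_V e^{-\alpha_0 u}\,dV\le C$. Applying this to each $\tilde\psi_i$ on $U_i\supset V_i$ (with the uniform $L^1$ and sup bounds from the previous step), and absorbing the uniformly bounded smooth correction $C_i|z^{(i)}|^2$ into the constants, gives $\int_{V_i}e^{-\alpha_0\psi}\omega_0^n\le C$. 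Summing over $i$ and using that the $V_i$ cover $M$ yields the global estimate.

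The main technical point is the uniform $L^1$ bound in the first step: in the Kähler setting it is a routine consequence of compactness of $\omega_0$-psh functions, but in the general Hermitian case $\omega_0$ has no global potential, so one must argue via the local potentials $\tilde\psi_i$ and the chain of overlapping charts. Once that is in hand, Hörmander's integrability theorem plus the finite covering argument complete the proof with no further subtlety.
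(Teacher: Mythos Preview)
The paper does not supply its own proof of this lemma; it simply refers the reader to Tian \cite{Ti}. Your outline is the standard argument---local plurisubharmonic potentials $\psi+C_i|z^{(i)}|^2$ on a finite cover, a uniform $L^1$ bound on the normalized family propagated through overlapping charts via the sub-mean value inequality, and then the uniform local H\"ormander--Skoda integrability on each $V_i$---and it is correct; this is precisely the route taken in the references the paper defers to, with your remark about working with local rather than global potentials being the only Hermitian-specific adjustment needed.
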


Using the above estimate,  we can get the pointwise upper bound of $(1-\delta)v-\varphi-s$.  For this we have:
\begin{lem}\label{l3.8}
Assume that $\delta>0,\,s_0>0$ are chosen so that $A_{\delta,s_0,\kappa}\le \delta^{n+1}$.  Assume that $e^{nG_0}\in L^{p_0}(M,\omega_0^n)$ for some $p_0>1$.  Define $\Omega_{\delta,s}=\{x\in M:(1-\delta)v-\varphi-s>0\}$ and put $u(s)=\int_{\Omega_{\delta,s}}e^{nG_0}\omega_0^n$.  Then for any $0<\delta_*<\frac{1}{n}$,  we may choose $\kappa>1$ sufficiently close to 1,  such that there exists $C_*>0$,  depending on $\delta_*$,  $||e^{nG_0}||_{L^{p_0}}$,  the background metric and the choice of $\kappa$,  such that
\begin{equation*}
tu(s+t)\le C_*u(s)^{1+\delta_*},
\end{equation*}
for any $s\ge s_0$,  $t>0$.  The above choice of $\kappa$ depends only on $\delta_*,\,n,\,p_0$.
\end{lem}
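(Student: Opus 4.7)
The plan is a Chebyshev step followed by a sequence of Hölder inequalities tuned against the Moser--Trudinger-type estimate of Corollary \ref{c3.6}; this is the standard De Giorgi/Kolodziej--Nguyen/Guo--Phong--Tong iteration adapted to our Hessian setting. Write $w = ((1-\delta)v - \varphi - s)_+$ and $d\mu = e^{nG_0}\omega_0^n$. On $\Omega_{\delta, s+t}$ we have $w \ge t$, and $\Omega_{\delta, s+t} \subset \Omega_{\delta, s}$, so
\begin{equation*}
tu(s+t) \le \int_{\Omega_{\delta,s+t}} w\, d\mu \le \int_{\Omega_{\delta,s}} w e^{nG_0}\omega_0^n.
\end{equation*}
The goal is to estimate this last integral by $C_* u(s)^{1+\delta_*}$.

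From $A := A_{\delta, s, \kappa} \le A_{\delta, s_0, \kappa} \le \delta^{n+1}$ (monotonicity in $s$), the right-hand side of Corollary \ref{c3.6} is uniformly bounded, so $\int_M \exp(\beta_0 A^{-1/n} w^{(n+1)/n})\omega_0^n \le C$. Expanding $e^x \ge x^k/k!$ and applying Stirling yields the Orlicz-type bound
\begin{equation*}
\|w\|_{L^r(\omega_0^n)} \le C_1 r^{n/(n+1)} A^{1/(n+1)} \qquad \text{for all } r \ge 1.
\end{equation*}
Next I would Hölder the key integral with conjugate exponents $r$ (large) and $r' = r/(r-1)$ (just above $1$), and then bring in $u(s)$ by a second Hölder against the weighted measure $d\mu$: writing $e^{r' nG_0} = e^{(r'-1) n G_0} \cdot e^{nG_0}$ and applying Hölder on $\Omega_{\delta,s}$ with respect to $d\mu$ using the exponent pair $(p_0-1)/(r'-1)$ and its conjugate yields, provided $1 < r' < p_0$,
\begin{equation*}
\Bigl(\int_{\Omega_{\delta,s}} e^{r' n G_0}\omega_0^n\Bigr)^{1/r'} \le C\, u(s)^{(p_0 - r')/(r'(p_0-1))}.
\end{equation*}

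An entirely parallel computation applied to $A^\kappa = \int_{\Omega_{\delta,s}} w^\kappa e^{\kappa n G_0} \omega_0^n$ (Hölder with conjugate exponents $r_1/\kappa$ and $s_1$, then using the $L^{r_1}$ bound on $w$ above together with the analogous bound for $\int_\Omega e^{\kappa s_1 n G_0}\omega_0^n$) produces, for $1 < \kappa s_1 < p_0$,
\begin{equation*}
A \le C\, u(s)^{(n+1)(p_0 - \kappa s_1)/(\kappa s_1 n(p_0-1))}.
\end{equation*}
Chaining these bounds, the total exponent of $u(s)$ on the right-hand side is
\begin{equation*}
\frac{p_0 - \kappa s_1}{\kappa n s_1(p_0-1)} + \frac{p_0 - r'}{r'(p_0-1)},
\end{equation*}
which converges to $\frac{1}{n}+1$ as $\kappa, s_1, r' \downarrow 1$. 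Given $\delta_* < 1/n$, one fixes $\kappa>1$ close enough to $1$ and selects $s_1,r'$ accordingly so this exponent is at least $1+\delta_*$, yielding the claimed inequality.

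The main obstacle is parameter bookkeeping rather than any single deep estimate: one must keep $\kappa s_1 < p_0$ and $r' < p_0$ so that the Hölder steps are legal, and at the same time one must verify that $\kappa$ can be chosen as a function of $\delta_*, n, p_0$ alone, so that the constant $C_*$ depends only on $\delta_*$, $\|e^{nG_0}\|_{L^{p_0}}$, the background metric, and $\kappa$ (and not on $s, t, v, \varphi$). The constant factor $r^{n/(n+1)}$ from the Orlicz bound is absorbed into $C_*$ once $r$ (equivalently $r'$) is fixed by $\delta_*$.
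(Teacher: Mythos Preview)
Your argument is correct and uses the same ingredients as the paper's proof (the Moser--Trudinger estimate of Corollary~\ref{c3.6}, H\"older interpolation between $L^1$ and $L^{p_0}$, and a Chebyshev step). The paper organizes these slightly more economically: instead of bounding $tu(s+t)\le\int_{\Omega_{\delta,s}} w\,e^{nG_0}\omega_0^n$ and then separately bounding $A$, it observes directly that $A_{\delta,s,\kappa}\ge c\,t\,u(s+t)$ (Chebyshev inside the definition of $A$) and pairs this with a single self-referential H\"older estimate $A\le C A^{1/(n+1)}u(s)^{\lambda}$, which immediately yields $A\le C\,u(s)^{(n+1)/n-\eps}$; this saves one round of H\"older. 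Your extra pass through $\int w\,d\mu$ is harmless but redundant.
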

\begin{proof}
Let $s\ge s_0$,  so that $A_{\delta,s,\kappa}\le A_{\delta,s_0,\kappa}\le \delta^{n+1}$.  
We then obtain from Corollary \ref{c3.6} that for any positive integer $q$,  one has that:
\begin{equation}\label{3.8}
\int_M\big((1-\delta)v-\varphi-s\big)_+^{\frac{n+1}{n}q}\omega_0^n\le C(q)A_{\delta,s}^{\frac{q}{n}}.
\end{equation}
Therefore,
\begin{equation}\label{3.9}
\begin{split}
&A_{\delta,s,\kappa}=||((1-\delta)v-\varphi-s)e^{nG_0}||_{L^{\kappa}(\Omega_{\delta,s})}\\
&\le ||((1-\delta)v-\varphi-s)||_{L^{\frac{n+1}{n}q}(\Omega_{\delta,s})}||e^{nG_0}||_{L^{q'}(\Omega_{\delta,s})}\\
&\le C(q)A_{\delta,s,\kappa}^{\frac{1}{n+1}}||e^{nG_0}||_{L^{q'}(\Omega_{\delta,s})}\le C(q)A_{\delta,s,\kappa}^{\frac{1}{n+1}}||e^{nG_0}||^{\lambda}_{L^1(\Omega_{\delta,s})}||e^{nG_0}||^{1-\lambda}_{L^{p_0}}.
\end{split}
\end{equation}
In the second line above,  the $q$ will be chosen sufficiently large and $q'$ is such that $\frac{1}{\kappa}=\frac{n}{n+1}\frac{1}{q}+\frac{1}{q'}$.  By choose $\kappa>1$ sufficiently close to 1 and $q$ sufficiently large,  we may make $q'>1$ arbitrarily close to 1.  This just follows from H\"older's inequality. 

 In the first inequality of the third line,  we used (\ref{3.8}).  In the second inequality of the third line,  $0<\lambda<1$ satisfies: $\frac{1}{q'}=\lambda+\frac{1-\lambda}{p_0}$.  By making $q'>1$ sufficiently close to 1,  we may make $\lambda$ as close to 1 as we want.

Hence we see from (\ref{3.9}) that,  for any $\eps>0$:
\begin{equation*}
A_{\delta,s,\kappa}\le C_{\eps}||e^{nG_0}||_{L^1(\Omega_{\delta,s})}^{\frac{n+1}{n}-\eps}.
\end{equation*}
Here $C_{\eps}$ above depends on $||e^{nG_0}||_{L^{p_0}}$ as well.  

On the other hand,  since $(1-\delta)v-\varphi-s>t$ on $\Omega_{s+t}$,  one has:
\begin{equation*}
A_{\delta,s,\kappa}\ge t||e^{nG_0}||_{L^{\kappa}(\Omega_{\delta,s+t})}\ge tc_1||e^{nG_0}||_{L^1(\Omega_{\delta,s+t})}.
\end{equation*}
Hence the result follows.
\end{proof}

We need to use the following Lemma of De Giorgi which was first used in the setting of complex Monge-Ampère equations in \cite{K}:
\begin{lem}\label{l3.9}
Let $\phi:[0,\infty)\rightarrow [0,\infty)$ be a decreasing function,  such that there exists $\mu>0$,  $B_0>0$,  $s_0\ge 0$,  such that for any $r>0,\,s\ge s_0$,  one has:
\begin{equation*}
r\phi(s+r)\le B_0\phi(s)^{1+\mu}.
\end{equation*}
Then $\phi(s)\equiv 0$ for $s\ge s_0+\frac{2B_0\phi(s_0)^{\mu}}{1-2^{-\mu}}.$
\end{lem}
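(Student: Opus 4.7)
The plan is to run a standard De Giorgi iteration: construct an increasing sequence $s_k \to s_\infty < \infty$ starting from $s_0$, along which $\phi(s_k)$ decays geometrically, and then use the monotonicity of $\phi$ to conclude that $\phi$ vanishes beyond $s_\infty$. The whole point is to pick the increments $r_k = s_{k+1} - s_k$ cleverly so that the hypothesis $r\phi(s+r) \le B_0 \phi(s)^{1+\mu}$ forces a halving at each step.

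First I would set $s_0$ equal to the given threshold in the hypothesis and define inductively
\begin{equation*}
r_k := 2 B_0 \phi(s_k)^{\mu}, \qquad s_{k+1} := s_k + r_k.
\end{equation*}
Applying the hypothesis with $s = s_k$ and $r = r_k$ then gives
\begin{equation*}
\phi(s_{k+1}) = \phi(s_k + r_k) \le \frac{B_0 \phi(s_k)^{1+\mu}}{r_k} = \tfrac{1}{2}\phi(s_k),
\end{equation*}
so by induction $\phi(s_k) \le 2^{-k}\phi(s_0)$. Plugging this back into the definition of $r_k$ yields the geometric bound $r_k \le 2 B_0 \phi(s_0)^{\mu} 2^{-k\mu}$.

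Next I would sum the increments to control the limiting point. Since $\mu > 0$, the geometric series converges and
\begin{equation*}
s_\infty := \lim_{k \to \infty} s_k = s_0 + \sum_{k=0}^{\infty} r_k \le s_0 + 2 B_0 \phi(s_0)^{\mu} \sum_{k=0}^{\infty} 2^{-k\mu} = s_0 + \frac{2 B_0 \phi(s_0)^{\mu}}{1 - 2^{-\mu}}.
\end{equation*}
Because $\phi$ is decreasing and $\phi(s_k) \to 0$, for any $s \ge s_\infty$ we can pick $k$ large with $s_k \le s$ and deduce $0 \le \phi(s) \le \phi(s_k) \to 0$, giving $\phi(s) = 0$ on the stated range.

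There is essentially no obstacle here beyond being careful about edge cases: if $\phi(s_0) = 0$ there is nothing to prove, and if $\phi(s_k) = 0$ at some finite stage we can stop the iteration early since the hypothesis trivially forces $\phi(s_k + r) = 0$ for all $r > 0$, and the required inequality $s_\infty \le s_0 + \frac{2 B_0 \phi(s_0)^\mu}{1 - 2^{-\mu}}$ still holds. This is a classical argument, so the only thing to double-check is that the constants match the statement exactly, which they do by the geometric series identity above.
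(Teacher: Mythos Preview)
Your proof is correct and follows essentially the same De Giorgi iteration as the paper: define $s_{k+1}-s_k=2B_0\phi(s_k)^{\mu}$, deduce $\phi(s_{k+1})\le\tfrac{1}{2}\phi(s_k)$, sum the resulting geometric series to bound $s_\infty$, and use monotonicity to conclude. You are in fact slightly more careful than the paper in explicitly handling the edge case $\phi(s_k)=0$ (where $r_k=0$ and the hypothesis cannot be applied directly).
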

\begin{proof}
We can choose a sequence $\{s_k\}_{k\ge 1}$ by induction:
\begin{equation*}
s_{k+1}-s_k=2B_0\phi(s_k)^{\mu}.
\end{equation*}
Then we choose $s=s_k,\,r=s_{k+1}-s_k$,  we see that
\begin{equation*}
\phi(s_{k+1})\le \frac{B_0\phi(s_k)^{1+\mu}}{s_{k+1}-s_k}\le \frac{1}{2}\phi(s_k).
\end{equation*}
That is,  $\phi(s_k)\le 2^{-k}\phi(s_0)$,  hence:
\begin{equation*}
s_{k+1}-s_k\le 2B_0\phi(s_0)^{\mu}2^{-k\mu}.
\end{equation*}
Therefore,
\begin{equation*}
\sum_{k=0}^{\infty}(s_{k+1}-s_k)\le \sum_{k=0}^{\infty}2B_0\phi(s_0)^{\mu}2^{-k\mu}=\frac{2B_0\phi(s_0)^{\mu}}{1-2^{-\mu}}.
\end{equation*}
It implies that $s_k$ is increasing and bounded from above.  Hence we see that $s_k\rightarrow s_{\infty}$.  Moreover,  for all $s\ge s_{\infty}$,  we see that:
\begin{equation*}
\phi(s)\le \phi(s_{\infty})\le \phi(s_k)\le 2^{-k}\phi(s_0).
\end{equation*}
Letting $k\rightarrow\infty$ we see that $\phi(s)\equiv 0$ for $s\ge s_{\infty}$.  Moreover,  one can find that 
$s_{\infty}\le s_0+\frac{2B_0\phi(s_0)^{\mu}}{1-2^{-\mu}}$.
\end{proof}

Combining Lemma \ref{l3.8} and \ref{l3.9},  we see that
\begin{lem}\label{l3.10}
Let $0<\delta<1$,  $s_0>0$ be chosen so that $A_{\delta,s_0,\kappa}\le \delta^{n+1}$.  Then for any $0<\nu<\frac{1}{n}(1-\frac{1}{p_0})$,  we may choose $\kappa>1$ sufficiently close to 1,  such that there exists $C>0$,  depending only on $\nu$,  $||e^{nG_0}||_{L^{p_0}}$,  the background metric and the choice of $\kappa$  such that
\begin{equation*}
\sup_M\big((1-\delta)v-\varphi\big)\le s_0+Cvol\big(\Omega_{\delta,s_0}\big)^{\nu}.
\end{equation*}
\end{lem}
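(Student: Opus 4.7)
The plan is to combine Lemma \ref{l3.8} with the De Giorgi-type iteration in Lemma \ref{l3.9}, and then interpolate using H\"older's inequality to convert the weighted measure $u(s_0)$ into the Lebesgue volume of $\Omega_{\delta,s_0}$.

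First, fix $0<\nu<\frac{1}{n}(1-\frac{1}{p_0})$ and set
\begin{equation*}
\delta_* = \frac{\nu}{1-\frac{1}{p_0}}\,.
\end{equation*}
Since $\nu<\frac{1}{n}(1-\frac{1}{p_0})$, we have $\delta_*<\frac{1}{n}$, so Lemma \ref{l3.8} applies with this choice of $\delta_*$, yielding a constant $C_*>0$ and an associated $\kappa>1$ sufficiently close to $1$ such that
\begin{equation*}
t\,u(s+t)\le C_*\,u(s)^{1+\delta_*}, \qquad s\ge s_0,\ t>0.
\end{equation*}

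Next, I would apply Lemma \ref{l3.9} to $\phi(s)\defd u(s+s_0)$ with $\mu=\delta_*$ and $B_0=C_*$. This shows that $u(s)=0$ for every
\begin{equation*}
s\ge s_0+\frac{2C_*\,u(s_0)^{\delta_*}}{1-2^{-\delta_*}}.
\end{equation*}
Since $e^{nG_0}>0$ pointwise and $\Omega_{\delta,s}$ is open (because $v,\varphi$ are continuous), the vanishing of $u(s)$ forces $\Omega_{\delta,s}=\emptyset$, i.e.\ $(1-\delta)v-\varphi\le s$ everywhere on $M$. Hence
\begin{equation*}
\sup_M\big((1-\delta)v-\varphi\big)\le s_0+\frac{2C_*}{1-2^{-\delta_*}}\,u(s_0)^{\delta_*}.
\end{equation*}

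Finally, I would estimate $u(s_0)$ by H\"older's inequality:
\begin{equation*}
u(s_0)=\int_{\Omega_{\delta,s_0}}e^{nG_0}\omega_0^n\le \|e^{nG_0}\|_{L^{p_0}}\,\mathrm{vol}\big(\Omega_{\delta,s_0}\big)^{1-\frac{1}{p_0}}.
\end{equation*}
Raising to the power $\delta_*$ and using the identity $\delta_*(1-\frac{1}{p_0})=\nu$ gives the claimed inequality with
\begin{equation*}
C=\frac{2C_*}{1-2^{-\delta_*}}\,\|e^{nG_0}\|_{L^{p_0}}^{\delta_*}.
\end{equation*}

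This argument is really just a bookkeeping step: Lemma \ref{l3.8} already encodes the essential PDE information through the Moser--Trudinger-type estimate, and the De Giorgi iteration plus H\"older interpolation is purely real-variable. The only place where care is needed is matching the parameter $\delta_*$ (which governs the De Giorgi iteration exponent, constrained to $\delta_*<1/n$) with the desired exponent $\nu$ via the H\"older interpolation, which is why the hypothesis $\nu<\frac{1}{n}(1-\frac{1}{p_0})$ arises naturally.
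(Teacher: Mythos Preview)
Your proof is correct and follows essentially the same route as the paper: combine Lemma~\ref{l3.8} with the De Giorgi iteration Lemma~\ref{l3.9} to bound $\sup_M((1-\delta)v-\varphi)$ in terms of $u(s_0)^{\delta_*}$, then use H\"older's inequality to pass from $u(s_0)$ to $\mathrm{vol}(\Omega_{\delta,s_0})^{1-1/p_0}$. Your explicit choice $\delta_*=\nu/(1-\tfrac{1}{p_0})$ and the remark that $u(s)=0$ forces $\Omega_{\delta,s}=\emptyset$ (since $e^{nG_0}>0$ and the set is open) are details the paper leaves implicit, but the argument is the same.
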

\begin{proof}
Combining Lemma \ref{l3.8} and \ref{l3.9},  and keep in mind that $u(s)=\int_{\Omega_{\delta,s}}e^{nG_0}\omega_0^n$,  we get: for any $0<\delta_*<\frac{1}{n}$:
\begin{equation}\label{3.11N}
\sup_M((1-\delta)v-\varphi)\le s_0+C_*\big(\int_{\Omega_{\delta,s_0}}e^{nG_0}\omega_0^n\big)^{\delta_*}.
\end{equation}
Here $C$ depends only on the choice of $\delta_*$,  $||e^{nG_0}||_{L^{p_0}}$,  the background metric and the choice of $\kappa>1$.  
We then apply H\"older's inequality to see that:
\begin{equation*}
\int_{\Omega_{\delta,s_0}}e^{nG_0}\omega_0^n\le ||e^{nG_0}||_{L^{p_0}}vol(\Omega_{\delta,s_0})^{1-\frac{1}{p_0}}.
\end{equation*}
Plugging this to (\ref{3.11N}) gives us the result.
\end{proof}
With a little more work,  we wish to get rid of the $\delta$ in the above estimate:
\begin{lem}\label{l3.11}
Assume that $0<\delta<1$,  and $s_0>0$ are chosen so that:
\begin{enumerate}
\item $s_0\ge 2\delta||v||_{L^{\infty}}$,
\item $A_{\delta,s_0,\kappa}\le \delta^{n+1}$.
\end{enumerate}
Then for any $0<\nu<\frac{1}{n}(1-\frac{1}{p_0})$,  there exists $C>0$,  depending only on $\nu$,  $||e^{nG_0}||_{L^{p_0}}$,  and the background metric,  such that
\begin{equation*}
\sup_M(v-\varphi)\le \frac{3s_0}{2}+Cs_0^{-\nu}||(v-\varphi)_+||_{L^1(\omega_0^n)}^{\nu}.
\end{equation*}
\end{lem}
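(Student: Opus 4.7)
The plan is to bootstrap Lemma \ref{l3.10} by absorbing the $\delta$ correction using the hypothesis $s_0\ge 2\delta\|v\|_{L^\infty}$, and then to convert the volume bound on $\Omega_{\delta,s_0}$ into an $L^1$ bound via Chebyshev's inequality. The key observation is that the quantity controlled in Lemma \ref{l3.10} is $(1-\delta)v-\varphi$, whereas the desired estimate involves $v-\varphi$; the two differ by $\delta v$, which is at most $\delta\|v\|_{L^\infty}\le s_0/2$ in absolute value by hypothesis (1). This simple bookkeeping will be the entire content of the step.

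More precisely, I would first write $v-\varphi=\big((1-\delta)v-\varphi\big)+\delta v$ and use $\delta v\le \delta\|v\|_{L^\infty}\le s_0/2$ together with Lemma \ref{l3.10} to get
\begin{equation*}
\sup_M(v-\varphi)\le \frac{3s_0}{2}+C\,\mathrm{vol}(\Omega_{\delta,s_0})^{\nu}.
\end{equation*}
Next I would pass to a lower bound for $v-\varphi$ on $\Omega_{\delta,s_0}$ using the same splitting in the opposite direction: if $(1-\delta)v-\varphi>s_0$ at $x$, then $v-\varphi>s_0+\delta v\ge s_0-\delta\|v\|_{L^\infty}\ge s_0/2$. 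This yields the inclusion $\Omega_{\delta,s_0}\subset\{v-\varphi>s_0/2\}$.

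Finally, Chebyshev's inequality gives
\begin{equation*}
\mathrm{vol}(\Omega_{\delta,s_0})\le \mathrm{vol}\big(\{v-\varphi>s_0/2\}\big)\le \frac{2}{s_0}\int_M (v-\varphi)_+\,\omega_0^n=\frac{2}{s_0}\|(v-\varphi)_+\|_{L^1}.
\end{equation*}
Substituting this into the previous estimate and absorbing the constant $2^\nu$ into $C$ produces
\begin{equation*}
\sup_M(v-\varphi)\le \frac{3s_0}{2}+Cs_0^{-\nu}\|(v-\varphi)_+\|_{L^1}^{\nu},
\end{equation*}
which is exactly the claim.

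There is essentially no hard step here; the lemma is a bookkeeping corollary of Lemma \ref{l3.10}, with the role of hypothesis (1) being precisely to make the two sides of the shift by $\delta v$ controllable by $s_0/2$, so that both the upper bound on $\sup_M(v-\varphi)$ and the lower bound for $v-\varphi$ on the superlevel set $\Omega_{\delta,s_0}$ survive intact. Hypothesis (2) is not used in this step directly, but enters solely because it is needed to invoke Lemma \ref{l3.10}.
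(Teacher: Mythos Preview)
Your proof is correct and follows essentially the same approach as the paper. The only cosmetic difference is that the paper bounds $\mathrm{vol}(\Omega_{\delta,s_0})$ by applying Chebyshev to $(1-\delta)v-\varphi$ on $\Omega_{\delta,s_0}$ and then absorbing the term $\delta\|v\|_{L^\infty}\,\mathrm{vol}(\Omega_{\delta,s_0})$ using hypothesis (1), whereas you first establish the inclusion $\Omega_{\delta,s_0}\subset\{v-\varphi>s_0/2\}$ and then apply Chebyshev directly; both routes yield the same bound $\mathrm{vol}(\Omega_{\delta,s_0})\le \frac{2}{s_0}\|(v-\varphi)_+\|_{L^1}$ and the rest is identical.
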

\begin{proof}
Let $s_0>0,\,0<\delta<1$ be as stated in the lemma.  Then we have
\begin{equation*}
vol(\Omega_{\delta,s_0})\le \frac{1}{s_0}\int_{\Omega_{\delta,s_0}}\big((1-\delta)v-\varphi\big)_+\omega_0^n\le \frac{1}{s_0}\big(||(v-\varphi)_+||_{L^1}+\delta||v||_{L^{\infty}}vol(\Omega_{\delta,s_0})\big).
\end{equation*}
Since $s_0\ge 2\delta||v||_{L^{\infty}}$,  one gets that:
\begin{equation*}
vol(\Omega_{\delta,s_0})\le \frac{2}{s_0}||(v-\varphi)_+||_{L^1}.
\end{equation*}
Therefore,  using Lemma \ref{l3.10},  we get:
\begin{equation*}
\sup_M(v-\varphi)\le \sup_M((1-\delta)v-\varphi)+\delta||v||_{L^{\infty}}\le \frac{3s_0}{2}+Cvol(\Omega_{\delta,s_0})^{\nu}\le \frac{3s_0}{2}+C2^{\nu}s_0^{-\nu}||(v-\varphi)_+||_{L^1}^{\nu}.
\end{equation*}
So the result follows.
\end{proof}
For $0<\delta<1$,  we define $s_0(\delta)$ to be the minimum of $s_0$ that satisfies $s_0\ge 2\delta||v||_{L^{\infty}}$ and $A_{\delta,s_0,\kappa}\ge \delta^{n+1}$.  

At this point,  it only remains to estimate $s_0(\delta)$,  and we have:
\begin{lem}\label{l3.12}
For any $0<\delta<1$,  we define $s_0(\delta)$ to be the smallest $s_0$ such that $s_0\ge 2\delta||v||_{L^{\infty}}$ and $A_{\delta,s_0,\kappa}\le \delta^{n+1}$.  Then for any $\mu>\frac{np_0}{p_0-1}$,  there exists $C_{\mu}$,  depending only on $\mu$,  $||e^{nG_0}||_{L^{p_0}}$,  the background metric and the choice of $\kappa$,  such that
\begin{equation*}
s_0(\delta)\le \max\big(2\delta||v||_{L^{\infty}},C_{\mu}\delta^{-\mu}||(v-\varphi)_+||_{L^1}\big).
\end{equation*}
\end{lem}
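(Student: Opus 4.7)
The plan is to split into two cases based on which of the two defining conditions for $s_0(\delta)$ is binding. If $s_0(\delta) = 2\delta\|v\|_{L^\infty}$, the bound is immediate. Otherwise, by minimality of $s_0(\delta)$ together with the continuity and monotonicity of $s \mapsto A_{\delta,s,\kappa}$, the second constraint must be tight, i.e., $A_{\delta,s_0(\delta),\kappa} = \delta^{n+1}$. The key idea is then to run the chain of inequalities (3.9) from the proof of Lemma \ref{l3.8} in the reverse direction: equality in the upper bound $A_{\delta,s_0(\delta),\kappa}\le \delta^{n+1}$ forces a lower bound on the $L^1$ mass of $e^{nG_0}$ on $\Omega_{\delta,s_0(\delta)}$, which by H\"older gives a lower bound on $\mathrm{vol}(\Omega_{\delta,s_0(\delta)})$, and we then upper bound the same volume by a Chebyshev inequality.

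More concretely, first I would invoke (\ref{3.9}) at $s=s_0(\delta)$: since $A_{\delta,s_0(\delta),\kappa}\le \delta^{n+1}$, the derivation of (\ref{3.8}) via Corollary \ref{c3.6} is valid, and (\ref{3.9}) reads
\begin{equation*}
A_{\delta,s_0(\delta),\kappa}^{n/(n+1)}\le C(q)\,\|e^{nG_0}\|_{L^1(\Omega_{\delta,s_0(\delta)})}^{\lambda}\|e^{nG_0}\|_{L^{p_0}}^{1-\lambda},
\end{equation*}
where $\lambda\in(0,1)$ can be made as close to $1$ as we want by choosing $\kappa>1$ sufficiently close to $1$ and $q$ sufficiently large. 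Plugging $A_{\delta,s_0(\delta),\kappa}=\delta^{n+1}$ and solving gives $\|e^{nG_0}\|_{L^1(\Omega_{\delta,s_0(\delta)})}\ge c\,\delta^{n+\eta}$ for arbitrarily small $\eta>0$. Second, H\"older's inequality yields $\|e^{nG_0}\|_{L^1(\Omega_{\delta,s_0(\delta)})}\le \|e^{nG_0}\|_{L^{p_0}}\,\mathrm{vol}(\Omega_{\delta,s_0(\delta)})^{1-1/p_0}$, so $\mathrm{vol}(\Omega_{\delta,s_0(\delta)})\ge c'\delta^{(n+\eta)p_0/(p_0-1)}$.

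Third, on $\Omega_{\delta,s_0(\delta)}$ one has $\varphi < (1-\delta)v - s_0(\delta)$, hence $v-\varphi > \delta v + s_0(\delta)\ge s_0(\delta)-\delta\|v\|_{L^\infty}\ge s_0(\delta)/2$ by the assumption $s_0(\delta)\ge 2\delta\|v\|_{L^\infty}$. Therefore
\begin{equation*}
\|(v-\varphi)_+\|_{L^1}\ge \frac{s_0(\delta)}{2}\mathrm{vol}(\Omega_{\delta,s_0(\delta)})\ge \frac{c'}{2}\,s_0(\delta)\,\delta^{(n+\eta)p_0/(p_0-1)},
\end{equation*}
which rearranges to $s_0(\delta)\le C\,\delta^{-(n+\eta)p_0/(p_0-1)}\|(v-\varphi)_+\|_{L^1}$. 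Given $\mu>np_0/(p_0-1)$, I would choose $\eta>0$ (and hence $\kappa>1$ close to $1$) so small that $(n+\eta)p_0/(p_0-1)\le \mu$, yielding the claimed estimate.

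The main subtlety is the exponent bookkeeping: the exponent $np_0/(p_0-1)$ in the statement is exactly the limit one obtains as $\lambda\to 1$ in the application of (\ref{3.9}), and as $\lambda$ approaches $1$ we must simultaneously let $\kappa\to 1^+$ and $q\to\infty$; verifying that a single choice of $\kappa$ (depending only on $\mu$, $n$, $p_0$) makes all the ingredients work is the one nontrivial piece of care required. The other steps (Chebyshev and H\"older) are routine, and the case $s_0(\delta)=2\delta\|v\|_{L^\infty}$ is absorbed into the maximum on the right-hand side.
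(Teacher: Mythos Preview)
Your proposal is correct and follows essentially the same strategy as the paper: both proofs split on which constraint is binding, invoke the Moser--Trudinger estimate (\ref{3.8}) at $s=s_0(\delta)$ (valid since $A_{\delta,s_0(\delta),\kappa}\le\delta^{n+1}$ by definition), and combine H\"older with the Chebyshev-type bound $\mathrm{vol}(\Omega_{\delta,s_0})\le (2/s_0)\|(v-\varphi)_+\|_{L^1}$. The only cosmetic difference is that the paper applies H\"older once to pass directly from $\|e^{nG_0}\|_{L^{\beta}(\Omega_{\delta,s_0})}$ to $\|e^{nG_0}\|_{L^{p_0}}\mathrm{vol}(\Omega_{\delta,s_0})^{1/\beta-1/p_0}$, whereas you insert the intermediate quantity $\|e^{nG_0}\|_{L^1(\Omega_{\delta,s_0})}$ via the interpolation step in (\ref{3.9}) and then apply H\"older again; the resulting exponents agree in the limit and your bookkeeping is fine.
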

\begin{proof}
Note that from (\ref{3.8NN}),  $A_{\delta,s,\kappa}$ depends continuously on $s$.  Therefore,  with $s_0=s_0(\delta)$,  one must have either $s_0=2\delta||v||_{L^{\infty}}$ or $A_{\delta,s_0,\kappa}=\delta^{n+1}$.  If the first possibility happens,  then we are done.  Now we look at what happens when $A_{\delta,s_0}=\delta^{n+1}$.  For this,  let $\beta>\kappa$,  we can then calculate:
\begin{equation*}
\begin{split}
&A_{\delta,s_0,\kappa}=\big(\int_{\Omega_{\delta,s_0}}\big((1-\delta)v-\varphi-s\big)_+^{\kappa}e^{\kappa nG_0}\omega_0^n\big)^{\frac{1}{\kappa}}\le ||((1-\delta)v-\varphi-s)_+||_{L^{\frac{\kappa\beta}{\beta-\kappa}}(\omega_0^n)}||e^{nG_0}||_{L^{\beta}(\Omega_{\delta,s_0})}\\
&\le C(\beta)A_{\delta,s_0,\kappa}^{\frac{1}{n+1}}\cdot ||e^{nG_0}||_{L^{p_0}}vol(\Omega_{\delta,s_0})^{\frac{1}{\beta}-\frac{1}{p_0}}\le C(\beta)A_{\delta,s_0,\kappa}^{\frac{1}{n+1}}\cdot ||e^{nG_0}||_{L^{p_0}}(2s_0^{-1})^{\frac{1}{\beta}-\frac{1}{p_0}}||(v-\varphi)_+||_{L^1}^{\frac{1}{\beta}-\frac{1}{p_0}}.
\end{split}
\end{equation*}
Using that $A_{\delta,s_0}=\delta^{n+1}$,  we get that:
\begin{equation*}
s_0\le C'(\beta)\delta^{-\frac{np_0\beta}{p_0-\beta}}||(v-\varphi)_+||_{L^1}.
\end{equation*}
By choosing $\beta>1$ as close to 1 as we want,  we may make $\frac{np_0\beta}{p_0-\beta}$ sufficiently close to $\frac{np_0}{p_0-1}$.
\end{proof}
From this,  the Proposition \ref{p3.1} immediately follows:
\begin{proof}
(Of Proposition \ref{p3.1}) 
Assume that $||(v-\varphi)_+||_{L^1}<1$ for the moment,  then for any $\mu>\frac{np_0}{p_0-1}$,  we wish to take:
\begin{equation*}
\delta=||(v-\varphi)_+||_{L^1}^{\frac{1}{\mu+1}}.
\end{equation*}
With this choice,  we see from Lemma \ref{l3.12} that:
\begin{equation*}
s_0(\delta)\le C||(v-\varphi)_+||_{L^1}^{\frac{1}{\mu+1}}.
\end{equation*}
Here $C$ depends on $\mu,\,||v||_{L^{\infty}},\,||e^{nG_0}||_{L^{p_0}}$,   the background metric and the choice of $\kappa$.  That is,  if we take $s_0=C||(v-\varphi)_+||_{L^1}^{\frac{1}{\mu+1}}$,  it will satisfy $s_0\ge 2\delta||v||_{L^{\infty}}$ and $A_{s_0,\delta}\le \delta^{n+1}$.  Hence,  we may use Lemma \ref{l3.11} to conclude that 
\begin{equation*}
\sup_M(v-\varphi)\le C\big(||(v-\varphi)_+||_{L^1}^{\frac{1}{\mu+1}}+||(v-\varphi)_+||_{L^1}^{\frac{\nu\mu}{\mu+1}}\big)\le C'||(v-\varphi)_+||_{L^1}^a.
\end{equation*}
A careful examination of the exponents shows that one can take any $a<\frac{p_0-1}{np_0+p_0-1}$.  
If $||(v-\varphi)_+||_{L^1}\ge 1$,  then the situation is trivial.  Indeed,  we have estimate for $||\varphi||_{L^{\infty}}$ from Guo-Phong's  $L^{\infty}$ estimate for Hermitian case \cite{GP}.
\end{proof}

\section{uniqueness issues}
\subsection{sup/inf convolution of the viscosity solution}
The basic strategy to prove uniqueness of viscosity solutions will be to perform sup/inf convolution of the solution,  so that we get a sub/super solution that is punctually second order differentiable a.e.  Then we get the pointwise differential inequality wherever the solution is punctually second order differentiable.

First we explain what we mean by punctually second order differentiable which is different from the usual definition of second order differentiable:
\begin{defn}\label{def 5.1}
Let $U\subset \bR^d$ be an open set,  and $x_0\in U$.  Let $\varphi$ be a function defined on $U$.  We say that $\varphi$ is punctually second order differentiable at $x_0$,  if there exists a quadratic polynomial $P_{\varphi,x_0}(x)$,  such that
\begin{equation*}
\lim_{r\rightarrow 0}r^{-2}\sup_{x\in B_r(x_0)}|\varphi(x)-P_{\varphi,x_0}(x)|=0.
\end{equation*}
\end{defn}
It would be useful to observe that:
\begin{lem}\label{l5.2new}
Let $U\subset \bR^d$ be an open set and $x_0\in U$.  Let $\varphi$ be a function defined on $U$.  Then $\varphi$ is punctually second order differentiable at $x_0$ if and only if there is a $C^2$ function $\psi$ defined in a neighborhood of $x_0$ such that 
\begin{equation}\label{5.1New}
\lim_{r\rightarrow 0}r^{-2}\sup_{x\in B_r(x_0)}|\varphi(x)-\psi(x)|=0.
\end{equation}
Moreover,  let $\psi_1,\,\psi_2$ be any two $C^2$ functions defined near $x_0$ that satisfies (\ref{5.1New}),  then the second order Taylor polynomial of $\psi_1$ and $\psi_2$ are equal.
\end{lem}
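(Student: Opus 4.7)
The plan has three essentially independent pieces: the ``if'' direction, the ``only if'' direction, and the uniqueness of the second-order Taylor polynomial. The ``only if'' direction is trivial: if $\varphi$ is punctually second order differentiable at $x_0$ with quadratic polynomial $P_{\varphi,x_0}$, then $\psi := P_{\varphi,x_0}$ is itself a (globally defined) $C^2$ function, and the defining limit for $\varphi$ from Definition \ref{def 5.1} is exactly (\ref{5.1New}).

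For the ``if'' direction, I would start with a $C^2$ function $\psi$ defined in a neighborhood $V$ of $x_0$ and satisfying (\ref{5.1New}), and take $P$ to be the second-order Taylor polynomial of $\psi$ at $x_0$, namely
\begin{equation*}
P(x) = \psi(x_0) + D\psi(x_0)\cdot(x-x_0) + \tfrac{1}{2}(x-x_0)^T D^2\psi(x_0)(x-x_0).
\end{equation*}
The key routine input is that the $C^2$ Taylor remainder admits a uniform estimate: using the integral form of the remainder (or the mean value form $\tfrac{1}{2}(x-x_0)^T(D^2\psi(\xi)-D^2\psi(x_0))(x-x_0)$ for some $\xi$ on $[x_0,x]$) together with the continuity modulus of $D^2\psi$ on a compact neighborhood, one gets $\sup_{B_r(x_0)}|\psi-P|\le \tfrac{1}{2}r^2\,\omega(r)=o(r^2)$. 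Combining this with (\ref{5.1New}) via the triangle inequality, $\sup_{B_r(x_0)}|\varphi-P|=o(r^2)$, so $P$ qualifies as $P_{\varphi,x_0}$.

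For the ``moreover'' clause, let $T_i$ denote the second-order Taylor polynomial of $\psi_i$ at $x_0$, $i=1,2$. By the same remainder estimate, $\sup_{B_r(x_0)}|\psi_i - T_i|=o(r^2)$; combining with (\ref{5.1New}) for each $\psi_i$ gives $\sup_{B_r(x_0)}|T_1-T_2|=o(r^2)$. Thus everything reduces to the elementary linear-algebra fact: a polynomial $Q(x)=c+b\cdot(x-x_0)+(x-x_0)^T M(x-x_0)$ of degree $\le 2$ (with $M$ symmetric) that satisfies $\sup_{B_r(x_0)}|Q|=o(r^2)$ as $r\to 0$ must vanish identically. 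I would extract the coefficients in order: first $c=Q(x_0)=o(r^2)$ for every $r$, hence $c=0$; then for a unit vector $v$, the anti-symmetric combination $Q(x_0+tv)-Q(x_0-tv)=2t\,b\cdot v$ is $o(t^2)$, forcing $b\cdot v=0$ for all $v$ and hence $b=0$; finally, $Q(x_0+tv)=t^2 v^T M v=o(t^2)$ forces $v^T M v=0$ for all $v$, and polarization gives $M=0$. Therefore $T_1=T_2$.

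There is no real obstacle here; the only subtle point is making sure the Taylor remainder bound is genuinely uniform in $x\in B_r(x_0)$ (which is what the pointwise $o(r^2)$ in Definition \ref{def 5.1} actually demands), and that is where the uniform continuity of $D^2\psi$ on a compact neighborhood enters.
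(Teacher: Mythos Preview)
Your proposal is correct and follows essentially the same route as the paper: both directions are handled identically, and for the ``moreover'' clause the paper simply observes that $\lim_{r\to 0}r^{-2}\sup_{B_r(x_0)}|\psi_1-\psi_2|=0$ forces the second-order Taylor polynomials to coincide, which is exactly what you prove in detail via the coefficient-extraction argument. Your write-up is in fact more explicit than the paper's on that last step.
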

\begin{proof}
If $\varphi$ is punctually second order differentiable at $x_0$,  then we can take the $C^2$ function $\psi=P_{\varphi,x_0}$ which is defined on $\bR^d$.

Conversely,  assume that there is a $C^2$ function $\psi$ defined near $x_0$ such that \sloppy $\lim_{r\rightarrow 0}r^{-2}\sup_{x\in B_r(x_0)}|\varphi(x)-\psi(x)|=0$.  Let $P$ be the Taylor polynomial of $\psi$ up to second order,  then $\lim_{r\rightarrow 0}r^{-2}\sup_{x\in B_r(x_0)}|\psi(x)-P(x)|=0$.  Therefore,  $P$ will be the quadratic polynomial satisfying the condition in the definition of punctually second order differentiability.

Let $\psi_1,\,\psi_2$ be any two $C^2$  functions satisfying (\ref{5.1New}),  then 
$\lim_{r\rightarrow 0}r^{-2}\sup_{x\in B_r(x_0)}|\psi_1(x)-\psi_2(x)|=0$.  This implies the Taylor polynomial of $\psi_1$ and $\psi_2$ must agree up to second order.
\end{proof}
Using this lemma,  we observe that the notion of punctually second order differentiability is invariant under diffeomorphism.  Indeed,  we have:
\begin{lem}\label{l5.3}
Let $U,\,V\subset \bR^d$ be open sets.  Let $\Phi:V\rightarrow U$ be a smooth map.  Let $x_0\in U,\,y_0\in V$,  and $x_0=\Phi(y_0)$.  Let $\varphi$ be a function defined on $U$ which is punctually second order differentiable at $x_0$,  then $\varphi\circ \Phi$ is punctually second order differentiable at $y_0$.
\end{lem}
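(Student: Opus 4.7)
The plan is to use the equivalent characterization of punctual second order differentiability given by Lemma \ref{l5.2new}: namely, $\varphi$ is punctually second order differentiable at $x_0$ if and only if there is a $C^2$ function $\psi$ defined near $x_0$ with
\begin{equation*}
\lim_{r\to 0} r^{-2}\sup_{x\in B_r(x_0)}|\varphi(x)-\psi(x)|=0.
\end{equation*}
Working with this characterization rather than with the quadratic polynomial $P_{\varphi,x_0}$ directly is the key, because it converts the question into one about uniform smallness, which behaves well under smooth pullback.

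First I would pick such a $\psi$ supplied by Lemma \ref{l5.2new} and consider the pullback $\psi\circ\Phi$. Since $\Phi$ is smooth and $\psi$ is $C^2$ in a neighborhood of $x_0=\Phi(y_0)$, the composition $\psi\circ\Phi$ is $C^2$ in a neighborhood of $y_0$. So the candidate approximating $C^2$ function for $\varphi\circ\Phi$ at $y_0$ is $\psi\circ\Phi$, and by Lemma \ref{l5.2new} it suffices to verify
\begin{equation*}
\lim_{r\to 0} r^{-2}\sup_{y\in B_r(y_0)}|\varphi(\Phi(y))-\psi(\Phi(y))|=0.
\end{equation*}

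Next I would exploit that $\Phi$ is locally Lipschitz at $y_0$: choose a neighborhood of $y_0$ on which $|\Phi(y)-\Phi(y_0)|\le L|y-y_0|$ for some $L>0$ (e.g.\ $L=\sup|D\Phi|+1$ on a small ball). Then for all sufficiently small $r>0$ one has $\Phi(B_r(y_0))\subset B_{Lr}(x_0)$, hence
\begin{equation*}
\sup_{y\in B_r(y_0)}|\varphi(\Phi(y))-\psi(\Phi(y))|\le \sup_{x\in B_{Lr}(x_0)}|\varphi(x)-\psi(x)|.
\end{equation*}
Multiplying by $r^{-2}=L^2(Lr)^{-2}$ and sending $r\to 0$, the right-hand side tends to $0$ by the defining property of $\psi$, so the desired limit holds.

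No step here is really a main obstacle; the only subtle point is resisting the temptation to work directly with the quadratic polynomial $P_{\varphi,x_0}$ (whose pullback under $\Phi$ is no longer quadratic), and instead using the $C^2$-approximation characterization from Lemma \ref{l5.2new}, for which composition with a smooth map is entirely natural. The Lipschitz estimate on $\Phi$ is what converts a $B_{Lr}(x_0)$-sized fluctuation into a $B_r(y_0)$-sized one with the loss of only a harmless constant $L^2$.
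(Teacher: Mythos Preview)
Your proof is correct and follows essentially the same approach as the paper: both use the $C^2$-approximation characterization from Lemma~\ref{l5.2new}, take $\psi\circ\Phi$ as the candidate $C^2$ function near $y_0$, and use the local Lipschitz property of $\Phi$ to get $\Phi(B_r(y_0))\subset B_{Lr}(x_0)$, from which the required limit follows immediately.
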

\begin{proof}
Let $\psi(x)$ be a $C^2$ function defined in a neighborhood of $x_0$ with \sloppy $\lim_{r\rightarrow 0}r^{-2}\sup_{B_r(x_0)}|\varphi(x)-\psi(x)|=0$.  Then $\psi\circ \Phi$ will be a $C^2$ function defined in a neighborhood of $y_0$.  We wish to show that:
\begin{equation*}
\lim_{r\rightarrow 0}r^{-2}\sup_{y\in B_r(y_0)}|\varphi\circ \Phi(y)-\psi\circ \Phi(y)|=0.
\end{equation*}
Note that there exists $C_1>0$ such that $\Phi(B_r(y_0))\subset B_{C_1r}(x_0)$ for all $r>0$ small enough.  Hence $\sup_{y\in B_r(y_0)}|\varphi\circ \Phi(y)-\psi\circ \Phi(y)|\le \sup_{x\in B_{C_1r}(x_0)}|\varphi(x)-\psi(x)|$.  So the result follows.
\end{proof}

If $\varphi$ is defined on a manifold,  then we can define punctually second order differentiability as follows:
\begin{defn}
    Let $M$ be a manifold and $x_0\in M$. Let $\varphi$ be a function defined on $M$. We say that $\varphi$ is punctually second order differentiable at $x_0$ if there exists a coordinate chart around $x_0$ such that $\varphi$ is punctually second order differentiable at $x_0$ under this coordinate chart in the sense of Definition \ref{def 5.1}.
\end{defn}
Because of Lemma \ref{l5.3},  we see that the notion of punctually second order differentiability is actually independent of the choice of coordinate chart.  In other words,  $\varphi$ would be punctually second order differentiable under any coordinate chart that contains $x_0$.  Moreover,  the Hessian of $\varphi$ is well-defined at $x_0$,  and one defines that to be the Hessian of any $C^2$ function $\psi$ at $x_0$,  where $\psi$ satisfies (\ref{5.1New}).
This definition is independent of the choice of $\psi$ due to Lemma \ref{l5.2new}.

One thing we observe is that,  if a viscosity subsolution/supersolution is twice differentiable at a point,  then at that point,  the differential inequality holds in the classical sense.
\begin{lem}
Let $G:M\times \bR\rightarrow \bR$ be a continuous 
function.
\begin{enumerate}
\item Let $\varphi\in C(M)$ be a viscosity subsolution to $F(\chi+dd^c \varphi)=e^{G(x,\varphi)}$.  Assume that $\varphi$ is punctually second order differentiable at $x_0$.  Then the following holds in the classical sense:
\begin{equation*}
F(\chi+dd^c\varphi)(x_0)\ge e^{G(x_0,\varphi(x_0))},
\end{equation*}
\item Let $\psi\in C(M)$ be a viscosity supersolution to $F(\chi+dd^c \psi)=e^{G(x,\psi)}$.  Assume that $\psi$ is punctually second order differentiable at $x_0$. Then one has
either
\begin{equation*}
\lambda[\chi+dd^c \psi](x_0)\notin Int(\Gamma)
\end{equation*}
or
\begin{equation*}
\lambda[\chi+dd^c \psi](x_0)\in Int(\Gamma)\text{ and }F(\chi+ dd^c \psi)(x_0)\le e^{G(x_0,\psi(x_0))}.
\end{equation*}
\end{enumerate}
\end{lem}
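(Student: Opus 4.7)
The plan is to promote the punctual quadratic Taylor polynomial of $\varphi$ (respectively $\psi$) at $x_0$ to a genuine $C^2$ test function that touches $\varphi$ from above (respectively $\psi$ from below) at $x_0$, apply the viscosity definition to each perturbation, and let the perturbation parameter tend to zero, invoking continuity of $F$ on $\Gamma$ and closedness of $\Gamma$.

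I first work in a local coordinate chart centered at $x_0$. Let $P$ be the quadratic polynomial provided by Lemma~\ref{l5.2new}, so that $P(x_0)=\varphi(x_0)$ and $|\varphi(z)-P(z)|=o(|z-x_0|^2)$ as $z\to x_0$; in particular $dd^cP(x_0)$ agrees with the intrinsic complex Hessian of $\varphi$ at $x_0$ (the chart-independence is handled by Lemma~\ref{l5.3}).

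For part (1), I set $P_\epsilon(z):=P(z)+\epsilon|z-x_0|^2$. Then $P_\epsilon(x_0)=\varphi(x_0)$ and
\[
P_\epsilon(z)-\varphi(z)\ge \epsilon|z-x_0|^2-o(|z-x_0|^2)\ge 0
\]
for $z$ close enough to $x_0$, so $P_\epsilon$ is a $C^2$ function touching $\varphi$ from above at $x_0$. The viscosity subsolution property then yields $\lambda[\chi+dd^cP_\epsilon](x_0)\in\Gamma$ and $F(\chi+dd^cP_\epsilon)(x_0)\ge e^{G(x_0,\varphi(x_0))}$. As $\epsilon\to 0$, $dd^cP_\epsilon(x_0)\to dd^c\varphi(x_0)$; since $\Gamma$ is closed and $F=f\circ\lambda$ is continuous on $\Gamma$, both statements pass to the limit and give the classical inequality.

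For part (2), I take $P_\epsilon(z):=P(z)-\epsilon|z-x_0|^2$, which by the analogous estimate is a $C^2$ function touching $\psi$ from below at $x_0$ (the standard convention for supersolutions). Applying the supersolution definition to each $P_\epsilon$ yields the dichotomy that either $\lambda[\chi+dd^cP_\epsilon](x_0)\notin\Gamma$, or else the eigenvalues lie in $\Gamma$ and $F(\chi+dd^cP_\epsilon)(x_0)\le e^{G(x_0,\psi(x_0))}$. If the first alternative holds along some sequence $\epsilon_k\to 0$, then by continuity of the eigenvalue map and openness of $\bR^n\setminus\Gamma$ the limit satisfies $\lambda[\chi+dd^c\psi](x_0)\notin\mathrm{Int}(\Gamma)$, which is the first alternative of the conclusion. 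Otherwise the second branch holds for all sufficiently small $\epsilon$; if the limiting eigenvalue lies on $\partial\Gamma$ we are again in the first alternative of the conclusion, while if it lies in $\mathrm{Int}(\Gamma)$ then continuity of $F$ on $\mathrm{Int}(\Gamma)$ yields the desired inequality. The main subtlety lies here: pushing the Hessian of $P$ downward by $-\epsilon|z-x_0|^2$ could in principle eject the eigenvalues from $\Gamma$ for every $\epsilon>0$, so one cannot simply pass a single inequality to the limit, and the dichotomy must be disentangled via the case split above.
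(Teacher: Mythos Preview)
Your proof is correct and follows essentially the same approach as the paper: perturb the quadratic Taylor polynomial by $\pm\epsilon|z-x_0|^2$ to obtain a $C^2$ test function touching from above (respectively below), apply the viscosity definition, and let $\epsilon\to 0$ using continuity of $f$ on $\Gamma$. The only cosmetic difference is in part~(2): the paper splits directly on whether $\lambda[\chi+dd^c\psi](x_0)\in\mathrm{Int}(\Gamma)$ and uses openness of $\mathrm{Int}(\Gamma)$ to guarantee the second supersolution branch for all small $\epsilon$, whereas you split on which branch of the supersolution dichotomy holds for the perturbed tests and then pass to the limit; both arguments are equivalent. One tiny wording point: in your first subcase of (2), the relevant fact is openness of $\mathrm{Int}(\Gamma)$ (equivalently closedness of its complement), not openness of $\mathbb{R}^n\setminus\Gamma$.
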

\begin{proof}
First we prove (1).  Let $\tilde{\varphi}$ be a $C^2$ function defined in a neighborhood of $x_0$ such that $\lim_{r\rightarrow 0}r^{-2}\sup_{B_r(x_0)}|\varphi(x)-\tilde{\varphi}(x)|=0$.
Then we see that for any $\eps>0$,  $\tilde{\varphi}(x)+\eps|x-x_0|^2$ would touch $\varphi$ from above at $x_0$ in the sense of Definition \ref{touch}. By writing $|x-x_0|^2$,  we have chosen some holomorphic coordinate chart near $x_0$.  Also we may assume that $\delta_{ij}\le Cg_{i\bar{j}}$ on this chart,  for some $c>0$.  
 Since $\varphi$ is a viscosity subsolution,  we see that:
\begin{equation*}
f\big(\lambda[\chi+ dd^c\tilde{\varphi}(x_0)+C\eps \omega_0]\big)\ge f\big(\lambda[\chi+ dd^c(P_{\varphi,x_0}+\eps|x-x_0|^2)]\big)\ge e^{G(x_0,\varphi(x_0))}.
\end{equation*}
Let $\eps\rightarrow 0$,  we see that $\lambda[\chi+dd^c\tilde{\varphi}(x_0)+C\eps \omega_0]\rightarrow \lambda[\chi+dd^c\tilde{\varphi}](x_0)$.  Since $f$ is a coutinuous function,  we see that
\begin{equation*}
f\big(\lambda[\chi+ dd^c\tilde{\varphi}]\big)(x_0)\ge e^{G(x_0,\varphi(x_0))}.
\end{equation*}
The proof of (2) is similar.  Indeed,  let $\tilde{\psi}$ be a $C^2$ function defined in a neighborhood of $x_0$ such that $\lim_{r\rightarrow 0}r^{-2}\sup_{B_r(x_0)}|\psi(x)-\tilde{\psi}(x)|=0$,  we know that for any $\eps>0$,  $\tilde{\psi}-\eps|x-x_0|^2$ would touch $\psi$ from below.

If it happens that $\lambda[\chi+dd^c \tilde{\psi}](x_0)\notin Int(\Gamma)$,  then we are in the first possibility and we are done.  

If $\lambda[\chi+dd^c \tilde{\psi}](x_0)\in Int(\Gamma)$,  then $\lambda[\chi+dd^c\tilde{\psi}(x_0)-C\eps\omega_0]\in Int(\Gamma)$ for small enough $\eps$.  Then the argument is the same as (1),  by sending $\eps\rightarrow 0$ and using the continuity of $f$.
\end{proof}

In general,  a viscosity solution is only continuous,  and there is no guarantee that it is punctually second order differentiable anywhere.  So our first step would be to find suitable regularization,  that makes the original solution a subsolution/supersolution,  and that it is punctually second order differentiable a.e.  

Henceforth we assume that $\varphi\in C(M)$ is a viscosity solution to $F(\chi+dd^c\varphi)=e^{G(x,\varphi)}$.  We define the super convolution of $\varphi$ as follows:
\begin{equation}\label{4.1}
\varphi^{\eps}(z)=\sup_{\xi\in T_zM}\big(\varphi(\exp_z(\xi))+\eps-\frac{1}{\eps}|\xi|_z^2\big),
\end{equation}
where $\exp_z(\xi)$ is the exponential map at $z$,  defined using the metric $\omega_0$,  and $|\xi|_z$ denotes the length of the tangent vector $\xi$,  again using the metric $\omega_0$.  Similarly we define the inf convolution of $\varphi$:
\begin{equation}\label{4.2}
\varphi_{\eps}(z)=\inf_{\xi\in T_zM}\big(\varphi(\exp_z(\xi))-\eps+\frac{1}{\eps}|\xi|_z^2\big).
\end{equation}

A key step is to show that $\varphi^{\eps}$ and $\varphi_{\eps}$ defined above produce viscosity subsolution and supersolution,  up to a small error.  Also from the definition of $\varphi^{\eps}$ and $\varphi_{\eps}$,  they will be semi-convex and semi-concave,  hence punctually second order differentiable a.e.  Next we make this precise.
We first verify that $\varphi^{\eps}$ and $\varphi_{\eps}$ defined above are subsolution/supersolution.

 Denote $\rho_{\varphi}(r),\,\,0<r<1$ to be the modulus of continuity of $\varphi$.  That is,  $\rho_{\varphi}(r)=\max\{|\varphi(x)-\varphi(y)|:\,\,d_g(x,y)\le r,\,x,y\in M\}$.
\begin{prop}\label{p4.3}
Let $\varphi$ be a viscosity solution to $F(\chi+dd^c \varphi)=e^{G(x,\varphi)}$ with $G(x,\varphi)$ continuous.  Then there exist continuous functions $\rho(\eps):(0,1)\rightarrow \bR_{>0}$ with $\rho(0+)=0$,  and $\rho_1(\eps,a_1),\,\rho_2(\eps,a_2):(0,1)^2\rightarrow \bR_{>0}$ with $\rho_i(0+,0+)=0,\,\,i=1,2$,  such that for any $0<\eps<1$,  $0<a_i<1,\,i=1,\,2$:
\begin{enumerate}
\item $\frac{\varphi^{\eps}}{1+a_1}$ is $\Gamma$-subharmonic with respect to $\frac{\chi+\rho(\eps)\omega_0}{1+a_1}$ in the viscosity sense,  \sloppy and $F(\frac{\chi+\rho(\eps)\omega_0}{1+a_1}+dd^c\frac{\varphi^{\eps}}{1+a_1})\ge e^{G(x,\frac{\varphi^{\eps}}{1+a_1})}-\rho_1(\eps,a_1)$ in the viscosity sense.
\item $\frac{\varphi_{\eps}}{1-a_2}$ satisfies $F(\frac{\chi-\rho(\eps)\omega_0}{1-a_2}+dd^c\frac{\varphi_{\eps}}{1-a_2})\le e^{G(x,\frac{\varphi_{\eps}}{1-a_2})}+\rho_2(\eps,a_2)$ in the viscosity sense.
\end{enumerate}
Here the functions $\rho,\,\rho_i,\,i=1,\,2$ are determined by $\rho_{\varphi}$,  $||\varphi||_{L^{\infty}}$,  the form $\chi$ and the background metric.
\end{prop}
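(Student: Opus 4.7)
The plan is to follow the manifold-adapted sup/inf convolution argument of our previous work \cite{CX}; I focus on (1), with (2) being analogous via the inf-convolution (the case when the transported test function has $\lambda\notin\Gamma$ is covered by the first alternative of the supersolution definition). The proof has three stages: control of the optimizer, transport of a $C^2$ test function via the exponential map, and absorption of the resulting geometric errors through the $(1+a_1)$-rescaling.

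First, I would show that the supremum in (\ref{4.1}) is attained at some $\xi^*(z)\in T_zM$ that is small, uniformly in $z$. Comparing the value at $\xi^*(z)$ with the value at $\xi=0$ gives
\begin{equation*}
\tfrac{1}{\eps}|\xi^*(z)|_z^2\le \varphi(\exp_z\xi^*(z))-\varphi(z)\le \rho_\varphi(|\xi^*(z)|_z).
\end{equation*}
A two-step bootstrap (first using $\rho_\varphi\le 2\|\varphi\|_{L^\infty}$ to get $|\xi^*|\le C\sqrt{\eps}$, then reinserting) produces a modulus $|\xi^*(z)|_z\le \rho(\eps)$ with $\rho(0+)=0$, uniform in $z$; for $\eps$ small we may assume $\rho(\eps)$ is below the injectivity radius. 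Now, let $P\in C^2$ touch $\varphi^\eps/(1+a_1)$ from above at $z_0$, set $\xi_0=\xi^*(z_0)$ and $y_0=\exp_{z_0}(\xi_0)$. For $v\in T_{z_0}M$ small let $z_v=\exp_{z_0}(v)$, let $\xi_v\in T_{z_v}M$ be the parallel transport of $\xi_0$ along $t\mapsto\exp_{z_0}(tv)$, and put $y_v=\exp_{z_v}(\xi_v)$. Since parallel transport is an isometry, $|\xi_v|_{z_v}=|\xi_0|_{z_0}$, and by definition of $\varphi^\eps$,
\begin{equation*}
(1+a_1)P(z_v)\ge \varphi^\eps(z_v)\ge \varphi(y_v)+\eps-\tfrac{|\xi_0|_{z_0}^2}{\eps},
\end{equation*}
with equality at $v=0$. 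For $\eps$ small the map $v\mapsto y_v$ is a local diffeomorphism, so composing with its local inverse produces a $C^2$ function $\tilde P$ near $y_0$ that touches $\varphi$ from above; the subsolution property of $\varphi$ then yields both $\lambda[\chi+dd^c\tilde P](y_0)\in\Gamma$ and $F(\chi+dd^c\tilde P)(y_0)\ge e^{G(y_0,\varphi(y_0))}$.

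The last stage transfers this inequality back to $z_0$. A Jacobi-field expansion along $\gamma(s)=\exp_{z_0}(s\xi_0)$ identifies the differential of $v\mapsto y_v$ at $0$ with parallel transport $T_{z_0}M\to T_{y_0}M$ plus a curvature correction of order $|\xi_0|^2$, and $\chi$, $\omega_0$ differ from their parallel transports by $O(|\xi_0|)$. Under this identification,
\begin{equation*}
\chi(y_0)+dd^c\tilde P(y_0)=\chi(z_0)+(1+a_1)\,dd^cP(z_0)+E,
\end{equation*}
with an error matrix satisfying $|E|_{\omega_0}\le C\rho(\eps)\bigl(1+|dP(z_0)|+|dd^cP(z_0)|\bigr)\omega_0$. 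The main obstacle is the $|dd^cP|$-contribution to $E$, which has no a priori upper bound, and this is where the $(1+a_1)$-rescaling enters decisively. Semi-convexity of $\varphi^\eps$ (a sup of uniform-$C^2$ paraboloids) forces $dd^cP\ge -C/(\eps(1+a_1))\,\omega_0$, controlling the negative part; when some eigenvalue of $dd^cP$ is large positive, the determinant-domination hypothesis (Assumption \ref{a1.1N}(3)) makes $f$ of the sum large and the target inequality is trivial; otherwise $dd^cP$ is uniformly bounded, the error $\rho(\eps)|dd^cP|$ is absorbed into the $\rho(\eps)\omega_0$ correction, and the $(1+a_1)^{-1}$ factor extracted via the degree-one homogeneity of $f$ compensates for the mismatch between scaled and unscaled sides. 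Finally, the $G$-modulus $\rho_1(\eps,a_1)$ comes from the uniform continuity of $G$ applied to $d_g(z_0,y_0)+|\varphi(y_0)-(1+a_1)P(z_0)|=O(\rho(\eps)+\eps+a_1\|\varphi\|_{L^\infty})$, giving $\rho_1(0+,0+)=0$.
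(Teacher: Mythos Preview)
Your third stage has a genuine gap. Even granting the error bound $|E|_{\omega_0}\le C\rho(\eps)(1+|dP|+|dd^cP|)$, the dichotomy you invoke for large $|dd^cP|$ fails: the determinant-domination condition $f(\lambda)\ge c_0(\prod_i\lambda_i)^{1/n}$ holds only on $\Gamma_n$ and gives no lower bound when a single eigenvalue of $\tfrac{\chi+\rho(\eps)\omega_0}{1+a_1}+dd^cP$ is large while the others merely lie in $\Gamma$ (possibly near $\partial\Gamma$). Semi-convexity of $\varphi^\eps$ bounds $dd^cP$ only from below, so the other eigenvalues can sit where $f$ is small while one runs to $+\infty$; thus ``$f$ of the sum is large'' is unjustified. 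There is a second issue upstream: the parallel-transport map $v\mapsto y_v$ is not holomorphic on a general Hermitian manifold, so pulling $dd^c$ back through it introduces the pure second derivatives $\partial_{z_iz_j}P$ as well, i.e.\ the full real Hessian $D^2P$, which is not controlled by the complex Hessian or by the subsolution structure.

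The paper sidesteps this by a different transport. In normal coordinates at $x_0$ it sets $\phi(z)=\exp_{x_0}(\xi_0)+N\cdot z$ with a constant matrix $N$ chosen so that $N^{*}g(w_0)=g(x_0)$. Since $\phi$ is holomorphic and affine, Lemma~\ref{l4.7} gives that the eigenvalues $\lambda[\chi+dd^c(Q\circ\phi^{-1})](w_0)$ agree, up to conjugation by $N$, with those of $\lambda[N^{*}\chi+dd^cQ](x_0)$, with \emph{no} error in the $dd^cQ$ term. The vector field $\xi(z)$ is then recovered from $\exp_z(\xi(z))=\phi(z)$ via the implicit function theorem, and the key estimate $|D_z^2(|\xi(z)|_z^2)|\le C|\xi_0|^2$ of Lemma~\ref{l4.6} makes the $\tfrac{1}{\eps}|\xi(z)|_z^2$ correction contribute at most $C|\xi_0|^2/\eps\le C\rho_\varphi(C\sqrt{\eps})$ to the Hessian. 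Together with $|\chi(w_0)-N^{*}\chi(x_0)|\le C|\xi_0|$ (Lemma~\ref{l4.8}), everything is absorbed by the single term $\rho(\eps)\omega_0$, independently of $dd^cP$. The factor $(1+a_1)$ plays no role in this absorption; it is introduced only so that later, in Proposition~\ref{p4.12}, one can arrange $\tfrac{c_*+\rho(\eps)}{1+a_1}<\tfrac{c_*-\rho(\eps)}{1-a_2}$.
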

For the proof of Proposition \ref{p4.3},  we need to understand how the touching of $\varphi^{\eps}$ or $\varphi_{\eps}$ from above or below translates to the touching of $\varphi$.

\begin{lem}\label{l4.4}
\begin{enumerate}
\item Let $\varphi\in C(M)$ and we define $\varphi^{\eps}$ according to (\ref{4.1}).  Let $x_0\in M$ and $P$ is a $C^2$ function defined in a neighborhood $x_0$ that touches $\varphi^{\eps}$ from above at $x_0$.  Assume that
\begin{equation*}
\varphi^{\eps}(x_0)=\varphi\big(\exp_{x_0}(\xi_0)\big)+\eps-\frac{1}{\eps}|\xi|_{x_0}^2,\text{ for some $\xi_0\in T_{x_0}M$}.
\end{equation*}
Let $\xi(z)\in T_zM$ be a smooth vector field defined in a neighborhood of $x_0$ with $\xi(x_0)=\xi_0$.  Put $\phi(z)=\exp_z(\xi(z))$,  then $z\mapsto P-\eps+\frac{1}{\eps}|\xi(z)|_z^2$ touches $\varphi\circ \phi$ from above at $x_0$.
\item Let $\varphi\in C(M)$ and we define $\varphi_{\eps}$ according to (\ref{4.2}).  Let $x_0\in M$ and $P
$ is a $C^2$ function defined in a neighgorhood $x_0$ that touches $\varphi_{\eps}$ from below at $x_0$.  Assume that:
\begin{equation*}
\varphi_{\eps}(x_0)=\varphi\big(\exp_{x_0}(\tilde{\xi}_0)\big)-\eps+\frac{1}{\eps}|\tilde{\xi}|_{x_0}^2,\text{ for some $\tilde{\xi}_0\in T_{x_0}M$}.
\end{equation*}
Let $\tilde{\xi}(z)\in T_zM$ be a smooth vector field defined in a neighborhood of $x_0$ with $\tilde{\xi}(x_0)=\tilde{\xi}_0$.  Put $\tilde{\phi}(z)=\exp_z(\tilde{\xi}(z))$,  then $z\mapsto P+\eps-\frac{1}{\eps}|\tilde{\xi}(z)|^2_z$ touches $\varphi\circ \phi$ from below at $x_0$.
\end{enumerate}
\end{lem}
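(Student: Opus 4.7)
The plan is to read off both claims directly from the sup/inf structure of the convolutions; the two parts are symmetric, so I would focus on (1) and indicate how (2) is handled by the same recipe.

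For part (1), I would first verify equality at $x_0$. By the hypothesis on where the sup defining $\varphi^{\eps}(x_0)$ is realized, together with $\xi(x_0)=\xi_0$ and $\phi(x_0)=\exp_{x_0}(\xi_0)$, one has
\begin{equation*}
\varphi^{\eps}(x_0) \;=\; \varphi(\phi(x_0)) + \eps - \tfrac{1}{\eps}|\xi(x_0)|_{x_0}^2.
\end{equation*}
Since $P(x_0) = \varphi^{\eps}(x_0)$ by the touching assumption, this rearranges to
\begin{equation*}
\bigl(P - \eps + \tfrac{1}{\eps}|\xi(\cdot)|_{\cdot}^2\bigr)(x_0) \;=\; \varphi(\phi(x_0)),
\end{equation*}
which is the equality at the touching point required by Definition \ref{touch}.

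Next I would establish the one-sided inequality in a neighborhood. For every $z$ close enough to $x_0$, the vector field $\xi(z)\in T_zM$ is an admissible competitor in the sup defining $\varphi^{\eps}(z)$, so
\begin{equation*}
\varphi^{\eps}(z) \;\ge\; \varphi(\exp_z(\xi(z))) + \eps - \tfrac{1}{\eps}|\xi(z)|_z^2 \;=\; \varphi(\phi(z)) + \eps - \tfrac{1}{\eps}|\xi(z)|_z^2.
\end{equation*}
Combined with $P\ge\varphi^{\eps}$ near $x_0$ (the touching-from-above assumption), this gives $P(z) \ge \varphi(\phi(z)) + \eps - \tfrac{1}{\eps}|\xi(z)|_z^2$, that is,
\begin{equation*}
\bigl(P - \eps + \tfrac{1}{\eps}|\xi(\cdot)|_{\cdot}^2\bigr)(z) \;\ge\; \varphi(\phi(z)) \;=\; (\varphi\circ\phi)(z).
\end{equation*}
Together with the equality at $x_0$, this is exactly the statement that $P-\eps+\tfrac{1}{\eps}|\xi(\cdot)|_{\cdot}^2$ touches $\varphi\circ\phi$ from above at $x_0$. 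One should also note that $\phi$ is defined only in a small neighborhood of $x_0$ (since $\exp_z$ is a diffeomorphism from a neighborhood of $0\in T_zM$ onto its image), but this is harmless since Definition \ref{touch} is local.

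Part (2) is a word-for-word dual: one substitutes $\inf$ for $\sup$, flips the signs of the quadratic correction, and reverses the inequalities throughout. There is no real obstacle here — the argument is essentially tautological, merely reorganizing the definitions of $\varphi^{\eps}$ and $\varphi_{\eps}$; the only point worth being careful about is to use the hypothesis that the extremum is \emph{attained} at $\xi_0$ (resp.\ $\tilde\xi_0$), which guarantees equality at the base point and is used precisely once in each part.
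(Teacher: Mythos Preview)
Your proof is correct and follows essentially the same approach as the paper: use $P\ge\varphi^{\eps}$ from the touching hypothesis, bound $\varphi^{\eps}(z)$ below by plugging the competitor $\xi(z)$ into the sup, and observe equality at $x_0$ since $\xi_0$ realizes the sup there. The paper's argument is slightly more compressed but logically identical, and likewise defers part (2) to symmetry.
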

\begin{proof}
We will just prove (1),  and the proof of (2) repeats that of (1) almost word for word.
By assumption,  we know that $P(z)\ge \varphi^{\eps}(z)$ in a neighorhood of $x_0$.  Therefore
\begin{equation*}
\begin{split}
P(z)&\ge \sup_{\xi\in T_zM}\big(\varphi(\exp_z(\xi))+\eps-\frac{1}{\eps}|\xi|_z^2\big)\ge \varphi(\exp_z(\xi(z))+\eps-\frac{1}{\eps}|\xi(z)|_z^2\\
&=\varphi\circ \phi(z)+\eps-\frac{1}{\eps}|\xi(z)|_z^2.
\end{split}
\end{equation*}
Also we know that equality is achieved when $z=x_0$.  This follows from that $\xi(x_0)=\xi_0$,  and $\xi_0$ achieves the sup by assumption.
\end{proof}
Next we wish to choose an appropriate $\phi(z)$,  and obtain $\xi(z)$ by inverting $\exp_z$.  Denote $w_0=\exp_z(\xi_0)$,  we hope to define $\phi(z)$ in a neighhorhood of $x_0$ such that:
\begin{equation*}
\sum_{i,j}g_{i\bar{j}}(w_0)\frac{\partial \phi_i}{\partial z_a}\frac{\partial \bar{\phi}_j}{\partial\bar{z}_b}=g_{a\bar{b}}(x_0),\,\,\text{$\phi$ is holomorphic},\,\,\det D_z\phi(x_0)\neq 0.
\end{equation*}
Next we explain why this choice is possible.  We first wish to estimate the smallness of $\xi$.  For this we have
\begin{lem}\label{l4.5}
\begin{enumerate}
\item Let $\varphi\in C(M)$ and let $\varphi^{\eps}$ be as defined by (\ref{4.1}).  Let $x_0\in M$ and $\xi_0\in T_{x_0}M$.  Assume that $\xi_0$ achieves the sup in the definition of $\varphi^{\eps}$ at $x_0$,  then 
\begin{equation*}
|\xi_0|_{x_0}\le \eps^{\frac{1}{2}}\big(\rho_{\varphi}(C\eps^{\frac{1}{2}})\big)^{\frac{1}{2}}.
\end{equation*}
\item Let $\varphi_{\eps}$ be as defined by (\ref{4.2}) and $\tilde{\xi}_0\in T_{x_0}M$ achieves the inf in the definition of $\varphi_{\eps}$ at $x_0$,  then 
\begin{equation*}
|\tilde{\xi}_0|_{x_0}\le \eps^{\frac{1}{2}}\big(\rho_{\varphi}(C\eps^{\frac{1}{2}})\big)^{\frac{1}{2}}.
\end{equation*}
Here the constant $C$ appearing above depends only on $||\varphi||_{L^{\infty}}$ and the background metric.
\end{enumerate}
\end{lem}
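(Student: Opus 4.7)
The plan is a standard two-step bootstrap: first extract a crude bound $|\xi_0|_{x_0} \le C\eps^{1/2}$ from the fact that $\varphi$ is bounded, then feed that bound into the modulus of continuity to get the advertised refinement. The only geometric input we need is that, for $\xi \in T_{x_0}M$ sufficiently small, the distance $d_g(x_0, \exp_{x_0}(\xi))$ from $x_0$ to $\exp_{x_0}(\xi)$ (measured with respect to the Riemannian metric underlying $\omega_0$) equals $|\xi|_{x_0}$, since the exponential geodesic of length $|\xi|_{x_0}$ is minimizing in a normal neighborhood.

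For part (1), I would compare the supremum with the value at $\xi = 0$. Since $\xi_0$ achieves the sup, we have
\begin{equation*}
\varphi(\exp_{x_0}(\xi_0)) + \eps - \tfrac{1}{\eps}|\xi_0|_{x_0}^2 \ge \varphi(x_0) + \eps,
\end{equation*}
so $\tfrac{1}{\eps}|\xi_0|_{x_0}^2 \le \varphi(\exp_{x_0}(\xi_0)) - \varphi(x_0) \le 2\|\varphi\|_{L^\infty}$. This already gives $|\xi_0|_{x_0} \le C_0 \eps^{1/2}$ with $C_0 = (2\|\varphi\|_{L^\infty})^{1/2}$. We may pass to a sufficiently small $\eps$ so that $C_0\eps^{1/2}$ lies in a normal neighborhood of $x_0$ uniformly in $x_0\in M$ (this is where compactness of $M$ enters); then $d_g(x_0,\exp_{x_0}(\xi_0)) = |\xi_0|_{x_0} \le C_0\eps^{1/2}$, and plugging back into the same inequality yields
\begin{equation*}
\tfrac{1}{\eps}|\xi_0|_{x_0}^2 \le \varphi(\exp_{x_0}(\xi_0)) - \varphi(x_0) \le \rho_\varphi(C_0\eps^{1/2}),
\end{equation*}
which is exactly the claimed bound after rearranging.

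Part (2) is symmetric. The optimizer $\tilde\xi_0$ for the inf satisfies
\begin{equation*}
\varphi(\exp_{x_0}(\tilde\xi_0)) - \eps + \tfrac{1}{\eps}|\tilde\xi_0|_{x_0}^2 \le \varphi(x_0) - \eps,
\end{equation*}
so $\tfrac{1}{\eps}|\tilde\xi_0|_{x_0}^2 \le \varphi(x_0) - \varphi(\exp_{x_0}(\tilde\xi_0)) \le 2\|\varphi\|_{L^\infty}$, and the identical bootstrap applies.

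There is no real obstacle here; the only subtle point is the uniform choice of normal neighborhood radius so that the exponential map is well-defined and length-preserving at the radius $|\xi_0|_{x_0}$. This is handled by compactness of $M$, which forces a uniform injectivity radius, so the constant $C$ in the statement depends only on $\|\varphi\|_{L^\infty}$ and the background metric as claimed.
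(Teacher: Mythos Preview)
Your proof is correct and follows essentially the same two-step bootstrap as the paper: first use the $L^\infty$ bound on $\varphi$ to get $|\xi_0|_{x_0}\le C\eps^{1/2}$, then feed this into the modulus of continuity via $d_g(x_0,\exp_{x_0}(\xi_0))\le|\xi_0|_{x_0}$ to obtain the refined estimate. The paper's argument is identical in structure, differing only in that it uses the inequality $d_g(x_0,\exp_{x_0}(\xi_0))\le|\xi_0|_{x_0}$ (which holds without any injectivity-radius consideration) rather than equality in a normal neighborhood.
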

\begin{proof}
Again we will just prove (1),  and the proof of (2) repeats that of (1) word for word.  Indeed,  we have:
\begin{equation}\label{4.3}
\varphi^{\eps}(x_0)=\varphi(\exp_{x_0}(\xi_0))+\eps-\frac{1}{\eps}|\xi_0|_{x_0}^2\ge \varphi(x_0)+\eps.
\end{equation}
In the $\ge$ above,  we have taken $\xi=0$.  Therefore,  we get:
\begin{equation*}
|\xi_0|_{x_0}\le \eps^{\frac{1}{2}}(2||\varphi||_{L^{\infty}})^{\frac{1}{2}}.
\end{equation*}
Denote $C=(2||\varphi||_{L^{\infty}})^{\frac{1}{2}}$,  we go back to (\ref{4.3}) and obtain that 
\begin{equation*}
|\xi_0|_{x_0}^2\le \eps\big(\varphi(\exp_{x_0}(\xi_0))-\varphi(x_0)\big)\le \eps\rho_{\varphi}\big(d_g(\exp_{x_0}(\xi_0),x_0)\big)\le \eps\rho_{\varphi}(|\xi_0|_{x_0})\le \eps\rho_{\varphi}(C\eps^{\frac{1}{2}}).
\end{equation*}
Hence our result follows.
\end{proof}
From Lemma \ref{l4.5},  we see that,  as $\eps\rightarrow 0$,  we would have $\xi_0,\,\,\tilde{\xi}_0\rightarrow x_0$.  

Another observation that follows from Lemma \ref{l4.5} is that both $\varphi^{\eps}$ and $\varphi_{\eps}$ approximate $\varphi$ uniformly.
\begin{lem}
$\varphi^{\eps}$ and $\varphi_{\eps}$ approximate $\varphi$ uniformly as $\eps\rightarrow0$.  More precisely,  for any $x_0\in M$:
\begin{equation*}
\begin{split}
&\varphi(x_0)+\eps\le \varphi^{\eps}(x_0)\le \varphi(x_0)+\eps+\rho_{\varphi}(C\eps^{\frac{1}{2}}),\\
&\varphi(x_0)-\eps-\rho_{\varphi}(C\eps^{\frac{1}{2}})\le \varphi_{\eps}(x_0)\le \varphi(x_0)-\eps.
\end{split}
\end{equation*}
Here $C$ is the same constant as in Lemma \ref{l4.5}.
\end{lem}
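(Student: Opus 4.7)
The plan is to deduce both pairs of inequalities directly from the definitions in (\ref{4.1})--(\ref{4.2}) together with Lemma \ref{l4.5}, treating the sup-convolution and inf-convolution cases in parallel.

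First I would establish $\varphi^\eps(x_0) \ge \varphi(x_0) + \eps$ by plugging $\xi = 0$ into the supremum defining $\varphi^\eps(x_0)$: since $\exp_{x_0}(0) = x_0$ and $|0|_{x_0} = 0$, the quantity inside the sup equals $\varphi(x_0) + \eps$, and the sup is no smaller than any particular value. Symmetrically, taking $\tilde\xi = 0$ in the inf defining $\varphi_\eps(x_0)$ yields $\varphi_\eps(x_0) \le \varphi(x_0) - \eps$.

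For the upper bound on $\varphi^\eps(x_0)$, I would pick $\xi_0 \in T_{x_0}M$ attaining the sup. Existence of the maximizer is not an issue: by the lower bound just proved, any maximizing sequence $\{\xi_k\}$ satisfies $\frac{1}{\eps}|\xi_k|_{x_0}^2 \le 2\|\varphi\|_{L^\infty}$, so it lies in a compact subset of $T_{x_0}M$ on which the expression is continuous. By Lemma \ref{l4.5}(1), $|\xi_0|_{x_0} \le \eps^{1/2}(\rho_\varphi(C\eps^{1/2}))^{1/2}$, which in particular does not exceed $C\eps^{1/2}$ since $\rho_\varphi \le 2\|\varphi\|_{L^\infty}$ is bounded. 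Because the Riemannian distance from $x_0$ to $\exp_{x_0}(\xi_0)$ is bounded by $|\xi_0|_{x_0}$, the modulus of continuity gives $\varphi(\exp_{x_0}(\xi_0)) \le \varphi(x_0) + \rho_\varphi(C\eps^{1/2})$. Dropping the non-positive term $-\frac{1}{\eps}|\xi_0|_{x_0}^2$ then produces the desired upper bound. The lower bound for $\varphi_\eps(x_0)$ is proved identically, using Lemma \ref{l4.5}(2) in place of (1) and reversing the sign conventions.

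No step here is delicate — the whole argument is a two-line chain of substitution plus modulus-of-continuity — so I do not anticipate any real obstacle. The only point worth being careful about is attainment of the sup/inf, which is handled by the compactness remark above; if one wished to avoid it, one could work instead with $\eta$-almost maximizers $\xi_0^\eta$ (for which the same bound from Lemma \ref{l4.5} goes through with only a harmless additive $\eta$) and let $\eta \to 0$ at the end.
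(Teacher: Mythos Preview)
Your proposal is correct and follows essentially the same route as the paper: take $\xi=0$ for the easy inequalities, and for the remaining ones use the maximizer $\xi_0$, drop the nonpositive penalty term, and bound $\varphi(\exp_{x_0}(\xi_0))-\varphi(x_0)$ by $\rho_\varphi(|\xi_0|_{x_0})\le \rho_\varphi(C\eps^{1/2})$ via Lemma \ref{l4.5}. The only addition you make beyond the paper is the explicit compactness remark guaranteeing attainment of the sup/inf (and the alternative with $\eta$-almost maximizers), which the paper leaves implicit.
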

\begin{proof}
By taking $\xi=0$,  we see that $\varphi^{\eps}(x_0)\ge \varphi(x_0)+\eps$,  $\varphi_{\eps}(x_0)\le \varphi(x_0)-\eps$.

For the other inequality,  we note that:
\begin{equation*}
\begin{split}
&\varphi^{\eps}(x_0)=\varphi(\exp_{x_0}(\xi_0))+\eps-\frac{1}{\eps}|\xi_0|_{x_0}^2\le \varphi(\exp_{x_0}(\xi_0))+\eps\\
&\le \varphi(x_0)+\rho_{\varphi}\big(d_g(x_0,\exp_{x_0}(\xi_0))\big)+\eps\le \varphi(x_0)+\rho_{\varphi}(C\eps^{\frac{1}{2}})+\eps.
\end{split}
\end{equation*}

\end{proof}
Now we choose a coordinate chart near $x_0$ such that $x_0$ is represented by the origin,  and that:
\begin{equation*}
g_{a\bar{b}}(x_0)=\delta_{ab}.
\end{equation*}
Let $N$ be an invertible $n\times n$ matrix,  such that 
\begin{equation}\label{4.4}
\sum_{i,j}N_{ia}\bar{N}_{jb}g_{i\bar{j}}(w_0)=\delta_{ab}=g_{a\bar{b}}(x_0),\,\,\,w_0=\exp_{x_0}(\xi_0).
\end{equation}
Moreover,  we may assume that:
\begin{equation}\label{4.5N}
|N-I|\le C_n|g_{i\bar{j}}(w_0)-\delta_{ij}|\le C_nCd_g(w_0,x_0)\le C_nC|\xi_0|_{x_0}.
\end{equation}
In the above,  the $C$ depends only on the background metric.  Now we wish to take:
\begin{equation}\label{def phi}
\phi(z)=\exp_{x_0}(\xi_0)+N\cdot z.
\end{equation}
Next,  we need to show the existence of a local vector field such that $\exp_z(\xi(z))=\phi(z)$.  
\begin{lem}\label{l4.6}
\begin{enumerate}
\item There exists a smooth vector field $\xi(z)\in T_zM$,  defined in a neighborhood of $x_0$,  such that 
\begin{equation*}
\exp_z(\xi(z))=\phi(z).
\end{equation*}
\item There exist a constant $C>0$,  depending only on the background metric,  such that:
\begin{equation*}
|D_z\xi|(x_0)\le C|\xi_0|_{x_0},\,\,|D_z^2\xi|(x_0)\le C|\xi_0|_{x_0},\,\,|D_z^2(|\xi(z)|_z^2)|\le C|\xi_0|_{x_0}^2.
\end{equation*}
\end{enumerate}
\end{lem}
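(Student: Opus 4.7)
The plan is to construct $\xi(z)$ via the smooth local inverse of the exponential map applied to $\phi(z)$, and then obtain the derivative estimates by noticing that the entire construction degenerates to the identically zero map when $\xi_0 = 0$; smooth dependence on $\xi_0$ then forces everything to be $O(|\xi_0|_{x_0})$.

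For Part (1), since $(d\exp_z)_0 = \mathrm{id}$, the map $(z,v) \mapsto (z, \exp_z v)$ is a local diffeomorphism from a neighborhood of the zero section in $TM$ onto a neighborhood of the diagonal in $M \times M$. Its inverse defines a smooth function $\Psi(z,y) := \exp_z^{-1}(y)$ on a neighborhood of the diagonal. By Lemma \ref{l4.5}, $|\xi_0|_{x_0}$ is as small as we wish (controlled by $\sqrt{\eps\,\rho_\varphi(C\sqrt{\eps})}$); combining this with $|N-I| \le C|\xi_0|_{x_0}$ from (\ref{4.5N}), the displacement $\phi(z) - z = w_0 + (N-I)z$ stays within the injectivity radius for $z$ in a fixed small neighborhood of $x_0$. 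Hence $\xi(z) := \Psi(z, \phi(z))$ is smooth and satisfies $\exp_z(\xi(z)) = \phi(z)$, with $\xi(x_0) = \xi_0$.

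For Part (2), choose $N$ to depend smoothly on $w_0$, for instance $N(w_0) := (g(w_0))^{-1/2}$ (the Hermitian square root), so that $N(x_0) = I$ and the normalization (\ref{4.4}) holds. Then
\[ \xi(z;\xi_0) := \Psi\!\left(z,\ \exp_{x_0}(\xi_0) + N\!\left(\exp_{x_0}(\xi_0)\right) z\right) \]
is jointly smooth in $(z,\xi_0)$ for $z$ near $x_0$ and $\xi_0$ near $0$. When $\xi_0 = 0$ we have $w_0 = x_0$, $N = I$, hence $\phi(z;0) = z$ and $\xi(z;0) = \Psi(z,z) \equiv 0$. Therefore, for each multi-index $|\alpha| \le 2$, the smooth function $\xi_0 \mapsto D_z^\alpha \xi(x_0;\xi_0)$ vanishes at $\xi_0 = 0$, and Taylor's theorem yields $|D_z^\alpha \xi(x_0;\xi_0)| \le C|\xi_0|_{x_0}$, giving the first two estimates. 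For the third estimate, expand $|\xi(z)|_z^2 = g_{i\bar{j}}(z)\,\xi^i(z)\,\overline{\xi^j(z)}$ and apply $D_z^2$ via the product rule. Every resulting term is the product of a bounded derivative of $g$ with two factors chosen from $\{\xi, D_z\xi, D_z^2\xi\}$ (the two $\xi$-components together absorbing at most two derivatives); by the estimates just proved, each such factor is $O(|\xi_0|_{x_0})$, so the total is $O(|\xi_0|_{x_0}^2)$.

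The only real subtlety is the smooth $\xi_0$-dependent choice of $N$: (\ref{4.4}) pins $N$ down only up to right-multiplication by unitaries, so one must commit to a fixed smooth branch (the Hermitian square root of $g(w_0)^{-1}$ works, as do Cholesky or Gram--Schmidt). Once that choice is made, both parts reduce to elementary smooth-dependence on the parameter $\xi_0$, with no substantive analytic obstacle beyond careful bookkeeping.
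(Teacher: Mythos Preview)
Your proof is correct and takes a genuinely different route from the paper's. The paper argues by direct differentiation: it applies the implicit function theorem to $\mathcal{F}(z,\xi)=\exp_z(\xi)-\phi(z)$ for existence, then differentiates the identity $\phi(z)=\exp_z(\xi(z))$ once and twice in $z$, invoking an explicit Taylor expansion of the exponential map (the paper's Lemma~\ref{l5.13New}, which supplies $D_\xi\exp|_{\xi=0}=I$, $D_z\exp|_{\xi=0}=I$, $D_{zz}\exp|_{\xi=0}=0$) together with $|N-I|\le C|\xi_0|$ from (\ref{4.5N}) to solve for $D_z\xi(x_0)$ and $D_z^2\xi(x_0)$ term by term. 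Your approach bypasses all of this computation by promoting $\xi_0$ to a smooth parameter: once a smooth branch $N=N(w_0)$ is fixed, the map $(z,\xi_0)\mapsto\xi(z;\xi_0)$ is jointly smooth and vanishes identically in $z$ when $\xi_0=0$, so a single Taylor expansion in $\xi_0$ delivers all three estimates at once. Your argument is cleaner and avoids the auxiliary expansion lemma; the paper's is more hands-on but works for any fixed $N$ satisfying (\ref{4.4})--(\ref{4.5N}) without requiring smooth dependence on $w_0$. One minor correction: condition (\ref{4.4}) reads $N^Tg(w_0)\bar N=I$, which the Hermitian square root $g(w_0)^{-1/2}$ does not quite satisfy; taking $N=\overline{g(w_0)^{-1/2}}$ (or, as you note, Cholesky or Gram--Schmidt) does.
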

We postpone the proof of this lemma for the moment and explain first how to use this lemma to finish the proof of Proposition \ref{p4.3}.
We still need one more lemma,  which justifies our choice of the map $\phi$.
\begin{lem}\label{l4.7}
Let $\phi$ be defined by (\ref{def phi}) in a neighborhood of $x_0$ with $N$ given by (\ref{4.4}).  Denote $w_0=\exp_{x_0}(\xi_0)$,  then for any function $Q$ defined in a neighborhood of $x_0$,  one has:
\begin{equation*}
\sum_kg^{i\bar{k}}(Q\circ \phi^{-1}\big)_{j\bar{k}}(w_0)=\sum_{a,b,p,q}N_{iq}g^{q\bar{b}}Q_{a\bar{b}}(x_0)(N^{-1})_{aj}.
\end{equation*}
\end{lem}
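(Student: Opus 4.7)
The plan is to compute both sides explicitly using the fact that $\phi$ is an affine holomorphic map, and then use the orthonormality relation (\ref{4.4}) to reconcile them. Because $\phi(z)=w_0+N\cdot z$ with $N$ a constant invertible matrix, its inverse is $\phi^{-1}(w)=N^{-1}(w-w_0)$, which is also holomorphic. In particular $\phi^{-1}(w_0)=0=x_0$, and $\partial \phi^{-1}_a/\partial w_j=(N^{-1})_{aj}$ while $\partial\overline{\phi^{-1}_b}/\partial\bar{w}_k=\overline{(N^{-1})_{bk}}$.

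The first step is to apply the chain rule to obtain
\begin{equation*}
(Q\circ\phi^{-1})_{j\bar{k}}(w_0)=\sum_{a,b}Q_{a\bar{b}}(x_0)\,(N^{-1})_{aj}\,\overline{(N^{-1})_{bk}}.
\end{equation*}
This reduces the problem to a purely linear-algebraic identity between $N$ and $g(w_0)$.

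Next I would unpack the defining relation (\ref{4.4}). Writing $G=(g_{i\bar{j}}(w_0))$ as a matrix, (\ref{4.4}) reads $N^{T}G\overline{N}=I$, so $G=(N^{-1})^{T}\overline{N^{-1}}$, and since the inverse matrix $H$ with entries $g^{i\bar{j}}(w_0)$ satisfies $H=\overline{G^{-1}}$, we obtain
\begin{equation*}
g^{i\bar{k}}(w_0)=\sum_{a}N_{ia}\,\overline{N_{ka}}.
\end{equation*}
Multiplying this by $\overline{(N^{-1})_{bk}}$ and summing over $k$ gives the key contraction identity
\begin{equation*}
\sum_{k}g^{i\bar{k}}(w_0)\,\overline{(N^{-1})_{bk}}=\sum_{a}N_{ia}\,\overline{(N^{-1}N)_{ba}}=N_{ib}.
\end{equation*}

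Finally, combining the chain-rule expression with this identity yields
\begin{equation*}
\sum_{k}g^{i\bar{k}}(w_0)(Q\circ\phi^{-1})_{j\bar{k}}(w_0)=\sum_{a,b}Q_{a\bar{b}}(x_0)(N^{-1})_{aj}\,N_{ib},
\end{equation*}
which, after inserting $g^{q\bar{b}}(x_0)=\delta_{qb}$ (recall the chart was chosen so $g_{a\bar{b}}(x_0)=\delta_{ab}$), is precisely the stated formula. The main obstacle here is purely bookkeeping: keeping track of which indices are barred and making sure the transpose/conjugate conventions that relate $G$ and $H$ are used correctly. Conceptually the lemma just expresses that $N$ is built to make $\phi$ an isometry on tangent spaces from $(T_{x_0}M,g(x_0))$ to $(T_{w_0}M,g(w_0))$, so the $(1,1)$-trace of $dd^{c}Q$ transports via $N$ and $N^{-1}$ in exactly the way recorded.
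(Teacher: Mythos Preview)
Your proof is correct and follows essentially the same approach as the paper: apply the chain rule to $(Q\circ\phi^{-1})_{j\bar{k}}$ using that $\phi$ is affine holomorphic, then use the relation (\ref{4.4}) to express $g^{i\bar{k}}(w_0)$ in terms of $N$ and contract. The only cosmetic difference is that the paper carries $g^{q\bar{p}}(x_0)$ through the computation and collapses via $\sum_k \bar{N}_{kp}\overline{(N^{-1})_{bk}}=\delta_{pb}$, whereas you substitute $g^{q\bar{b}}(x_0)=\delta_{qb}$ earlier to get $g^{i\bar{k}}(w_0)=\sum_a N_{ia}\overline{N_{ka}}$ directly; the two computations are equivalent.
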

\begin{proof}
This is a straightforward calculation.  Indeed,

\begin{equation}\label{4.8}
\begin{split}
&\sum_kg^{i\bar{k}}(w_0)\big(Q\circ \phi^{-1}\big)_{j\bar{k}}(w_0)=\sum_{a,b}\sum_kg^{i\bar{k}}(w_0)Q_{a\bar{b}}(x_0)\frac{\partial(\phi^{-1})_a}{\partial z_j}\frac{\partial(\phi^{-1})_{\bar{b}}}{\partial\bar{z}_k}\\
&=\sum_{a,b}\sum_kg^{i\bar{k}}(w_0)(N^{-1})_{aj}\overline{(N^{-1})_{bk}}Q_{a\bar{b}}(x_0)\\
&=\sum_{a,b}\sum_k\sum_{p,q}\bar{N}_{kp}g^{q\bar{p}}(x_0)N_{iq}(N^{-1})_{aj}\overline{(N^{-1})_{bk}}Q_{a\bar{b}}(x_0)\\
&=\sum_{a,b,p,q}N_{iq}g^{q\bar{b}}(x_0)Q_{a\bar{b}}(x_0)(N^{-1})_{aj}.
\end{split}
\end{equation}
Some explanations are in order.  In the first equality,  we noted that $\phi(z)$ as given by (\ref{def phi}) is holomorphic.  In the third equality,  we used (\ref{4.4}).  In the last equality,  we noted that $\sum_k\bar{N}_{kp}\overline{(N^{-1})_{bk}}=\delta_{pb}$.
\end{proof}
Another thing we observe is that:
\begin{lem}\label{l4.8}
Denote $w_0=\exp_{x_0}(\xi_0)$,  then for any $i,\,j$,  we have
\begin{equation*}
|g^{i\bar{k}}\chi_{j\bar{k}}(w_0)-N_{iq}g^{q\bar{b}}\chi_{a\bar{b}}(x_0)(N^{-1})_{aj}|\le C|\xi_0|_{x_0}.
\end{equation*}
In the above,  $C$ depends only on the background metric and the form $\chi$.
\end{lem}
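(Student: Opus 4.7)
The plan is simply to expand both sides of the claimed inequality as first-order perturbations about their "frozen coefficient" values at $x_0$ and observe that both reduce to $\chi_{j\bar i}(x_0)$ modulo an error controlled by $|\xi_0|_{x_0}$. Everything is smooth on the compact manifold $M$, so the only real input is a Lipschitz/Taylor bound for $g^{i\bar k}$ and $\chi_{j\bar k}$ plus the already-available estimate (\ref{4.5N}) for $N$.

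First I would note that since $w_0 = \exp_{x_0}(\xi_0)$, one has $d_g(w_0, x_0) \le |\xi_0|_{x_0}$, and in the fixed coordinate chart (where $g_{a\bar b}(x_0)=\delta_{ab}$) the Euclidean distance $|w_0|$ from the origin is comparable to $|\xi_0|_{x_0}$ by smoothness of $\exp$. Since $g^{i\bar k}$ and $\chi_{j\bar k}$ are smooth functions of the coordinate, they are Lipschitz on a fixed neighborhood of $x_0$, so
\begin{equation*}
|g^{i\bar k}(w_0)-\delta_{ik}| + |\chi_{j\bar k}(w_0)-\chi_{j\bar k}(x_0)| \le C|\xi_0|_{x_0}.
\end{equation*}
Expanding and using $g^{i\bar k}(x_0)=\delta_{ik}$ then gives
\begin{equation*}
g^{i\bar k}\chi_{j\bar k}(w_0) = \chi_{j\bar i}(x_0) + O(|\xi_0|_{x_0}),
\end{equation*}
where the implicit constant depends only on the background metric and $\chi$.

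Next I would analyze the right-hand expression $N_{iq}g^{q\bar b}(x_0)\chi_{a\bar b}(x_0)(N^{-1})_{aj}$. Substituting $g^{q\bar b}(x_0)=\delta_{qb}$ collapses it to $N_{iq}(N^{-1})_{aj}\chi_{a\bar q}(x_0)$. By (\ref{4.5N}), both $|N-I|$ and $|N^{-1}-I|$ are bounded by $C|\xi_0|_{x_0}$, so writing $N = I + (N-I)$ and $N^{-1} = I + (N^{-1}-I)$ and distributing yields
\begin{equation*}
N_{iq}(N^{-1})_{aj}\chi_{a\bar q}(x_0) = \delta_{iq}\delta_{aj}\chi_{a\bar q}(x_0) + O(|\xi_0|_{x_0}) = \chi_{j\bar i}(x_0) + O(|\xi_0|_{x_0}),
\end{equation*}
since $\chi$ is bounded on $M$. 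Subtracting the two expressions, the leading $\chi_{j\bar i}(x_0)$ terms cancel and the claimed bound follows.

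There is no serious obstacle here; the only minor point of care is choosing the coordinate chart appropriately (so that $g_{a\bar b}(x_0)=\delta_{ab}$) in order to make the cancellation transparent, and making sure the Lipschitz constants for $g^{i\bar k}$ and $\chi_{j\bar k}$ absorb into the single universal constant $C$ depending only on $(M,\omega_0)$ and $\chi$.
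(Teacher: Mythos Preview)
Your proof is correct and is essentially the same as the paper's: the paper writes the difference as the telescoping sum
\[
\big(g^{i\bar{k}}\chi_{j\bar{k}}(w_0)-g^{i\bar{k}}\chi_{j\bar{k}}(x_0)\big)+\big(\delta_{iq}g^{q\bar{b}}\chi_{a\bar{b}}(x_0)\delta_{aj}-N_{iq}g^{q\bar{b}}\chi_{a\bar{b}}(x_0)(N^{-1})_{aj}\big),
\]
bounding the first bracket by Lipschitz continuity and the second by (\ref{4.5N}), which is exactly your argument with the common reference value $\chi_{j\bar i}(x_0)$ made explicit.
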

\begin{proof}
To see this,  we simply write this as a telescoping sum:
\begin{equation*}
\big(g^{i\bar{k}}\chi_{j\bar{k}}(w_0)-g^{i\bar{k}}\chi_{j\bar{k}}(x_0)\big)+\big(\delta_{iq}g^{q\bar{b}}\chi_{a\bar{b}}(x_0)\delta_{aj}-N_{iq}g^{q\bar{b}}\chi_{a\bar{b}}(x_0)(N^{-1})_{aj}\big).
\end{equation*}
The first bracket above is clearly bounded by $Cd_g(w_0,x_0)\le C'|\xi_0|_{x_0}$.  For the second bracket,  we need to use (\ref{4.5N}) to see that it is also bounded by $C|\xi_0|_{x_0}$.
\end{proof}

\begin{proof}
(Of Proposition \ref{p4.3}) First we prove (1).  Let $P$ be a $C^2$ function that touches $\frac{\varphi^{\eps}}{1+a_1}$ from above at $x_0$,  which is the same as saying $(1+a_1)P$ touches $\varphi^{\eps}$ from above at $x_0$.  We need to show that:
\begin{equation}\label{4.6}
\begin{split}
&\lambda\big(\frac{\chi+\rho(\eps)\omega_0}{1+a_1}+dd^c P\big)\in \Gamma,\\
&f\big(\lambda[\frac{\chi+\rho(\eps)\omega_0}{1+a_1}+dd^cP](x_0)\big)\ge e^{G(x_0,P(x_0))}-\rho_1(\eps,a_1).
\end{split}
\end{equation}
The choice of $\rho(\eps),\,\rho_1(\eps,a_1)$ will be made clear later on.  First we can see from Lemma \ref{l4.4},  part (1) that the function $(1+a_1)P-\eps+\frac{1}{\eps}|\xi(z)|_z^2$ touches $\varphi\circ \phi$ from above.  Note that $\phi(x_0)=\exp_{x_0}(\xi_0)$ and that $\det D_z\phi(x_0)\neq 0$,  we see that $\phi$ defines an invertible map between a neighborhood of $x_0$ and a neighborhood of $\exp_{x_0}(\xi_0)$.  

Therefore,  we see that $\big((1+a_1)P-\eps+\frac{1}{\eps}|\xi(z)|^2\big)\circ \phi^{-1}(w)$ touches $\varphi$ from above at $\phi(x_0)=\exp_{x_0}(\xi_0)$.
Therefore,  using that $\varphi$ is a viscosity solution,  we see that: with $w_0=\exp_{x_0}(\xi_0)$,
\begin{equation}\label{4.7}
\begin{split}
&\lambda[\chi+dd^c((1+a_1)P-\eps+\frac{1}{\eps}|\xi(z)|^2_z)\circ \phi^{-1}](w_0)\in \Gamma,\\
&f\big(\lambda[\chi+dd^c((1+a_1)P-\eps+\frac{1}{\eps}|\xi(z)|^2_z)\circ \phi^{-1}](w_0)\big)\ge e^{G(w_0,((1+a_1)P-\eps+\frac{1}{\eps}|\xi(z)|^2_z)\circ \phi^{-1}(w_0))}.
\end{split}
\end{equation}
We wish to show that (\ref{4.7}) implies (\ref{4.6}).
For this,  we may calculate,  for fixed $i,\,j$:
\begin{equation}\label{4.10N}
\begin{split}
&g^{i\bar{k}}\big(\chi_{j\bar{k}}+\big(((1+a_1)P-\eps+\frac{1}{\eps}|\xi|^2_z)\circ \phi^{-1}\big)_{j\bar{k}}\big)(w_0)\\
&=g^{i\bar{k}}\chi_{j\bar{k}}(w_0)+N_{iq}g^{q\bar{b}}((1+a_1)P-\eps+\frac{1}{\eps}|\xi(z)|^2_z)_{a\bar{b}}(x_0)(N^{-1})_{aj}\\
&\le N_{iq}g^{q\bar{b}}(\chi_{a\bar{b}}(x_0)+C|\xi_0|_{x_0}g_{a\bar{b}}(x_0))(N^{-1})_{aj}\\
&+N_{iq}g^{q\bar{b}}((1+a_1)P-\eps+\frac{1}{\eps}|\xi(z)|^2_z)_{a\bar{b}}(x_0)(N^{-1})_{aj}
\end{split}
\end{equation}
In the equality above,  we used Lemma \ref{l4.7} with $Q=(1+a_1)P-\eps+\frac{1}{\eps}|\xi(z)|_z^2$.
The inequality above follows from Lemma \ref{l4.8}.
Then we see from (\ref{4.10N}) that:
\begin{equation}\label{4.11}
\lambda[\chi+dd^c((1+a_1)P-\eps+\frac{1}{\eps}|\xi(z)|^2_z)\circ \phi^{-1}]\le \lambda\big(\chi+C|\xi_0|_{x_0}\omega_0+dd^c((1+a_1)P+\frac{1}{\eps}|\xi(z)|_z^2)\big).
\end{equation}
The meaning of the above inequality is that the difference belongs to $\Gamma_n$.  Moreover,  we may use Lemma \ref{l4.5} and \ref{l4.6} to see that:
\begin{equation}\label{4.12}
\begin{split}
&\chi+C|\xi_0|_{x_0}\omega_0+dd^c((1+a_1)P+\frac{1}{\eps}|\xi(z)|^2_z)\\
&\le \chi+C'(\rho_{\varphi}^{\frac{1}{2}}(C''\eps^{\frac{1}{2}})+\rho_{\varphi}(C''\eps^{\frac{1}{2}}))\omega_0+(1+a_1)dd^c P.
\end{split}
\end{equation}
Therefore,  if we now put $\rho(\eps)=C'(\rho_{\varphi}^{\frac{1}{2}}(C''\eps^{\frac{1}{2}})+\rho_{\varphi}(C''\eps^{\frac{1}{2}}))$,  we see that:
\begin{equation*}
\lambda\big(\chi+\rho(\eps)\omega_0+(1+a_1)dd^c P\big)\in \Gamma.
\end{equation*}
This is exactly the first statement of (\ref{4.6}).  For the second statement,  the above calculation already implies that:
\begin{equation}\label{4.13}
\begin{split}
&f\big(\lambda[\chi+dd^c((1+a_1)P-\eps+\frac{1}{\eps}|\xi(z)|^2_z)\circ \phi^{-1}](w_0)\big)\le f\big(\lambda\big(\chi+\rho(\eps)\omega_0+(1+a_1)dd^c P\big)\big)\\
&=(1+a_1)f\big(\lambda[\frac{\chi+\rho(\eps)\omega_0}{1+a_1}+dd^cP]\big).
\end{split}
\end{equation}
The equality here uses that $f$ is of homogeneity one.
On the other hand,
\begin{equation}\label{4.14}
\begin{split}
&\frac{1}{1+a_1}e^{G(w_0,((1+a_1)P-\eps+\frac{1}{\eps}|\xi(z)|^2_z)\circ \phi^{-1}(w_0))}=\frac{1}{1+a_1}e^{G(w_0,((1+a_1)P(x_0)-\eps+\frac{1}{\eps}|\xi_0|^2_{x_0}))}\\
&\ge e^{G(x_0,P(x_0))}-\rho_1(\eps,a_1).
\end{split}
\end{equation}
Here we again used Lemma \ref{l4.5} on the estimate of $|\xi_0|_{x_0}$.
Combining (\ref{4.13}) and (\ref{4.14}) gives the second statement of (\ref{4.6}).  So far we have proved the first part of Proposition \ref{p4.3}.
The second part of Proposition \ref{p4.3} is proved similarly,  which we sketch briefly.

Let $P$ be a $C^2$ function defined in a neighborhood of $x_0$,  which touches $\frac{\varphi_{\eps}}{1-a_2}$ from below at $x_0$.  We need to show that:
Either
\begin{equation}\label{5.16}
\lambda\big(\frac{\chi-\rho(\eps)\omega_0}{1-a_2}+dd^c P\big)(x_0)\notin \Gamma
\end{equation}
or
\begin{equation}\label{4.15}
\begin{split}
&f\big(\lambda[\frac{\chi-\rho(\eps)\omega_0}{1-a_2}+dd^c P](x_0)\big)\le e^{G(x_0,P(x_0)}+\rho_2(\eps,a_2),\\
& \lambda\big(\frac{\chi-\rho(\eps)\omega_0}{1-a_2}+dd^c P\big)(x_0)\in \Gamma
\end{split}
\end{equation}
Since $(1-a_2)P$ touches $\varphi_{\eps}$ from below at $x_0$,  we see from Lemma \ref{l4.4},  part (2) that $\big((1-a_2)P+\eps-\frac{1}{\eps}|\tilde{\xi}(z)|^2_z\big)\circ \tilde{\phi}^{-1}(w)$ touches $\varphi$ from below at $\tilde{\phi}(x_0)=\exp_{x_0}(\tilde{\xi}_0)$ (which we denote as $\tilde{w}_0$ from now on).  Here $\tilde{\xi}_0$,  $\tilde{\xi}(z)$,  $\tilde{\phi}(z)$ is defined in the same way as $\varphi^{\eps}$,  hence satisfy the same estimates as $\xi_0,\,\xi(z),\,\phi(z)$.  Therefore,  we may conclude,  as before:
Either
\begin{equation}\label{4.16}
\lambda[\chi+dd^c((1-a_2)P+\eps-\frac{1}{\eps}|\tilde{\xi}(z)|^2_z)\circ\tilde{\phi}^{-1}](\tilde{w}_0)\notin \Gamma
 \end{equation}
 or
 \begin{equation}\label{5.19}
 \begin{split}
 &f\big(\lambda[\chi+dd^c((1-a_2)P+\eps-\frac{1}{\eps}|\tilde{\xi}(z)|_z^2)\circ \tilde{\phi}^{-1}](\tilde{w}_0)\le e^{G(\tilde{w}_0,((1-a_2)P+\eps-\frac{1}{\eps}|\tilde{\xi}(z)|^2)\circ \tilde{\phi}^{-1}(\tilde{w}_0))},\\
& \lambda[\chi+dd^c((1-a_2)P+\eps-\frac{1}{\eps}|\tilde{\xi}(z)|^2_z)\circ\tilde{\phi}^{-1}](\tilde{w}_0)\in \Gamma.
\end{split}
\end{equation}

So we just need to deduce (\ref{5.16}) or (\ref{4.15}) from (\ref{4.16}) or (\ref{5.19}).  Similar calculations as in the proof of part (1) will show that:
\begin{equation}
\lambda\big(\chi-\rho(\eps)\omega_0+dd^c(1-a_2)P\big)(x_0)\le \lambda[\chi+dd^c((1-a_2)P+\eps-\frac{1}{\eps}|\tilde{\xi}(z)|^2_z)\circ\tilde{\phi}^{-1}](\tilde{w}_0).
\end{equation}
Here one can actually make $\rho(\eps)$ to be the same as part (1).  Therefore,  if $\lambda[\chi-\rho(\eps)\omega_0+dd^c(1-a_2)P]\in \Gamma$,  it will imply $\lambda[\chi+dd^c((1-a_2)P+\eps+\frac{1}{\eps}|\tilde{\xi}(z)|^2_z)\circ\tilde{\phi}^{-1}](\tilde{w}_0)\in \Gamma$.  Moreover,  in this case,  we also have:
\begin{equation*}
\begin{split}
&f\big(\lambda\big(\frac{\chi-\rho(\eps)\omega_0}{1-a_2}+dd^c P\big)\big)\le \frac{1}{1-a_2}f\big(\lambda[\chi+((1-a_2)P+\eps-\frac{1}{\eps}|\tilde{\xi}(z)|^2_z)\circ \tilde{\phi}^{-1}]\big)(\tilde{w}_0)\\
&\le \frac{1}{1-a_2}e^{G(\tilde{w}_0,((1-a_2)P+\eps-\frac{1}{\eps}|\tilde{\xi}(z)|^2)\circ \tilde{\phi}^{-1}(\tilde{w}_0))}
\end{split}
\end{equation*}
The same calculation as in the proof of part (1) shows that one can estimate the right hand side from above by $e^{G(x_0,P(x_0))}+\rho_2(\eps,a_2)$ (again we note that $\tilde{\phi}^{-1}(\tilde{w}_0)=x_0$).
\end{proof}

Now let us prove Lemma \ref{l4.6}:
\begin{proof}
(Of Lemma \ref{l4.6})
The existence part of the vector field $\xi(z)$ is a result of implicit function theorem.  Let $U$ be an open subset of $TM$,   such that $(x_0,\xi_0)\in U$ and that $\exp_z(\xi),\,\phi(z)$ is inside the coordinate chart near $x_0$.  We consider the following map:
\begin{equation*}
\mathcal{F}:U\rightarrow \bC^n,\,\,\,(z,\xi)\mapsto \exp_z(\xi)-\phi(z).
\end{equation*}
On the right hand side above,  we have identified $\exp_z(\xi)$ and $\phi(z)$ with points in $\bC^n$ using the coordinate chart,  so that the subtraction makes sense.  Moreover,
\begin{equation*}
D_{\xi}\mathcal{F}|_{(z,\xi)(q)=(x_0,\xi_0)}=D_{\xi}(\exp_z\xi)|_{z=x_0,\,\xi=\xi_0}(q),\,\,\,q\in T_{x_0}M.
\end{equation*}
From the lemma \ref{l5.13New} below,  we know that:
\begin{equation}\label{5.19New}
D_{\xi}(\exp_z(\xi))|_{z=x_0,\,\xi=0}=I,\,D_z(\exp_z(\xi))|_{z=x_0,\,\xi=0}=I,\,D_{zz}(\exp_z(\xi))|_{z=x_0,\,\xi=0}=0.
\end{equation}
Therefore $D_{\xi}(\exp_z\xi)_{z=x_0,\,\xi=\xi_0}$ would be non-singular,  since $\xi_0$ is very close to 0 due to Lemma \ref{l4.5}.  Moreover,  we also know that $\mathcal{F}(x_0,\xi_0)=0$.  Therefore,  we may conclude from implicit function theorem that there is a neighborhood $V$ of $x_0$,  and a vector field $\xi(z),\,z\in V$,  such that $\mathcal{F}(z,\xi(z))=0$.  Namely $\phi(z)=\exp_z(\xi(z))$.

Now we derive the estimates of $\xi(z)$.  By differentiation,  we see that:
\begin{equation*}
N=D_z\phi(z)|_{z=x_0}=D_z(\exp_z\xi)|_{z=x_0,\xi=\xi_0}+D_{\xi}(\exp_z\xi)D_z\xi|_{z=x_0,\xi=\xi_0}
\end{equation*}
From (\ref{5.19New}) and the smoothness of the exponential map,  we know that $|D_z(\exp_z\xi)-I|_{z=x_0,\,\xi=\xi_0}\le C|\xi_0|_{x_0}$.  Hence we may use (\ref{4.5N}) to see that:
\begin{equation*}
|N-D_z(\exp_z(\xi))_{z=x_0,\,\xi=\xi_0}|\le C|\xi_0|_{x_0}.
\end{equation*}
Here the $C$ depends only on the background metric.  Also we noted that $D_{\xi}(\exp_z\xi)|_{z=x_0,\,\xi=\xi_0}$ is invertible,  we see that $|D_z\xi|_{z=x_0}\le C|\xi_0|_{x_0}$.  Differentiating once more,  we get:
\begin{equation*}
\begin{split}
&0=D_{zz}(\exp_z\xi)|_{z=x_0,\,\xi=\xi_0}+2D_{z\xi}(\exp_z\xi)|_{z=x_0,\xi=\xi_0}D_z\xi|_{z=x_0}\\
&+D_{\xi\xi}(\exp_z\xi)|_{z=x_0,\,\xi=\xi_0}D_z\xi*D_z\xi|_{z=x_0}+D_{\xi}(\exp_z\xi)|_{z=x_0,\xi=\xi_0}D_z^2\xi|_{z=x_0}.
\end{split}
\end{equation*}
In the above,  $D_z\xi*D_z\xi$ just denotes some quadratic expression of $D_z\xi$.  Using again (\ref{5.19New}) and the smoothness of the exponential map,  we know that:
\begin{equation*}
|D_{zz}(\exp_z\xi)|_{z=x_0,\,\xi=\xi_0}\le C|\xi_0|_{x_0},\,\,|D_{z\xi}(\exp_z\xi)|_{z=x_0,\,\xi=\xi_0}\le C,\,|D_{\xi\xi}(\exp_z\xi)|_{z=x_0,\,\xi=\xi_0}\le C.
\end{equation*}
From this,  we see that
\begin{equation*}
|D_z^2\xi|_{z=x_0}\le C|\xi_0|_{x_0}.
\end{equation*}
Finally,  
\begin{equation*}
\begin{split}
&D_z^2\big(|\xi(z)|_z^2\big)=D_z^2\big(g_{i\bar{j}}(z)\xi_i(z)\bar{\xi}_j(z)\big)=D_z^2g_{i\bar{j}}(z)\xi_i(z)\bar{\xi}_j(z)+D_zg_{i\bar{j}}D_z\xi_i\bar{\xi}_j\\
&+D_zg_{i\bar{j}}\xi_iD_z\bar{\xi}_j+g_{i\bar{j}}\big(D_z^2\xi_i\bar{\xi}_j+\xi_iD_z^2\bar{\xi}_j+D_z\xi_i*D_z\bar{\xi}_j\big).
\end{split}
\end{equation*}
Using the above estimates for $D_z\xi$ and $D_z^2\xi$,  we see that $|D_z^2(|\xi(z)|_z^2)|_{z=x_0}\le C|\xi_0|_{x_0}^2$.
\end{proof}
In the above,  we used the following lemma about the Taylor expansion of the exponential map.  This lemma can be found in \cite{D}.
\begin{lem}\label{l5.13New}
The exponential map on a Hermitian manifold has the Taylor expansion in the following form under local coordinates:
\begin{equation*}
\exp_z(\xi)_m=g_m(z,\zeta)+\sum_{j,k,l}c_{jklm}(\frac{1}{2}\bar{z}_k+\frac{1}{6}\bar{\zeta}_k)\zeta_j\zeta_l+O\big(|\zeta|^2(|z|+|\zeta|)^2),
\end{equation*}
where
\begin{equation*}
g_m(z,\zeta)=z_m+\zeta_m-\sum_{j,l}a_{jlm}z_j\zeta_l+\sum_{j,k,l,p}a_{jlp}a_{kpm}z_jz_k\zeta_l-\sum_{j,k,l}b_{jklm}(z_jz_k\zeta_l+z_k\zeta_j\zeta_l+\frac{1}{3}\zeta_j\zeta_k\zeta_l),
\end{equation*}
and $\xi$ and $\zeta$ are related through:
\begin{equation*}
\zeta_m=\xi_m+\sum_{j,l}a_{jlm}z_j\xi_l+\sum_{j,k,l}b_{jklm}z_jz_k\xi_l.
\end{equation*}
In the above,  $(\exp_z\xi)_m$ denotes the $m$-th component of the exponential map under local coordinates. 
\end{lem}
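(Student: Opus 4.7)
The plan is to derive the expansion by iterative solution of the geodesic ODE. Recall that $\exp_z(\xi)=\gamma(1)$, where $\gamma(t)$ is the geodesic with $\gamma(0)=z$ and $\dot\gamma(0)=\xi$; in local coordinates this satisfies $\ddot\gamma^m+\Gamma^m_{\alpha\beta}(\gamma)\dot\gamma^\alpha\dot\gamma^\beta=0$, where $\Gamma^m_{\alpha\beta}$ are the Christoffel symbols of the Riemannian connection induced by $\omega_0$ and $\alpha,\beta$ range over both holomorphic and antiholomorphic indices. Integrating twice gives the Picard identity $\gamma(1)=z+\xi-\int_0^1(1-s)\,\Gamma(\gamma(s))(\dot\gamma(s),\dot\gamma(s))\,ds$, and I would substitute the zeroth-order ansatz $\gamma(s)=z+s\xi$, Taylor-expand $\Gamma(\cdot)$ around the origin, integrate, and then perform one more Picard iteration to pick up the higher-order corrections needed to reach the claimed order.

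Concretely, one writes $\Gamma^m_{\alpha\beta}(w)=a^m_{\alpha\beta}+\partial_k\Gamma^m_{\alpha\beta}(0)\,w_k+\partial_{\bar k}\Gamma^m_{\alpha\beta}(0)\,\bar w_k+O(|w|^2)$ and identifies the coefficients, after relabeling, with the constants $a_{jlm}, b_{jklm}, c_{jklm}$ appearing in the statement. Substituting $w=\gamma(s)=z+s\xi-\tfrac{s^2}{2}\Gamma(z)(\xi,\xi)+\cdots$ and using the elementary integrals $\int_0^1(1-s)\,ds=\tfrac12$ and $\int_0^1(1-s)s\,ds=\tfrac16$ explains the appearance of the coefficients $\tfrac12$ and $\tfrac16$ in front of $\bar z_k$ and $\bar\zeta_k$ respectively, since $\bar z_k$ is the value of $\bar\gamma_k$ at $s=0$ while $\bar\zeta_k$ first appears multiplied by an extra factor of $s$ coming from $\bar\gamma(s)-\bar z$. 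The holomorphic contributions from the expansion of $\Gamma$ in the holomorphic variables $w$ collapse into the closed-form polynomial $g_m(z,\zeta)$ after performing the linear change of variables $\zeta_m=\xi_m+a_{jlm}z_j\xi_l+b_{jklm}z_jz_k\xi_l$, which is exactly the change needed to absorb the parallel-transport corrections that would otherwise mix $z$ and $\xi$ in the leading polynomial.

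The hard part is the combinatorial bookkeeping: one must collect all monomials in $(z,\bar z,\xi,\bar\xi)$ of total weighted degree at most three, verify that after the substitution $\xi\mapsto\zeta$ they assemble into the stated $g_m+c_{jklm}(\tfrac12\bar z_k+\tfrac16\bar\zeta_k)\zeta_j\zeta_l$ form, and confirm that the remainder is really $O(|\zeta|^2(|z|+|\zeta|)^2)$ rather than a weaker bound such as $O(|\zeta|(|z|+|\zeta|)^3)$. Structurally, the reason the expansion takes this precise shape is that holomorphic $\partial_w$-derivatives of $\Gamma$ can always be absorbed into a holomorphic redefinition of the velocity (this is why $g_m$ depends only on $z$ and $\zeta$), whereas antiholomorphic $\partial_{\bar w}$-derivatives, encoded by $c_{jklm}$, cannot and therefore appear explicitly in the cubic remainder term. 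The detailed verification is carried out in \cite{D}.
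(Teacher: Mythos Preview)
The paper does not actually prove this lemma: it states the expansion and immediately refers the reader to Demailly \cite{D} (``This lemma can be found in \cite{D}''), using only the consequences recorded in (\ref{5.19New}). Your proposal likewise ends by deferring the detailed verification to \cite{D}, so at the level of what is asserted the two agree. The sketch you give---Picard iteration of the geodesic ODE with Taylor expansion of the Christoffel symbols around the origin, followed by the holomorphic change of velocity variable $\xi\mapsto\zeta$---is indeed the mechanism behind Demailly's computation, and your structural explanation for why holomorphic derivatives of $\Gamma$ are absorbed into $g_m$ while antiholomorphic ones surface as the $c_{jklm}$ term is correct. Since the paper offers no argument of its own to compare against, there is nothing further to contrast; your outline is a faithful summary of the method in the cited reference.
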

Finally we observe that $\varphi^{\eps}$ and $\varphi_{\eps}$ are punctually second order differentiable a.e.  Indeed,  one has:
\begin{lem}\label{l4.11N}
Let $\varphi\in C(M)$,  and we define $\varphi^{\eps},\,\varphi_{\eps}$ according to (\ref{4.1}) and (\ref{4.2}).  Let $x_0\in M$ and we choose local coordinates in a neighborhood of $x_0$.  Then there exists a neighborhood $U$ of $x_0$,  and $C_{\eps}>0$,  such that: $z\mapsto \varphi^{\eps}(z)+C_{\eps}|z|^2$ is convex on $U$ under the coordinates,  and $z\mapsto \varphi_{\eps}(z)-C_{\eps}|z|^2$ is concave on $U$ under the coordinates.  In particular,  $\varphi^{\eps}$,  $\varphi_{\eps}$ are punctually second order differentiable a.e.  
\end{lem}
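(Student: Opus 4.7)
The plan is to write $\varphi^{\eps}$ (resp.\ $\varphi_{\eps}$) locally as a supremum (resp.\ infimum) of a family of smooth functions whose Hessians in a fixed coordinate chart are uniformly bounded. Once this is done, the standard ``supremum of convex functions is convex'' argument yields the semi-convexity (resp.\ semi-concavity), and Alexandrov's theorem produces punctual second order differentiability almost everywhere.

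First I would fix local coordinates on a ball $B_{2r_0}(x_0)$ contained inside the injectivity radius of $\omega_0$. Lemma \ref{l4.5} shows that any maximizer $\xi_0 \in T_zM$ in (\ref{4.1}) at a point $z \in B_{r_0/2}(x_0)$ satisfies $|\xi_0|_z \le \eps^{1/2}\rho_{\varphi}(C\eps^{1/2})^{1/2}$, which can be made smaller than $r_0/2$ by shrinking $\eps$. The substitution $w = \exp_z(\xi)$ is then a diffeomorphism between a neighborhood of $0$ in $T_zM$ and a neighborhood of $z$ in $M$, under which $|\xi|_z^2 = d_g(z,w)^2$. Hence for $z \in B_{r_0/2}(x_0)$ and $\eps$ small enough,
\begin{equation*}
\varphi^{\eps}(z) = \sup_{w \in B_{r_0}(x_0)} \Big(\varphi(w) + \eps - \tfrac{1}{\eps}d_g(z,w)^2\Big),
\end{equation*}
and analogously for $\varphi_{\eps}$ with the sup replaced by inf and the sign of the error and quadratic term flipped.

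Next I would observe that on the compact set $\overline{B_{r_0/2}(x_0)} \times \overline{B_{r_0}(x_0)}$, which lies inside the injectivity radius, the squared distance $d_g(z,w)^2$ is jointly $C^{\infty}$ in $(z,w)$. Therefore, in the fixed coordinate chart, there is a constant $C_0>0$, depending only on the background metric and $r_0$, such that for every $w \in B_{r_0}(x_0)$ the Hessian of $z \mapsto d_g(z,w)^2$ is bounded in operator norm by $C_0$ on $B_{r_0/2}(x_0)$. Consequently, for each fixed $w$, the map
\begin{equation*}
z \mapsto \varphi(w) + \eps - \tfrac{1}{\eps}d_g(z,w)^2 + \tfrac{C_0}{\eps}|z|^2
\end{equation*}
is convex on $B_{r_0/2}(x_0)$. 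Taking the supremum over $w$ preserves convexity, so $z \mapsto \varphi^{\eps}(z) + \tfrac{C_0}{\eps}|z|^2$ is convex on $B_{r_0/2}(x_0)$. The analogous infimum argument shows that $z \mapsto \varphi_{\eps}(z) - \tfrac{C_0}{\eps}|z|^2$ is concave on the same neighborhood. Setting $C_{\eps} = C_0/\eps$ and $U = B_{r_0/2}(x_0)$ proves the semi-convexity and semi-concavity claims.

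The final assertion then follows because a convex function on an open subset of $\bR^{2n}$ is twice differentiable in the Alexandrov sense almost everywhere, which is exactly the punctual second order differentiability of Definition \ref{def 5.1}; the same holds for concave functions. The only delicate point is the reparametrization step: one must verify that for small $\eps$ the restriction of the sup in (\ref{4.1}) to $w \in B_{r_0}(x_0)$ does not change $\varphi^{\eps}(z)$, which is precisely what Lemma \ref{l4.5} provides. Once this reparametrization is justified, the rest is the standard Euclidean semi-convexity argument transferred to local coordinates on $M$ via the smooth squared distance function.
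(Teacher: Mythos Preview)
Your proof is correct and follows essentially the same route as the paper: both rewrite $\varphi^{\eps}(z)$ as a supremum over points $w$ in a fixed neighborhood (using Lemma~\ref{l4.5} to localize the maximizing $\xi$), observe that the function $z\mapsto |\xi(z,w)|_z^2=d_g(z,w)^2$ is smooth with uniformly bounded second derivatives in $z$, and conclude semi-convexity from the fact that a supremum of uniformly semi-convex functions is semi-convex. The only cosmetic difference is that you write $d_g(z,w)^2$ explicitly while the paper keeps the notation $|\xi(z,w)|_z^2$.
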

\begin{proof}
We just prove that $\varphi^{\eps}$ is semi-convex.  The proof that $\varphi_{\eps}$ is semi-concave follows similar lines.  We can choose $U_0$ small enough,  such that for any $z,\,w\in U_0$,  there is a unique $\xi\in T_zM$ such that $\exp_z(\xi)=w$.  Moreover,  we can assume that $\xi$ depends smoothly on $z$ and $w$,  and that $|\xi|^2_z$ is also smooth in $z$ and $w$.  Therefore,  one has,  for some neighborhood $U$ of $x_0$ (possibly smaller than $U_0$):
\begin{equation}\label{4.18}
\begin{split}
&\varphi^{\eps}(z)=\sup_{\xi\in T_zM}\big(\varphi(\exp_z(\xi)+\eps-\frac{1}{\eps}|\xi|^2_z)\big)=\sup_{\xi\in T_zM,\,|\xi|<r_0}\big(\varphi(\exp_z(\xi))+\eps-\frac{1}{\eps}|\xi|^2\big)\\
&=\sup_{w\in U_0}\big(\varphi(w)+\eps-\frac{1}{\eps}|\xi(z,w)|^2_z\big),\,\,z\in U.
\end{split}
\end{equation}
The second equality used Lemma \ref{l4.5} on the estimate of $\xi$ that achieves the sup.  In the last inequality,  we noted that for some neighborhood $U$ of $x_0$,  the image of $U_0\ni w\mapsto \xi(z,w)$ will cover $\{\xi\in T_zM:|\xi|_z<r_0\}$ for any $z\in U$.  
Note that in (\ref{4.18}),  the function $z\mapsto \varphi(w)+\eps-\frac{1}{\eps}|\xi(z,w)|^2_z$ is smooth,  and has uniform $C^2$ bound (in $z$,  uniform with respect to $w$).  Therefore,  taking sup will imply that $\varphi^{\eps}$ is semi-convex.
\end{proof}

\subsection{when the right hand side has strict monotonicity}
In this subsection,  we assume that $G(x,\varphi)$ is continuous and strictly monotone increasing in $\varphi$.  We wish to show that:
\begin{thm}\label{t4.1}
Let $G(x,\varphi)$ be continuous,  and strictly monotone increasing in $\varphi$.  Then there exists at most one viscosity solution to $F(\chi+dd^c\varphi)=e^{G(x,\varphi)}$.
\end{thm}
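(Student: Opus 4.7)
The plan is to argue by contradiction. Suppose $\varphi_1$ and $\varphi_2$ are two viscosity solutions with $M_0 := \max_M(\varphi_1 - \varphi_2) > 0$; the case $\max_M(\varphi_2 - \varphi_1) > 0$ is symmetric. For parameters $\eps, a \in (0,1)$ I will use the regularizations $u := \varphi_1^\eps/(1+a)$ and $v := \varphi_{2,\eps}/(1-a)$ from the previous subsection. By Proposition \ref{p4.3}, in the viscosity sense $u$ satisfies $F(\chi' + dd^c u) \ge e^{G(x,u)} - \rho_1(\eps,a)$ with $\chi' := (\chi + \rho(\eps)\omega_0)/(1+a)$, while $v$ satisfies $F(\chi'' + dd^c v) \le e^{G(x,v)} + \rho_2(\eps,a)$ with $\chi'' := (\chi - \rho(\eps)\omega_0)/(1-a)$. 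Since $u \to \varphi_1$ and $v \to \varphi_2$ uniformly as $\eps, a \to 0$, for all sufficiently small parameters one has $\max_M(u-v) \ge M_0/2$.

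The next step is to locate a good touching point. By Lemma \ref{l4.11N}, $u$ is semi-convex and $v$ is semi-concave in local coordinates, so $w := u - v$ is semi-convex and, by Alexandrov's theorem, both $u$ and $v$ are punctually second-order differentiable almost everywhere. A Jensen's lemma argument in a chart near the maximum of $w$ produces, for almost every small $p \in \bR^{2n}$, a local maximum $\tilde x_p$ of $w - p \cdot x$ at which both $u$ and $v$ are punctually twice differentiable and $u(\tilde x_p) - v(\tilde x_p)$ is arbitrarily close to $\max_M(u-v)$. At such $\tilde x_p$, maximality yields $dd^c u(\tilde x_p) \le dd^c v(\tilde x_p)$, and the viscosity inequalities from Proposition \ref{p4.3} become pointwise classical inequalities.

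Write $A := \chi' + dd^c u$, $A' := \chi' + dd^c v$, and $B := \chi'' + dd^c v$, all evaluated at $\tilde x_p$. Ellipticity together with $A \le A'$ gives $F(A) \le F(A')$. For the comparison $F(A') \le F(B)$, observe that $B - A' = \chi'' - \chi' = \tfrac{2}{1-a^2}(a\chi - \rho(\eps)\omega_0)$; choosing $c_* > 0$ so that $\lambda[\chi - c_*\omega_0] \in \Gamma$ and restricting to $\rho(\eps) \le a c_*$, the eigenvalues of $B - A'$ lie in $\Gamma$. Along the segment $A'(t) := A' + t(B - A')$ for $t \in [0,1]$, Lemma \ref{l3.6N} gives $\tfrac{d}{dt} F(A'(t)) = \tfrac{\partial F}{\partial h_{i\bar j}}(A'(t)) (B-A')_{i\bar j} \ge 0$ as long as $\lambda(A'(t)) \in \Gamma$. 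Since $F(A'(0)) = F(A') \ge F(A) > 0$, while $F > 0$ on $\mathrm{Int}(\Gamma)$ and $F = 0$ on $\partial \Gamma$, a continuity argument shows $A'(t)$ remains in $\{\lambda \in \Gamma\}$ for every $t \in [0,1]$. Hence $\lambda(B) \in \Gamma$ (which legitimizes applying the supersolution property at $B$) and $F(A') \le F(B)$.

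Chaining the four inequalities at $\tilde x_p$ gives
\[
 e^{G(\tilde x_p, u(\tilde x_p))} - \rho_1(\eps,a) \le F(A) \le F(A') \le F(B) \le e^{G(\tilde x_p, v(\tilde x_p))} + \rho_2(\eps,a).
\]
Taking $a \to 0$ and then choosing $\eps = \eps(a) \to 0$ fast enough that $\rho(\eps) \le a c_*$ and $\rho_i(\eps,a) \to 0$, while keeping $u(\tilde x_p) - v(\tilde x_p) \ge M_0/3$, yields a contradiction: since $u, v$ are uniformly bounded and $G$ is strictly monotone increasing in its second argument, on the compact region $\{(x,s,t) \in M \times [-C,C]^2 : s - t \ge M_0/3\}$ there is a uniform gap $G(x,s) - G(x,t) \ge \delta_0 > 0$, so $e^{G(\tilde x_p, u(\tilde x_p))} - e^{G(\tilde x_p, v(\tilde x_p))}$ is bounded below by a positive constant independent of the parameters. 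The main obstacle I anticipate is verifying $\lambda(B) \in \Gamma$ at $\tilde x_p$, since otherwise the supersolution property of $v$ yields no bound on $F(B)$ and the chain breaks; the argument above resolves this by combining Lemma \ref{l3.6N} with the propagation principle that $F$ vanishes exactly on $\partial \Gamma$, the same idea underlying G{\aa}rding-cone theory.
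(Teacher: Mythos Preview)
Your proposal is correct and follows essentially the same route as the paper: contradiction setup, sup/inf convolution via Proposition~\ref{p4.3}, Jensen--Alexandrov perturbation to find a touching point with punctual second-order differentiability, comparison of the two sides there, and contradiction from strict monotonicity. The only notable technical difference is your verification that $\lambda(B)\in\Gamma$: you run a propagation argument along the segment $A'(t)$ using Lemma~\ref{l3.6N} together with $F|_{\partial\Gamma}=0$, whereas the paper simply observes that $B-A$ decomposes as a sum of a form with eigenvalues in $\Gamma$ and a nonnegative form, so $\lambda(B)\in\Gamma$ directly by convexity of the matrix cone $\{\lambda\in\Gamma\}$; the paper then gets $F(A)\le F(B)$ in one stroke from concavity plus Lemma~\ref{l3.6N}, without the intermediate $A'$. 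Both arguments are valid, but the paper's is shorter and avoids the continuity--propagation step. One small point to clean up: your Jensen step implicitly needs the $-\delta_k|z|^2$ term to force a strict maximum (as in the paper's Lemma~\ref{l4.13}), which introduces a harmless $O(\delta_k)$ error in the inequality $dd^c u\le dd^c v$; the paper absorbs this by passing to $\psi_{\eps,a_1,k}$ (Lemma~\ref{l4.14}) and then letting $k\to\infty$ before sending $\eps,a\to 0$.
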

If not,  then there exist two viscosity solutions $\varphi_1,\,\varphi_2$,  and $\varphi_1\neq \varphi_2$.  Without loss of generality,  we may assume that:
\begin{equation}
\kappa_0:=\max_M(\varphi_1-\varphi_2)>0.
\end{equation}
Now we consider the super-convolution,  applied to $\varphi_1$,  and the inf-convolution,  applied to $\varphi_2$.  We define:
\begin{equation*}
\kappa_{\eps,a_1,a_2}=\max_M\big(\frac{(\varphi_1)^{\eps}}{1+a_1}-\frac{(\varphi_2)_{\eps}}{1-a_2}\big).
\end{equation*}
Then it is easy to see that,  as $\eps,\,a_1,\,a_2\rightarrow 0+$,  $\kappa_{\eps,a_1,a_2}\rightarrow \kappa_0$.  Assume that the above max is achieved at $x_*$.  Then we have:
\begin{prop}\label{p4.12}
Assume that $\eps,\,a_1,\,a_2$ are chosen so that $\frac{c_*+\rho(\eps)}{1+a_1}<\frac{c_*-\rho(\eps)}{1-a_2}$.  Assume also that both $(\varphi_1)^{\eps}$ and $(\varphi_2)_{\eps}$ are punctually second order differentiable at $x_*$. Then one has:
\begin{enumerate}
\item $\lambda[\frac{\chi-\rho(\eps)\omega_0}{1-a_2}+dd^c\frac{(\varphi_2)_{\eps}}{1-a_2}](x_*)\in \Gamma$,  
\item $F(\frac{\chi+\rho(\eps)\omega_0}{1+a_1}+dd^c\frac{(\varphi_1)^{\eps}}{1+a_1})(x_*)\le F(\frac{\chi-\rho(\eps)\omega_0}{1-a_2}+dd^c\frac{(\varphi_2)_{\eps}}{1-a_2})(x_*)$.
\end{enumerate}
In the above,  $c_*>0$ is the constant that allows one to write $\chi=\tilde{\chi}+c_*\omega_0$ with $\lambda(\tilde{\chi})\in \Gamma$.
\end{prop}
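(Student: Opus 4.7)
The plan is to work at the maximum point $x_*$ using only the subsolution information for $(\varphi_1)^{\eps}$ and the sign of the Hessian of $u_1-u_2$ at a maximum; the supersolution statement in Proposition \ref{p4.3}(2) plays no role here. Denote $u_1=\frac{(\varphi_1)^{\eps}}{1+a_1}$, $u_2=\frac{(\varphi_2)_{\eps}}{1-a_2}$, $A=\frac{\chi+\rho(\eps)\omega_0}{1+a_1}$, and $B=\frac{\chi-\rho(\eps)\omega_0}{1-a_2}$. Since $x_*$ maximizes $u_1-u_2$ and both functions are punctually second-order differentiable there, comparing their quadratic touching polynomials at $x_*$ yields $dd^c u_1(x_*)\le dd^c u_2(x_*)$ as Hermitian forms. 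Proposition \ref{p4.3}(1) together with the earlier lemma converting punctual second-order differentiability into classical pointwise differential inequalities then yields $\lambda(A+dd^c u_1)(x_*)\in\Gamma$ and the usual lower bound on $F(A+dd^c u_1)(x_*)$.

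The key algebraic step is to decompose $B-A$. Writing $\chi=\tilde{\chi}+c_*\omega_0$ with $\lambda(\tilde{\chi})\in\Gamma$, a direct computation yields
\begin{equation*}
B-A=\alpha\tilde{\chi}+\beta\omega_0,\quad \alpha=\frac{a_1+a_2}{(1+a_1)(1-a_2)},\ \beta=\frac{c_*(a_1+a_2)-\rho(\eps)(2+a_1-a_2)}{(1+a_1)(1-a_2)},
\end{equation*}
and the hypothesis $\frac{c_*+\rho(\eps)}{1+a_1}<\frac{c_*-\rho(\eps)}{1-a_2}$ is exactly equivalent to $\beta>0$, while $\alpha>0$ is automatic. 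Setting $M:=\alpha\tilde{\chi}+\beta\omega_0+(dd^c u_2-dd^c u_1)$, we have $B+dd^c u_2=(A+dd^c u_1)+M$ at $x_*$. Each summand constituting $M$ has $g$-eigenvalues in $\Gamma$: $\lambda(\tilde{\chi})\in\Gamma$ by the choice of $c_*$, $\lambda(\omega_0)=(1,\dots,1)\in\Gamma_n\subset\Gamma$, and $dd^c u_2-dd^c u_1$ is positive semi-definite and hence has eigenvalues in $\Gamma_n\subset\Gamma$.

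For (1), I invoke the classical fact that the spectral set $S_\Gamma=\{H\text{ Hermitian}:\lambda(H)\in\Gamma\}$ is a convex cone (Davis' theorem applied to the permutation-invariant convex cone $\Gamma$), hence closed under addition; applied first to the three summands in $M$ and then to $(A+dd^c u_1)+M$, this gives $\lambda(B+dd^c u_2)(x_*)\in\Gamma$. For (2), concavity of $F$ on $S_\Gamma$ together with positive homogeneity of degree one gives the super-additivity $F(X+Y)\ge F(X)+F(Y)$ for all $X,Y\in S_\Gamma$ (apply concavity at $\frac{X+Y}{2}$ and use homogeneity); taking $X=A+dd^c u_1$ and $Y=M$, and noting $F(M)\ge 0$ since $\lambda(M)\in\Gamma$, this yields $F(B+dd^c u_2)(x_*)\ge F(A+dd^c u_1)(x_*)$, which is (2). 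The main obstacle, and essentially the only external input, is the spectral-set convexity; once accepted, the proof collapses to the algebraic decomposition above.
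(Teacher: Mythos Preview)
Your argument is correct and for part (1) coincides with the paper's proof: both decompose $B-A$ as $\alpha\tilde{\chi}+\beta\omega_0$ with $\alpha,\beta>0$, add the nonnegative Hessian difference at the maximum, and invoke convexity of the spectral cone $S_\Gamma$ (the paper leaves this last step implicit; you cite Davis' theorem explicitly, which is a plus).

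For part (2) the routes diverge slightly. The paper applies the tangent-line form of concavity,
\[
F(A+dd^c u_1)\le F(B+dd^c u_2)+\frac{\partial F}{\partial h_{i\bar j}}(B+dd^c u_2)\,(-M)_{i\bar j},
\]
and then appeals to Lemma \ref{l3.6N} (proved via Schur--Horn) to get $\frac{\partial F}{\partial h_{i\bar j}}(B+dd^c u_2)\,M_{i\bar j}\ge 0$ from $\lambda(M)\in\Gamma$. You instead combine concavity with degree-one homogeneity to obtain super-additivity $F(X+Y)\ge F(X)+F(Y)$ on $S_\Gamma$, and then use $F(M)\ge 0$. Your route is a bit more elementary in that it avoids Lemma \ref{l3.6N} entirely (indeed, your inequality $F(X+M)\ge F(X)+F(M)$ together with concavity actually \emph{implies} $\nabla F(X)\cdot M\ge F(M)\ge 0$, so Lemma \ref{l3.6N} drops out as a corollary). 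The paper's route has the advantage that Lemma \ref{l3.6N} is reused elsewhere (e.g., in the stability estimate, Lemma \ref{l3.5}), so the extra lemma is not wasted effort in the global economy of the paper.
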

This proposition allows us to exclude the non-uniqueness of viscosity solutions,  as long as $(\varphi_1)^{\eps}$,  $(\varphi_2)_{\eps}$ are both punctually second order differentiable at $x_*$.

\begin{proof}
(of Theorem \ref{t4.1},  assuming $(\varphi_1)^{\eps}$ and $(\varphi_2)_{\eps}$ are punctually second order differentiable at $x_*$)
We choose $\eps,\,a_1,\,a_2$ so that $\frac{c_*+\rho(\eps)}{1+a_1}<\frac{c_*-\rho(\eps)}{1-a_2}$.  
Combining Proposition \ref{p4.3} and Proposition \ref{p4.12},  we see that at $x_*$:
\begin{equation}
\begin{split}
&f\big(\lambda[\frac{\chi+\rho(\eps)\omega_0}{1+a_1}+dd^c\frac{(\varphi_1)^{\eps}}{1+a_1}]\big)(x_*)\le f(\lambda[\frac{\chi-\rho(\eps)\omega_0}{1-a_2}+dd^c\frac{(\varphi_2)_{\eps}}{1-a_2}])(x_*),\\
&f\big(\lambda[\frac{\chi+\rho(\eps)\omega_0}{1+a_1}+dd^c\frac{(\varphi_1)^{\eps}}{1+a_1}]\big)(x_*)\ge e^{G(x_*,\frac{(\varphi_1)^{\eps}(x_*)}{1+a_1})}-\rho_1(\eps,a_1),\\
&f\big(\lambda[\frac{\chi-\rho(\eps)\omega_0}{1-a_2}+dd^c\frac{(\varphi_2)_{\eps}}{1-a_2}])\big)(x_*)\le e^{G(x_*,\frac{(\varphi_2)_{\eps}(x_*)}{1-a_2})}+\rho_2(\eps,a_2).
\end{split}
\end{equation}
Combining the three inequalities,  we see that:
\begin{equation}\label{4.21}
e^{G(x_*,\frac{(\varphi_1)^{\eps}(x_*)}{1+a_1})}-\rho_1(\eps,a_1)\le e^{G(x_*,\frac{(\varphi_2)_{\eps}(x_*)}{1-a_2})}+\rho_2(\eps,a_2).
\end{equation}
On the other hand,  $\frac{(\varphi_1)^{\eps}(x_*)}{1+a_1}-\frac{(\varphi_2)_{\eps}(x_*)}{1-a_2}=\kappa_{\eps,a_1,a_2}\rightarrow \kappa_0>0$ as $\eps,\,a_1,\,a_2\rightarrow 0$.  This is clearly inconsistent with (\ref{4.21}) when $\eps,\,a_1,\,a_2$ are all small enough.
\end{proof}
Now we prove Proposition \ref{p4.12}.
\begin{proof}
(Of Proposition \ref{p4.12}) Since both $(\varphi_1)^{\eps}$ and $(\varphi_2)_{\eps}$ are differentiable at $x_*$,  and that $\frac{(\varphi_1)^{\eps}}{1+a_1}-\frac{(\varphi_2)_{\eps}}{1-a_2}$ achieves maximum at $x_*$,  we see that:
\begin{equation*}
dd^c\frac{(\varphi_2)_{\eps}}{1-a_2}-dd^c\frac{(\varphi_1)^{\eps}}{1+a_1}\ge 0.
\end{equation*}
Moreover,
\begin{equation*}
\frac{\chi-\rho(\eps)\omega_0}{1-a_2}-\frac{\chi+\rho(\eps)\omega_0}{1+a_1}=\tilde{\chi}(\frac{1}{1-a_2}-\frac{1}{1+a_1})+(\frac{c_*-\rho(\eps)}{1-a_2}-\frac{c_*+\rho(\eps)}{1+a_1})\omega_0.
\end{equation*}
Therefore, 
\begin{equation}\label{4.22}
\frac{\chi-\rho(\eps)\omega_0}{1-a_2}+dd^c\frac{(\varphi_2)_{\eps}}{1-a_2}\ge \frac{\chi+\rho(\eps)\omega_0}{1+a_1}+dd^c\frac{(\varphi_1)^{\eps}}{1+a_1}+c_1\tilde{\chi}+\eta.
\end{equation}
In the above,  $c_1=\frac{1}{1-a_2}-\frac{1}{1+a_1}>0$,  $\lambda(\tilde{\chi})\in \Gamma$,  $\eta\ge 0$.  Also by Proposition \ref{p4.3},  part (1),  we also have $\lambda[\frac{\chi+\rho(\eps)\omega_0}{1+a_1}+dd^c\frac{(\varphi_1)^{\eps}}{1+a_1}]\in \Gamma$.  Therefore,  we see that $\lambda[\frac{\chi-\rho(\eps)\omega_0}{1-a_2}+dd^c\frac{(\varphi_2)_{\eps}}{1-a_2}]\in \Gamma$.

To prove the second part,  we wish to use concavity.  Indeed,  one has:
\begin{equation*}
\begin{split}
&f\big(\lambda[\frac{\chi+\rho(\eps)\omega_0}{1+a_1}+dd^c\frac{(\varphi_1)^{\eps}}{1+a_1}]\big)(x_*)\le f\big(\lambda[\frac{\chi-\rho(\eps)\omega_0}{1-a_2}+dd^c\frac{(\varphi_2)_{\eps}}{1-a_2}]\big)(x_*)\\
&+ \frac{\partial F}{\partial h_{i\bar{j}}}(\frac{\chi-\rho(\eps)}{1-a_2}+dd^c\frac{(\varphi_2)_{\eps}}{1-a_2})\big(\frac{\chi_{i\bar{j}}+\rho(\eps)g_{i\bar{j}}}{1+a_1}+\frac{(\varphi_1)_{i\bar{j}}^{\eps}}{1+a_1}\\
&-\frac{\chi_{i\bar{j}}-\rho(\eps)g_{i\bar{j}}}{1-a_2}-dd^c\frac{((\varphi_2)_{\eps})_{i\bar{j}}}{1-a_2}\big).
\end{split}
\end{equation*}
So we just need to show that:
\begin{equation*}
\begin{split}
&\frac{\partial F}{\partial h_{i\bar{j}}}(\frac{\chi-\rho(\eps)}{1-a_2}+dd^c\frac{(\varphi_2)_{\eps}}{1-a_2})\big(\frac{\chi_{i\bar{j}}+\rho(\eps)g_{i\bar{j}}}{1+a_1}+\frac{(\varphi_1)_{i\bar{j}}^{\eps}}{1+a_1}\\
&-\frac{\chi_{i\bar{j}}-\rho(\eps)g_{i\bar{j}}}{1-a_2}-\frac{((\varphi_2)_{\eps})_{i\bar{j}}}{1-a_2}\big)\le 0.
\end{split}
\end{equation*}
By Lemma \ref{l3.6N},  we just need to \sloppy show that $\lambda\big(\frac{\chi-\rho(\eps)\omega_0}{1-a_2}+dd^c\frac{(\varphi_2)_{\eps}}{1-a_2}-\frac{\chi+\rho(\eps)\omega_0}{1+a_1}-dd^c\frac{(\varphi_1)^{\eps}}{1+a_1}\big)\in \Gamma$.  However,  one can already see this from (\ref{4.22}).
\end{proof}

Next we look at the general case,  without assuming $(\varphi_1)^{\eps}$,  $(\varphi_2)_{\eps}$ punctually second order differentiable at $x_*$.  For this,  we need a perturbation argument from \cite{EGZ} and \cite{L}.

First we choose normal coordinate near $x_*$,  such that $x_*$ is given by $z=0$.  We wish to show that:
\begin{lem}\label{l4.13}
There exists a neighborhood $U_0$ of $x_*$,  and 
there exists a sequence $p_k\in \bC^n$,  $p_k\rightarrow 0$,  and a sequence $\delta_k>0,\,\delta_k\rightarrow 0$,  such that one can find a sequence of points $x_k\in U_0$ with the following properties hold:
\begin{enumerate}
\item $x_k\rightarrow x_*$ as $k\rightarrow \infty$,
\item $\frac{(\varphi_1)^{\eps}}{1+a_1}-\frac{(\varphi_2)_{\eps}}{1-a_2}-<p_k,z>-\delta_k|z|^2$ has local maximum at $x_k$,
\item Both $(\varphi_1)^{\eps}$ and $(\varphi_2)_{\eps}$ are punctually second order differentiable at $x_k$.
\end{enumerate}
\end{lem}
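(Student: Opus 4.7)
The plan is to adapt the classical Jensen/perturbation argument used in \cite{EGZ} and \cite{L}. First, I would pass to a holomorphic coordinate chart centered at $x_* = 0$, and by Lemma \ref{l4.11N} shrink $U_0$ so that $(\varphi_1)^\eps$ is semi-convex and $(\varphi_2)_\eps$ is semi-concave in these coordinates. In particular
\[u(z) := \frac{(\varphi_1)^{\eps}(z)}{1+a_1} - \frac{(\varphi_2)_{\eps}(z)}{1-a_2}\]
is semi-convex on $U_0$ and attains its global maximum at $0$. By Aleksandrov's theorem together with Lemma \ref{l4.11N}, the set $N \subset U_0$ where either $(\varphi_1)^\eps$ or $(\varphi_2)_\eps$ fails to be punctually second order differentiable has Lebesgue measure zero.

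Second, I would fix $\delta_k = 1/k$ and consider $u_k(z) := u(z) - \delta_k|z|^2$. Since $u(z) \le u(0)$ on $U_0$, we have $u_k(z) - u_k(0) \le -\delta_k|z|^2$, so $0$ is a strict maximum of $u_k$ on $\overline{U_0}$. A routine upper-semicontinuity argument then shows that for $|p| < \rho_k$ sufficiently small, any maximizer $y_k(p) \in \overline{U_0}$ of $z \mapsto u_k(z) - \langle p,z\rangle$ lies in the interior of $U_0$ and satisfies $y_k(p) \to 0$ as $|p| \to 0$.

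Third, the core of the argument is the Jensen null-set property: for Lebesgue-a.e.\ $p$ with $|p| < \rho_k$, every maximizer $y_k(p)$ avoids $N$. To prove this I would write $u + C|z|^2 = h$ with $h$ convex (possible since $u$ is semi-convex), so that at any interior maximizer $y = y_k(p)$ the first-order condition reads $p + 2(C+\delta_k)\,y \in \partial h(y)$ in the subdifferential sense. An area-formula argument for subdifferentials of convex functions (Aleksandrov) then shows that the set of $p$'s whose associated maximizer lies in $N$ is itself Lebesgue null. Granting this, I would pick $p_k \in \bC^n$ with $|p_k| < \min(\rho_k, 1/k)$ such that $y_k(p_k) \notin N$, and if necessary shrink $|p_k|$ further to ensure $|y_k(p_k) - x_*| < 1/k$. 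Setting $x_k := y_k(p_k)$ yields the required sequence: (2) by construction, (3) since $x_k \notin N$, and (1) from the size control on $x_k$.

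The main obstacle is the Jensen null-set property in the third step, which rests on the area formula / Aleksandrov theorem for subdifferentials of convex functions; the remaining steps are routine continuity and measure-theoretic bookkeeping.
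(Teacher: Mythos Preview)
Your proposal is correct and follows essentially the same route as the paper. Both arguments rest on Jensen's perturbation lemma applied to the semi-convex function $u-\frac{1}{k}|z|^2$; the paper simply invokes it as Lemma~A.3 of \cite{CIL} (the set $K$ of touching points has positive measure, so one picks $x_k\in K\setminus N$ directly), whereas you phrase it in the dual form (for a.e.\ $p$ the maximizer avoids $N$) and select $p_k$ first---but the underlying mechanism is identical.
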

Another thing we note is that:
\begin{lem}\label{l4.14}
Define $\psi_{\eps,a_1,k}(z)=\frac{(\varphi_1)^{\eps}}{1+a_1}-<p_k,z>-\delta_k|z|^2$ on $U_0$.  Then for large enough $k$ ($\eps,\,a_1$ is fixed now) $\psi_{\eps,a_1,k}$ solves the following inequalities on $U_0$ in the viscosity sense:
\begin{equation*}
\begin{split}
&\lambda[\frac{\chi+2\rho(\eps)\omega_0}{1+a_1}+dd^c\psi_{\eps,a_1,k}]\in \Gamma.\\
&
f\big(\lambda[\frac{\chi+2\rho(\eps)\omega_0}{1+a_1}+dd^c\psi_{\eps,a_1,k}]\big)\ge e^{G(x,\psi_{\eps,a_1,k})}-2\rho_1(a_1,\eps).
\end{split}
\end{equation*}
\end{lem}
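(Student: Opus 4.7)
The plan is to lift every $C^2$ test function that touches $\psi_{\eps,a_1,k}$ from above to a test function for $\frac{(\varphi_1)^\eps}{1+a_1}$, apply Proposition~\ref{p4.3}(1) to that lifted function, and then absorb the small perturbation terms using the extra $\rho(\eps)\omega_0/(1+a_1)$ allowance built into the statement.

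Concretely, I would take any $C^2$ function $P$ touching $\psi_{\eps,a_1,k}$ from above at some $x_0\in U_0$, and observe that, by the definition of $\psi_{\eps,a_1,k}$, the shifted function $\tilde P(z):=P(z)+\langle p_k,z\rangle+\delta_k|z|^2$ touches $\frac{(\varphi_1)^\eps}{1+a_1}$ from above at $x_0$. Proposition~\ref{p4.3}(1) then yields $\lambda[\frac{\chi+\rho(\eps)\omega_0}{1+a_1}+dd^c\tilde P](x_0)\in\Gamma$ and
\[
F\!\left(\tfrac{\chi+\rho(\eps)\omega_0}{1+a_1}+dd^c\tilde P\right)\!(x_0)\ge e^{G(x_0,\tilde P(x_0))}-\rho_1(\eps,a_1).
\]
Since $dd^c\tilde P=dd^cP+\delta_k\,dd^c|z|^2$ and one can shrink $U_0$ to ensure a fixed bound $dd^c|z|^2\le C_0\omega_0$, choosing $k$ so large that $C_0\delta_k(1+a_1)\le\rho(\eps)$ will produce the decomposition
\[
\tfrac{\chi+2\rho(\eps)\omega_0}{1+a_1}+dd^cP \;=\; \tfrac{\chi+\rho(\eps)\omega_0}{1+a_1}+dd^c\tilde P+D,\qquad D:=\tfrac{\rho(\eps)\omega_0}{1+a_1}-\delta_k\,dd^c|z|^2\ge 0.
\]
Because $\Gamma\supset\Gamma_n$, Weyl's monotonicity of eigenvalues shows that $\{A:\lambda(A)\in\Gamma\}$ is closed under adding a positive semi-definite Hermitian form, which gives the required $\Gamma$-condition for $\frac{\chi+2\rho(\eps)\omega_0}{1+a_1}+dd^cP$. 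Combined with the monotonicity $F(A+D)\ge F(A)$ (which follows from the concavity inequality $F(A)\le F(A+D)-\frac{\partial F}{\partial h_{i\bar j}}(A+D)D_{i\bar j}$ together with Lemma~\ref{l3.6N} applied to the pair $A+D,\,D$, since the correction is then non-negative), this would give
\[
F\!\left(\tfrac{\chi+2\rho(\eps)\omega_0}{1+a_1}+dd^cP\right)\!(x_0)\ge e^{G(x_0,\tilde P(x_0))}-\rho_1(\eps,a_1).
\]

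Finally, I would replace $\tilde P(x_0)$ by $P(x_0)=\psi_{\eps,a_1,k}(x_0)$ in the $G$-argument. Because $p_k,\delta_k\to 0$ and $U_0$ is bounded, the difference $\tilde P(x_0)-P(x_0)=\langle p_k,x_0\rangle+\delta_k|x_0|^2$ tends to $0$ uniformly in $x_0\in U_0$, while the values $\psi_{\eps,a_1,k}(x_0)$ stay in a fixed bounded interval because $(\varphi_1)^\eps$ is bounded. Uniform continuity of $(x,u)\mapsto e^{G(x,u)}$ on the resulting compact set will then give $|e^{G(x_0,\tilde P(x_0))}-e^{G(x_0,P(x_0))}|\le\rho_1(\eps,a_1)$ once $k$ is large, which accounts for the second $\rho_1$ loss and produces the stated $-2\rho_1(\eps,a_1)$. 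The only delicate ingredient is the monotonicity of $F$ in its matrix argument, which I would derive from concavity plus Lemma~\ref{l3.6N} rather than from any direct eigenvalue comparison.
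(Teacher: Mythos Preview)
Your proof is correct and follows essentially the same route as the paper: lift the test function by $\langle p_k,z\rangle+\delta_k|z|^2$, apply Proposition~\ref{p4.3}(1), absorb $\delta_k\,dd^c|z|^2$ into the extra $\rho(\eps)\omega_0/(1+a_1)$, and use uniform continuity of $e^{G}$ to handle the shift in the argument. The only difference is that you justify $F(A+D)\ge F(A)$ for $D\ge 0$ via concavity together with Lemma~\ref{l3.6N}, whereas the paper treats this as immediate; indeed it follows more directly from $\partial f/\partial\lambda_i>0$ (Assumption~\ref{a1.1N}) and Weyl's monotonicity of ordered eigenvalues, so your detour through concavity, while valid, is not needed.
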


We first explain how this implies the uniqueness of viscosity solutions in the general case.
\begin{proof}
(Of Theorem \ref{t4.1},  in the general case)
We just need to suitably choose $\eps,\,a_1,\,a_2$,  and choose $k$ large enough,  then we evaluate at $x_k$.  Now we know that $\psi_{\eps,a_1,k}-\frac{(\varphi_2)_{\eps}}{1-a_2}$ achieves maximum at $x_k$.  Moreover,  both $\psi_{\eps,a_1,k}$ and $(\varphi_2)_{\eps}$ are punctually second order differentiable at $x_k$.  Therefore,  if we follow the argument of Proposition \ref{p4.12},  we see that if we choose $\eps,\,a_1,\,a_2$ so that $\frac{c_*+2\rho(\eps)}{1+a_1}<\frac{c_*-\rho(\eps)}{1-a_2}$,  we would be able to conclude that:
\begin{equation*}
f\big(\lambda[\frac{\chi+2\rho(\eps)\omega_0}{1+a_1}+dd^c\psi_{\eps,a_1,k}]\big)(x_k)\le f\big(\lambda[\frac{\chi-\rho(\eps)\omega_0}{1-a_2}+dd^c\frac{(\varphi_2)_{\eps}}{1-a_2}]\big)(x_k).
\end{equation*}
Therefore,  we see that:
\begin{equation}\label{4.23}
e^{G(x_k,\psi_{\eps,a_1,k}(x_k))}-2\rho_1(a_1,\eps)\le e^{G(x_k,\frac{(\varphi_2)_{\eps}}{1-a_2}(x_k))}+\rho_2(a_2,\eps).
\end{equation}
Now one passes to limit as $k\rightarrow \infty$.  Note that $\psi_{\eps,a_1,k}(x_k)\rightarrow \frac{(\varphi_1)^{\eps}}{1+a_1}(x_*)$,  we see that:
\begin{equation*}
e^{G(x_*,\frac{(\varphi_1)^{\eps}}{1+a_1}(x_*))}-2\rho_1(a_1,\eps)\le e^{G(x_*,\frac{(\varphi_2)_{\eps}}{1-a_2}(x_*))}+\rho_2(a_2,\eps).
\end{equation*}
On the other hand,  $\frac{(\varphi_1)^{\eps}}{1+a_1}(x_*)-\frac{(\varphi_2)_{\eps}}{1-a_2}(x_*)$ is strictly positive and bounded away from zero as $\eps,\,a_1,\,a_2\rightarrow 0$.  From the strict monotonicity of $G$,  we see a contradiction.
\end{proof}
Now it only remains to establish the technicalities Lemma \ref{l4.13} and \ref{l4.14}.  We start with Lemma \ref{l4.14}.

\begin{proof}
(Of Lemma \ref{l4.14})
Let $x_0\in U_0$ and let $P$ be a $C^2$ function on $U_0$ that touches $\psi_{\eps,a_1,k}$ from above at $x_0$.  This would imply that $P+<p_k,z>+\delta_k|z|^2$ touches $\frac{(\varphi_1)^{\eps}}{1+a_1}$ at $x_0$.  Therefore,  we may use Proposition \ref{p4.3},  part (1) to conclude that:
\begin{equation*}
\begin{split}
&\lambda[\frac{\chi+\rho(\eps)\omega_0}{1+a_1}+dd^c(P+<p_k,z>+\delta_k|z|^2)]\in \Gamma,\\
&f\big(\lambda[\frac{\chi+\rho(\eps)\omega_0}{1+a_1}+dd^c(P+<p_k,z>+\delta_k|z|^2]\big)\\
&\ge e^{G(x_0,(P+<p_k,z>+\delta_k|z|^2)(x_0))}-\rho_1(\eps,a_1).
\end{split}
\end{equation*}
On the other hand,  it is easy to see that when $k$ is large enough (so that $\delta_k$ is small enough),  one has:
\begin{equation*}
\begin{split}
&\frac{\chi+\rho(\eps)\omega_0}{1+a_1}+dd^c(P+<p_k,z>+\delta_k|z|^2)\le \frac{\chi+2\rho(\eps)\omega_0}{1+a_1}+dd^c P,\\
&e^{G(x_0,(P+<p_k,z>+\delta_k|z|^2)(x_0))}-\rho_1(\eps,a_1)\ge e^{G(x_0,P(x_0))}-2\rho_1(\eps,a_1).
\end{split}
\end{equation*}
Combining,  we get:
\begin{equation*}
\begin{split}
&\lambda[\frac{\chi+2\rho(\eps)\omega_0}{1+a_1}+dd^cP]\in \Gamma,\\
&f\big(\lambda[\frac{\chi+2\rho(\eps)\omega_0}{1+a_1}+dd^cP]\big)\ge e^{G(x_0,P(x_0))}-2\rho_1(\eps,a_1).
\end{split}
\end{equation*}
\end{proof}
Now we prove Lemma \ref{l4.13}.
\begin{proof}
(Of Lemma \ref{l4.13})  Without loss of generality,  let us assume that $U_0=B_1(0)$,  $x_*=0$ under the local coordinates.  Then this lemma really follows from a lemma of Crandall,  Ishii and Lions (\cite{CIL}  lemma A.3),  which states that:\\

Let $\varphi:\bR^N\rightarrow \bR$ be semi-convex and $\hat{x}$ be a strict local maximum of $\varphi$.  For $p\in \bR^N$,  put $\varphi_p(x)=\varphi(x)+<p,x>$.  Then for any $r>0,\,\delta>0$,  the following set $K$ has positive measure:
\begin{equation*}
K=\{x\in B_r(\hat{x}):\text{there exists $p\in B_{\delta}(0)$ such that $\varphi_p(x)$ has a local maximum at $x$}\}.
\end{equation*}

We are going to apply this lemma with $\varphi=\frac{(\varphi_1)^{\eps}}{1+a_1}-\frac{(\varphi_2)_{\eps}}{1-a_2}-\frac{1}{k}|z|^2$.  This function will be semi-convex due to Lemma \ref{l4.11N}.  Moreover,   we know that $\frac{(\varphi_1)^{\eps}}{1+a_1}-\frac{(\varphi_2)_{\eps}}{1-a_2}-\frac{1}{k}|z|^2$ has strict maximum at $x_*$ (given by $z=0$).  Then the above lemma applies.

We will also choose $\delta=\frac{1}{k},\,r=\frac{1}{k}$.  Then we can conclude the set of $x\in B_{\frac{1}{k}}(x_*)$ such that $\frac{(\varphi_1)^{\eps}}{1+a_1}-\frac{(\varphi_2)_{\eps}}{1-a_2}-\frac{1}{k}|z|^2+<p,z>$ has local maximum at $x$ for some $|p|\le \frac{1}{k}$ has positive measure.  Since $(\varphi_1)^{\eps}$ and $(\varphi_2)_{\eps}$ are both punctually second order differentiable a.e.,  we can find $x_k\in B_{\frac{1}{k}}(x_*)$ belonging to the above said set,  such that both $(\varphi_1)^{\eps}$ and $(\varphi_2)_{\eps}$ are punctually second order differentiable at $x_k$.  If we denote the corresponding $p$ to be $p_k$,  we see that we are done.
\end{proof}

\subsection{when the right hand side does not depend on $\varphi$}
In this subsection,  we will assume that the right hand side $G$ depends only on $x$.  In this case,  one can only hope to solve $F(\chi+dd^c\varphi)=e^{G+c}$,  for some constant $c$.
Even though we can show that there is a unique $c\in \bR$ that makes this equation solvable in the viscosity sense,  we still don't have a good enough understanding of this constant,  which is the main hurdle to a proof of uniqueness of viscosity solutions in this case.
Let us start with observing the monotonicity of the constant,  in terms of the right hand side.  More specifically,  we have:
\begin{prop}\label{p4.15}
Let $G_1,\,G_2\in C(M)$ with $G_1\ge G_2$.  Assume that there exist $c_i\in \bR,\,\varphi_i\in C(M),\,i=1,\,2$,  that solve $F(\chi+dd^c\varphi_i)=e^{G_i+c_i}$ in the viscosity sense.  Then one has $c_1\le c_2$.
\end{prop}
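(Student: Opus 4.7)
The plan is to argue by contradiction: assume $c_1>c_2$ and derive an inconsistency by comparing the two viscosity equations through the sup/inf convolution machinery of the previous subsection. The overall scheme mirrors the proof of Theorem~\ref{t4.1}; the strictness required to close the contradiction will now come from the assumption $c_1>c_2$ itself, rather than from any strict monotonicity of the right hand side in $\varphi$.

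First I would apply Proposition~\ref{p4.3} with $G(x,u)=G_i(x)+c_i$ (which is constant in $u$) to conclude that for small $\epsilon,a_1,a_2>0$ and moduli $\rho(\epsilon),\rho_i(\epsilon,a_i)\to 0$ the function $\frac{(\varphi_1)^\epsilon}{1+a_1}$ is $\Gamma$-subharmonic with respect to $\frac{\chi+\rho(\epsilon)\omega_0}{1+a_1}$ and satisfies
\begin{equation*}
F\bigl(\tfrac{\chi+\rho(\epsilon)\omega_0}{1+a_1}+dd^c\tfrac{(\varphi_1)^\epsilon}{1+a_1}\bigr)\ge e^{G_1+c_1}-\rho_1(\epsilon,a_1)
\end{equation*}
in the viscosity sense, while $\frac{(\varphi_2)_\epsilon}{1-a_2}$ satisfies the dual inequality with $e^{G_2+c_2}+\rho_2(\epsilon,a_2)$.

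Next I would consider $\Psi_{\epsilon,a_1,a_2}:=\frac{(\varphi_1)^\epsilon}{1+a_1}-\frac{(\varphi_2)_\epsilon}{1-a_2}$, which attains its maximum on the compact manifold $M$ at some $x_*=x_*(\epsilon,a_1,a_2)$. I would pick $\epsilon,a_1,a_2$ so that $\frac{c_*+\rho(\epsilon)}{1+a_1}<\frac{c_*-\rho(\epsilon)}{1-a_2}$ (for instance, $a_i\gg\rho(\epsilon)$) and, when both convolutions are punctually second order differentiable at $x_*$, invoke Proposition~\ref{p4.12} directly. For the general case, since $(\varphi_1)^\epsilon$ is semi-convex and $(\varphi_2)_\epsilon$ is semi-concave (Lemma~\ref{l4.11N}), so $\Psi$ is semi-convex, I would follow the perturbation procedure of Lemmas~\ref{l4.13} and~\ref{l4.14}: extract sequences $x_k\to x_*$, $p_k\to 0$, $\delta_k\to 0$ such that $\Psi-\langle p_k,z\rangle-\delta_k|z|^2$ has a local maximum at $x_k$ and both convolutions are punctually second order differentiable at $x_k$, while the perturbed subsolution $\psi_k:=\frac{(\varphi_1)^\epsilon}{1+a_1}-\langle p_k,z\rangle-\delta_k|z|^2$ verifies the viscosity inequality with slightly enlarged error $2\rho_1$ and enlarged form $\chi+2\rho(\epsilon)\omega_0$. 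Combining the comparison of $F$-values at $x_k$ (Proposition~\ref{p4.12}) with the sub- and supersolution inequalities yields
\begin{equation*}
e^{G_1(x_k)+c_1}-2\rho_1(\epsilon,a_1)\le e^{G_2(x_k)+c_2}+\rho_2(\epsilon,a_2).
\end{equation*}

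Finally I would let $k\to\infty$ and then $\epsilon,a_1,a_2\to 0$ with $a_i$ tending to $0$ more slowly than $\rho(\epsilon)$. Extracting a convergent subsequence $x_k\to x_\infty\in M$, the error terms vanish and we obtain $e^{G_1(x_\infty)+c_1}\le e^{G_2(x_\infty)+c_2}$. On the other hand, $G_1\ge G_2$ combined with $c_1>c_2$ forces, by compactness of $M$, a uniform positive gap $\inf_M(G_1+c_1-G_2-c_2)>0$, hence $e^{G_1+c_1}-e^{G_2+c_2}\ge\delta>0$ pointwise, contradicting the limit inequality. Therefore $c_1\le c_2$. The main technical hurdle, exactly as in Theorem~\ref{t4.1}, is handling the possible lack of punctual second order differentiability at $x_*$ via the perturbation argument; once this is in place, the novelty here is that the strictness needed at the end is supplied directly by the standing assumption $c_1>c_2$, so no additional monotonicity hypothesis on the right hand side is required.
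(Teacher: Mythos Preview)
Your proposal is correct and follows essentially the same route as the paper: argue by contradiction assuming $c_1>c_2$, apply the sup/inf convolution machinery of Proposition~\ref{p4.3}, compare at the maximum of $\frac{(\varphi_1)^\eps}{1+a_1}-\frac{(\varphi_2)_\eps}{1-a_2}$ via Proposition~\ref{p4.12} (with the perturbation of Lemmas~\ref{l4.13}--\ref{l4.14} to handle lack of punctual second order differentiability), and conclude from the uniform gap $e^{G_1+c_1}\ge e^{G_2+c_2}+\delta_0$. One minor cleanup: you do not actually need to send $\eps,a_1,a_2\to 0$ or extract a further subsequence of the $x_k$, since once $k\to\infty$ gives the inequality at $x_*$ with error $2\rho_1+\rho_2$, the uniform pointwise gap $\delta_0$ already yields a contradiction for any fixed sufficiently small parameters satisfying $\frac{c_*+2\rho(\eps)}{1+a_1}<\frac{c_*-\rho(\eps)}{1-a_2}$.
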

\begin{proof}
Assume that this is false,  namely $c_1>c_2$.  This would imply that $e^{G_1+c_1}>e^{G_2+c_2}+\delta_0$ on $M$,  for some $\delta_0>0$.  Heuristically this would lead to a contradiction if one evaluates at $x_0$,  where $\varphi_1-\varphi_2$ achieves maximum.  At this point,  one would have $\chi+dd^c\varphi_1\le \chi+dd^c\varphi_2$,  so that $F(\chi+dd^c\varphi_1)\le F(\chi+dd^c\varphi_2)$ at $x_0$.  

To proceed rigorously,  one needs to perform the super/inf convolutions considered in Subsection 4.1.  One consideres $\frac{(\varphi_1)^{\eps}}{1+a_1}-\frac{(\varphi_2)_{\eps}}{1-a_2}$,  where $(\varphi_1)^{\eps}$ and $(\varphi_2)_{\eps}$ are defined according to (\ref{4.1}) and (\ref{4.2}).  

Assume that the maximum is achieved at $x_*$.  If both $(\varphi_1)^{\eps}$ and $(\varphi_2)_{\eps}$ are both twice differentiable at $x_*$,  then one has:
\begin{equation*}
\begin{split}
&f\big(\lambda[\frac{\chi+\rho(\eps)\omega_0}{1+a_1}+dd^c\frac{(\varphi_1)^{\eps}}{1+a_1}]\big)(x_*)\ge e^{G_1(x_*)+c_1}-\rho_1(\eps,a_1),\\
&f\big(\lambda[\frac{\chi-\rho(\eps)\omega_0}{1-a_2}+dd^c\frac{(\varphi_2)_{\eps}}{1-a_2}]\big)(x_*)\le e^{G_2(x_*)+c_2}+\rho_2(\eps,a_2).
\end{split}
\end{equation*}
Since $\frac{(\varphi_1)^{\eps}}{1+a_1}-\frac{(\varphi_2)_{\eps}}{1-a_2}$ has maximum at $x_*$,  we see that,  if $\eps,\,a_1,\,a_2$ are chosen so that $\frac{c_*+\rho(\eps)}{1+a_1}<\frac{c_*-\rho(\eps)}{1-a_2}$,  we can follow the argument of Proposition \ref{p4.12} to see that:
\begin{equation*}
f\big(\lambda[\frac{\chi+\rho(\eps)\omega_0}{1+a_1}+dd^c\frac{(\varphi_1)^{\eps}}{1+a_1}]\big)(x_*)\le f\big(\lambda[\frac{\chi-\rho(\eps)\omega_0}{1-a_2}+dd^c\frac{(\varphi_2)_{\eps}}{1-a_2}]\big)(x_*).
\end{equation*}
Then one gets: $e^{G_1(x_*)+c_1}-\rho_1(\eps,a_1)\le e^{G_2(x_*)+c_2}+\rho_2(\eps,a_2)$.  This is inconsistent with $e^{G_1+c_1}\ge e^{G_2+c_2}+\delta_0$,  if one chooses $\eps,\,a_1,\,a_2$ all small enough,  and $\frac{c_*+\rho(\eps)}{1+a_1}<\frac{c_*-\rho(\eps)}{1-a_2}$.

In the general case,  we can take a coordinate chart in a neighborhood of $x_*$,  and consider $\frac{(\varphi_1)^{\eps}}{1+a_1}-\frac{(\varphi_2)_{\eps}}{1-a_2}-<p_k,z>-\delta_k|z|^2$ with $p_k,\,\delta_k\rightarrow 0$.  Using the same argument as in the proof of Theorem \ref{t4.1} in the general case,  we can find a sequence $x_k\rightarrow x_*$ such that both $(\varphi_1)^{\eps}$ and $(\varphi_2)_{\eps}$ are punctually second order differentiable at $x_k$,  and the above function has local minimum at $x_k$.  We still get a contradiction after evaluating at $x_k$ and passing to the limit as $k\rightarrow \infty$.
\end{proof}
A direct consequence of the above proposition is that there is a unique constant $c$ that allows for a viscosity solutions:
\begin{cor}
Let $G\in C(M)$,  there is at most one constant $c\in \bR$,  such that $F(\chi+dd^c\varphi)=e^{G+c}$ is solvable in the viscosity sense.
\end{cor}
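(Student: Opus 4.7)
The plan is to derive the uniqueness of the solvability constant $c$ directly from the monotonicity result Proposition \ref{p4.15}, using it in a symmetric fashion. Suppose, for contradiction, that there are two distinct constants $c_1 \ne c_2$ with corresponding viscosity solutions $\varphi_1, \varphi_2$ to
\[
F(\chi + dd^c \varphi_i) = e^{G + c_i}, \qquad i = 1, 2.
\]

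The key observation is that Proposition \ref{p4.15} compares $G_1 \ge G_2$ and allows equality; in particular, the choice $G_1 = G_2 = G$ is permitted. First I would apply Proposition \ref{p4.15} with $G_1 = G_2 = G$, taking $\varphi_1$ (with constant $c_1$) to play the role of the first solution and $\varphi_2$ (with constant $c_2$) the second; the conclusion yields $c_1 \le c_2$. Then I would invoke Proposition \ref{p4.15} a second time with the same equal right hand sides but with the roles of the two solutions interchanged, which produces $c_2 \le c_1$. Combining these two inequalities gives $c_1 = c_2$, contradicting our assumption.

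There is essentially no obstacle to this argument; it is a purely symmetric application of the monotonicity statement that has already been proven via sup/inf convolutions in Proposition \ref{p4.15}. The only subtlety worth checking is that Proposition \ref{p4.15} does not secretly require strict inequality $G_1 > G_2$: inspection of its proof, which derives a contradiction from the strict gap $e^{G_1+c_1} > e^{G_2+c_2}+\delta_0$ obtained under the assumption $c_1 > c_2$, shows that equality $G_1 = G_2$ is perfectly admissible in the hypothesis. Hence the corollary follows immediately, and no additional machinery (such as revisiting the super/inf convolution construction or re-proving the comparison at the touching point $x_*$) is required beyond the already established proposition.
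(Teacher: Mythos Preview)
Your proof is correct and is exactly the argument the paper has in mind: the corollary is stated as a direct consequence of Proposition \ref{p4.15}, and applying that proposition symmetrically with $G_1=G_2=G$ to obtain $c_1\le c_2$ and $c_2\le c_1$ is precisely the intended one-line deduction. Your check that Proposition \ref{p4.15} allows equality $G_1=G_2$ (since its proof only uses the strict inequality $e^{G_1+c_1}>e^{G_2+c_2}+\delta_0$ coming from the hypothesis $c_1>c_2$) is a valid and useful observation.
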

Because of this result,  we may simply denote this constant to be $c(G)$.  Proposition \ref{p4.15} then implies that $c(G_1)\le c(G_2)$ whenever $G_1\ge G_2$.

The question that is of crucial importance to us is the following:
\begin{q}\label{q4.17}
Assume that $G_1\ge G_2$,  and $G_1\neq G_2$.  Do we actually have $c(G_1)<c(G_2)$?
\end{q}
In order to justify its importance,  we are going to show the uniqueness of viscosity solutions,  assuming we have an affirmative answer to Question \ref{q4.17}.  More precisely:
\begin{thm}\label{t4.2}
Let $G\in C(M)$.  Assume that for any $G'\in C(M),\,G'\le G$ and $G'\neq G$,  one has $c(G')>c(G)$.  Then there is at most one viscosity solutions to $F(\chi+dd^c\varphi)=e^{G+c}$.
\end{thm}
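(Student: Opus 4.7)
The plan is proof by contradiction. Suppose two distinct viscosity solutions $\varphi_1 \ne \varphi_2$ exist with $\sup_M\varphi_i = 0$. I will produce a continuous $G' \in C(M)$ with $G' \le G$ and $G' \not\equiv G$ for which $F(\chi+dd^c u) = e^{G' + c(G)}$ is solvable in the viscosity sense. By the uniqueness of the solvability constant (the corollary following Proposition \ref{p4.15}), this forces $c(G') = c(G)$, contradicting the strict monotonicity hypothesis $c(G') > c(G)$.

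First, form $\psi := \max(\varphi_1, \varphi_2)$. As the pointwise maximum of two viscosity subsolutions, $\psi$ is itself a viscosity subsolution to $F(\chi+dd^c\psi) \ge e^{G+c(G)}$ with $\lambda[\chi+dd^c\psi] \in \Gamma$ and $\sup_M\psi = 0$. Because $\varphi_1 \ne \varphi_2$ are continuous, the set $U := \{\varphi_1 > \varphi_2\}$, after possibly swapping indices, is a nonempty open subset of $M$. Next, pick a smooth $h \ge 0$ with $\mathrm{supp}(h) \subset U$ and $h \not\equiv 0$, and set $G' := G - \delta h$ for $\delta > 0$. Then $G' \le G$, $G' \ne G$, and $\psi$ remains a viscosity subsolution to the $G'$-equation with constant $c(G)$: on $U$, the identity $\psi = \varphi_1$ gives $F(\chi+dd^c\psi) = e^{G+c(G)} \ge e^{G'+c(G)}$, while outside $U$ the identity $G' = G$ means $\psi$ is a viscosity solution of the unchanged equation.

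Finally, run Perron's method for the $G'$-equation starting from the subsolution $\psi$. Set
\[
\Phi(x) := \sup\{u(x) : u \in \mathrm{USC}(M),\; u \le 0,\; u \text{ viscosity subsolution to } F(\chi+dd^c u) \ge e^{G'+c(G)}\}.
\]
Since $\psi$ lies in this family, $\Phi \ge \psi$; combined with $\Phi \le 0$ and $\sup_M\psi = 0$, we obtain $\sup_M\Phi = 0$. The USC envelope $\Phi^*$ is a viscosity subsolution, and the classical Perron "bump-up at non-supersolution points" argument should show that the LSC envelope $\Phi_*$ is a viscosity supersolution, making $\Phi$ a continuous viscosity solution of $F(\chi+dd^c\Phi) = e^{G'+c(G)}$ with $\sup_M\Phi = 0$. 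This delivers the required viscosity solution to the $G'$-equation with constant $c(G)$, producing the contradiction.

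The main obstacle is executing the Perron step rigorously in our setting. The bump-up argument requires a local strict-subsolution modification: given a viscosity subsolution for which the supersolution inequality fails (in particular, the subsolution inequality holds strictly) at some point $x_0$, one must produce a slightly larger subsolution on a neighborhood of $x_0$ while preserving the subsolution inequality globally. The sup/inf-convolution machinery developed in Section 4.1, together with the comparison-at-maximum-point arguments in Propositions \ref{p4.12} and \ref{p4.15}, is the natural vehicle, but adapting these to this Perron context in the Hermitian setting — without any strict monotonicity of the right-hand side in $u$ to lean on — is the technical heart of the argument.
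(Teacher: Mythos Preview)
Your approach has a genuine gap at the Perron step, and it is not merely a technicality. On a closed manifold, for an equation $F(\chi+dd^c u)=e^{G'+c}$ whose right-hand side is independent of $u$, the Perron method needs a comparison principle to pass from ``$\Phi^*$ is a subsolution and $\Phi_*$ is a supersolution'' to ``$\Phi^*=\Phi_*$, hence $\Phi$ is a continuous viscosity solution''; that comparison principle is exactly what is unavailable here and what the theorem is about. There are further concrete obstructions: without a normalisation the admissible family is unbounded above (subsolutions are invariant under adding constants), while with the constraint $u\le 0$ the bump-up argument fails at any point where $\Phi_*=0$, since you cannot lift the competitor beyond the barrier. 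More tellingly, for \emph{any} $G'\le G$ the function $\varphi_1$ itself already lies in your Perron family, so the construction makes no real use of the assumption $\varphi_1\ne\varphi_2$; if it worked it would force $c(G')=c(G)$ for every $G'\le G$ with $G'\ne G$, directly contradicting the hypothesis you are assuming---and since that hypothesis does hold in nontrivial cases (e.g.\ the K\"ahler $\sigma_k$ equations discussed after Question~\ref{q4.17}), the Perron step cannot be correct as stated.

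The paper's argument goes in the opposite direction. It chooses $\tilde G\le G$ with $\tilde G=G$ on a neighbourhood of the contact set $E=\{\varphi_1-\varphi_2=\max_M(\varphi_1-\varphi_2)\}$ and $\tilde G<G$ away from $E$, and uses the existence theorem to produce a viscosity solution $\eta$ of $F(\chi+dd^c\eta)=e^{\tilde G+\tilde c}$; the hypothesis then supplies $\tilde c=c(\tilde G)>c(G)$ as an \emph{input}. For small $r>0$ the function $(1-r)\varphi_1+r\eta-\varphi_2$ attains its maximum inside the region where $\tilde G=G$, and there the concavity of $f$ together with the sup/inf-convolution machinery yields, at the regularised maximum point,
\[
e^{G+c(G)}\ \ge\ (1-r)\,e^{G+c(G)}+r\,e^{G+\tilde c}-o(1),
\]
which contradicts $\tilde c>c(G)$. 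Thus the strict monotonicity is used to manufacture a strict inequality feeding a direct maximum-principle comparison between known functions, rather than as the target of a construction that would itself require comparison.
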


Before presenting the proof in full rigor,  let us first explain heuristically how the above said strict monotonicity helps.

Let $\varphi_1$ and $\varphi_2$ be two viscosity solutions to the equation.  Denote $E=\{x\in M:\varphi_2(x)-\varphi_1(x)=\min_M(\varphi_2-\varphi_1)\}$.  Clearly $E$ is a compact subset of $M$ and we will be done if we can show $E=M$.  Assume otherwise,  we can take $\delta>0$ small enough,  such that $E_{\delta}\neq M$,  where $E_{\delta}$ denotes the $\delta$-neighborhood of $E$.

Now we can define $\tilde{G}\in C(M)$ as follows: $e^{\tilde{G}}=e^G$ on $E_{\frac{\delta}{2}}$,  $e^{\tilde{G}}=\frac{1}{2}e^G$ outside $E_{\delta}$,  and $\tilde{G}\le G$ on $M$.  Let $\eta$ be a viscosity solution to:
\begin{equation*}
F(\chi+dd^c\eta)=e^{\tilde{G}+\tilde{c}},\,\,\sup_M\eta=0.
\end{equation*}
From the previous section,  we know such $\eta$ exists.  
From the assumption,  we know that $\tilde{c}>c$.  Let $0<r<1$,  we consider the minimum of $\varphi_2-((1-r)\varphi_1+r\eta)$ and assume that it is achieved at $x_{\delta,r}$.  Evaluating at $x_{\delta,r}$,  we would have:
\begin{equation*}
f\big(\lambda[\chi+dd^c\varphi_2]\big)\ge f\big(\lambda[\chi+dd^c(1-r)\varphi_1+r\eta]\big)\ge (1-r)f(\lambda[\chi+dd^c\varphi_1])+rf(\lambda[\chi+dd^c\eta]).
\end{equation*}
This would mean that:
\begin{equation}\label{4.24}
e^{G+c}(x_{\delta,r})\ge (1-r)e^{G+c}(x_{\delta,r})+re^{\tilde{G}+\tilde{c}}(x_{\delta,r}).
\end{equation}
On the other hand,  for fixed $\delta$,  the minimum of $\varphi_2-((1-r)\varphi_1+r\eta)$ can only be achieved in $E_{\frac{\delta}{2}}$,  as long as $r$ is small enough,  but then one would have $e^{\tilde{G}}(x_{\delta,r})=e^{G}(x_{\delta,r})$,  and this is inconsistent with (\ref{4.24}).

Next we are going to make the above argument rigorous.  

As before,  in place of $\varphi_1$,  we wish to consider $\frac{(\varphi_1)^{\eps}}{1+a_1}$,  $\frac{(\varphi_2)_{\eps}}{1-a_2},\,\frac{\eta^{\eps}}{1-a_2}$,  with parameters $\eps,\,a_i,\,i=1,\,2$ small enough,  then similar arguments as before shows the following:
\begin{lem}\label{l4.18}
There exist continuous functions $\rho(\eps),\,\rho_i(\eps,a_i),\,i=1,2$ with $\rho(0+)=0,\,\rho_i(0+,0+)=0,\,i=1,2$,  such that:
\begin{enumerate}
\item $\frac{(\varphi_1)^{\eps}}{1+a_1}$ solves the following in the viscosity sense:
\begin{equation*}
\begin{split}
&\lambda[\frac{\chi+\rho(\eps)\omega_0}{1+a_1}+dd^c\frac{(\varphi_1)^{\eps}}{1+a_1}]\in \Gamma,\\
&F(\frac{\chi+\rho(\eps)\omega_0}{1+a_1}+dd^c\frac{(\varphi_1)^{\eps}}{1+a_1})\ge e^{G+c}-\rho_1(\eps,a_1).
\end{split}
\end{equation*}
\item $\frac{(\varphi_2)_{\eps}}{1-a_2}$ solves $F(\frac{\chi-\rho(\eps)}{1-a_2}+dd^c\frac{(\varphi_2)_{\eps}}{1-a_2})\le e^{G+c}+\rho_2(\eps,a_2)$ in the viscosity sense.  
\item $\frac{\eta^{\eps}}{1+a_1}$ solves $\lambda[\frac{\chi+\rho(\eps)\omega_0}{1+a_1}+dd^c\frac{\eta^{\eps}}{1+a_1}]\in \Gamma$ and $F(\frac{\chi+\rho(\eps)\omega_0}{1+a_1}+dd^c\frac{\eta^{\eps}}{1+a_1})\ge e^{\tilde{G}+\tilde{c}}-\rho_2(\eps,a_1)$ in the viscosity sense.
\end{enumerate}
\end{lem}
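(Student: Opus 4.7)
The plan is to recognize this lemma as a direct specialization of Proposition \ref{p4.3} applied separately to the three viscosity solutions $\varphi_1$, $\varphi_2$, and $\eta$, in the case where the right hand side depends only on $x$. The setup of Subsection 5.3 is in fact designed to make exactly this reduction available.

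For item (1), I would view $\varphi_1$ as a viscosity solution of $F(\chi+dd^c\varphi_1)=e^{\hat G(x,\varphi_1)}$ with the $u$-independent continuous function $\hat G(x,u):=G(x)+c$. Proposition \ref{p4.3} part (1) then produces continuous $\rho(\eps)$ and $\rho_1(\eps,a_1)$ with $\rho(0+)=0$ and $\rho_1(0+,0+)=0$, giving both the $\Gamma$-subharmonicity of $\frac{(\varphi_1)^\eps}{1+a_1}$ with respect to $\frac{\chi+\rho(\eps)\omega_0}{1+a_1}$ and the viscosity inequality
\begin{equation*}
F\Big(\tfrac{\chi+\rho(\eps)\omega_0}{1+a_1}+dd^c\tfrac{(\varphi_1)^\eps}{1+a_1}\Big)\ge e^{\hat G(x,\frac{(\varphi_1)^\eps}{1+a_1})}-\rho_1(\eps,a_1).
\end{equation*}
Since $\hat G$ does not depend on $u$, its right hand side collapses to $e^{G(x)+c}-\rho_1(\eps,a_1)$, matching the claim. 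Item (3) is proved identically by applying Proposition \ref{p4.3} part (1) to $\eta$, a viscosity solution of $F(\chi+dd^c\eta)=e^{\tilde G(x)+\tilde c}$, and item (2) is obtained by applying Proposition \ref{p4.3} part (2) to $\varphi_2$, viewed the same way with $\hat G(x,u)=G(x)+c$.

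To assemble a single common triple $(\rho,\rho_1,\rho_2)$ that works for all three parts, I would take the pointwise maximum of the three moduli produced in the three separate applications. This is legitimate because the moduli in Proposition \ref{p4.3} depend only on $\rho_\varphi$, $\|\varphi\|_{L^\infty}$, the form $\chi$, and the background metric, and the three functions $\varphi_1,\varphi_2,\eta$ are fixed a priori with uniformly bounded data (the bounds for $\eta$ being supplied by Theorem \ref{t1.1} applied to the right hand side $\tilde G(x)+\tilde c$). Choosing the same parameter $a_1$ in items (1) and (3) is harmless, since Proposition \ref{p4.3} is uniform in $a_1\in(0,1)$.

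Because the statement is essentially a packaging of Proposition \ref{p4.3} in the constant-in-$\varphi$ regime, I do not anticipate any genuine obstacle; the only mild task is the bookkeeping needed to consolidate the three separate conclusions into a single triple of moduli valid uniformly across the three items, and to absorb the harmless discrepancy between $\hat G(x,\tfrac{\varphi^\eps}{1+a_1})$ and $\hat G(x,\varphi)$, which vanishes identically because $\hat G$ is independent of its second argument.
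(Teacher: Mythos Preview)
Your proposal is correct and matches the paper's approach: the paper itself does not give a separate proof of this lemma, merely stating that ``similar arguments as before'' (i.e., Proposition~\ref{p4.3}) yield the conclusion. Your observation that the right hand side $\hat G(x,u)=G(x)+c$ is independent of $u$, so that the $\varphi$-dependence in Proposition~\ref{p4.3} drops out, together with taking the pointwise maximum of the resulting moduli across the three applications, is exactly the intended reduction.
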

Next we consider the maximum point of $((1-r)\frac{(\varphi_1)^{\eps}}{1+a_1}+r\frac{\eta^{\eps}}{1+a_1})-\frac{(\varphi_2)_{\eps}}{1-a_2}$.  Assume that the maximum is achieved at $x_*$,  then we have:
\begin{lem}
For any fixed $\delta>0$,  $x_*\in E_{\frac{\delta}{2}}$ as long as $\eps,\,a_1,\,a_2,\,r$ are all small enough.
\end{lem}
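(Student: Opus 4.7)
The plan is to reduce everything to the unregularized function
$H_r(x) := (1-r)\varphi_1(x) + r\eta(x) - \varphi_2(x)$,
exploiting that the sup/inf convolutions converge uniformly to the original functions. From the bounds $\varphi(x)+\eps \le \varphi^{\eps}(x) \le \varphi(x)+\eps+\rho_{\varphi}(C\eps^{1/2})$ (and symmetrically for $\varphi_{\eps}$) together with $a_i \to 0$, the functions $\frac{(\varphi_1)^{\eps}}{1+a_1}$, $\frac{\eta^{\eps}}{1+a_1}$, $\frac{(\varphi_2)_{\eps}}{1-a_2}$ converge uniformly on $M$ to $\varphi_1,\eta,\varphi_2$ respectively as $(\eps,a_1,a_2) \to 0$. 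Hence, denoting the regularized test function by $\tilde H_{r,\eps,a_1,a_2}$ (whose maximizer is $x_*$), we obtain
\[
\|\tilde H_{r,\eps,a_1,a_2} - H_r\|_{L^\infty(M)} \le \omega(\eps,a_1,a_2),
\]
with $\omega(\eps,a_1,a_2) \to 0$, uniformly in $r \in [0,1]$.

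Next, I would use the definition of $E$ to localize the maximum of $H_r$. Set $M_0 := \max_M(\varphi_1-\varphi_2) = -\min_M(\varphi_2-\varphi_1)$. By definition, $E = \{x \in M : (\varphi_1 - \varphi_2)(x) = M_0\}$, and since $M \setminus E_{\delta/2}$ is a compact set disjoint from $E$, continuity of $\varphi_1-\varphi_2$ yields a constant $\sigma=\sigma(\delta)>0$ with
\[
\varphi_1(x) - \varphi_2(x) \le M_0 - \sigma(\delta) \qquad \forall\, x \in M \setminus E_{\delta/2}.
\]

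Let $C := \|\eta-\varphi_1\|_{L^\infty(M)}$, which is finite because $\eta$ and $\varphi_1$ are bounded continuous functions. For any $x_0 \in E$, $H_r(x_0) = M_0 + r(\eta-\varphi_1)(x_0) \ge M_0 - rC$, whence $\sup_M \tilde H_r \ge M_0 - rC - \omega$. Assuming $x_* \notin E_{\delta/2}$, the bound from Step~2 gives $\tilde H_r(x_*) \le H_r(x_*) + \omega \le M_0 - \sigma(\delta) + rC + \omega$. Combining the two inequalities yields
\[
\sigma(\delta) \le 2rC + 2\omega(\eps,a_1,a_2).
\]
Thus, fixing $\delta>0$ first, then choosing $r < \sigma(\delta)/(4C)$, and finally choosing $\eps,a_1,a_2$ so small that $\omega(\eps,a_1,a_2)<\sigma(\delta)/4$, we force $x_* \in E_{\delta/2}$, as desired.

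There is no deep analytic obstacle here; the argument is essentially compactness plus continuity plus the uniform convergence of the regularizations. The only point that genuinely requires attention is the order of quantifiers: $\delta$ must be fixed first, $r$ is then chosen small (depending on $\delta$ via $\sigma(\delta)$) to absorb the $r(\eta-\varphi_1)$ perturbation, and only afterwards may the regularization parameters $\eps,a_1,a_2$ be taken small enough so that the uniform-convergence error $\omega$ is dominated by $\sigma(\delta)$. This order is automatically compatible with the smallness requirements (such as $\tfrac{c_\ast+\rho(\eps)}{1+a_1}<\tfrac{c_\ast-\rho(\eps)}{1-a_2}$) imposed elsewhere in the uniqueness argument, since those also only ask $\eps,a_1,a_2$ to be sufficiently small.
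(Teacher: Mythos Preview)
Your argument is correct and is essentially the same as the paper's: both use that $M\setminus E_{\delta/2}$ is compact so $\varphi_1-\varphi_2$ stays strictly below its maximum there by some $\sigma(\delta)>0$, and that the regularized test function converges uniformly to $\varphi_1-\varphi_2$ as all parameters tend to zero. Your version is more explicit in separating the $r$-perturbation (controlled by $rC$ with $C=\|\eta-\varphi_1\|_{L^\infty}$) from the regularization error $\omega(\eps,a_1,a_2)$, whereas the paper simply bundles all four parameters together and invokes uniform convergence to $\varphi_1-\varphi_2$; but the content is the same.
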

\begin{proof}
We observe that for $x\notin E_{\frac{\delta}{2}}$,  there exists $\delta'>0$,  such that 
\begin{equation*}
\varphi_1-\varphi_2\le \max_M(\varphi_1-\varphi_2)-\delta',
\end{equation*}
On the other hand,  we know that as $\eps,\,a_1,\,a_2,\,r\rightarrow 0$,  we have that $((1-r)\frac{(\varphi_1)^{\eps}}{1+a_1}+r\frac{\eta^{\eps}}{1+a_1})-\frac{(\varphi_2)_{\eps}}{1-a_2}$ will converge to $\varphi_1-\varphi_2$ uniformly,  so if the parameters are all small enough and $x\notin E_{\frac{\delta}{2}}$,  one would have:
\begin{equation*}
\big((1-r)\frac{(\varphi_1)^{\eps}}{1+a_1}+r\frac{\eta^{\eps}}{1+a_1}\big)-\frac{(\varphi_2)_{\eps}}{1-a_2}\le \max_M\big(\big((1-r)\frac{(\varphi_1)^{\eps}}{1+a_1}+r\frac{\eta^{\eps}}{1+a_1}\big)-\frac{(\varphi_2)_{\eps}}{1-a_2}\big)-\frac{\delta'}{2}.
\end{equation*}
This proves $x_*\in E_{\frac{\delta}{2}}$.
\end{proof}
In order to fix the issue that the functions $(\varphi_1)^{\eps},\,\eta^{\eps},\,(\varphi_2)_{\eps}$ are not punctually second order differentiable at $x_*$,  we need to consider slight perturbations of the original function.  As before,  we take a coordinate chart near $x_*$,  and we have the following analogue of Lemma \ref{l4.13}:
\begin{lem}
There exists a neighborhood $U_0$ of $x_*$,  and there exists a sequence $p_k\in \bC^n$,  $p_k\rightarrow 0$,  and a sequence $\delta_k>0,\,\delta_k\rightarrow 0$,  such that one can find a sequence of $x_k\in U_0$,  such that if we define $\psi_{\eps,a_1,k}:=\frac{(\varphi_1)^{\eps}}{1+a_1}-<p_k,z>-\delta_k|z|^2$,  we have:
\begin{enumerate}
\item $x_k\rightarrow x_*$ as $k\rightarrow \infty$,
\item $((1-r)\psi_{\eps,a_1,k}+r\frac{\eta^{\eps}}{1+a_1})-\frac{(\varphi_2)_{\eps}}{1-a_2}$ has local maximum at $x_k$,
\item $(\varphi_1)^{\eps},\,\eta^{\eps},\,(\varphi_2)_{\eps}$ are all punctually second order differentiable at $x_k$.
\end{enumerate}
\end{lem}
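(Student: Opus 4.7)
The proof will mirror that of Lemma \ref{l4.13}, but applied to a three-function combination instead of two. First I would fix a coordinate chart around $x_*$ so that $x_*$ corresponds to $z=0$. By Lemma \ref{l4.11N}, both $(\varphi_1)^{\eps}$ and $\eta^{\eps}$ are semi-convex on this chart (after adding $C_\eps|z|^2$), while $(\varphi_2)_{\eps}$ is semi-concave. Consequently, with nonnegative weights $(1-r)/(1+a_1),\,r/(1+a_1),\,1/(1-a_2)$, the function
\begin{equation*}
F(z)\defd (1-r)\frac{(\varphi_1)^{\eps}}{1+a_1}+r\frac{\eta^{\eps}}{1+a_1}-\frac{(\varphi_2)_{\eps}}{1-a_2}
\end{equation*}
is semi-convex near $z=0$, since sums of semi-convex functions are semi-convex and $-(\varphi_2)_{\eps}$ is semi-convex as well. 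By the assumption on $x_*$, $F$ attains its local maximum at $z=0$.

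Next I would make this maximum strict by introducing the quadratic penalty. For each large $k$, set $\delta_k=1/k$ and consider
\begin{equation*}
\tilde{F}_k(z)\defd F(z)-(1-r)\delta_k|z|^2.
\end{equation*}
Then $\tilde{F}_k$ is still semi-convex, and $z=0$ is now a strict local maximum of $\tilde F_k$. I would then invoke Lemma A.3 of Crandall-Ishii-Lions \cite{CIL} (the same tool used in the proof of Lemma \ref{l4.13}), with $\hat x=0$, radius $1/k$, and $\delta=1/k$: the set
\begin{equation*}
K_k\defd \bigl\{z\in B_{1/k}(0):\exists\,q\in B_{1/k}(0)\text{ such that }\tilde{F}_k(\cdot)+<q,\cdot>\text{ has a local max at }z\bigr\}
\end{equation*}
has positive Lebesgue measure. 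Because each of $(\varphi_1)^{\eps},\,\eta^{\eps},\,(\varphi_2)_{\eps}$ is punctually second-order differentiable almost everywhere (Alexandrov's theorem, via semi-convexity/concavity as recorded in Lemma \ref{l4.11N}), the bad set where any of them fails to be punctually second-order differentiable has measure zero. I can therefore select $x_k\in K_k$ with the corresponding $q_k\in B_{1/k}(0)$ such that all three functions are punctually second-order differentiable at $x_k$. Setting $p_k\defd -q_k/(1-r)$, this $p_k$ lies in $B_{1/(k(1-r))}(0)\to 0$.

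With these choices, a direct computation gives
\begin{equation*}
(1-r)\psi_{\eps,a_1,k}+r\frac{\eta^{\eps}}{1+a_1}-\frac{(\varphi_2)_{\eps}}{1-a_2}=\tilde{F}_k(z)-(1-r)<p_k,z>=\tilde{F}_k(z)+<q_k,z>,
\end{equation*}
so that by construction this function has a local maximum at $x_k$, verifying item (2). Item (1) follows from $x_k\in B_{1/k}(0)$, and item (3) holds by the way $x_k$ was selected. The main obstacle, and the only point requiring care, is verifying that the three-function combination $F$ is genuinely semi-convex so that Lemma A.3 of \cite{CIL} applies in the required form — this hinges on the fact that $(\varphi_1)^{\eps}$ and $\eta^{\eps}$ are sup-convolutions (hence semi-convex) while $(\varphi_2)_{\eps}$ is an inf-convolution (hence semi-concave, so that $-(\varphi_2)_{\eps}/(1-a_2)$ is semi-convex). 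Once this structural observation is in place, the remainder of the argument is an almost verbatim repetition of Lemma \ref{l4.13}.
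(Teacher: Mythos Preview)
Your proposal is correct and follows essentially the same approach as the paper: the paper does not write out a separate proof for this lemma, treating it as the obvious three-function analogue of Lemma \ref{l4.13}, and your argument carries out exactly that adaptation via the Crandall--Ishii--Lions Lemma A.3 applied to the semi-convex combination $F-(1-r)\delta_k|z|^2$. Your bookkeeping with the factor $(1-r)$ in front of the linear and quadratic perturbations (so that $p_k=-q_k/(1-r)$) is a clean way to match the definition of $\psi_{\eps,a_1,k}$ and is the only point that differs cosmetically from the two-function case.
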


With this preparation,  we are ready to present the rigorous proof of Theorem \ref{t4.2}:
\begin{proof}
(Of Theorem \ref{t4.2}) 
From Lemma \ref{l4.18} and the definition of $\psi_{\eps,a_1,k}$,  we see that $\psi_{\eps,a_1,k}$ solves the following equation in the viscosity sense:
\begin{equation}\label{4.25}
\begin{split}
&\lambda[\frac{\chi+2\rho(\eps)\omega_0}{1+a_1}+dd^c\psi_{\eps,a_1,k}]\in\Gamma,\\
&f\big(\lambda[\frac{\chi+2\rho(\eps)\omega_0}{1+a_1}+dd^c\psi_{\eps,a_1,k}]\big)\ge e^{G+c}-\rho_1(\eps,a_1).
\end{split}
\end{equation}
From Lemma \ref{l4.18},  part (3),  we also have:
\begin{equation}\label{4.26}
\begin{split}
&\lambda[\frac{\chi+2\rho(\eps)\omega_0}{1+a_1}+dd^c\frac{\eta^{\eps}}{1+a_1}]\in \Gamma,\\
&f\big(\lambda[\frac{\chi+2\rho(\eps)\omega_0}{1+a_1}+dd^c\frac{\eta^{\eps}}{1+a_1}]\big)\ge e^{\tilde{G}+\tilde{c}}-\rho_2(\eps,a_1).
\end{split}
\end{equation}
Since $x_k\rightarrow x_*$ and $x_*\in E_{\frac{\delta}{2}}$,  we see that $x_k\in E_{\frac{\delta}{2}}$ for large enough $k$.  Moreover,  since $\psi_{\eps,a_1,k}$ and $\eta^{\eps}$ are punctually second order differentiable at $x_k$,  we can evaluate (\ref{4.25}),  (\ref{4.26}) at $x_k$ and see that:
\begin{equation}
\begin{split}
&f\big(\lambda[\frac{\chi+2\rho(\eps)\omega_0}{1+a_1}+dd^c((1-r)\psi_{\eps,a_1,k}+r\eta^{\eps})]\big)(x_k)\\
&\ge (1-r)f\big(\lambda[\frac{\chi+2\rho(\eps)\omega_0}{1+a_1}+dd^c\psi_{\eps,a_1,k})])(x_k)+rf\big(\lambda[\frac{\chi+2\rho(\eps)\omega_0}{1+a_1}+dd^c\eta^{\eps}]\big)(x_k)-2\rho_1(a_1,\eps)\\
&\ge (1-r)e^{G+c}(x_k)+re^{\tilde{G}+\tilde{c}}(x_k)-2\rho_1(a_1,\eps)=(1-r)e^{G+c}(x_k)+re^{G+\tilde{c}}(x_k)-2\rho_1(a_1,\eps).
\end{split}
\end{equation}
In the first inequality above,  we used the concavity of $f$.  In the second inequality above,  we used that $\psi_{\eps,a_1,k}$ and $\eta^{\eps}$ are subsolutions.  Also since $x_k\in E_{\frac{\delta}{2}}$,  we have $\tilde{G}(x_k)=G(x_k)$.
On the other hand,  if one chooses the parameters such that:
\begin{equation*}
\frac{c_*+2\rho(\eps)}{1+a_1}\le \frac{c_*-\rho(\eps)}{1-a_2},
\end{equation*}
we then have (similar to Proposition \ref{p4.12}):
\begin{equation*}
\begin{split}
&f\big(\lambda[\frac{\chi+2\rho(\eps)\omega_0}{1+a_1}+dd^c((1-r)\psi_{\eps,a_1,k}+r\eta^{\eps})]\big)(x_k)\le f\big(\lambda[\frac{\chi-\rho(\eps)\omega_0}{1-a_2}+dd^c\frac{(\varphi_2)_{\eps}}{1-a_2}]\big)(x_k)\\
&\le e^{G+c}(x_k)+\rho_2(\eps,a_2).
\end{split}
\end{equation*}
Therefore,  we finally obtain that:
\begin{equation}\label{4.29}
(1-r)e^{G+c}(x_k)+re^{G+\tilde{c}}(x_k)-2\rho_1(a_1,\eps)\le e^{G+c}(x_k)+\rho_2(\eps,a_2).
\end{equation}
First we let $k\rightarrow \infty$ and use that $x_k\rightarrow x_*$,  we see that
\begin{equation*}
(1-r)e^{G+c}(x_*)+re^{G+\tilde{c}}(x_*)-2\rho_1(a_1,\eps)\le e^{G+c}(x_*)+\rho_2(\eps,a_2).
\end{equation*}
Note that if $r>0$ and fixed,  $e^{G+c}(x_*)$ is strictly less than $(1-r)e^{G+c}(x_*)+re^{G+\tilde{c}}(x_*)$.  Thereby we get a contradiction with (\ref{4.29}) after $\eps,\,a_1,\,a_2$ are chosen sufficiently small.
\end{proof}
Now let us go back to Question \ref{q4.17}.  The only positive examples we know are the classical ones: if $\omega_0$ is a K\"ahler metric and if $\chi$ is closed,  so the equation could be written as:
\begin{equation*}
\big(\frac{(\chi+dd^c\varphi)^k\wedge \omega_0^{n-k}}{\omega_0^n}\big)^{\frac{1}{k}}=e^{G+c},\,\,\,1\le k\le n.
\end{equation*}
Then we can see that:
\begin{equation*}
e^{kc}=\frac{\int_M\chi^k\wedge \omega_0^{n-k}}{\int_Me^{kG}\omega_0^n}.
\end{equation*}
Therefore,  we see that Question \ref{q4.17} indeed holds in this case.  Another observation we make is that,  if either one of them is smooth,  then Question \ref{q4.17} also has an affirmative answer:
\begin{lem}
If either $G_1$ or $G_2$ is smooth,  then the answer to Question \ref{q4.17} is yes.
\end{lem}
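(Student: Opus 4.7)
The plan is to reduce the mixed case (one of $G_1, G_2$ smooth) to the all-smooth case by approximation, where a linearization-plus-Gauduchon computation gives a quantitative version of strict monotonicity. A crucial preliminary is the $1$-Lipschitz continuity of the solvability constant: since $\varphi$ is a viscosity solution for $G$ with constant $c(G)$ if and only if it is one for $G+\delta$ with constant $c(G)-\delta$, the monotonicity of Proposition \ref{p4.15} yields $|c(G)-c(G')|\le \|G-G'\|_\infty$ for all $G,G'\in C(M)$.

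For the case where both $G_1, G_2$ are smooth, Theorem \ref{t2.1}(2) produces smooth solutions $\varphi_1, \varphi_2$. Because $f$ is concave, positive, and homogeneous of degree one on $\Gamma$, AM--GM upgrades concavity to log-concavity, so $\log F$ is concave on the set of Hermitian matrices whose eigenvalues lie in $\Gamma$. Applying concavity of $\log F$ at the base point $\chi+dd^c\varphi_2$ yields
\begin{equation*}
(G_1-G_2)+(c(G_1)-c(G_2)) \;=\; \log F(\chi+dd^c\varphi_1)-\log F(\chi+dd^c\varphi_2) \;\le\; \mathcal{L}(\varphi_1-\varphi_2),
\end{equation*}
where $\mathcal{L}u = F(\chi+dd^c\varphi_2)^{-1}\,(\partial F/\partial h_{i\bar{j}})(\chi+dd^c\varphi_2)\,u_{i\bar{j}}$. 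By (\ref{2.2NNN}) and the definition (\ref{2.3NN}) of $\tilde\Omega$ at $\varphi_2$, one has $\mathcal{L}u\cdot \omega_0^n/n! = \tilde\Omega^{n-1}/(n-1)!\wedge dd^c u$. Multiplying by $e^{(n-1)v_0}$, where $v_0$ is the Gauduchon factor of $\tilde\Omega$ (so $dd^c(e^{(n-1)v_0}\tilde\Omega^{n-1})=0$), and integrating over $M$, a double integration by parts annihilates the right-hand side, giving
\begin{equation*}
c(G_2)-c(G_1) \;\ge\; \frac{\int_M (G_1-G_2)\, e^{(n-1)v_0}\,\omega_0^n}{\int_M e^{(n-1)v_0}\,\omega_0^n}.
\end{equation*}
Since $G_1\ge G_2$ and $G_1\not\equiv G_2$ are continuous, the integrand is nonnegative and strictly positive on a nonempty open set, so $c(G_1)<c(G_2)$.

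Now assume $G_2$ is smooth but $G_1$ is merely continuous (the case $G_1$ smooth is symmetric with the roles of the linearization base-point swapped). Take smooth $G_{1,j}\in C^\infty(M)$ with $G_{1,j}\ge G_1$ and $G_{1,j}\to G_1$ uniformly, e.g. by mollifying $G_1$ and adding a small positive constant. Then $G_{1,j}\ge G_2$ and $G_{1,j}\not\equiv G_2$ for every $j$. Applying the smooth--smooth inequality to $(G_{1,j},G_2)$, linearized at $\varphi_2$, gives
\begin{equation*}
c(G_2)-c(G_{1,j}) \;\ge\; \frac{\int_M (G_{1,j}-G_2)\, e^{(n-1)v_0}\,\omega_0^n}{\int_M e^{(n-1)v_0}\,\omega_0^n},
\end{equation*}
where crucially the Gauduchon factor $v_0$ is determined by $\varphi_2$ alone and is therefore independent of $j$. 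Letting $j\to\infty$ and invoking the $1$-Lipschitz continuity of $c$, together with uniform convergence of $G_{1,j}\to G_1$, yields $c(G_2)-c(G_1)\ge \int_M (G_1-G_2)\,e^{(n-1)v_0}\,\omega_0^n / \int_M e^{(n-1)v_0}\,\omega_0^n > 0$.

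The main obstacle is ensuring that the weight $e^{(n-1)v_0}\omega_0^n$ appearing in the quantitative lower bound stays \emph{fixed} while the non-smooth side is approximated; this is precisely what lets the uniform convergence $G_{1,j}\to G_1$ pass to a strict inequality in the limit. The fix is to always linearize the concavity inequality at whichever of $\varphi_1,\varphi_2$ is smooth, so that the associated $\tilde\Omega$ and its Gauduchon factor $v_0$ depend only on the smooth side and survive the $j\to\infty$ limit. A secondary point to verify is that the integration-by-parts identity $\int_M e^{(n-1)v_0}\tilde\Omega^{n-1}\wedge dd^c u = 0$ is applied only to smooth $u$ (namely $u=\varphi_{1,j}-\varphi_2$ or $u=\varphi_1-\varphi_{2,j}$), which is automatic in the approximation scheme.
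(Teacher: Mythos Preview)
Your smooth--smooth step is fine (and in fact stronger than the paper's contradiction argument: you obtain a quantitative lower bound for $c(G_2)-c(G_1)$ via log-concavity of $F$). Your treatment of the mixed case when $G_2$ is smooth is also correct: linearizing at $\varphi_2$ fixes the Gauduchon weight, and the limit goes through.

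However, the claimed ``symmetry'' for the case $G_1$ smooth fails. If you linearize $\log F$ at $\chi+dd^c\varphi_1$ and apply concavity with $B=\chi+dd^c\varphi_{2,j}$, you obtain
\[
(G_{2,j}+c(G_{2,j}))-(G_1+c(G_1)) \;\le\; \mathcal{L}_1(\varphi_{2,j}-\varphi_1),
\]
and after integrating against the Gauduchon weight of $\tilde\Omega_1$ this yields only an \emph{upper} bound
\[
c(G_{2,j})-c(G_1)\;\le\;\frac{\int_M (G_1-G_{2,j})\,e^{(n-1)v_1}\,\omega_0^n}{\int_M e^{(n-1)v_1}\,\omega_0^n},
\]
which does not give strict positivity of $c(G_2)-c(G_1)$. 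The concavity inequality only produces a useful lower bound when you linearize at the point associated with the \emph{smaller} of $G_1,G_2$ (here $\varphi_2$), and that is precisely the side you cannot keep fixed when $G_2$ is non-smooth. If instead you linearize at the $j$-dependent $\varphi_{2,j}$, the Gauduchon factor $v_{0,j}$ varies with $j$ with no a priori control, so the limit argument breaks down.

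The paper sidesteps this asymmetry entirely: once the smooth--smooth case is established, the mixed case is handled by inserting a single intermediate smooth function. If $G_1$ is smooth and $G_1>G_2$ on an open set $U$, take $G_2'=G_1-\eps\eta$ for a small bump $\eta\ge 0$ supported in $U$; then $G_2\le G_2'\le G_1$ with $G_2'\neq G_1$, so $c(G_1)<c(G_2')\le c(G_2)$ by the smooth--smooth strict inequality and monotonicity. The case $G_2$ smooth is genuinely symmetric for this argument. This avoids any approximation-with-moving-weights and does not need the Lipschitz continuity of $c$.
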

\begin{proof}
First we assume that both $G_1$ and $G_2$ are smooth.  Assume otherwise,  that is,  $G_1$ and $G_2$ are both smooth,  $G_1\ge G_2$,  $G_1\neq G_2$,  but still $c(G_1)=c(G_2)$.  Let us denote this constant to be $c$.  Then we have:
\begin{equation*}
F(\chi+dd^c\varphi_1)=e^{G_1+c},\,\,F(\chi+dd^c\varphi_2)=e^{G_2+c}.
\end{equation*}
Subtracting and using the concavity of $F$,  we get:
\begin{equation*}
e^{G_2+c}-e^{G_1+c}=F(\chi+dd^c\varphi_2)-F(\chi+dd^c\varphi_1)\ge \frac{\partial F}{\partial h_{i\bar{j}}}(\chi+dd^c\varphi_2)(\varphi_2-\varphi_1)_{i\bar{j}}.
\end{equation*}

Therefore,  if we define 
\begin{equation*}
\tilde{\Omega}=\big(\det g\det(\frac{\partial F}{\partial h_{i\bar{j}}})(\chi+dd^c\varphi_2)\big)^{\frac{1}{n-1}}\Omega,
\end{equation*}
where $\Omega$ is defined in (\ref{2.1NN}),  
then the above equation can be written as:
\begin{equation*}
\frac{\tilde{\Omega}^{n-1}}{(n-1)!}\wedge dd^c(\varphi_2 - \varphi_1) \le (e^{G_2+c}- e^{G_1 +c})\frac{\omega_0^n}{n!}.
\end{equation*}
Now let $v$ be the Gauduchon factor corresponding to $\tilde{\Omega}$,  that is $dd^c\big(e^{(n-1)v}\tilde{\Omega}^{n-1}\big)=0$,  then we have:
\begin{equation*}
0=\int_Me^{(n-1)v}\frac{\tilde{\Omega}^{n-1}}{(n-1)!}\wedge dd^c(\varphi_2 - \varphi_1)=\int_Me^{(n-1)v +c}(e^{G_2}-e^{G_1})\frac{\omega_0^n}{n!}\le 0.
\end{equation*}
Here we use the fact that $c(G_1)=c(G_2)$ and $G_1 \ge G_2$. This implies $G_1=G_2$,  a contradiction.

Next we consider when only one of $G_1$ or $G_2$ is smooth.  Assume,  say,  $G_1$ is smooth,  $G_1\ge G_2$,  and $G_1>G_2$ on some open set $U$.  By adding a bump function supported on $U$,  it is easy to find a smooth function $G_2'$ such that $G_1\ge G_2'\ge G_2$,  and $G_1\neq G_2'$.  Then from the strict monotonicity in the smooth case,  we can conclude that:
\begin{equation*}
c(G_1)<c(G_2')\le c(G_2).
\end{equation*}
The other possibility that $G_2$ is smooth can be dealt with similarly.
\end{proof}

\section{Acknowlegement}
This work is supported in part by Simons Foundation Award with ID:605796.

\end{document}